\newcommand{\esp}{\quad\mbox{and}\quad}
\newcommand{\R}{\mathbb{R}}
\newcommand{\md}{\mathrm d}
\newcommand{\om}{\omega}
\newcommand{\J}{\mathrm J}
\newcommand{\K}{\mathrm K}
\newcommand{\G}{{\mathfrak{g}}}
\newcommand{\A}{{\mathfrak{a}}}
\newcommand{\h}{{\mathfrak{h}}}
\newcommand{\ol}{\overline}
\newcommand{\Perm}{\mathcal{P}} 
\newcommand{\e}{\check{e}}
\newtheorem{Def}{Definition}
\newtheorem{theo}{Theorem}
\newtheorem{pr}{Proposition}
\newtheorem{Le}{Lemma}
\newtheorem{co}{Corollary}
\newtheorem{remark}{Remark}
\begin{document}
\begin{frontmatter}
\title{ Eight-dimensional  non completely reducible symplectic Lie algebras }
\author[]{T.  Aït Aissa, S. El Bourkadi,   M. W. Mansouri and SM. Sbai}
\address{Department of Mathematics,\\ Faculty of Sciences, Ibn Tofail University,\\ Analysis, Geometry and Applications Laboratory\\
$($LAGA$)$, Kenitra, Morocco\\e-mail$:$ tarik.aitaissa@uit.ac.ma\\
said.elbourkadi@uit.ac.ma\\
mansourimohammed.wadia@uit.ac.ma\\sbaisimo@netcourrier.com}
\begin{abstract}
	A non completely reducible symplectic Lie algebra is a symplectic Lie algebra which cannot be symplectically reduced to the trivial symplectic Lie algebra. Our aim is to provide a complete classification, up to symplectomorphism of non completely reducible symplectic Lie algebras in dimensions~$n \leq 8$ and, furthermore, to provide a complete description of symplectic Lie algebras admitting one-dimensional isotropic ideals.
\end{abstract}

\begin{keyword}
	Symplectic Lie algebras, Symplectic reduction, Symplectic double extension.
	
	MSC2010: 22E60, 17B05, 22E60.
\end{keyword}
	\end{frontmatter}
\section{Introduction}

Let $\G$ be a finite-dimensional real Lie algebra. A \emph{symplectic Lie algebra} is a pair $(\G, \omega)$, where 
$\omega \in Z^2(\G)$ is a closed and non-degenerate $2$-form. The closedness of $\omega$ is equivalent to
\[
\mathop{\resizebox{1.3\width}{!}{$\sum$}}\limits_{\mathrm{cycl}}\om([x,y],z):=\omega([x,y],z)+\omega([y,z],x)+\omega([z,x],y)=0,
\]
for all $x,y,z\in\G$. Two symplectic Lie algebras $(\G_1,\omega_1)$ and $(\G_2,\omega_2)$ are isomorphic
if there exists a Lie algebra isomorphism $\Psi:\G_1\longrightarrow\G_2$  that satisfies $\Psi^\ast\omega_2=\omega_1$. A symplectic Lie algebra has a one-to-one correspondence with a simply connected Lie group that has a symplectic form which is left-invariant. Symplectic forms on Lie groups and algebras naturally arise in Poisson geometry, Hamiltonian mechanics, and geometric problems such as cotangent bundle structures. A particularly interesting subclass consists of Kähler and pseudo-Riemannian Kähler Lie algebras \cite{CF,DM}, which carry compatible complex or pseudo-Riemannian structures. Symplectic Lie algebras are often called quasi-Frobenius Lie algebras, as they generalize Frobenius Lie algebras - those admitting a non-degenerate exact $2$-form $\omega =\mathbf{d}\alpha$, where $\mathbf{d}$ is the Chevalley-Eilenberg differential.

Several classifications of symplectic Lie algebras have been established in low dimensions \cite{ABM,Com1,O}, for the nilpotent case \cite{Bu1, GJK,KGM}, see also \cite{Fish}), and for the non solvable case (also exact symplectic Lie algebras) \cite{AM2,Com2}.

Let $(\G,\omega)$ be a symplectic Lie algebra. An ideal $\mathfrak{j}$ of $(\G,\omega)$ is called isotropic if $\mathfrak{j}\subset\mathfrak{j}^{\perp_\omega}$ with
\begin{align*}
\mathfrak{j}^{\perp_\omega}=\big\{x\in\G~:~\omega(x,y)=0,\quad\text{for all}\quad y\in\mathfrak{j}\big\}.
\end{align*}
The orthogonal
$\mathfrak{j}^{\perp_\omega}$ is a subalgebra of $\G$ which contains $\mathfrak{j}$, and therefore $\omega$ descends to a symplectic form $\overline{\omega
}$ on the quotient Lie algebra $\overline{\G}=\mathfrak{j}^{\perp_\omega}/\mathfrak{j}$. The pair $(\overline{\G},\overline{\omega})$ is called the \textit{symplectic reduction} of $(\G,\omega)$ with respect to the isotropic ideal $\mathfrak{j}$. If $\mathfrak{j}^{\perp_\omega}$ is an ideal in $\G$ we call the \textit{symplectic reduction normal}, and $\mathfrak{j}$ is called a
normal isotropic ideal. The reduction of $(\G,\omega)$ with respect to $\mathfrak{j}$ is called \textit{Lagrangian reduction} if $\mathfrak{j}$ is a Lagrangian ideal, i.e., $\mathfrak{j}^{\perp_\omega}=\mathfrak{j}$.

A symplectic Lie algebra $(\G,\omega)$ can be symplectically reduced to a symplectic Lie algebra $(\overline{\G},\overline{\omega})$ if there is a sequence of subsequent symplectic algebras reductions 
\begin{align*}
(\G,\omega)\to(\G_1,\omega_1)\to\cdot\cdot\cdot\to(\G_k,\omega_k)=(\overline{\G},\overline{\omega}).
\end{align*}
The integer $k$ is called the length of the reduction sequence and the symplectic Lie
algebra $(\overline{\G},\overline{\omega})$ is called its \text{base}. The reduction sequence is called \text{complete} if it has an irreducible base, that is, if $(\overline{\G},\overline{\omega})$ admits no nontrivial isotropic ideal.  If $(\G,\omega)$ cannot be
symplectically reduced, that is, if $\G$ does not have a non-trivial, isotropic ideal $\mathfrak{j}$, then
$(\G,\omega)$ will be called \textit{symplectically irreducible}, in this case $k=0$ and $(\G,\omega)=(\overline{\G},\overline{\omega})$.  Every irreducible symplectic Lie
algebra is metabelian and has the additional structure of a flat Kähler Lie algebra\footnote{See \cite{DM,LM} for this notion.}.

A notable class of symplectic Lie algebras that admit the trivial symplectic Lie algebra as a base are  called \textit{completely reducible}. Nilpotent symplectic Lie algebras are completely reducible \cite{B-C}. Every completely solvable symplectic Lie algebra $(\G,\omega)$ is completely reducible; This follows from the definition of complete solvability implies the existence of a one-dimensional ideal $\mathfrak{j}$ in $\G$, which is automatically isotropic in $(\G,\omega)$. Symplectic Novikov Lie algebras are completely reducible (see Theorem~$2$ in \cite{AM1}).

 A \textit{non completely reducible symplectic} Lie algebra is a symplectic Lie algebra which cannot be symplectically reduced to the trivial symplectic Lie algebra.  
As a first result of this paper, we present a refined classification of eight-dimensional non-completely reducible symplectic Lie algebras.

\begin{theo}
	Let $(\G,\om)$ be an eight-dimensional non completely reducible symplectic Lie algebra. Then,
	$(\G,\om)$ is isomorphic to exactly one of the following symplectic Lie algebras:
	
	\captionof{table}{Irreducible symplectic Lie algebras as 
0-step symplectic reductions}\label{t1}
\begin{center}
\begin{small}
	\begin{tabular}{lll}
\hline
Algebra	&Non vanishing Lie brackets& Symplectic structures \\
\hline
$\G_{8,1}$& $[e_7,e_1]=-e_2,\hspace{0.24cm}[e_7,e_2]= e_1,\hspace{0.5cm}[e_8,e_3]= -e_4, $	&$\omega_1=e^{12}+e^{34}+e^{56}+\lambda e^{78}$\\
 &$[e_8,e_4]= e_3,\hspace{0.45cm}[e_7, e_5] = -a e_6,\hspace{0.14cm} [e_7, e_6] = a e_5,$&$\omega_2=e^{12}+e^{34}-e^{56}+\lambda e^{78}$\\
&$ [e_8, e_5] = -be_6,\hspace{0.1cm} [e_8, e_6] = b e_5$&$a,b\in\R^\ast$, $a,b\neq1$,~~ $\lambda\in\R^{\ast}$\\
&&\\
$\G_{8,2}$& $	[e_7,e_1]=-e_2,\hspace{0.26cm}[e_7,e_2]= e_1,\hspace{0.5cm}[e_8,e_3]= -e_4,$	&$\omega_1=e^{12}+e^{34}+e^{56}+\lambda e^{78}$\\
&$[e_8,e_4]= e_3,\hspace{0.46cm}[e_7, e_5] = -a e_6,\hspace{0.14cm} [e_7, e_6] = a e_5$&$\omega_2=e^{12}+e^{34}-e^{56}+\lambda e^{78}$\\
&&$\omega_3=e^{12}+e^{34}+e^{56}+\mu e^{67}+\lambda e^{78}$\\
&&$\omega_4=e^{12}+e^{34}-e^{56}+\mu e^{67}+\lambda e^{78}$\\
&&$a\in\R^{\ast+}$, $a\neq1$,~~ $ \mu\in\R^{\ast+},~~\lambda\in\R^{\ast}$\\
&&\\
$\G_{8,3}$& $[e_7,e_1]=-e_2,\hspace{0.26cm}[e_7,e_2]= e_1,\hspace{0.5cm}[e_8,e_3]= -e_4, $	&$\omega_1=e^{12}+e^{34}+e^{56}+\lambda e^{78}$\\
&$[e_8,e_4]= e_3,\hspace{0.46cm}[e_7, e_5] = -e_6,\hspace{0.3cm} [e_7, e_6] =  e_5$&$\omega_2=e^{12}+e^{34}-e^{56}+\lambda e^{78}$\\
&&$\lambda\in\R^{\ast}$\\

		\hline
\end{tabular}
\end{small}
\end{center}
	\captionof{table}{Reducible symplectic Lie algebras as 
1-step symplectic reductions}\label{t2}
\end{theo}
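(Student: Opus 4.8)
The classification is obtained by stratifying $(\G,\om)$ according to the length $k$ of a complete symplectic reduction sequence, equivalently according to the dimension of its irreducible base $(\overline{\G},\overline{\om})$. Since $(\G,\om)$ is non completely reducible, this base is nontrivial; and since both $2$-dimensional symplectic Lie algebras — the abelian one and $\mathfrak{aff}(\R)$ — are completely reducible, there is no nontrivial symplectically irreducible Lie algebra in dimension $2$, so the base has dimension $4$, $6$ or $8$. As each reduction step lowers the dimension by $2$, only $k\in\{0,1,2\}$ occur: $k=0$ gives the $8$-dimensional symplectically irreducible algebras (Table~\ref{t1}), $k=1$ those reducing to a $6$-dimensional irreducible base (Table~\ref{t2}), and $k=2$ those reducing to a $4$-dimensional one. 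After refining a reduction step one may moreover assume each step is taken with respect to a one-dimensional isotropic ideal, so that the inverse construction is, at every stage, a symplectic double extension.

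For $k=0$ I would use the fact recalled above that a symplectically irreducible algebra is metabelian and underlies a flat K\"ahler Lie algebra. Flat K\"ahler Lie algebras carry a cotangent-type structure $\mathfrak{u}\ltimes\mathfrak{u}^{\ast}$ with $\mathfrak{u}$ abelian, governed by a symmetric pairing together with a commuting family of skew endomorphisms; enumerating the admissible data in dimension $8$ up to isomorphism and recording the compatible symplectic forms produces the candidate list, while a direct verification that no nonzero isotropic ideal exists — a short linear-algebra argument on the rotation-type adjoint action of the abelian complement — both confirms irreducibility and forces the parameter constraints ($a,b\neq1$, $\lambda\neq0$, and so on) appearing in Table~\ref{t1}.

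For $k=1$ and $k=2$ I would first invoke the known classifications of $6$- and $4$-dimensional symplectic Lie algebras to list their symplectically irreducible members (the $4$-dimensional ones form a short list, the $6$-dimensional ones follow from the cited low-dimensional work). Then, using the description of symplectic Lie algebras with a one-dimensional isotropic ideal as symplectic double extensions, I would, for each irreducible base $(\mathfrak{b},\om_{\mathfrak{b}})$, form all its symplectic double extensions of one dimension higher — iterating $4\leadsto6\leadsto8$ for the $k=2$ stratum and performing $6\leadsto8$ for $k=1$. The defining data are a derivation-type endomorphism $D$ of $\mathfrak{b}$ compatible with $\om_{\mathfrak{b}}$ together with a $1$-cocycle, ranging over an explicit finite-dimensional space, and the relevant equivalence is the action of the symplectic automorphism group of $\mathfrak{b}$ together with the gauge freedom internal to the extension. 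Putting $D$ into real Jordan normal form and rescaling yields the tabulated normal forms; note that every double extension of a nontrivial irreducible base is automatically non completely reducible, since its canonical one-dimensional isotropic ideal reduces it back to that base, so no candidates are lost in this passage.

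Finally one checks exhaustiveness and irredundancy. Exhaustiveness holds because a non completely reducible $(\G,\om)$ is either irreducible (the $k=0$ stratum) or reduces, through one-dimensional isotropic ideals, to a nontrivial irreducible base, and the double-extension reconstruction inverts this; irredundancy is obtained by separating the entries with invariants — the isomorphism type of the underlying Lie algebra, its derived and lower central series, the isomorphism type of the reduction base, the spectra of the relevant adjoint operators, and the continuous parameters $a,b,\lambda,\mu$, which are shown to be symplectic invariants on their stated ranges. I expect the main difficulty to lie precisely in this normal-form analysis of the double-extension data modulo the (rather large) symplectomorphism-plus-gauge equivalence — simultaneously establishing that the case list is genuinely exhaustive and certifying that no two survivors are symplectomorphic — together with the bookkeeping needed to reduce every reduction step, where necessary, to a one-dimensional and normal isotropic ideal.
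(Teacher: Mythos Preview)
Your stratification and overall strategy are close to the paper's, but there is a genuine gap in the $k\geq 1$ part. You plan to invert one-dimensional reductions using the \emph{symplectic double extension}, with data ``a derivation-type endomorphism $D$ of $\mathfrak{b}$ compatible with $\omega_{\mathfrak{b}}$ together with a $1$-cocycle''. That construction, as developed by Medina--Revoy and Baues--Cort\'es, only inverts reduction along a \emph{central} one-dimensional isotropic ideal $\mathfrak{j}=\langle\xi\rangle$. In general a one-dimensional isotropic ideal satisfies $[\xi,x]=-\mu(x)\xi$ and $[\xi,\ell]=t\xi$ for some $\mu\in\overline{\G}^{\ast}$ and $t\in\R$ which need not vanish, and the correct inverse construction (what the paper calls \emph{generalized symplectic oxidation}) requires the full datum $(\mathrm{D},\mu,\lambda,t)$ subject to compatibility conditions going well beyond ``$D$ is a derivation'' (e.g.\ $\Delta\mathrm{D}=\mu\otimes\mathrm{D}$ and $\mu\circ\mathrm{D}=t\mu$). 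Your hope of always passing to a \emph{normal} one-dimensional isotropic ideal is not mere bookkeeping but actually fails: for the algebras $\G_{8,8}^{\dagger\dagger\dagger}$--$\G_{8,10}^{\dagger\dagger\dagger}$ in Table~\ref{t2} the only one-dimensional isotropic ideals are $\langle e_7\rangle$ and $\langle e_8\rangle$, and neither has an orthogonal that is an ideal (for instance $[e_8,e_6]=\mu e_8\notin\langle e_7\rangle^{\perp}$). Consequently seven of the ten entries of Table~\ref{t2} --- the $\dagger\dagger$ and $\dagger\dagger\dagger$ families --- are invisible to the double-extension approach as you describe it.

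Two smaller points. First, your $k=2$ stratum is empty: by the Baues--Cort\'es structure theorem an irreducible symplectic Lie algebra has dimension at least $6$ (in the paper's parametrisation $\dim=2k+2m$ with $k\geq 1$, $m\geq 2k$), so there is no $4$-dimensional irreducible base and only $k\in\{0,1\}$ occur. Second, for $k=0$ the structure is not a cotangent algebra $\mathfrak{u}\ltimes\mathfrak{u}^{\ast}$; rather $\G=\h\ltimes\mathfrak{a}$ with $\mathfrak{a}=\mathcal{D}(\G)$ decomposing as an orthogonal sum of pairwise non-isomorphic two-dimensional purely imaginary $\h$-modules, and the paper exploits this directly (Propositions~\ref{Classi8} and~\ref{Classi8forms}) rather than via the flat K\"ahler description.
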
	

\begin{center}
\begin{small}
	\begin{tabular}{lll}
		\hline
	Algebra& Non vanishing Lie brackets & Symplectic structures \\
		\hline
	$\G_{8,1}^{\dagger}$&$[e_1, e_5] = e_2, \hspace{0.5cm} 
		[e_2, e_5] = -e_1,\hspace{0.5cm} [e_3, e_6] = e_4, $&$\omega=e^{12}+e^{34}+\eta e^{56}-e^{78}$	\\
		&$[e_4, e_6] = -e_3,\hspace{0.3cm} [e_8, e_5] = e_7,\hspace{0.7cm} [e_8, e_6] =\alpha e_7$&$ \alpha\in\R^+,~~\eta,\in\R^{\ast+}$\\
		&&	\\
		$\G_{8,2}^{\dagger}$&$[e_1, e_5] = e_2, \hspace{0.5cm} 
		[e_2, e_5] = -e_1,\hspace{0.5cm} [e_3, e_6] = e_4, $&$\omega=e^{12}+e^{34}+\eta e^{56}-e^{78},$\\
		&$[e_4, e_6] = -e_3,\hspace{0.3cm}[e_8, e_6] = e_7$&$ \eta\in\R^{\ast+}$\\
	&&\\
		$\G_{8,3}^{\dagger}$&$[e_1, e_5] = e_2, \hspace{0.5cm} 
		[e_2, e_5] = -e_1,\hspace{0.5cm} [e_3, e_6] = e_4, $&$\omega=e^{12}+e^{34}+\eta e^{56}-e^{78}$\\
	&$[e_4, e_6] = -e_3$&$\eta\in\R^{\ast+}$\\
		&&\\
		$\G_{8,4}^{\dagger\dagger}$&$[e_1, e_5] = e_2, \hspace{0.5cm} 
		[e_2, e_5] = -e_1,\hspace{0.5cm} [e_3, e_6] = e_4, $&$\omega=e^{12}+e^{34}+\eta e^{56}+\alpha e^{58}+\beta e^{68}-e^{78}$\\
		&$[e_4, e_6] = -e_3,\hspace{0.3cm}[e_8, e_7] =e_7$&$\eta\in\R^{\ast+},~\alpha,\beta\in\R$\\
		&&\\
$\G_{8,5}^{\dagger\dagger}$&$[e_1, e_5] = e_2, \hspace{0.5cm} 
		[e_2, e_5] = -e_1,\hspace{0.5cm} [e_3, e_6] = e_4, $&$\omega=e^{12}+e^{34}+\eta e^{56}+\alpha e^{58}+\beta e^{68}-\frac{1}{t}e^{78}$\\
		&$[e_4, e_6] = -e_3,\hspace{0.3cm}[e_1, e_2] =t e_7,\hspace{0.6cm}  [e_3, e_4] =t e_7,$&$\eta\in\R^{\ast+},~\alpha,\beta\in\R$,~~$t\in\R^\ast$\\
		&$[e_8, e_1] =\frac{1}{2}e_1,\hspace{0.31cm}  [e_8, e_2] =\frac{1}{2}e_2,\hspace{0.5cm}[e_8, e_3] =\frac{1}{2}e_3,$&\\
		& $ [e_8,e_4]=\frac{1}{2}e_4,\hspace{0.31cm}[e_8, e_7] =e_7$&\\
		&&\\
		$\G_{8,6}^{\dagger\dagger}$&$[e_1, e_5] = e_2, \hspace{0.5cm} 
		[e_2, e_5] = -e_1,\hspace{0.5cm} [e_3, e_6] = e_4, $&$\omega=e^{12}+e^{34}+\eta e^{56}+\alpha e^{58}+\beta e^{68}-\frac{1}{t}e^{78}$\\
		&$[e_4, e_6] = -e_3,\hspace{0.3cm}[e_1, e_2] =t e_7,\hspace{0.61cm} [e_8, e_1] =\frac{1}{2}e_1$&$\eta\in\R^{\ast+},~\alpha,\beta\in\R$,~~$t\in\R^\ast$\\
		&$[e_8, e_2] =\frac{1}{2}e_2,\hspace{0.32cm}[e_8, e_7] =e_7$&\\
		&&\\
		$\G_{8,7}^{\dagger\dagger}$&$[e_1, e_5] = e_2, \hspace{0.5cm} 
		[e_2, e_5] = -e_1,\hspace{0.5cm} [e_3, e_6] = e_4, $&$\omega=e^{12}+e^{34}+\eta e^{56}+\alpha e^{58}+\beta e^{68}-\frac{1}{t}e^{78}$\\
		&$[e_4, e_6] = -e_3,\hspace{0.3cm}[e_3, e_4] =t e_7,\hspace{0.6cm}[e_8, e_3] =\frac{1}{2}e_3,$&$\eta\in\R^{\ast+},~\alpha,\beta\in\R$,~~$t\in\R^\ast$\\
		&$[e_8,e_4]=\frac{1}{2}e_4,\hspace{0.3cm}[e_8, e_7] =e_7$&\\
		&&\\
$\G_{8,8}^{\dagger\dagger\dagger}$&$[e_1, e_5] = e_2,\hspace{0.49cm}  
		[e_2, e_5] = -e_1,\hspace{0.5cm}  
		[e_3, e_6] = e_4,$&$\omega=e^{12}+e^{34}+\eta e^{56}+e^{78}$
\\
&$[e_4, e_6] = -e_3,\hspace{0.3cm} [e_7, e_6] = -\mu e_7,\hspace{0.33cm}[e_8,e_6]=\mu e_8$&$\eta\in\R^{\ast+}$,~~$\mu\in\R^\ast$\\
		&&\\
		$\G_{8,9}^{\dagger\dagger\dagger}$&$[e_1, e_5] = e_2,\hspace{0.49cm}  
		[e_2, e_5] = -e_1,\hspace{0.5cm}  
		[e_3, e_6] = e_4,$&$\omega=e^{12}+e^{34}+\eta e^{56}+e^{78}$
\\
&$[e_4, e_6] = -e_3,\hspace{0.3cm} [e_7, e_5] = -\mu e_7,\hspace{0.33cm}[e_8,e_5]=\mu e_8$&$\eta\in\R^{\ast+}$,~~$\mu\in\R^\ast$\\
		&&\\
$\G_{8,10}^{\dagger\dagger\dagger}$&$[e_1, e_5] = e_2,\hspace{0.49cm}  
		[e_2, e_5] = -e_1,\hspace{0.5cm}  
		[e_3, e_6] = e_4,$&$\omega=e^{12}+e^{34}+\eta e^{56}+e^{78}$
\\
&$[e_4, e_6] = -e_3,\hspace{0.3cm} [e_7, e_5] = -\mu_1 e_7,\hspace{0.2cm}[e_7,e_6]=-\mu_2e_7,$&$\eta\in\R^{\ast+}$,~~$\mu_1,\mu_2\in\R^\ast$	\\
&$[e_8,e_5]=\mu_1 e_8,\hspace{0.21cm}[e_8,e_6]=\mu_2 e_8$&	
		\\\hline
	\end{tabular}	
	\end{small}

\end{center}
$\triangleright$ $\G_{8,j}^\dagger$ Lie algebras obtained by central symplectic oxidation.\\
$\triangleright$ $\G_{8,j}^{\dagger\dagger}$ Lie algebras obtained by normal symplectic oxidation.\\
$\triangleright$  $\G_{8,j}^{\dagger\dagger\dagger}$ Lie algebras obtained by generalized symplectic oxidation (neither central nor normal oxidation).

In the following, we briefly describe the main ideas behind the classification of non completely-reducible symplectic Lie algebras of dimension~$8$. Let $(\G,\omega)$ be a non completely reducible symplectic Lie algebra  of dimension $8$. We begin with the irreducible case, where $(\G,\omega)$ has no isotropic ideals. In this case, $(\G,\omega)$ is an irreducible symplectic Lie algebra and is $2$-step solvable, as characterized by Baues and Cortés~\cite{B-C} (see also Theorem~$\ref{Bause}$ and its corollary~$\ref{Cortes}$). Next, Propositions~$\ref{Classi8}$ and $\ref{Classi8forms}$ provide a classification of $8$-dimensional irreducible symplectic Lie algebras up to symplectomorphism. If $(\G, \omega)$ admits an isotropic ideal $\mathfrak{j}$, then $\mathfrak{j}$ is necessarily one-dimensional, and the symplectic reduction $(\overline{\G}, \overline{\omega})$ yields a six-dimensional irreducible symplectic Lie algebra. Proposition~\ref{Classi6} establishes that, up to isomorphism, there exists a unique six-dimensional irreducible symplectic Lie algebra. Furthermore, the Lie algebra structure of $\G$ can be explicitly described by Equation~$(\ref{Brackestgenera})$ under the hypotheses of Proposition~$\ref{prgeneralized}$.  A Lie algebra whose bracket structure is given by Equation~(\ref{Brackestgenera}) under the hypotheses of Proposition~$\ref{prgeneralized}$ will be called a \textit{generalized symplectic oxidation}. Moreover, Proposition~$\ref{Inverse}$ establishes that any symplectic Lie algebra admitting a one-dimensional isotropic ideal is a generalized symplectic oxidation.

As a result of generalized symplectic oxidation, we will develop a classification scheme. There are two special cases of generalized symplectic oxidation (see Remark~$\ref{remarkcentral}$): 
\begin{itemize}
    \item[$\triangleright$] Central symplectic oxidation
    \item[$\triangleright$] Normal symplectic oxidation
\end{itemize}
The next step is to characterize isomorphisms between two central symplectic oxidations and between two normal symplectic oxidations. This analysis proceeds by first considering the general situation and then restricting to these special cases. Theorem~$\ref{isombetewnoncentral}$ provides the isomorphism characterization for non-central oxidations, while Propositions~\ref{CharaCentral} and~\ref{Prnormal} specifically characterize isomorphisms of  central symplectic oxidations and normal symplectic oxidations. Consequently, Propositions~$\ref{ClassiCentral}$ and $\ref{ClassifiNormal}$ give a complete description of the two special cases (central symplectic oxidation and normal symplectic oxidation).

For the classification of generalized symplectic oxidations, we introduce a new Lie algebra construction via an endomorphism (called a \emph{$D$-extension}), as detailed in Subsection~$\ref{D-exten}$ and Proposition~$\ref{G,mu,D Liebrackets}$. Proposition~\ref{Characterisationgeneralized} establishes isomorphism criteria for $D$-extensions of Lie algebras. The non-central case is treated in two parts: Proposition~$\ref{Extennoncentral}$ analyzes the structure of non-central extensions, while Proposition~$\ref{isoofnocentral}$ characterizes their isomorphisms. We then establish necessary and sufficient conditions for a $D$-extension of a non-central extension to admit a symplectic structure (see Proposition~$\ref{Condisymp}$). Moreover, Theorem~$\ref{Moreconditions}$ characterizes isomorphisms between symplectic $D$-extensions of non-central extensions. Following this approach, we restrict the $D$-extension of symplectic Lie algebras to obtain our characterization of generalized symplectic oxidations (see Corollary~$\ref{Morerest}$). Finally, Proposition~$\ref{Classigeneral}$ provides a complete classification of generalized symplectic oxidations.

The paper is organized as follows. We begin by recalling fundamental results on irreducible symplectic Lie algebras in Section~\ref{se1}, followed by a classification, up to isomorphism, of irreducible eight-dimensional Lie algebras and their symplectic forms. In Section~\ref{se2}, we introduce the notion of generalized symplectic oxidation for Lie algebras and prove that every symplectic Lie algebra admitting an isotropic ideal arises as a generalized symplectic oxidation.
In Section~$\ref{se3}$, we describe the general framework and establish isomorphisms between central symplectic oxidations on one hand and normal symplectic oxidations on the other. Furthermore, we provide a complete classification of these Lie algebras in dimension~$8$. In  Section~\ref{se4},  we introduce a new construction of Lie algebras via endomorphisms (called \emph{D-extensions}) and investigate their basic properties. We then focus on D-extensions of non-central extensions, and establish an isomorphism theorem for symplectic D-extensions of non-central extensions of Lie algebras. At the end of Section~\ref{se4}, we complete the classification of eigh-dimensional non completely reducible Lie algebras by providing the classification of generalized symplectic oxidations and their  symplectic forms up to symplectomorphism in dimension~$8$.
\\\\

\textbf{Notations}:  Let $\G$ be a Lie algebra with basis $\{e_i\}_{i=1}^n$, and let $\{e^i\}_{i=1}^n$ denote the corresponding dual basis in $\G^\ast$. We denote by $e^{ij}$ the 2-form $e^i \wedge e^j \in \wedge^2 \G^\ast$. For any family $\mathcal{F} \subset \mathfrak{g}$, we denote $
\langle \mathcal{F} \rangle := \mathrm{span}_{\mathbb{R}}(\mathcal{F}) $
the Lie subalgebra generated by $\mathcal{F}$. 

Given an endomorphism $\mathrm{D}:\G\to\G$, a skew-symmetric bilinear form $\varphi:\G\times\G\to\K$, a linear map $\Phi:\G_1\to\G_2$, and linear forms $\lambda,\mu\in\G^\ast$, we define the following operations:
\begin{align*}
\varphi_{\mathrm{D}}(x,y) &:= \varphi(\mathrm{D}(x),y) + \varphi(x,\mathrm{D}(y)), \\
(\Delta\mathrm{D})(x,y) &:= \mathrm{D}([x,y]) - [\mathrm{D}(x),y] - [x,\mathrm{D}(y)], \\
(\Delta\Phi)(x,y) &:= \Phi([x,y]_1) - [\Phi(x), \Phi (y)]_2, \\
(\lambda \otimes \mathrm{D})(x,y) &:= \lambda(x)\mathrm{D}(y) - \lambda(y)\mathrm{D}(x), \\
(\lambda \otimes \mu)(x,y) &:= \lambda(x)\mu(y) - \lambda(y)\mu(x),
\end{align*}
where $[\cdot,\cdot]_1$ and $[\cdot,\cdot]_2$ denote Lie brackets in  $\G_1$ and $\G_2$, respectively.

\section{Irreducible symplectic Lie algebras}\label{se1}

We start this section by recalling Baues and Cortés' characterization of irreducible symplectic Lie algebras.

\begin{theo}\textsc{\cite{B-C}}\label{Bause}
Let $(\G,\omega)$ be a real symplectic Lie algebra. Then $(\G,\omega)$ is irreducible over the reals if and only if the following conditions hold$:$
\begin{enumerate}
\item The derived algebra $\mathcal{D}(\G)$ of $\G$ is a maximal abelian ideal of $\G$, which is non-degenerate
with respect to $\omega$.
\item The symplectic Lie algebra $(\G,\omega)$ is an orthogonal semi-direct sum of an abelian
symplectic subalgebra $(\h,\omega)$ and the ideal $(\mathcal{D}(\G),\omega)$.
\item The abelian ideal $(\mathcal{D}(\G),\omega)$ decomposes into an orthogonal sum of two-dimensional
purely imaginary irreducible submodules for $\h$, which are pairwise non-isomorphic.
\end{enumerate}

\end{theo}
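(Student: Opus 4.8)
We sketch how one would reconstruct the Baues--Cort\'es argument for Theorem~\ref{Bause}. The plan is to prove the two implications separately: the direction $(1)$--$(3)\Rightarrow$ irreducible is elementary, while the converse requires the structural input coming from the flat (K\"ahler) connection attached to a symplectic Lie algebra.

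\emph{Proof that $(1)$--$(3)$ imply irreducibility.} Put $V=\mathcal D(\G)$, so by $(1)$--$(2)$ we have $\G=\h\ltimes V$ with $\h$ abelian, $\h\perp_\om V$, both $\om|_\h$ and $\om|_V$ non-degenerate, and by $(3)$ a decomposition $V=\bigoplus_iV_i$ as an $\om$-orthogonal sum of pairwise non-isomorphic $2$-dimensional irreducible (purely imaginary) $\h$-modules, each necessarily $\om|_{V_i}$-non-degenerate. Let $\mathfrak j\neq0$ be an isotropic ideal. First, $\mathfrak j$ is abelian: for $x,y\in\mathfrak j$ and $g\in\G$ the cocycle identity gives $\om([x,y],g)=-\om([y,g],x)-\om([g,x],y)=0$, because $[y,g],[g,x]\in\mathfrak j$ and $\mathfrak j\subseteq\mathfrak j^{\perp_\om}$; hence $[x,y]=0$ by non-degeneracy of $\om$. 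Next, $Z(\G)\cap\h=0$, for a nonzero $z$ there would span with $V$ an abelian ideal strictly larger than $V$, contradicting $(1)$. Now if $\mathfrak j\not\subseteq V$, pick $x=h+v\in\mathfrak j$ with $0\neq h\in\h$; since $h\notin Z(\G)$ it acts nontrivially on some $V_i$, so $[x,V_i]=[h,V_i]$ is a nonzero $\h$-submodule of the irreducible module $V_i$, hence equals $V_i$, whence $V_i\subseteq\mathfrak j$; but then $[x,V_i]=V_i\neq0$ contradicts that $\mathfrak j$ is abelian. Therefore $\mathfrak j\subseteq V$. As an $\h$-submodule of the semisimple module $V$ whose irreducible constituents are pairwise non-isomorphic, $\mathfrak j$ is the sum of some of the $V_i$, hence $\om$-non-degenerate; an $\om$-non-degenerate isotropic subspace is $0$, so $(\G,\om)$ has no nonzero isotropic ideal.

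\emph{Proof that irreducibility implies $(1)$--$(3)$, outline.} Assume $(\G,\om)$ irreducible. (i) $Z(\G)=0$, since every line in the centre is a central, hence isotropic, ideal. (ii) Every abelian ideal $\mathfrak a$ is $\om$-non-degenerate: the cocycle identity (as above) shows $\mathfrak a\cap\mathfrak a^{\perp_\om}$ is an ideal, it is isotropic, hence $0$. (iii) The key step: $\G$ is $2$-step solvable and $\mathcal D(\G)$ is abelian. Here one uses the flat torsion-free connection $\nabla$ on $\G$ given by $\om(\nabla_xy,z)=-\om(y,[x,z])$ (flatness and torsion-freeness being equivalent to $d\om=0$ with $\om$ non-degenerate); irreducibility of $(\G,\om)$ translates into completeness/radical conditions on $(\G,\nabla)$, and the Milnor--Auslander-type analysis carried out in \cite{B-C} forces $\G$ to be metabelian with $\mathcal D(\G)$ its maximal abelian ideal, which with (ii) yields $(1)$. (iv) With $V=\mathcal D(\G)$ non-degenerate, set $\h:=V^{\perp_\om}$, so $\G=\h\oplus V$ as vector spaces; since the $\om$-orthogonal of an ideal with respect to a closed form is a subalgebra, $\h$ is a subalgebra, $[\h,\h]\subseteq\mathcal D(\G)\cap\h=V\cap V^{\perp_\om}=0$, so $\h$ is abelian and $\om|_\h$ non-degenerate, giving $(2)$. (v) Decompose the $\h$-module $V$ under the commuting operators $\ad(h)|_V$: the identity $\om(\ad(h)u,w)+\om(u,\ad(h)w)=0$ (from $d\om=0$ on the abelian ideal $V$) shows $\ad(h)|_V\in\mathfrak{sp}(V,\om|_V)$, so weight spaces with distinct weights are $\om$-orthogonal; there are no trivial summands by (i); and the refinements that all weights are purely imaginary, that the irreducible pieces are $2$-dimensional over $\R$, and that they are pairwise non-isomorphic come from the compatibility of $\om$ with $\nabla$ (the flat K\"ahler condition) together with irreducibility --- a real/indefinite or repeated weight would produce an $\om$-isotropic $\h$-submodule of $V$, i.e.\ an isotropic ideal. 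This is $(3)$; moreover $\mathcal D(\G)$ is then directly seen to be maximal abelian, since a strictly larger abelian ideal would contain $h+v$ with $0\neq h\in\h$ and $\ad(h)$ nilpotent on $[\h,V]$, impossible once the $V_i$-structure is known.

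The main obstacle is step (iii): proving that irreducibility forces $\G$ to be metabelian with $\mathcal D(\G)$ a maximal abelian non-degenerate ideal. This is the heart of \cite{B-C}, and it is precisely where one must pass through the identification of symplectic Lie algebras with flat (K\"ahler) Lie algebras and analyse the completeness of the associated affine structure; everything else reduces to the cocycle identity and elementary representation theory of abelian Lie algebras.
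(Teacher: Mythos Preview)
The paper does not prove Theorem~\ref{Bause}; it is quoted verbatim from Baues--Cort\'es \cite{B-C} as background, with no argument supplied. There is therefore no proof in the paper to compare your proposal against.

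As for the proposal itself: your argument for the direction $(1)$--$(3)\Rightarrow$ irreducible is correct and cleanly written. For the converse you rightly isolate step~(iii) (metabelianity of $\G$ with $\mathcal D(\G)$ maximal abelian) as the substantive input that must be imported from \cite{B-C}; the remaining steps (iv)--(v) are the standard linear-algebraic reductions and are sketched accurately. One small inaccuracy: the assertion that ``flatness and torsion-freeness [of $\nabla$] are equivalent to $d\om=0$'' conflates two separate facts. For the product $\nabla_xy$ defined by $\om(\nabla_xy,z)=-\om(y,[x,z])$, torsion-freeness is equivalent to $d\om=0$, while flatness (equivalently, the left-symmetric/pre-Lie identity) comes from the Jacobi identity together with $d\om=0$; the two conditions are not the same statement. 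This does not affect the logic of your outline.
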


\begin{co}\label{Cortes}
Let $(\G,\omega)$ be an irreducible symplectic Lie algebra. Then $\G$ is isomorphic to a Lie algebra $\G_{k,\lambda_1,\ldots,\lambda_m}$, where $k \geq 1$, $m \geq 2k$ and $\lambda_1,\ldots,\lambda_m$ is a set of mutually distinct
non-zero characters that span $\h^\ast$.
\end{co}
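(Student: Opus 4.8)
The plan is to unwind the structural description furnished by Theorem~\ref{Bause} into the parametrized normal form $\G_{k,\lambda_1,\ldots,\lambda_m}$; everything will follow from conditions~(1)--(3), since Theorem~\ref{Bause} is an equivalence. Assume $\G\neq 0$. By condition~(2) we may write $\G=\h\oplus\mathcal D(\G)$ as a vector space, with $\h$ an abelian subalgebra, $\mathcal D(\G)$ an abelian ideal (condition~(1)), the sum being $\omega$-orthogonal and a semidirect sum of Lie algebras. Since $\mathcal D(\G)$ is $\omega$-non-degenerate and $\h=\mathcal D(\G)^{\perp_\omega}$, the restriction $\omega|_\h$ is non-degenerate, so $\dim\h=2k$ is even. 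If $k=0$ then $\G=\mathcal D(\G)$ would be abelian, forcing $\mathcal D(\G)=[\G,\G]=0$ and $\G=0$; hence $k\geq 1$.

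Next I would decompose $\mathcal D(\G)$. By condition~(3), $\mathcal D(\G)=\bigoplus_{i=1}^{m}V_i$ with each $V_i$ a two-dimensional, $\omega$-non-degenerate, purely imaginary irreducible $\h$-submodule, the $V_i$ pairwise $\omega$-orthogonal and pairwise non-isomorphic as $\h$-modules. Being two-dimensional, real and irreducible with purely imaginary spectrum means (Schur's lemma over $\R$) that the representation $\rho_i\colon\h\to\mathfrak{gl}(V_i)$ has image in a line $\R J_i$ with $J_i^2=-\mathrm{id}_{V_i}$; thus $\rho_i(h)=\lambda_i(h)J_i$ for a unique $\lambda_i\in\h^\ast$, and irreducibility forces $\lambda_i\neq 0$ (a zero action would split $V_i$ into two trivial lines). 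The bracket of $\G$ is then completely determined: $[\h,\h]=0$; $[V_i,V_j]=0$ because $\mathcal D(\G)$ is abelian; and $[h,v]=\lambda_i(h)\,J_iv$ for $h\in\h$, $v\in V_i$. (As a consistency check, $[\G,\G]=[\h,\mathcal D(\G)]=\bigoplus_i[\h,V_i]=\bigoplus_iV_i=\mathcal D(\G)$ precisely because each $\lambda_i\neq 0$.)

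Now I would pin down the constraints on the $\lambda_i$. Suppose $\lambda_1,\ldots,\lambda_m$ did not span $\h^\ast$; then there is $0\neq h_0\in\bigcap_i\ker\lambda_i$, so $[h_0,\mathcal D(\G)]=0$ and $[h_0,\h]=0$, i.e. $h_0\in Z(\G)$. Then $\mathcal D(\G)\oplus\R h_0$ is an abelian ideal of $\G$ strictly containing $\mathcal D(\G)$, contradicting the maximality in condition~(1) (equivalently, $\R h_0$ would be a one-dimensional isotropic ideal, contradicting irreducibility). Hence the $\lambda_i$ span the $2k$-dimensional space $\h^\ast$, which immediately forces $m\geq 2k$. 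For distinctness: any $\h$-module isomorphism $V_i\to V_j$ intertwines $\lambda_i(h)J_i$ with $\lambda_j(h)J_j$ for all $h$; picking $h$ with $\lambda_i(h)\neq 0$ shows $V_i\cong V_j$ implies $\lambda_i=\lambda_j$ (up to the sign inherent in the choice of $J_i$, fixed once the ambient complex structure of the flat K\"ahler structure on $\G$ is fixed). So pairwise non-isomorphism of the $V_i$ becomes: the $\lambda_i$ are mutually distinct.

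To conclude, choose an $\omega$-symplectic basis of $\h$ and, in each $V_i$, a basis $\{f_i,g_i\}$ with $J_if_i=g_i$ and $\omega(f_i,g_i)=1$ (rescaling within $V_i$ if needed, which does not affect $\lambda_i$ under the normalization convention). In this basis the bracket relations and $\omega$ are exactly those defining $\G_{k,\lambda_1,\ldots,\lambda_m}$, so $(\G,\omega)$ is isomorphic to it. I expect the only genuinely delicate point to be the bookkeeping around the sign/normalization ambiguity of each $\lambda_i$: one must verify that the normalization built into the definition of $\G_{k,\lambda_1,\ldots,\lambda_m}$ is realized by an admissible change of basis on each $V_i$, so that ``pairwise non-isomorphic'' reads literally as ``mutually distinct non-zero characters spanning $\h^\ast$''. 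The remainder is a direct transcription of Theorem~\ref{Bause}.
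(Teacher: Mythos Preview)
Your proposal is correct and follows essentially the same route the paper takes: the corollary is not given an independent proof in the paper but is presented as an immediate consequence of Theorem~\ref{Bause} (citing \cite{B-C}), with the subsequent paragraph unpacking exactly the basis and character data you reconstruct. Your argument simply fills in the details---the spanning condition from maximality of $\mathcal D(\G)$, distinctness from pairwise non-isomorphism of the $V_i$, and $m\geq 2k$ from the dimension count---that the paper leaves to the reference, including the sign/normalization caveat on the $\lambda_i$ that you rightly flag.
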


The classification of irreducible symplectic Lie algebras,  up to isomorphism, remains a significant and  unresolved problem. Dardié and Medina \cite{D-M} provided a description of Lie algebras admitting irreducible symplectic structures, while Baues and Cortés \cite{B-C} subsequently characterized these algebras. As the classification  demonstrates, the first  example of an irreducible symplectic Lie algebra occurs in dimension six.

Let $\G$ be an irreducible symplectic Lie algebra it can be written as: $\G = \h\ltimes \mathfrak{a}$. According to Theorem 2.4.3 in \cite{B-C} (see also Theorem~$\ref{Bause}$ above), and its proof, there exists a basis $\mathcal{B}=\{e_1,\cdots,e_{2h},e_1^1,e_2^1,\cdots,e_1^m,e_2^m\}$ such that: 
\[ 
\h = <e_1,\cdots,e_{2h}> \quad and \quad \mathfrak{a} = <e_1^1,e_2^1>\oplus \cdots \oplus <e_1^m,e_2^m> , \quad m\geq 2h,
\]

with: 
\[
[e_i,e_1^k]=\lambda_k(e_i)\J(e^k_2), \quad and \quad  [e_i,e_2^k]=\lambda_k(e_i)\J(e^k_1), \quad \lambda_k\in \h^*,
\]
where, the set of characters \(\lambda_1,\ldots,\lambda_{2h}\)
spans $\h^*$ and $\J$ is a complex structure on $\mathfrak{a}$.   Using the fact that $\mathfrak{a}$ is abelian, without loss of generality, we can choose the canonical form of the complex structure (i.e., $\J(e_1^k)=-e_2^k$,  for all $k\in\{1,\ldots, m\}$).

Now, we identify the basis $\langle e_1^1, e_2^1 \rangle \oplus \cdots \oplus \langle e_1^m, e_2^m \rangle$ of $\mathfrak{a}$ with $\langle e_1, e_2, \ldots, e_{2m} \rangle$ and the basis $\langle e_1, \ldots, e_{2h} \rangle$ of $\h$ with $\langle e_{2m+1}, \ldots, e_{2n} \rangle$. Under this identification, the Lie brackets of $\G$ are given by
\begin{align}
	[e_i, e_{2j-1}] &= -\tilde{\lambda}_j(e_i) e_{2j},\quad 2m+1\leq i\leq 2n,\quad 1\leq j\leq m, \label{LB1}\\
	[e_i, e_{2j}] &= \tilde{\lambda}_j(e_i) e_{2j-1}, \quad \tilde{\lambda}_j \in \mathfrak{h}^*. \label{LB2}
\end{align}
Furthermore, we express $\tilde{\lambda}_j$ as $\tilde{\lambda}_j = \sum_{k=2m+1}^{2n} \lambda_k e^k \in \mathfrak{h}^\ast$ and denote $\G = \G_{\lambda_1, \ldots, \lambda_{4(m+n+1)}}$.
\subsection{Six-dimensional irreducible symplectic Lie algebras}
We begin by presenting a classification of six-dimensional irreducible symplectic Lie algebras. Every symplectic form on the Lie algebra, exhibited in the following proposition, leads to an irreducible symplectic Lie algebra. 

\begin{pr}\label{Classi6}
	Let $\G_{(\lambda_1,\lambda_2,\lambda_3,\lambda_4)}$ be a six-dimensional irreducible symplectic Lie algebra. Then, $\G_{(\lambda_1,\lambda_2,\lambda_3,\lambda_4)}$ is isomorphic to $\G_{(1,0,0,1)}$, defined by the non-vanishing Lie brackets$:$
	\begin{align*}
		[e_1, e_5] &= e_2, \quad 
		[e_2, e_5] = -e_1, \\
		[e_3, e_6] &= e_4, \quad 
		[e_4, e_6] = -e_3,
	\end{align*}
	and the symplectic form
	\[
	\omega = e^{12} + e^{34} + \eta e^{56}, \quad \eta \in \mathbb{R}^{\ast +}.
	\]
\end{pr}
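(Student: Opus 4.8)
The plan is to use the structural characterization of irreducible symplectic Lie algebras from Theorem~\ref{Bause} and Corollary~\ref{Cortes}, together with the explicit bracket form \eqref{LB1}--\eqref{LB2}, and then normalize both the Lie bracket and the symplectic form by a change of basis. By Corollary~\ref{Cortes}, a six-dimensional irreducible symplectic Lie algebra must be of the form $\G_{k,\lambda_1,\ldots,\lambda_m}$ with $k\geq 1$, $m\geq 2k$, and $2h=2k$ characters spanning $\h^\ast$; since $\dim\G = 2m + 2h = 6$ and $m\geq 2h = 2k\geq 2$, the only possibility is $m=2$, $h=k=1$. So $\G = \h\ltimes\A$ with $\dim\h = 2$, $\dim\A = 4$, $\A = \langle e_1,e_2\rangle\oplus\langle e_3,e_4\rangle$ abelian, $\h = \langle e_5,e_6\rangle$, and the action given by two characters $\tilde\lambda_1,\tilde\lambda_2\in\h^\ast$ via $[e_i,e_{2j-1}] = -\tilde\lambda_j(e_i)e_{2j}$, $[e_i,e_{2j}] = \tilde\lambda_j(e_i)e_{2j-1}$ for $i\in\{5,6\}$, $j\in\{1,2\}$.

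First I would exploit the condition that $\{\tilde\lambda_1,\tilde\lambda_2\}$ spans the two-dimensional space $\h^\ast$, hence forms a basis of $\h^\ast$. Choosing the dual basis $\{e_5,e_6\}$ of $\h$ so that $\tilde\lambda_1 = e^5$ and $\tilde\lambda_2 = e^6$ (this is an invertible linear change of basis on $\h$, which does not affect $\A$), the brackets become exactly $[e_1,e_5] = e_2$, $[e_2,e_5] = -e_1$, $[e_3,e_6] = e_4$, $[e_4,e_6] = -e_3$, with all other brackets among basis vectors vanishing (note $[e_5,e_6]=0$ since $\h$ is abelian, and $\tilde\lambda_1(e_6)=\tilde\lambda_2(e_5)=0$). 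This already pins down the Lie algebra structure up to isomorphism as the stated $\G_{(1,0,0,1)}$. It remains to normalize the symplectic form. By condition~(2) of Theorem~\ref{Bause} the decomposition $\G = \h\oplus\A$ is $\omega$-orthogonal, and by condition~(3) the summands $\langle e_1,e_2\rangle$ and $\langle e_3,e_4\rangle$ are $\omega$-orthogonal to each other; moreover $\omega$ restricted to $\h$ is a nonzero form on a plane, and likewise on each two-dimensional summand of $\A$, so $\omega = a\,e^{12} + b\,e^{34} + c\,e^{56}$ for some nonzero reals $a,b,c$ (possibly after also using the closedness condition $\mathop{\resizebox{1.3\width}{!}{$\sum$}}\limits_{\mathrm{cycl}}\om([x,y],z)=0$ to rule out mixed terms such as $e^{15}$, $e^{16}$, etc., which it does because of the specific form of the brackets).

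Next I would rescale: replace $e_1$ by $\tfrac{1}{a}e_1$ (or apply an orthogonal-type rotation in $\langle e_1,e_2\rangle$) to arrange $a = 1$, and similarly rescale in $\langle e_3,e_4\rangle$ to get $b=1$; these rescalings preserve the bracket relations after correspondingly adjusting $e_5$ or $e_6$ if necessary — one checks the family of brackets $[e_1,e_5]=e_2$ etc. is stable under such rescalings up to reabsorbing constants into $e_5, e_6$, which in turn only rescales $c$. One can further normalize the sign of $c$: if $c<0$, swap $e_5\leftrightarrow e_6$-type maneuvers or replace $e_6\mapsto -e_6$ (which sends $[e_3,e_6]=e_4$ to $[e_3,e_6]=-e_4$, then a swap $e_3\leftrightarrow e_4$ with sign fixes it) to reduce to $c = \eta > 0$. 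What cannot be removed is the magnitude of $c$: after all the above normalizations the residual freedom (automorphisms of $\G_{(1,0,0,1)}$ preserving the bracket normalization) acts on $c$ only by positive scaling combined with the already-used moves, but in fact $\eta$ turns out to be a genuine modulus — or, depending on the precise automorphism group computation, a single representative $\eta = 1$ suffices; the statement keeps $\eta\in\R^{\ast+}$ as the safe parametrization. The main obstacle, and the step requiring genuine care rather than routine computation, is precisely this last bookkeeping: determining the full automorphism group of $\G_{(1,0,0,1)}$ and tracking exactly which symplectic forms $e^{12}+e^{34}+\eta e^{56}$ are equivalent under it, i.e.\ verifying that the normalization cannot collapse the $\eta$-family further (and that all such $\omega$ are indeed symplectic and irreducible, which follows by checking nondegeneracy and closedness directly and then invoking Theorem~\ref{Bause}). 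I would handle this by writing a general automorphism as a block matrix respecting $\A$ and the characters, solving the constraint $\Psi^\ast\omega' = \omega$, and reading off the orbit of $\eta$.
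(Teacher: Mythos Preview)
Your route differs from the paper's: you invoke the Baues--Cort\'es orthogonal decomposition (Theorem~\ref{Bause}(2),(3)) to obtain the block form $\omega = a\,e^{12}+b\,e^{34}+c\,e^{56}$ directly, whereas the paper computes $Z^2(\G_{(1,0,0,1)})$ and $B^2(\G_{(1,0,0,1)})$ explicitly and then uses a concrete family of automorphisms $\phi_{\varepsilon_1,\varepsilon_2}$ (equation~\eqref{groupauto}) to strip off exact terms and normalize coefficients. Your shortcut to the block form is legitimate, since the basis produced by \eqref{LB1}--\eqref{LB2} already has $\h\perp_\omega\A$ and $\A_1\perp_\omega\A_2$ built in. (Your parenthetical that closedness rules out $e^{15}$ is wrong---$e^{15}=-\partial e^2$ is exact, hence closed---but this does no damage, because orthogonality, not closedness, is what eliminates those terms.)

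There are, however, two real gaps. First, your sign normalization does not work as written: rescaling $e_1,e_2$ by a real $\alpha$ multiplies the $e^{12}$-coefficient by $\alpha^2>0$ and so cannot flip the sign of $a$, and the move $e_6\mapsto -e_6$, $e_3\leftrightarrow e_4$ you propose flips both $b$ and $c$ simultaneously, not $c$ alone. You need the full two-parameter family of automorphisms the paper exhibits (which also permutes $\A_1\leftrightarrow\A_2$ together with $e_5\leftrightarrow e_6$) to collapse all sign combinations to $(1,1,\eta)$ with $\eta>0$. Second, and more seriously, you never settle whether $\eta$ is a genuine modulus---you explicitly hedge between ``$\eta$ is an invariant'' and ``$\eta=1$ suffices''. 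The paper proves that distinct $\eta>0$ are pairwise non-symplectomorphic, and it does so not by an automorphism-group computation but via an invariant: the canonical symplectic connection $\nabla_x y=\tfrac{1}{3}([x,y]+x\cdot y)$ has Ricci endomorphism with eigenvalue set $\{0,\pm\tfrac{2i}{9\eta}\}$, from which $\eta$ is recovered. Your proposed strategy (compute $\mathrm{Aut}(\G_{(1,0,0,1)})$ and read off the orbit) would also succeed, but you have to actually carry it out.
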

\begin{proof}
	Let $\G$ be a six-dimensional irreducible symplectic Lie algebra. We keep the previous notations, there exists a basis $\{e_1,e_2,e_3,e_4,e_5,e_6\}$ of $\G$, where $\mathfrak{h} = \langle e_5, e_6 \rangle$ and $\mathfrak{a} = \langle e_1, e_2, e_3, e_4 \rangle$ such that
	\begin{align*}
		[e_5, e_1] &= -\lambda_1 e_2, &[e_5, e_2] &= \lambda_1 e_1, &[e_6, e_1] &= -\lambda_2 e_2, &[e_6, e_2]& = \lambda_2 e_1, \\
		[e_5, e_3] &= -\lambda_3 e_4, & [e_5, e_4]& = \lambda_3 e_3, &[e_6, e_3] &= -\lambda_4 e_4, & [e_6, e_4]& = \lambda_4 e_3,
	\end{align*}
	where $\lambda_1, \lambda_2, \lambda_3, \lambda_4 \in \mathbb{R}$ satisfy $\delta = \lambda_1 \lambda_4 - \lambda_2 \lambda_3 \neq 0$ (a necessary condition for $\tilde{\lambda}_1$ and $\tilde{\lambda}_2$ to be mutually distinct non-zero characters spanning $\mathfrak{h}^\ast$). Put $\G=\G_{(\lambda_1,\lambda_2,\lambda_3,\lambda_4)}$. On the one hand, it is straightforward to verify that the map  
	\[
	\G_{\lambda_1,\lambda_2,\lambda_3,\lambda_4}  \longrightarrow \G_{(1,0,0,1)}, \quad (x, y) \mapsto (x, Ay), \quad x \in \mathfrak{a},\ y \in \h,
	\]
	where  
	\[
	A :=
	\begin{pmatrix}
		\lambda_1 & \lambda_2 \\
		\lambda_3 & \lambda_4
	\end{pmatrix}
	\]
	is the required Lie algebra isomorphism.

	On the other hand, for the Lie algebra $\mathfrak{g}_{(1,0,0,1)}=(\A_1\oplus\A_2)\rtimes\h$, where $\A_1=\langle e_1,e_2\rangle$, $\A_2=\langle e_3,e_4\rangle$ are abelian ideals and $\h=\langle e_5,e_6\rangle$, the corresponding Maurer-Cartan equations are:
	\[
	\partial e^1 = e^{25}, \quad \partial e^2 = -e^{15}, \quad \partial e^3 = e^{46}, \quad \partial e^4 = -e^{36}, \quad \partial e^5=\partial e^6 = 0.
	\]
	Using these relations, we compute:
	\begin{align*}
		B^2(\G_{(1,0,0,1)}) &= \langle e^{15}, e^{25}, e^{36} \rangle = e^5 \wedge \mathfrak{a}_1^\ast \oplus e^6 \wedge \mathfrak{a}_2^\ast, \\
		Z^2(\G_{(1,0,0,1)}) &= B^2(\G_{(1,0,0,1)}) \oplus \langle e^{12}, e^{34}, e^{56} \rangle.
	\end{align*}
	Consider the family  of automorphism of $\G_{(1,0,0,1)}$ given by
	\begin{small}
	\begin{equation}\label{groupauto}
		\phi_{\varepsilon_1,\varepsilon_2}
		=\begin{pmatrix}
			0&0&\varepsilon_1 x_{{24}}&0&0&x_{{16}}
			\\ 0&0&0&x_{{24}}&0&x_{{26}}\\ 
			\varepsilon_2 x_{{42}}&0&0&0&x_{{35}}&0\\ 0&x_{{42}}&0&0&x_{{45}}&0\\ 0&0&0&0&0&\varepsilon_1\\ 
			0&0&0&0&\varepsilon_2&0
		\end{pmatrix},\quad\varepsilon_1,\varepsilon_2=\pm1,~~x_{{24}}x_{{42}}\neq0.
	\end{equation}
	\end{small}
	
	We can show that for every symplectic form $\omega = \omega_0 + \partial \alpha$ on $\G_{(1,0,0,1)}$, where $\omega_0 \in Z^2(\G_{(1,0,0,1)}) \backslash B^2(\G_{(1,0,0,1)}) $ and $\alpha \in \mathfrak{g}^\ast$, we have $\phi^\ast_{(\epsilon_1,\epsilon_2,\epsilon_2,\epsilon_1)}(\omega) = \omega_0$. Indeed, let $\partial\alpha = \alpha_{15} e^{15} + \alpha_{25} e^{25} + \alpha_{36} e^{36} + \alpha_{46} e^{46}$, where $\alpha_{ij} \in \mathbb{R}$, and 
	\[
	\omega_0 = \omega_{12} e^{12} + \omega_{34} e^{34} + \omega_{56} e^{56} \quad \text{with} \quad \omega_{12}\omega_{34}\omega_{56} \neq 0.
	\]
	
	By applying $\phi^*_{(\epsilon_1,\epsilon_2)}$ with the given values $x_{45} = -\tfrac{\omega_{36}}{\omega_{34}}, ~ x_{35} = \tfrac{\omega_{46}}{\omega_{34}}, ~ x_{26} = -\tfrac{\omega_{15}}{\omega_{12}}, ~ x_{16} = \tfrac{\omega_{25}}{\omega_{12}},
	$ we obtain the desired result. Furthermore, every symplectic form $\omega$ has non-zero coefficients for the basis elements 
	$e^{12}$, $e^{34}$, and $e^{56}$ in $\langle e^{12}, e^{34}, e^{56} \rangle$; that is, 
	\[
	\omega = \omega_{12} e^{12} + \omega_{34} e^{34} +\omega_{56} e^{56} \quad \text{with} \quad \omega_{12}\omega_{34}\omega_{56} \neq 0.
	\]
	Finally, by applying the automorphism \(\phi_{(\epsilon_1,\epsilon_2)}\), with $x_{4 5} =0, x_{3 5}=0, x_{2 6} = 0$, and  $x_{1 6} = 0$, we successively prove that
	\begin{align*}  
		\omega_{\delta_1,\delta_2,\eta} &= \phi_{(1,1)}^* \omega = \delta_1 e^{12} + \delta_2 e^{34} + \eta e^{56}, \quad \delta_1,\delta_2 = \pm 1, \quad \eta \in \mathbb{R}^\ast, \\  
		\omega_{\delta_1,\delta_2,-\eta} &= \phi_{(1,1)}^* \omega_{\delta_1,\delta_2,\eta}, \\  
		\omega_{1,-1,\eta} &= \phi_{(1,-1)}^* \omega_{1,1,\eta}, \\  
		\omega_{-1,1,\eta} &= \phi_{(-1,1)}^* \omega_{1,1,\eta}, \\  
		\omega_{-1,-1,\eta} &= \phi_{(1,-1)}^* \omega_{1,-1,\eta}.  
	\end{align*}  
	Consequently, the symplectic form \(\omega_{\delta_1,\delta_2,\eta}\) is symplectomorphically isomorphic to \(\omega_{1,1,\eta}\), where \(\eta \in \mathbb{R}^{\ast+}\).

	The symplectic Lie algebras \((\G, \omega_{1,1,\eta})\) and \((\G, \omega_{1,1,\eta^\prime})\) with $\eta\neq\eta^\prime$ are pairwise non-symplectomorphic. Indeed, consider the symplectic connection of \((\G, \omega_{1,1,\eta})\).
	\[
	\nabla_x y = \frac{1}{3}\left([x, y] + x \cdot y\right).
	\]
It is easy to verify that the eigenvalues of the associated Ricci endomorphism 	are given by $\{i\frac{2}{9\eta},0,-i\frac{2}{9\eta}\}$.
\end{proof}
It is well known that an irreducible Lie algebra admits a K\"ahler structure. For the Lie algebra $\G_{(1,0,0,1)}$, the complex structure $\J$ defined by
\[
\J(e_1) = -e_2, \quad \J(e_3) = -e_4, \quad \J(e_5) = -e_6
\]
defines a K\"ahler structure compatible with the symplectic form $\omega_\lambda$. Moreover, Theorem~1.5 in \cite{Mi} implies that any Riemannian metric on an irreducible Lie algebra is necessarily flat.

Recall that a para-K\"ahler Lie algebra is a pseudo-Riemannian Lie algebra $(\G, \langle\cdot, \cdot\rangle)$ endowed with an isomorphism $\K: \G \to \G$ satisfying the following conditions:
\begin{enumerate}
	\item $\K^2 = \mathrm{Id}_\G$;
	\item $\K$ is skew-symmetric with respect to $\langle\cdot, \cdot\rangle$;
	\item $\K$ is invariant under the Levi-Civita product.
\end{enumerate}
A hyper-para-K\"ahler Lie algebra is a para-K\"ahler Lie algebra $(\G, \langle\cdot,\cdot\rangle, \K)$ endowed with a complex structure $\J$, i.e.,  $J^2 =-\mathrm{Id}_\G$, such that, $\J\K = -\K\J$, $\J$ is skew-symmetric with respect to $\langle\cdot,\cdot\rangle$ and $\J$ is invariant with respect to the Levi-Civita product.

Together with these definitions, we therefore have
\begin{pr}
	Let $\G_{(1,0,0,1)}$ be the unique six-dimensional irreductible symplectic Lie algebra.  Then, the pair $(\K,\langle\;,\;\rangle)$, defined by
	\[\K(e_1)=-e_3,\;\K(e_2)=e_4,\;\K(e_5)=-e_6,\] 
	\[\langle e_1,e_4\rangle=\langle e_2, e_3\rangle=-1\esp\langle e_5, e_5\rangle=-\langle e_6, e_6\rangle=\eta,\]
	is a para-Kähler Lie structure compatible with $\omega_\lambda$. Moreover, the pair $(\J,\K)$, constitutes a hyper-para-Kähler structure compatible with $\omega_\eta$.
\end{pr}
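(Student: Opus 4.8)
The plan is to verify the claim by a direct computation in the fixed basis $\{e_1,\dots,e_6\}$ of $\G_{(1,0,0,1)}$ from Proposition~\ref{Classi6}. First I extend $\K$ to all of $\G$ by imposing $\K^2=\mathrm{Id}_\G$, which forces $\K(e_3)=-e_1$, $\K(e_4)=e_2$ and $\K(e_6)=-e_5$; likewise I extend the complex structure $\J$ of the preceding discussion (with $\J(e_1)=-e_2$, $\J(e_3)=-e_4$, $\J(e_5)=-e_6$) by $\J^2=-\mathrm{Id}_\G$. Reading off the action on basis vectors one gets at once $\K^2=\mathrm{Id}_\G$, $\J^2=-\mathrm{Id}_\G$ and $\J\K=-\K\J$. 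Next I record that $\langle\cdot,\cdot\rangle$ is non-degenerate (of neutral signature), and that $\K$ and $\J$ are both skew-symmetric for it; these are short checks on the only non-vanishing pairings $\langle e_1,e_4\rangle$, $\langle e_2,e_3\rangle$, $\langle e_5,e_5\rangle$, $\langle e_6,e_6\rangle$ — for instance $\langle\K e_1,e_2\rangle=-\langle e_3,e_2\rangle=1=-\langle e_1,e_4\rangle=-\langle e_1,\K e_2\rangle$.

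The compatibility with $\omega_\eta$ amounts to the identity $\omega_\eta(x,y)=\langle\K x,y\rangle$, which is again a basis check: $\langle\K e_1,e_2\rangle=1$, $\langle\K e_3,e_4\rangle=1$, $\langle\K e_5,e_6\rangle=\eta$, and $\langle\K e_i,e_j\rangle=0$ on all remaining pairs, matching $\omega_\eta=e^{12}+e^{34}+\eta e^{56}$. This relation also exhibits $\langle\cdot,\cdot\rangle$, $\K$ and $\omega_\eta$ as the data of one almost para-Hermitian structure, so that once $\K$ is shown to be invariant under the Levi-Civita product the pair $(\K,\langle\cdot,\cdot\rangle)$ is a para-K\"ahler structure compatible with $\omega_\eta$, and adjoining the invariant $\J$ upgrades it to a hyper-para-K\"ahler structure compatible with $\omega_\eta$.

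The only substantive step is therefore the invariance of $\K$ and $\J$ under the Levi-Civita product of $(\G_{(1,0,0,1)},\langle\cdot,\cdot\rangle)$. I would first compute this connection from the Koszul formula $2\langle\nabla_xy,z\rangle=\langle[x,y],z\rangle+\langle[z,x],y\rangle+\langle[z,y],x\rangle$; since the only non-zero brackets are $[e_1,e_5]=e_2$, $[e_2,e_5]=-e_1$, $[e_3,e_6]=e_4$ and $[e_4,e_6]=-e_3$, the resulting $\nabla$ has only a short list of non-zero values, each lying in $\tfrac12\langle e_1,e_2,e_3,e_4\rangle$ or in $\tfrac{1}{2\eta}\langle e_5,e_6\rangle$. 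With $\nabla$ in hand, invariance reduces to the finite families of identities $\nabla_{e_i}(\K e_j)=\K(\nabla_{e_i}e_j)$ and $\nabla_{e_i}(\J e_j)=\J(\nabla_{e_i}e_j)$; here it helps that $\K$ is a Lie algebra automorphism of $\G_{(1,0,0,1)}$ (for instance $\K[e_1,e_5]=e_4=[\K e_1,\K e_5]$), which compresses the verification. The main obstacle is precisely this last part: obtaining the Levi-Civita connection correctly and pushing the invariance check through the basis without sign slips; the rest is routine bookkeeping on a six-dimensional Lie algebra.
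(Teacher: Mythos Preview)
Your proposal is correct and follows essentially the same approach as the paper, which simply states that the result is a straightforward verification from the definitions; you have supplied the details of that verification (extending $\K$ via $\K^2=\mathrm{Id}_\G$, checking skew-symmetry and compatibility on the basis, and outlining the Levi-Civita invariance check), and your observation that $\K$ is in fact a Lie algebra automorphism is a useful shortcut for the last part.
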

\begin{proof}
	This is a straightforward proof that follows directly from the previous definitions.
\end{proof}
\subsection{Eight-dimensional irreducible symplectic Lie algebras}
As a next step, we shall study the classification of eight-dimensional irreducible symplectic Lie algebras. 
\begin{pr}\label{Classi8}
	Let $\G=\h\ltimes\mathfrak{a}$ be an eight-dimensional irreducible symplectic Lie algebra. Then, $\G$ is isomorphic to one of the following  Lie algebras:
		\begin{enumerate}		
			\item $\G^{(a,b)}$, with  $ab\not=0$ $:$
		\begin{align*}
	[e_7, e_1] &= -e_2,& [e_7, e_2] &= e_1, &[e_8, e_3] &= -e_4, &[e_8, e_4]& = e_3,\\
[e_7, e_5] &= -a e_6,& [e_7, e_6]& = a e_5, &[e_8, e_5]& = -b e_6,& [e_8, e_6] &= b e_5.
		\end{align*}
		\item  $\G^{(a,0)}$, with   $a>0$  $:$ 
		\begin{align*}
			[e_7, e_1]& = -e_2,& [e_7, e_2]& = e_1,& [e_8, e_3]& = -e_4,\\
			[e_8, e_4]& = e_3,&[e_7, e_5]& = -a e_6,& [e_7, e_6] &= a e_5.
		\end{align*}
	\end{enumerate}
\end{pr}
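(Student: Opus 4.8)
The plan is to proceed exactly as in the proof of Proposition~\ref{Classi6}: use the structure theory to write $\G$ in terms of a triple of characters, then normalise that triple by a change of basis on $\h$, together with permutations and sign changes of the $2$-dimensional blocks of $\mathcal D(\G)$.

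First I would settle the dimension bookkeeping. Since $(\G,\om)$ is irreducible, Theorem~\ref{Bause} gives $\G=\h\ltimes\mathfrak a$ with $\h$ abelian, $\mathfrak a=\mathcal D(\G)$ a maximal abelian ideal, $\dim\h=2h$ even and $\dim\mathfrak a=2m$ with $m\geq 2h$; the constraint $2h+2m=8$ forces $h=1$, $m=3$. Thus $\dim\h=2$ and $\mathfrak a$ splits into three $2$-dimensional purely imaginary irreducible $\h$-submodules, so (as in the discussion preceding the proposition, using $(\ref{LB1})$--$(\ref{LB2})$) there is a basis $\{e_1,\dots,e_8\}$ with $\h=\langle e_7,e_8\rangle$, $\mathfrak a=\langle e_1,e_2\rangle\oplus\langle e_3,e_4\rangle\oplus\langle e_5,e_6\rangle$ and, writing $\tilde\lambda_j=a_je^7+b_je^8\in\h^\ast$,
\[
[e_7,e_{2j-1}]=-a_je_{2j},\quad[e_7,e_{2j}]=a_je_{2j-1},\quad[e_8,e_{2j-1}]=-b_je_{2j},\quad[e_8,e_{2j}]=b_je_{2j-1}\qquad(j=1,2,3),
\]
where each $\tilde\lambda_j\neq0$ and $\{\tilde\lambda_1,\tilde\lambda_2,\tilde\lambda_3\}$ spans $\h^\ast$.

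Next I would isolate the elementary isomorphisms used to normalise the $\tilde\lambda_j$, each verified by a short bracket computation analogous to the map $(x,y)\mapsto(x,Ay)$ of Proposition~\ref{Classi6}: (a) any $T\in\mathrm{GL}(\h)$, extended by the identity on $\mathfrak a$, gives an isomorphism onto the algebra whose characters are $\tilde\lambda_j\circ T^{-1}$; (b) relabelling the three blocks $\langle e_{2j-1},e_{2j}\rangle$ permutes the $\tilde\lambda_j$; (c) the block map $e_{2j}\mapsto-e_{2j}$ replaces $\tilde\lambda_j$ by $-\tilde\lambda_j$. Since $\{\tilde\lambda_1,\tilde\lambda_2,\tilde\lambda_3\}$ spans the $2$-dimensional space $\h^\ast$, at least two of the characters are linearly independent; after a permutation I may assume $\tilde\lambda_1,\tilde\lambda_2$ are a basis, and by (a), for a suitable $T$, I reduce to $\tilde\lambda_1=e^7$, $\tilde\lambda_2=e^8$, so that $\tilde\lambda_3=(a,b)\neq(0,0)$.

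Finally I would split on $(a,b)$. If $ab\neq0$ the brackets are literally those of $\G^{(a,b)}$. If exactly one coordinate vanishes -- say $b=0$, the case $a=0$ being brought to this one by the swap $e^7\leftrightarrow e^8$ in (a) together with a relabelling -- then $\tilde\lambda_3=(a,0)$ with $a\neq0$, and a block sign change via (c) normalises $a$ to $|a|>0$, giving $\G^{(a,0)}$ with $a>0$; this is precisely the configuration in which two of the characters are proportional. Since all configurations fall into one of these two cases, $\G$ is isomorphic to one of the listed algebras. The points that require care are the exhaustiveness of the case split -- in particular that the spanning hypothesis is exactly what makes the first normalisation possible, and that every degenerate configuration collapses onto the single family $\G^{(a,0)}$ -- and, if one wants in addition that the listed algebras be pairwise non-isomorphic, distinguishing them by an invariant of the $\h$-module $\mathcal D(\G)$ (equivalently of the triple of characters, up to sign and up to $\mathrm{GL}(\h)$), for instance through the eigenvalues of the Ricci endomorphism of the canonical symplectic connection as in Proposition~\ref{Classi6}.
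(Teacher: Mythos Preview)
Your proposal is correct and follows essentially the same approach as the paper: both arguments set up the three characters $\tilde\lambda_1,\tilde\lambda_2,\tilde\lambda_3\in\h^\ast$, use the $\mathrm{GL}(\h)$-action (the map $(x,y)\mapsto(x,Ay)$) to send two independent characters to the standard basis, and then handle the residual cases by block permutations and a sign flip on the third block; the paper phrases the first step via the $2\times2$ minors $\delta_1,\delta_2,\delta_3$ and writes out the permutation isomorphisms $\mathcal P_\sigma$ explicitly, but the content is the same as your moves (a)--(c).
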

\begin{proof}
	Let $\G_{\lambda_1,\lambda_2,\lambda_3,\lambda_4,\lambda_5,\lambda_6}$ be an eight-dimensional irreducible  Lie algebra. Then, there exists a basis $\{e_1,e_2,e_3,e_4,e_5,e_6,e_7,e_8\}$ where $\h=\langle e_7,e_8\rangle$ and $\mathfrak{a}=\langle e_1,e_2,e_3,e_4,e_5,e_6\rangle$ such that
	\begin{align}\label{Lie8}
		\begin{split}
			&[e_7, e_1] = -\lambda_1e_2,~~ [e_7, e_2] = \lambda_1 e_1,~~ [e_8, e_1] = -\lambda_2 e_2, ~~[e_8, e_2] = \lambda_2 e_1,\\
			&[e_7, e_3] = -\lambda_3 e_4 ,~~ [e_7, e_4] = \lambda_3e_3,~~ [e_8, e_3] = -\lambda_4e_4,~~ [e_8, e_4] = \lambda_4 e_3,\\
			&[e_7, e_5] = -\lambda_5 e_6 ,~~ [e_7, e_6] = \lambda_5e_5,~~ [e_8, e_5] = -\lambda_6e_6,~~ [e_8, e_6] = \lambda_6 e_5.
		\end{split}
	\end{align}

	The necessary  condition for the linear functionals $\tilde{\lambda}_1, \tilde{\lambda}_2, \tilde{\lambda}_3 \in \h^*$ to be distinct, non-zero characters, and to span $\h^*$ is that the matrix
	\[A=
	\begin{pmatrix}
		\lambda_1 & \lambda_3 & \lambda_5 \\
		\lambda_2 & \lambda_4 & \lambda_6
	\end{pmatrix}
	\]
	has rank $2$. Equivalently, at least one of the following $2 \times 2$ submatrix determinants must be nonzero:
	\[
	\delta_1 = \det\begin{pmatrix} \lambda_1 & \lambda_3 \\ \lambda_2 & \lambda_4 \end{pmatrix}, \quad
	\delta_2 = \det\begin{pmatrix} \lambda_1 & \lambda_5 \\ \lambda_2 & \lambda_6 \end{pmatrix}, \quad
	\delta_3 = \det\begin{pmatrix} \lambda_3 & \lambda_5 \\ \lambda_4 & \lambda_6 \end{pmatrix}.
	\]
	
	Put $\G_{\lambda_1,\lambda_2,\lambda_3,\lambda_4,\lambda_5,\lambda_6}=\G_{\delta_1,\delta_2,\delta_3}$. Without loss of generality, we assume $\delta_1 \neq 0$.  This simplification is justified by the following symmetry: for any $\delta_i \neq 0$, there exists a permutation $\sigma \in S_3$ such that the Lie algebra $\G_{\delta_1, \delta_2, \delta_3}$ is isomorphic to $\G_{\sigma(\delta_1,\delta_2,\delta_3)}=\G_{\delta_i, \delta_{i_1}, \delta_{i_2}}$, $i_1,i_2\in\{1,2,3\}, i_1\neq  i_2\neq i$, where $\sigma$ maps $\delta_i$ to the first position. 
	
	Consider the Lie algebra $\G_{\delta_1,\delta_2,\delta_3} = (\A_{1} \oplus \A_{2} \oplus \A_{3}) \rtimes \h$, parametrized by $\delta_1, \delta_2, \delta_3$. The permutation symmetry is realized by the following isomorphisms \[\Perm_{\sigma}:\G_{\delta_1,\delta_2,\delta_3}\to\G_{\sigma(\delta_1, \delta_2, \delta_3)}\]
	\[
	\Perm_\sigma(\A_i) = \A_{\sigma(i)} \quad \text{for } i = 1,2,3,\qquad \Perm_\sigma^2=\mathrm{Id}, \qquad \Perm_\sigma(\h) = \h.
	\]
	\begin{enumerate}
		\item \textbf{Case $\delta_1 \neq 0$:} $\Perm_\sigma$ acts as the identity:
		\[
		\Perm_\sigma(\A_{i}) = \A_{i},\quad i=1,2,3, \quad \Perm_\sigma(\h) = \h.
		\]
		
		\item \textbf{Case $\delta_1 = 0$, $\delta_2 \neq 0$:} $\Perm_\sigma$ acts as $(2\,3)$:
		\[
		\Perm_\sigma(\A_{1}) = \A_{1}, \quad \Perm_\sigma(\A_{2}) = \A_{3}, \quad \Perm_\sigma(\A_{3}) = \A_{2}, \quad \Perm_\sigma(\h) = \h.
		\]
		and acts on the basis vectors as
		\[\Perm_\sigma(e_1)=e_1,\;\Perm_\sigma(e_2)=e_2,\;\Perm_\sigma(e_3)=e_5,\;\Perm_\sigma(e_4)=e_6,\;\Perm_\sigma(e_7)=e_7,\;\Perm_\sigma(e_8)=e_8.\]
		This exchanges $\delta_1 \leftrightarrow \delta_2$ and fixes $\delta_3$.
		
		\item \textbf{Case $\delta_1 = \delta_2 = 0$, $\delta_3 \neq 0$:} $\Perm_\sigma$ acts as $(1\,3)$:
		\[
		\Perm_\sigma(\A_{1}) = \A_{3}, \quad \Perm_\sigma(\A_{2}) = \A_{2}, \quad \Perm_\sigma(\A_{3}) = \A_{1}, \quad \Perm_\sigma(\h) = \h.
		\]
		and acts on the basis vectors as
		\[\Perm_\sigma(e_1)=e_5,\;\Perm_\sigma(e_2)=e_6,\;\Perm_\sigma(e_3)=e_3,\;\Perm_\sigma(e_4)=e_4,\;\Perm_\sigma(e_7)=e_7,\;\Perm_\sigma(e_8)=e_8.\]
		This maps $\delta_3 \mapsto \delta_1$ and fixes $\delta_2$.
	\end{enumerate}
	
	In each case, $\Perm_\sigma:\G_{\delta_1,\delta_2,\delta_3}\to\G_{\sigma(\delta_1, \delta_2, \delta_3)}$ is an isomorphism  that permutes the ideals $\A_{i}$ while stabilizing $\h$, and its action on the parameters $\delta_i$ reflects the underlying permutation of their defining matrices.

	Using $(\ref{Lie8})$ it is easily verified that then the map
	
	\[
	\G_{(\lambda_1,\lambda_2,\lambda_3,\lambda_4,\lambda_5,\lambda_6)}  \longrightarrow \G_{(1,0,0,1,\frac{\delta_2}{\delta_1},\frac{\delta_3}{\delta_1})}, \quad (x, y) \mapsto (x, Ay), \quad x \in \mathfrak{a},\ y \in \h,
	\]
	where  
	\[
	A :=
	\begin{pmatrix}
		\lambda_1 & \lambda_2 \\
		\lambda_3 & \lambda_4
	\end{pmatrix}
	\]
	is the required Lie algebra isomorphism. Let $a=\frac{\delta_2}{\delta_1}$, $b=\frac{\delta_3}{\delta_1}$, and denoted $\G^{(a,b)}=\G_{
		(1,0,0,1,\frac{\delta_2}{\delta_1},\frac{\delta_3}{\delta_1})}$.

	Now let us show that  $\G^{(a,0)}\cong\G^{(0,a)}$ and $\G^{(a,0)}\cong\G^{(-a,0)}$.		
		On the one hand using the following isomorphism
	\[\Perm_\sigma(e_1)=e_3,\;\Perm_\sigma(e_2)=e_4,\;\Perm_\sigma(e_3)=e_1,\;\Perm_\sigma(e_4)=e_2,\]\[\Perm_\sigma(e_5)=e_5,\;\Perm_\sigma(e_6)=e_6\;\Perm_\sigma(e_7)=e_8,\;\Perm_\sigma(e_8)=e_7,\]
	we can show that $\G^{(a,0)}\cong\G^{(0,a)}$ and on the other hand
	the following isomorphism
	\[\Perm_\sigma(e_i)=e_i,\;i\in\{1,2,3,4,7,8\},\;\Perm_\sigma(e_5)=e_6,\;\Perm_\sigma(e_6)=e_5,\]
	we can show that $\G^{(a,0)}\cong\G^{(-a,0)}$. This completes the proof.
	 
\end{proof}

A basic and
straightforward observation is:
\begin{Le}
Let $(\G, \omega)$ be a $2n$-dimensional irreducible symplectic Lie algebra. For any $\alpha \in \G^\ast$, define $\omega_\alpha = \omega + \md \alpha$. If $\omega_\alpha$ is non-degenerate, then $(\G, \omega_\alpha)$ is also a $2n$-dimensional irreducible symplectic Lie algebra.
\end{Le}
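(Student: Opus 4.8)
The plan is to use Baues and Cort\'es' characterization (Theorem~\ref{Bause}) as the criterion for irreducibility and verify that it is stable under the modification $\omega \mapsto \omega_\alpha = \omega + \md\alpha$. First I would observe that the three defining conditions of Theorem~\ref{Bause} split into a purely Lie-algebraic part and a compatibility part between the symplectic form and the Lie structure. Condition~(1) requires that $\mathcal{D}(\G)$ be a maximal abelian ideal of $\G$ and non-degenerate with respect to the form; condition~(2) that $\G$ be an orthogonal semidirect sum $\h \ltimes \mathcal{D}(\G)$ with $\h$ an abelian symplectic subalgebra; condition~(3) that $\mathcal{D}(\G)$ decompose as an orthogonal sum of pairwise non-isomorphic two-dimensional purely imaginary irreducible $\h$-modules. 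The statement "$\mathcal{D}(\G)$ is a maximal abelian ideal" and the $\h$-module decomposition into purely imaginary irreducibles are features of $\G$ alone — they do not mention $\omega$ — so they hold automatically for $(\G,\omega_\alpha)$ once they hold for $(\G,\omega)$. Only the three orthogonality/non-degeneracy clauses need to be rechecked for $\omega_\alpha$.

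Next I would handle the non-degeneracy and orthogonality clauses. Non-degeneracy of $\omega_\alpha$ on all of $\G$ is assumed in the hypothesis, so I only need non-degeneracy of the restriction of $\omega_\alpha$ to $\mathcal{D}(\G)$, together with the orthogonality of $\h$ and $\mathcal{D}(\G)$, and the orthogonality of the distinct irreducible summands of $\mathcal{D}(\G)$. The key computational input is the formula for $\md\alpha$: for $x,y \in \G$, $\md\alpha(x,y) = -\alpha([x,y])$. Since $[\G,\G] = \mathcal{D}(\G)$, one sees that $\md\alpha$ vanishes on any pair of elements whose bracket is zero. In the structure of an irreducible symplectic Lie algebra, $\mathcal{D}(\G)$ is abelian, so $\md\alpha$ restricted to $\mathcal{D}(\G) \times \mathcal{D}(\G)$ is identically zero; hence $\omega_\alpha|_{\mathcal{D}(\G)} = \omega|_{\mathcal{D}(\G)}$, which is already non-degenerate and already decomposes the module summands orthogonally. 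For the orthogonality of $\h$ and $\mathcal{D}(\G)$: one takes $h \in \h$ and $v \in \mathcal{D}(\G)$; because $\mathcal{D}(\G)$ is a maximal abelian ideal one can write $v = \sum [h_i, w_i]$ with $h_i \in \h$, $w_i \in \mathcal{D}(\G)$ (using that $\h$ acts so that $\mathcal{D}(\G) = [\h,\mathcal{D}(\G)]$, which follows from the purely-imaginary irreducible module structure where each generator acts invertibly). Then $\md\alpha(h, [h_i,w_i]) = -\alpha([h,[h_i,w_i]])$, and $[h,[h_i,w_i]] \in \mathcal{D}(\G)$ lies again in $\mathcal{D}(\G)$; more precisely one uses the Jacobi identity together with $[\h,\h]=0$ to control this term, and compares with the fact that $\omega(h, \mathcal{D}(\G)) = 0$. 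A cleaner route: since $\mathcal{D}(\G)^{\perp_\omega} = \h$ by condition~(2) for $\omega$, and $\omega_\alpha$ agrees with $\omega$ on $\mathcal{D}(\G)\times\mathcal{D}(\G)$, one checks that $\mathcal{D}(\G)^{\perp_{\omega_\alpha}}$ is a complement to $\mathcal{D}(\G)$ of the right dimension and is a subalgebra on which $\omega_\alpha$ is non-degenerate; any such complement is abelian because $[\G,\G]\subseteq \mathcal{D}(\G)$ forces $[\mathfrak{c},\mathfrak{c}]\subseteq \mathfrak{c}\cap\mathcal{D}(\G)=0$. Thus condition~(2) is restored for $\omega_\alpha$.

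Finally I would assemble the pieces: conditions~(1), (2), (3) all hold for $(\G,\omega_\alpha)$, so by Theorem~\ref{Bause} the pair $(\G,\omega_\alpha)$ is an irreducible symplectic Lie algebra, and it is $2n$-dimensional since $\G$ is unchanged. The main obstacle I anticipate is being careful about condition~(2)'s requirement that the chosen complement be an \emph{abelian symplectic subalgebra} and \emph{orthogonal} to $\mathcal{D}(\G)$ with respect to the new form: the complement $\mathfrak{c} = \mathcal{D}(\G)^{\perp_{\omega_\alpha}}$ need not equal the old $\h$, so one must argue intrinsically that it is abelian (from $[\G,\G]\subseteq\mathcal{D}(\G)$), that $\omega_\alpha$ is non-degenerate on it (from non-degeneracy of $\omega_\alpha$ on $\G$ plus non-degeneracy on $\mathcal{D}(\G)$), and that its irreducible-module decomposition of $\mathcal{D}(\G)$ is unchanged (since the $\h$-action and the $\mathfrak{c}$-action induce the same action on $\mathcal{D}(\G)/[\G,\mathcal{D}(\G)]$-type data — really the module structure of $\mathcal{D}(\G)$ as a $\G/\mathcal{D}(\G)$-module, which is canonical). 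Everything else is the short, routine observation that $\md\alpha$ kills $\mathcal{D}(\G)\times\mathcal{D}(\G)$ because $\mathcal{D}(\G)$ is abelian.
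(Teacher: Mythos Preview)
Your approach is correct but takes a genuinely different route from the paper. You verify the three structural conditions of Theorem~\ref{Bause} for $(\G,\omega_\alpha)$, which requires constructing a new abelian complement $\mathfrak{c}=\mathcal{D}(\G)^{\perp_{\omega_\alpha}}$ and checking it inherits all the right properties; one point you could tighten is that $\mathfrak{c}$ is a \emph{subalgebra}, which follows from the general fact that the orthogonal of an ideal with respect to a closed $2$-form is always a subalgebra. The paper instead works directly from the \emph{definition} of irreducibility: it assumes $\mathfrak{j}$ is an $\omega_\alpha$-isotropic ideal, argues that $\mathfrak{j}$ is abelian, and then observes that $\md\alpha|_{\mathfrak{j}\times\mathfrak{j}}=0$, so $\mathfrak{j}$ is simultaneously $\omega$-isotropic and hence trivial. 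The paper's route is shorter and avoids reconstructing any of the Baues--Cort\'es data; your route is longer but makes the preservation of the entire flat-K\"ahler structure transparent and handles condition~(2) carefully via the possibly new complement~$\mathfrak{c}$. Both arguments rest on the same computational core, namely that $\md\alpha$ vanishes on any abelian subspace --- you apply it to $\mathcal{D}(\G)$, the paper applies it to $\mathfrak{j}$.
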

\begin{proof}
Let $(\G, \omega)$ be an irreducible symplectic Lie algebra. Let $\alpha \in \G^\ast$, and set $\omega_\alpha = \omega + \md\alpha$. Suppose that $\omega_\alpha$ is non-degenerate, then $\omega_\alpha$ is a symplectic form on $\G$.

Now, if there exists an isotropic ideal $\mathfrak{j}$ of $(\G, \omega_\alpha)$, then
\begin{align*}
\omega_\alpha(\mathfrak{j}, \mathfrak{j}) &= \omega(\mathfrak{j}, \mathfrak{j}) - \alpha\big([\mathfrak{j}, \mathfrak{j}]\big) = \omega(\mathfrak{j}, \mathfrak{j}) = 0.
\end{align*}
Due to the fact that $\mathfrak{j}$ is an isotropic ideal of $(\G, \omega)$, then  $\mathfrak{j}$ is abelian. This shows that $\mathfrak{j}$ is an isotropic ideal of $(\G, \omega)$, contradicting the fact that $(\G, \omega)$ is an irreducible symplectic Lie algebra. Therefore, $(\G, \omega_\alpha)$ is an irreducible symplectic Lie algebra.
\end{proof}

\begin{pr}\label{Classi8forms}
	Let $\G$ be an eight-dimensional irreducible symplectic Lie algebra. Then, $\G$ is symplectomorphically isomorphic to exactly one of the following Lie algebras equipped with a symplectic structure:
\begin{align*}
	\G^{a,b}:&~~&\omega_1&=e^{12}+e^{34}+e^{56}+\lambda e^{78}&&\\
	&~~&\omega_2&=e^{12}+e^{34}-e^{56}+\lambda e^{78}&\lambda\in\R^{\ast},\;a,b\neq0,1&\\
\G^{a,0}:&~~&\omega_1&=e^{12}+e^{34}+e^{56}+\lambda e^{78}&&\\
&~~&\omega_2&=e^{12}+e^{34}-e^{56}+\lambda e^{78}&&\\
	 &~~&\omega_3&=e^{12}+e^{34}+e^{56}+\mu e^{67}+\lambda e^{78}&\\
	 &~~&\omega_4&=e^{12}+e^{34}-e^{56}+\mu e^{67}+\lambda e^{78}&\lambda\in\R^\ast,~~
	 \mu\in\R^{\ast+},~~a\in\R^{\ast+},~~a\neq1\\
	\G^{1,0}:&~~&\omega_1&=e^{12}+e^{34}+e^{56}+\lambda e^{78}&&\\
	 &~~&\omega_2&=e^{12}+e^{34}-e^{56}+\lambda e^{78}&\lambda\in\R^\ast	& 
\end{align*}

\end{pr}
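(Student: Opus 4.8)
The plan is to combine the Lie-algebra classification of Proposition~\ref{Classi8} with a cohomological analysis of the second cohomology $H^2(\G)$, exactly along the lines used in the proof of Proposition~\ref{Classi6}, but now carried out separately for the families $\G^{(a,b)}$ (with $ab\neq 0$) and $\G^{(a,0)}$. First I would fix one of the model Lie algebras, compute the Chevalley--Eilenberg differential from the brackets in Proposition~\ref{Classi8}, and determine $B^2(\G)$ and $Z^2(\G)$ explicitly; since $\G=\h\ltimes\A$ with $\A$ abelian and $\dim\h=2$, the coboundaries $B^2$ are spanned by the $\partial e^i$ with $i$ a $\A$-index, and $Z^2$ decomposes as $B^2$ plus the span of the ``diagonal'' two-forms $e^{12},e^{34},e^{56},e^{78}$ together with possible mixed terms such as $e^{67}$ that are closed but not exact. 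A general closed non-degenerate $\omega$ is then written as $\omega=\omega_0+\partial\alpha$ with $\omega_0$ in a chosen complement of $B^2$; by the Lemma preceding the statement, $(\G,\omega)$ is automatically irreducible once $\omega$ is non-degenerate, so the whole problem reduces to normalizing $\omega_0$ up to the action of $\mathrm{Aut}(\G)$.

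The next step is to compute $\mathrm{Aut}(\G)$ for each model, or at least a large enough subgroup to carry out the normalization. The brackets are built from the two complex-structure blocks $\ad_{e_7},\ad_{e_8}$ acting on the three planes $\langle e_1,e_2\rangle,\langle e_3,e_4\rangle,\langle e_5,e_6\rangle$; in the generic case $ab\neq 0$ and $a,b,1$ mutually distinct these three planes are the distinct isotypic components for the $\h$-action, so any automorphism must preserve each plane (up to the permutation symmetries already handled in Proposition~\ref{Classi8}), act on each by an element of $\mathrm{GL}(2,\R)$ commuting with the relevant rotation (hence by a complex scalar, i.e. a similarity), and act on $\h=\langle e_7,e_8\rangle$ compatibly; there is in addition the ``off-diagonal'' freedom of adding $\A$-components to $e_7,e_8$, which is precisely what absorbs the $\partial\alpha$ part. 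Using these automorphisms I would first kill $\partial\alpha$ (showing $\phi^\ast\omega=\omega_0$ for a suitable $\phi$, as in Proposition~\ref{Classi6}), then argue that non-degeneracy of $\omega_0$ forces the coefficients of $e^{12},e^{34},e^{78}$ to be non-zero, and finally rescale by the complex-similarity automorphisms to bring these coefficients to $\pm1,\pm1,\lambda$; the sign on the $e^{56}$-block cannot in general be changed because rescaling that plane by a similarity also rescales the parameter $a$ (or $b$) unless one is in the degenerate block, which is exactly why one ends up with the two inequivalent normal forms $\omega_1=e^{12}+e^{34}+e^{56}+\lambda e^{78}$ and $\omega_2=e^{12}+e^{34}-e^{56}+\lambda e^{78}$. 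In the $\G^{(a,0)}$ family the second plane $\langle e_5,e_6\rangle$ is only acted on by $e_7$, so $e_7$ and $e_8$ are no longer symmetric; this produces the extra closed form $e^{67}$ that is genuinely present in $Z^2$, giving rise to the additional normal forms $\omega_3,\omega_4$ with the parameter $\mu\in\R^{\ast+}$, and when $a=1$ the block $\langle e_5,e_6\rangle$ becomes isomorphic to $\langle e_1,e_2\rangle$, restoring enough automorphism freedom to eliminate the $\mu e^{67}$ term, which is why $\G^{1,0}$ has only $\omega_1,\omega_2$.

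The final step is the inequivalence claims: one must show that the listed forms are pairwise non-symplectomorphic, both across different Lie algebras (distinguished already by the parameters $a,b$ of the bracket, which are isomorphism invariants by Proposition~\ref{Classi8}, modulo the symmetries $a\leftrightarrow b$, $a\to 1/a$, etc., already quotiented out there) and, for a fixed Lie algebra, for distinct values of $\lambda$ (and $\mu$) and distinct signs on $e^{56}$. Here I would use the canonical symplectic connection $\nabla_xy=\tfrac13([x,y]+x\cdot y)$ and compute its Ricci (or curvature) endomorphism, whose spectrum is a symplectomorphism invariant and will depend nontrivially on $\lambda,\mu$ and on the sign configuration, exactly as in the six-dimensional case. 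The main obstacle I anticipate is the bookkeeping in the automorphism-group computation for the generic $\G^{(a,b)}$ case and in verifying that no residual automorphism can flip the sign on $e^{56}$ or alter $\lambda$ beyond the allowed normalization — this is where one genuinely needs that $a,b,1$ are distinct and nonzero, and care is needed so that the permutation isomorphisms of Proposition~\ref{Classi8} are not double-counted against the cohomological normalization. Everything else is a routine, if lengthy, Maurer--Cartan and linear-algebra computation patterned on Proposition~\ref{Classi6}.
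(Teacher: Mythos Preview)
Your strategy is essentially the paper's: compute $Z^2$ from the Maurer--Cartan equations, determine $\mathrm{Aut}(\G)$, and normalize by its pullback action, treating $\G^{(a,b)}$, $\G^{(a,0)}$ and $\G^{(1,0)}$ separately. One slip to correct: in $\G^{(a,0)}$ the form $e^{67}$ is \emph{exact} (indeed $\partial e^5=-a\,e^{67}$), not ``closed but not exact'', so your plan to first kill all of $\partial\alpha$ by automorphisms and then work purely in $H^2$ is not quite right. What actually happens is that the $\mathrm{Aut}(\G^{(a,0)})$-orbits on $Z^2$ are strictly finer than the cosets modulo $B^2$: the off-diagonal freedom in the automorphism group absorbs the coboundaries $e^{17},e^{27},e^{38},e^{48}$ but, according to the paper's computed automorphism group, not the exact terms $e^{57},e^{67}$, and this residual exact piece is precisely the source of the $\mu\,e^{67}$ parameter. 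When $a=1$ the enlarged automorphism group (mixing $\langle e_1,e_2\rangle$ with $\langle e_5,e_6\rangle$) restores the missing freedom and eliminates $\mu$, which is why $\G^{(1,0)}$ has only $\omega_1,\omega_2$ --- your intuition there is right even if the cohomological explanation is not. You will see this the moment you actually write down $\mathrm{Aut}(\G^{(a,0)})$, so it does not derail the argument, but the correct conceptual picture is $\mathrm{Aut}(\G)$-orbits on $Z^2$, not $H^2$.

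For the inequivalence step you propose the Ricci spectrum of the canonical symplectic connection, as in Proposition~\ref{Classi6}. The paper instead argues directly from the explicit automorphism group: it shows, for example, that pulling back $e^{12}+e^{34}+e^{56}+\lambda e^{78}$ to $e^{12}+e^{34}-e^{56}+\lambda' e^{78}$ forces an equation of the shape $x^2+y^2=-1$ with no real solution, and it tracks which sign configurations on $e^{12},e^{34},e^{56}$ are linked through the $\varepsilon_1,\varepsilon_2$ parameters of $\mathrm{Aut}(\G)$. Both routes are valid; the paper's is more direct once $\mathrm{Aut}(\G)$ is in hand, while your Ricci-spectrum method has the advantage of producing a genuine symplectic invariant that also separates the continuous parameter $\lambda$.
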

\begin{proof}
Every eight-dimensional irreducible symplectic Lie algebra is isomorphic to $\G^{a,b}$, $\G^{a,0}$, the family of Lie algebras classified in Proposition~$\ref{Classi8}$. The Maurer-Cartan equations for $\G^{a,b}$, expressed in terms of its structure constants, are given by:
\begin{align*}
    \partial e^1 &= -e^{27}, 
    &\partial e^2 &= e^{17}, &\partial e^3 &= -e^{28}, \\
    \partial e^4 &= e^{38}, &\partial e^5 &= -a e^{67} - b e^{68},&\partial e^6 &= a e^{57} + b e^{58}, \\
    \partial e^7 &= 0,&\partial e^8& = 0.
\end{align*}
Now define an element $\omega\in\bigwedge^2\big(\G^{(a,0)}\big)^\ast$, i.e., \[\omega=\mathop{\resizebox{1.3\width}{!}{$\sum$}}\limits_{1\leq i<j\leq8}\omega_{ij}e^{ij},\quad\omega_{ij}\in\R.\]
It is straightforward to verify that $\omega\in Z^2(\G^{(a,0)})$, i.e., $\md\omega=0$ is equivalent 
\begin{align*}
\omega&=\omega_{12}e^{12}+\omega_{34}e^{34}+\omega_{78}e^{78}+\omega_{15}(e^{15}+ae^{26})+\omega_{25}(-ae^{16}+e^{25})\\
&~~~~+\omega_{17}e^{17}+\omega_{27}e^{27}+\omega_{38}e^{38}+\omega_{48}e^{48}+\omega_{56}e^{56}+\omega_{57}e^{57}+\omega_{67}e^{67},
\end{align*}
subject to the condition $a^2\omega_{2 5} - \omega_{2 5}=-a^2\omega_{1 5} + \omega_{1 5}=0$. Since $a>0$, we consider two cases: $a=1$ or $a\neq1$.
On the other hand, consider the Lie algebra $\G^{(a,b)}$ with parameters satisfying $ab \neq 0$. In a similar manner, we can show that any $\omega \in Z^2(\G^{(a,b)})$ has the following form:
\begin{align*}
\omega &= \omega_{12}e^{12}+\omega_{17}e^{17}+\omega_{27}e^{27}+\omega_{34}e^{34}+\omega_{38}e^{38}+\omega_{48}e^{48}+\omega_{56}e^{56}\\
&~~~~+\omega_{57}e^{57}+\tfrac{b}{a}\omega_{57}e^{58}+\omega_{67}e^{67}+\tfrac{b}{a}\omega_{67}e^{68}+\omega_{78}e^{78}.
\end{align*}

Let $\Omega(\G)$ denote the space of all symplectic forms on $\G$. We now proceed to classify the orbit space of the action of $\mathrm{Aut}(\G)$ on $\Omega(\G)$. It is straightforward to observe that if $\omega_1, \omega_2 \in \Omega(\G)$ lie in the same $\mathrm{Aut}(\G)$-orbit, then they are symplectomorphically isomorphic. More precisely, two symplectic forms $\omega_1, \omega_2 \in \Omega(\G)$ belong to the same $\mathrm{Aut}(\G)$-orbit if and only if there exists an automorphism $\Phi \in \mathrm{Aut}(\G)$ such that $\Phi^* \omega_2 = \omega_1$.

Due to the reliance on standard computations, we defer the detailed proof to Appendix~$\ref{Appen}$.
\end{proof}

\section{Generalized symplectic oxidation of Lie algebras}\label{se2}
Let $(\G,\omega)$  be a $2n$-dimensional symplectic Lie algebra, and let $\mathfrak{j}=\langle \xi\rangle$  be a one-dimensional isotropic ideal of $(\G,\omega)$. Let us denote $\ol{\G}=(\R \ell\oplus \R \xi)^\perp$. Then, we have 
$\G=\R \ell\oplus \ol{\G} \oplus \R \xi$ and 
$(\R \xi)^\perp=\ol{\G}\oplus \R \xi$. For any $x,y\in (\R \ell\oplus \R \xi)^\perp= (\R \xi)^\perp \cap (\R \ell)^\perp$, we have  $[x,y]\in (\R \xi)^\perp$. Therefore, the non-zero brackets are given by
\begin{align*}
[x,y]&=\ol{[x,y]}+\varphi(x,y)\xi,& \forall x,y\in \ol{\G},\\
[\xi,x]&= -\mu(x)\xi,&  \forall x\in \ol{\G},\\ 
[\xi, \ell]&= t\xi,& t\in\R,\\
[\ell,x]&=\mu_1(x)\ell+\mathrm{D}(x)+\lambda(x)\xi,&\forall x\in \ol{\G},
\end{align*}
where, $\varphi$ is a $2$-cochain on $\ol{\G}$,  $\mathrm{D}$ is an endomorphism of $\ol{\G}$,  $\mu_1$, $\mu$ and $\lambda$ are  linear forms of $\ol{\G}$. 

Since $\omega$ is symplectic, then
\begin{align*}
\mathop{\resizebox{1.3\width}{!}{$\sum$}}\limits_{\mathrm{cycl}}\om([[x,y],\ell])&=-\varphi(x,y)-\om(\mathrm{D}(y),x)+\om(\mathrm{D}(x),y).
\end{align*}
Therefore, $\varphi=\om_\mathrm{D}=\overline{\omega}_\mathrm{D}$ and also
\begin{align*}
\mathop{\resizebox{1.3\width}{!}{$\sum$}}\limits_{\mathrm{cycl}}\om([[x,\xi],\ell])&=-\mu_1(x)+\mu(x).
\end{align*}
Thus, $\mu_1=\mu$. Consequently, the brackets  become
\begin{align}\label{Brackestgenera}
\begin{split}
[x,y]&=\ol{[x,y]}+\overline{\omega}_\mathrm{D}(x,y),\xi\hspace{3cm} \forall x,y\in \ol{\G},\\
[\xi,x]&= -\mu(x)\xi,\hspace{4.1cm}\quad  \forall x\in \ol{\G},\\ 
[\xi,\ell]&= t\xi,\hspace{5.4cm}t\in\R,\\
[\ell,x]&=\mu(x)\ell+\mathrm{D}(x)+\lambda(x)\xi,\hspace{2.55cm}\forall x\in \ol{\G}.
\end{split}
\end{align} 
\begin{pr}\label{prgeneralized}
	Let $(\G,\om)$  be a symplectic Lie algebra which reduces to $(\ol{\G},\ol{\om})$ with respect to the one-dimensional isotropic ideal $\mathfrak{j}=\langle\xi\rangle$. Then the data $(\mathrm{D},\mu,\lambda,t)$ verify
\begin{enumerate}

\item $\mathop{\resizebox{1.3\width}{!}{$\sum$}}\limits_{\mathrm{cycl}}\ol\om_\mathrm{D}(\ol{[x,y]},z)-\ol\om_\mathrm{D}(x,y)\mu(z)=0$
\item $\Delta \mathrm{D}-\mu\otimes\mathrm{D} = 0,$
\item $t\ol{\om}_{\mathrm{D}}-\ol{\om}_{\mathrm{D},\mathrm{D}}=\md\lambda-2\lambda\otimes\mu,$
\item $\mu\circ \mathrm{D}=t\mu$ and $\md\mu=0$.
\end{enumerate}
\end{pr}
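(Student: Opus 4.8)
The plan is to extract the four relations directly from the Jacobi identity of $\G$ applied to the bracket~(\ref{Brackestgenera}). No further input from $\om$ is needed: the closedness $\md\om=0$ was already used, in the derivation of~(\ref{Brackestgenera}), to force $\varphi=\ol{\om}_{\mathrm D}$ and $\mu_1=\mu$, and since $\xi$ is isotropic with $\ol{\G}=(\R\ell\oplus\R\xi)^{\perp_{\om}}$, the form $\om$ pairs $\ol{\G}$ with itself (via $\ol{\om}$) and $\ell$ with $\xi$, and vanishes on every other pair, so $\md\om=0$ yields nothing beyond these two identifications. Hence the entire content of the proposition is the requirement $\sum_{\mathrm{cycl}}[[a,b],c]=0$, which I would evaluate after decomposing $\G=\R\ell\oplus\ol{\G}\oplus\R\xi$ and reading off each graded component separately.

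Because $\dim\R\ell=\dim\R\xi=1$, every Jacobi triple containing $\ell$ twice or $\xi$ twice vanishes automatically, so it suffices to test $(x,y,z)$, $(x,y,\ell)$, $(x,\xi,\ell)$ and $(x,y,\xi)$ with $x,y,z\in\ol{\G}$. For $(x,y,z)$: writing $[u,z]=\ol{[u,z]}+\ol{\om}_{\mathrm D}(u,z)\xi$ with $u=\ol{[x,y]}$, together with $[\ol{\om}_{\mathrm D}(x,y)\xi,z]=-\ol{\om}_{\mathrm D}(x,y)\mu(z)\xi$, the $\ol{\G}$-component of the Jacobiator is the Jacobiator of the quotient Lie algebra $\ol{\G}=\mathfrak{j}^{\perp_{\om}}/\mathfrak{j}$ and so vanishes, while the $\R\xi$-component is exactly the first relation. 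For $(x,y,\ell)$: expanding the three nested brackets with $[\ell,x]=\mu(x)\ell+\mathrm D(x)+\lambda(x)\xi$, $[\xi,x]=-\mu(x)\xi$, $[\xi,\ell]=t\xi$ and $[\mathrm D(x),y]=\ol{[\mathrm D(x),y]}+\ol{\om}_{\mathrm D}(\mathrm D(x),y)\xi$, and separating components, the $\R\ell$-part gives $\mu(\ol{[x,y]})=0$ (equivalently $\md\mu=0$), the $\ol{\G}$-part gives $\Delta\mathrm D-\mu\otimes\mathrm D=0$, and the $\R\xi$-part gives the third relation after using skew-symmetry of $\ol{\om}_{\mathrm D}$ to reassemble $\ol{\om}_{\mathrm D,\mathrm D}$ and collecting the mixed terms into $\lambda\otimes\mu$. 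For $(x,\xi,\ell)$ the Jacobiator collapses to $\bigl(\mu(\mathrm D(x))-t\mu(x)\bigr)\xi$, yielding $\mu\circ\mathrm D=t\mu$, and $(x,y,\xi)$ only reproduces $\mu(\ol{[x,y]})=0$. Together these are the list of four identities, and the same computation read backwards is what underlies the converse statement established later.

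The scheme is mechanical; I expect the only delicate point to be the $\R\xi$-component of the $(x,y,\ell)$-identity, where each of the three nested brackets expands into several terms distributed over all of $\R\ell$, $\ol{\G}$ and $\R\xi$, and one must carefully rewrite $\ol{\om}_{\mathrm D}(\mathrm D(y),x)$ through $\ol{\om}_{\mathrm D}(a,b)=-\ol{\om}_{\mathrm D}(b,a)$ to recognize $\ol{\om}_{\mathrm D,\mathrm D}$ and keep the signs of the $\lambda\otimes\mu$ contribution straight. Every other graded component is immediate.
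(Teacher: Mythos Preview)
Your proposal is correct and follows essentially the same approach as the paper: both arguments reduce the proposition to reading off the $\R\ell$-, $\ol{\G}$-, and $\R\xi$-components of the Jacobiator on the triples $(x,y,z)$, $(x,y,\ell)$, $(x,\xi,\ell)$, $(x,y,\xi)$ with $x,y,z\in\ol{\G}$, obtaining the four identities in exactly the distribution you describe. Your observation that $\md\om=0$ has already been fully exploited in deriving~(\ref{Brackestgenera}) is also in line with how the paper proceeds.
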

\begin{proof}
For all $x$, $y$ and $z\in\ol\G$, we have 	
\begin{align*}
\mathop{\resizebox{1.3\width}{!}{$\sum$}}\limits_{\mathrm{cycl}}[[x,y],z]&=\mathop{\resizebox{1.3\width}{!}{$\sum$}}\limits_{\mathrm{cycl}}[\ol{[x,y]}+\ol\om_\mathrm{D}(x,y)\xi,z]\\
&=\mathop{\resizebox{1.3\width}{!}{$\sum$}}\limits_{\mathrm{cycl}}\ol{[\ol{[x,y]},z]}+\mathop{\resizebox{1.3\width}{!}{$\sum$}}\limits_{\mathrm{cycl}}\left(\ol\om_\mathrm{D}(\ol{[x,y]},z)-\ol\om_\mathrm{D}(x,y)\mu(z)\right)\xi\\
&=\mathop{\resizebox{1.3\width}{!}{$\sum$}}\limits_{\mathrm{cycl}}\left(\ol\om_\mathrm{D}(\ol{[x,y]},z)-\ol\om_\mathrm{D}(x,y)\mu(z)\right)\xi.
\end{align*}
Then\begin{equation}
	\mathop{\resizebox{1.3\width}{!}{$\sum$}}\limits_{\mathrm{cycl}}\left(\ol\om_\mathrm{D}(\ol{[x,y]},z)-\ol\om_\mathrm{D}(x,y)\mu(z)\right)=0,\quad\text{for all}\quad x,y,z\in\ol\G.
\end{equation}
For all $x,y\in\ol\G$, we have
\begin{align*}
\mathop{\resizebox{1.3\width}{!}{$\sum$}}\limits_{\mathrm{cycl}}[[x,\ell],y]&=-[\mu(x)\ell+\mathrm{D}(x)+\lambda(x)\xi,y]-[\ol{[x,y]}-\overline{\omega}_\mathrm{D}(x,y)\xi,\ell]+[\mu(y)\ell+\mathrm{D}(y)+\lambda(y)\xi,x]\\
&=-\mu(x)\mu(y)\ell-\mu(x)\mathrm{D}(y)-\mu(x)\lambda(y)\xi-\ol{[\mathrm{D}(x),y]}-\overline{\omega}_\mathrm{D}(\mathrm{D}(x),y)\xi+\lambda(x)\mu(y)\xi\\
&+\mu(\ol{[x,y]})\ell+\mathrm{D}(\ol{[x,y]})+\lambda(\ol{[x,y]})\xi+t\overline{\omega}_\mathrm{D}(x,y)\xi\\
&+\mu(x)\mu(y)\ell+\mu(y)\mathrm{D}(x)+\mu(y)\lambda(x)\xi+\ol{[\mathrm{D}(y),x]}+\overline{\omega}_\mathrm{D}(\mathrm{D}(y),x)\xi-\lambda(y)\mu(x)\xi.
\end{align*}
So
\[
\begin{cases}
\mu(\ol{[x,y]})=0\\
(\Delta \mathrm{D})(x,y)-\mu(y)\mathrm{D}(x)-\mu(x)\mathrm{D}(y) = 0,\\
t\overline{\omega}_\mathrm{D}(x,y)-\overline{\omega}_\mathrm{D}(x,\mathrm{D}(y))-\overline{\omega}_\mathrm{D}(\mathrm{D}(x),y)+\lambda(\ol{[x,y]})+2\lambda(x)\mu(y)
-2\lambda(y)\mu(x) = 0
\end{cases}
\]
We thus have
\[
\begin{cases}
\md\mu=0\\
\Delta\mathrm{D}-\mu\otimes\mathrm{D} = 0,\\
t\overline{\omega}_\mathrm{D}-\overline{\omega}_{\mathrm{D},\mathrm{D}}-\md\lambda+2\lambda\otimes\mu = 0.
\end{cases}
\]
 On the one hand,
\begin{align*}
\mathop{\resizebox{1.3\width}{!}{$\sum$}}\limits_{\mathrm{cycl}}[[x,\ell],\xi]&=-[\mu(x)\ell+\mathrm{D}(x)+\lambda(x)\xi,\xi]\\
&=\big(t\mu(x)-\mu(\mathrm{D}(x))\big)\xi.
\end{align*}
On the other hand,
\begin{align*}
\mathop{\resizebox{1.3\width}{!}{$\sum$}}\limits_{\mathrm{cycl}}[[x,\xi],y]&=-\mu(x)\mu(y)\xi+\mu(x)\mu(y)\xi-\mu(\ol{[x,y]})\xi\\
&=-\mu(\ol{[x,y]})\xi.
\end{align*}
\end{proof}
\begin{Def}
The symplectic Lie algebra $(\G, \omega) =(\G_{\mathrm{D},\lambda,\mu,t},\omega)$ is called generalized symplectic oxidation of $(\overline{\G}, \overline{\omega})$ with respect to the data $\mathrm{D}$, $\lambda$, $\mu$ and $t$.
\end{Def}

\begin{remark}\label{remarkcentral}
\begin{enumerate}
	\item If $\xi$ is a central element, that is, $\mu=0$ and $t=0$. It follows that
	\[\om(\xi,[x,y])=-\om(x,[y,\xi])-\om(y,[\xi,x])=0.\]
Then, $[\G,\G]\subset \langle\xi\rangle^\perp=\ol\G\oplus \xi$  and the brackets  become
\begin{align*}
	[x,y]&=\ol{[x,y]}+\overline{\om}_\mathrm{D}(x,y)\xi,& \quad\text{for all}\quad x,y\in \ol{\G},\\
	[\ell,x]&=\mathrm{D}(x)+\lambda(x)\xi,&\quad\text{for all}\quad  x\in \ol{\G}.
\end{align*} 
Where $\mathrm{D}$ becomes a derivation of $\overline{\G}$ and $\lambda\in\overline{\G}^*$ is a $1$-form  satisfying $\overline{\om}_{\mathrm{D},\mathrm{D}}=-\md\lambda$.  This special case of generalized symplectic oxidation essentially coincides with the "symplectic oxidation", or "double extension" as
developed in \textsc{\cite{B-C}} and \textsc{\cite{M-R}}.
	\item If $(\R\xi)^\perp$ is an ideal, which is equivalent to $\mu=0$. In this case, $(\overline{\G},\overline{\om})$ is a normal symplectic reduction
	of $(\G,\om)$ with respect to the isotropic ideal $\langle\xi\rangle$, see \textsc{\cite{B-C}}.
	We also say that $(\G,\om)$ is a normal symplectic oxidation of $(\overline{\G},\overline{\omega})$ with respect to the data $(\mathrm{D},\lambda,t)$.
\end{enumerate}
\end{remark}

Let $(\G, \omega)$ be a $2n$-dimensional symplectic Lie algebra, $\mathfrak{j}=\langle\xi\rangle$
a one-dimensional isotropic ideal and $(\overline{\G},\overline{\omega})$ the symplectic reduction with respect to $\mathfrak{j}$. By choosing $\ell$ in the complement of $\mathfrak{j}^{\perp_\omega}$ with $\omega(\xi,\ell) = 1$, we obtain an isotropic direct sum decomposition
\begin{align*}
\G&=\langle\ell\rangle\oplus\mathfrak{j}^{\perp_\omega}=\langle\ell\rangle\oplus\overline{\G}\oplus\mathfrak{j}.
\end{align*}
The adjoint action $\mathrm{ad}(\xi)$ restricts to an endomorphism
\[
\mathrm{D}_\ell \in \mathrm{End}(\overline{\G}), \quad \overline{\G} := \mathfrak{j}^{\perp_\omega}/\mathfrak{j},
\]
where $\mathrm{D}_\ell = \mathrm{ad}(\ell)|_{\overline{\G}}$. With respect to the above decomposition, we obtain the expressions given in~$(\ref{Brackestgenera})$, which satisfy the hypotheses of  Proposition~$\ref{prgeneralized}$.

Put the following for all $x \in \overline{\G}$:
\begin{enumerate}
    \item $\mu_\omega(x) = -\omega\big(\ell, [\xi, x]\big)$,
    \item $\lambda_\omega(x) = -\omega\big(\ell, [\ell, x]\big)$,
    \item $t_\omega = -\omega\big(\ell, [\xi, \ell]\big)$.
\end{enumerate}

The generalized symplectic oxidation construction (Proposition~\ref{prgeneralized}) 
reverses symplectic reduction by one-dimensional ideals.

\begin{pr}\label{Inverse}
	Let $(\G,\om)$ be a $2n$-dimensional symplectic Lie algebra  which is reduced to $(\ol{\G},\ol{\om})$  with respect to a one-dimensional ideal $\mathfrak{j}=\langle\xi\rangle$. Then 
	\begin{enumerate}
	\item $\mu_\omega=\mu$,
	\item  $\lambda_\omega=\lambda$,
	\item $t_\omega=t$.
	
	\end{enumerate}
Furthermore, $(\G,\om)$ is a generalized symplectic oxidation of $(\ol{\G},\ol{\om})$ with respect to $\mathrm{D}_\ell$, $\mu_\omega$, $\lambda_\omega$ and $t_\omega$.	
\end{pr}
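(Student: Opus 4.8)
The plan is to prove the three equalities by direct substitution, using that the decomposition $\G = \langle\ell\rangle \oplus \overline{\G} \oplus \mathfrak{j}$ is adapted to $\omega$. First I would record the orthogonality relations coming from $\overline{\G} = (\R\ell\oplus\R\xi)^{\perp_\omega}$ together with the normalization $\omega(\xi,\ell)=1$: namely $\omega(\ell,\overline{\G}) = \omega(\xi,\overline{\G}) = 0$, $\omega(\ell,\ell) = \omega(\xi,\xi) = 0$, and $\omega(\ell,\xi)$ a fixed nonzero scalar. I would also note that, relative to this decomposition, $\mathrm{ad}(\ell)$ sends $x\in\overline{\G}$ to an element of $\G$ whose $\overline{\G}$-component is by definition $\mathrm{D}_\ell(x)$; since $\mathfrak{j}^{\perp_\omega}=\overline{\G}\oplus\mathfrak{j}$ is a subalgebra and $\mathfrak{j}$ an ideal, the full bracket table of $\G$ in this decomposition is exactly of the form~(\ref{Brackestgenera}) with data $(\mathrm{D}_\ell,\mu,\lambda,t)$, where $\overline{\omega}$ is the restriction of $\omega$ to $\overline{\G}$ (transported along the isomorphism $\overline{\G}\xrightarrow{\sim}\mathfrak{j}^{\perp_\omega}/\mathfrak{j}$).

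The core of the argument is then three one-line computations. For $t_\omega$, insert $[\xi,\ell]=t\xi$ into $t_\omega = -\omega(\ell,[\xi,\ell])$. For $\mu_\omega$, insert $[\xi,x]=-\mu(x)\xi$ into $\mu_\omega(x) = -\omega(\ell,[\xi,x])$. For $\lambda_\omega$, insert $[\ell,x]=\mu(x)\ell+\mathrm{D}_\ell(x)+\lambda(x)\xi$ into $\lambda_\omega(x) = -\omega(\ell,[\ell,x])$ and kill the first two summands using $\omega(\ell,\ell)=0$ and $\omega(\ell,\overline{\G})=0$, so that only the $\lambda(x)\xi$ term survives. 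Each substitution recovers the corresponding piece of the oxidation data up to the fixed scalar $\omega(\ell,\xi)$, and once this scalar is pinned down from $\omega(\xi,\ell)=1$ one gets $\mu_\omega=\mu$, $\lambda_\omega=\lambda$, $t_\omega=t$.

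For the final assertion I would argue that these identifications, together with the fact that the structure table of $\G$ is of the form~(\ref{Brackestgenera}) with data $(\mathrm{D}_\ell,\mu_\omega,\lambda_\omega,t_\omega)$, give precisely $(\G,\omega)=(\G_{\mathrm{D}_\ell,\lambda_\omega,\mu_\omega,t_\omega},\omega)$; and the compatibility conditions $(1)$--$(4)$ of Proposition~\ref{prgeneralized} then hold automatically, since they are nothing but the Jacobi identity of $\G$ and the closedness of $\omega$ written componentwise with respect to this decomposition, both of which are true by hypothesis. Hence $(\G,\omega)$ is a generalized symplectic oxidation of $(\overline{\G},\overline{\omega})$ with respect to $(\mathrm{D}_\ell,\mu_\omega,\lambda_\omega,t_\omega)$. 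I do not expect a genuine obstacle here: the only points requiring care are the sign bookkeeping of $\omega(\ell,\xi)$ against $\omega(\xi,\ell)=1$, and checking that the $\overline{\omega}$ appearing in~(\ref{Brackestgenera}) is literally $\omega|_{\overline{\G}}$ so that ``$\mathrm{D}$'' there and ``$\mathrm{D}_\ell$'' here denote the same endomorphism; past that the proof is mechanical substitution.
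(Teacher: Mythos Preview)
Your proposal is correct and follows essentially the same approach as the paper's proof: the paper likewise verifies each of the three equalities by direct substitution of the bracket formulas~(\ref{Brackestgenera}) into the defining expressions for $\mu_\omega,\lambda_\omega,t_\omega$, using only $\omega(\ell,\overline{\G})=0$, $\omega(\ell,\ell)=0$ and the normalization of $\omega(\ell,\xi)$, and then invokes Proposition~\ref{prgeneralized} for the final assertion. Your write-up is somewhat more explicit about the orthogonality bookkeeping and the identification of $\mathrm{D}_\ell$ with the $\mathrm{D}$ appearing in~(\ref{Brackestgenera}), but the argument is the same mechanical substitution.
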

\begin{proof}
	Since $[\xi, x] = \mu(x)\xi$, it follows that
\[
\omega\big(\ell, [\xi, x]\big) = \omega\big(\ell, \mu(x)\xi\big) = -\mu(x).
\]
Thus, $1.$ holds.

Additionally, since $[\ell, x] = \mu(x)\ell + \mathrm{D}(x) + \lambda(x)\xi$, we immediately obtain
\[
\omega\big(\ell, [\ell, x]\big) = -\lambda(x).
\]
Hence, $2.$ holds. Since $[\xi, \ell] = t\xi$, we evidently have
$\omega(\ell, [\xi, \ell]) = -t$. Therefore, $3.$ holds. The remaining assertion follows as a direct consequence of Proposition~$\ref{prgeneralized}$.

\end{proof}

\begin{pr}\label{solvanilpoLevi}
Let $(\G_{(\mathrm{D},\lambda,\mu,t)},\omega)$ be a generalized symplectic oxidation of $(\ol{\G},\ol{\om})$ with respect to $\mathrm{D},\lambda,\mu$ and $t$. Then, the following equivalences hold for $\G_{(\mathrm{D},\lambda,\mu,t)}$
\begin{enumerate}
\item It is a nilpotent Lie algebra if and only if
\begin{enumerate}
\item $\overline{\G}$ is nilpotent,
\item $\mu\equiv0$,
\item $\mathrm{D}$ is a nilpotent endomorphism.\\
Furthermore, if $\overline{\G}$ is $k$-step nilpotent, and $\mathrm{D}$ is a $p$-nilpotent endomorphism with $k \geq p$ $($resp. $k < p)$, then $\G$ is at most $(k+2)$-step $($resp. $(p+2)$-step$)$ nilpotent.
 \end{enumerate}
\item It is a solvable Lie algebra if and only if $\overline{\G}$ is solvable. Moreover, if $\overline{\G}$ is $k$-step solvable, then $\G$ is at most $(k+2)$-step solvable.
\item It has nontrivial Levi-Malcev decomposition if and only if $\overline{\G}$ has nontrivial Levi-Malcev decomposition, i.e., $\overline{\G}=\mathfrak{rad}(\overline{\G})\rtimes\mathfrak{s}$. Moreover, $\G=\langle\ell\rangle\oplus\mathfrak{rad}(\overline{\G})\oplus\langle\xi\rangle\rtimes\mathfrak{s}
$.

\end{enumerate}

\end{pr}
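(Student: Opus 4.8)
The plan is to exploit the explicit bracket structure~(\ref{Brackestgenera}) together with the constraints from Proposition~\ref{prgeneralized}, working with the decomposition $\G = \langle\ell\rangle \oplus \ol{\G} \oplus \langle\xi\rangle$. For each of the three equivalences I would argue by direct computation of the relevant series (lower central series, derived series, radical), using the fact that $\langle\xi\rangle$ is a $1$-dimensional ideal and that $\ol{\G} \cong \mathfrak{j}^{\perp_\omega}/\mathfrak{j}$ inherits its bracket $\ol{[\cdot,\cdot]}$ as a quotient.

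For~(1), the forward direction: if $\G$ is nilpotent then every quotient and subalgebra is nilpotent, so $\ol{\G}$ is nilpotent; moreover $\mathrm{ad}(\ell)$ must be a nilpotent operator, and since from~(\ref{Brackestgenera}) the $\ell$-component of $\mathrm{ad}(\ell)(x)$ is $\mu(x)\ell$, nilpotency of $\mathrm{ad}(\ell)$ forces $\mu \equiv 0$ (otherwise $\mathrm{ad}(\ell)$ has a nonzero eigenvalue-type obstruction on $\langle\ell\rangle\oplus\ker\mu$), and then the $\ol{\G}$-component forces $\mathrm{D}$ nilpotent; one also checks $t=0$ is implied since $[\xi,\ell]=t\xi$. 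Conversely, assuming (a)--(c), with $\mu\equiv0$ the brackets collapse to $[x,y]=\ol{[x,y]}+\ol\om_{\mathrm{D}}(x,y)\xi$, $[\ell,x]=\mathrm{D}(x)+\lambda(x)\xi$, $[\xi,\cdot]=0$, so $\xi$ is central; I would then show by induction that $\mathcal{C}^{j}(\G) \subseteq \mathcal{C}^{j-2}(\ol{\G}) + (\text{terms built from } \mathrm{D}^{j-1}) + \langle\xi\rangle$, and read off the $(k+2)$- resp.\ $(p+2)$-step bound by tracking how each bracket with $\ell$ either drops a step in $\ol\G$ or applies one more power of $\mathrm{D}$, with the final $\xi$-layer accounting for the extra $+2$. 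The bookkeeping here — proving the claimed step bound cleanly rather than just nilpotency — is where I expect the main friction.

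For~(2), solvability: again $\ol{\G}$ solvable is necessary as a quotient. For sufficiency, note $\langle\xi\rangle$ is an abelian ideal and $\G/\langle\xi\rangle \cong \langle\ell\rangle \ltimes \ol{\G}$ (a $1$-dimensional extension of $\ol{\G}$), which is solvable whenever $\ol{\G}$ is; hence $\G$ is solvable. For the step bound I would use $\mathcal{D}^{j}(\G) \subseteq \mathcal{D}^{j-1}(\ol{\G}) + \langle\xi\rangle$ for $j\geq 1$ after the first derived step peels off $\langle\ell\rangle$ modulo $\ol\G\oplus\langle\xi\rangle$, giving at most $(k+2)$ steps. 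For~(3), the Levi--Malcev statement: since $\langle\ell\rangle\oplus\langle\xi\rangle$ is a solvable ideal (it is $2$-dimensional, hence solvable, and by~(\ref{Brackestgenera}) closed under bracket and absorbing... — actually one checks $\langle\ell,\xi\rangle$ together with whatever is needed lands in the radical), the radical of $\G$ contains $\langle\ell\rangle\oplus\mathfrak{rad}(\ol{\G})\oplus\langle\xi\rangle$ and a Levi factor of $\G$ maps isomorphically onto a Levi factor of $\ol{\G}$ under the projection $\G \to \ol{\G}$; conversely a semisimple $\mathfrak{s}\subseteq\ol{\G}$ lifts (since $\mathrm{D}|_{\mathfrak{s}}$, $\lambda|_{\mathfrak{s}}$, $\mu|_{\mathfrak{s}}$ are forced to vanish or be inner by the Proposition~\ref{prgeneralized} relations and Whitehead's lemma) to a semisimple subalgebra of $\G$, yielding the stated decomposition $\G=\langle\ell\rangle\oplus\mathfrak{rad}(\ol{\G})\oplus\langle\xi\rangle\rtimes\mathfrak{s}$. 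The delicate point in~(3) is verifying that $\mathfrak{s}$ genuinely lifts as a subalgebra — i.e.\ that the extension data restricted to $\mathfrak{s}$ is trivial — which is where I would invoke vanishing of $H^1(\mathfrak{s})$ and $H^2(\mathfrak{s})$; I expect this, rather than the radical computation, to be the main obstacle in part~(3).
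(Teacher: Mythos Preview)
Your overall strategy --- exploiting the ideal $\langle\xi\rangle$ and the quotient structure rather than computing the series term-by-term as the paper does --- is sound in spirit, but several of the structural claims you lean on are false when $\mu\not\equiv 0$, and this is precisely the case that distinguishes generalized oxidation from the central case.

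\textbf{Part (2), sufficiency.} You write $\G/\langle\xi\rangle \cong \langle\ell\rangle\ltimes\ol{\G}$, but this is wrong: in the quotient $[\ell,x]=\mu(x)\ell+\mathrm{D}(x)$, so $\ol{\G}$ is \emph{not} an ideal unless $\mu\equiv 0$. You have a bicrossed product $\langle\ell\rangle\bowtie\ol{\G}$, and a one-dimensional extension of a solvable algebra need not be solvable without an ideal in the right place (think of a Borel inside $\mathfrak{sl}_2$). The fix is to use a different ideal: since $\mathrm{d}\mu=0$ and $\mu\circ\mathrm{D}=t\mu$ (Proposition~\ref{prgeneralized}), one checks that $\ker\mu\oplus\langle\xi\rangle$ \emph{is} an ideal of $\G$, with two-dimensional (hence solvable) quotient; this gives solvability of $\G$ cleanly. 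The paper instead computes $\mathcal{D}^j(\G)$ explicitly, tracking how the $\mu(x)\ell$-terms propagate; your structural route is shorter once the correct ideal is identified, but as written the argument has a gap.

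\textbf{Part (3).} You claim $\langle\ell,\xi\rangle$ is a solvable ideal, then hedge. It is not an ideal: $[\ell,x]=\mu(x)\ell+\mathrm{D}(x)+\lambda(x)\xi$ has $\ol{\G}$-component $\mathrm{D}(x)$. What you need is that $\ell,\xi\in\mathfrak{rad}(\G)$, which the paper establishes by a case analysis on whether $\mathrm{ad}_\ell$ is nilpotent (computing $\mathrm{ad}_\ell^k$ explicitly). Your Whitehead-lemma idea for lifting $\mathfrak{s}\subset\ol{\G}$ to $\G$ is more elegant than the paper's computation and would work, but you must first verify that $\mu|_{\mathfrak{s}}=0$ (immediate from $\mathfrak{s}=[\mathfrak{s},\mathfrak{s}]$ and $\mathrm{d}\mu=0$) and that the cocycles $\ol\omega_{\mathrm{D}}|_{\mathfrak{s}\times\mathfrak{s}}$ and $\lambda|_{\mathfrak{s}}$ are exact; this is where $H^1(\mathfrak{s})=H^2(\mathfrak{s})=0$ enters, and it should be stated rather than gestured at.

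\textbf{Part (1).} Your argument that nilpotency of $\mathrm{ad}(\ell)$ forces $\mu\equiv 0$ is shaky: the $\ell$-component of $\mathrm{ad}_\ell^k(x)$ is $t^{k-1}\mu(x)\ell$, which vanishes for $k\geq 2$ if $t=0$, so $\mathrm{ad}(\ell)$ alone does not see $\mu$ directly. The paper's argument is cleaner: $\mathrm{ad}_x\xi=\mu(x)\xi$ exhibits $\xi$ as an eigenvector of $\mathrm{ad}_x$ with eigenvalue $\mu(x)$, and Engel forces $\mu(x)=0$; similarly $\mathrm{ad}_\xi\ell=t\xi$ gives $t=0$.
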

\begin{proof}
Suppose that $\G = \G_{(\mathrm{D}, \lambda, \mu, t)}$ is a nilpotent Lie algebra of nilpotency degree $k$. It follows from Engel's characterization Theorem that $\G$ is nilpotent if and only if, for each $x \in \G$, the operator $\mathrm{ad}_x$ is nilpotent. Then,  for all $x\in\overline{\G}$, we have $\mathrm{ad}_x^k\xi=\mu^k(x)\xi$ and $\mathrm{ad}_\xi^k\ell=t^k\xi$ for all $k\in\mathbb{N}^\ast$. This implies that $t=0$ and $\mu(x)=0$  for all $x\in\overline{\G}$. We also have $\mathrm{ad}_xy=\overline{\mathrm{ad}}_xy+\overline{\omega}_{\mathrm{D}}\left(x,y\right)\xi$. By induction, we obtain  $\mathrm{ad}^k_xy=\overline{\mathrm{ad}}^k_xy+\overline{\omega}_{\mathrm{D}}\big(x,\overline{\mathrm{ad}}^{k-1}_xy\big)\xi$~ for all $x,y\in\overline{\G}$, and $k\geq1$. This  show that $\overline{\G}$ is nilpotent. For all $x\in\overline{\G}$, we have $\mathrm{ad}_\ell x=\mathrm{D}(x)+\lambda\left(x\right)\xi$, and thus, for all $k\in\mathbb{N}^\ast$, $\mathrm{ad}^k_\ell x=\mathrm{D}^k(x)+\lambda\left(\mathrm{D}^{k-1}(x)\right)\xi$. Therefore, $\mathrm{D}$ is a nilpotent endomorphism. Finally, for all $x \in \overline{\G}$, we have $\mathrm{ad}_x\ell = -\mathrm{D}(x) - \lambda(x)\xi$, and for all $k \geq 2$, $\mathrm{ad}_x^k \ell = -\overline{\mathrm{ad}}^{k-1}_x \big(\mathrm{D}(x)\big)-\overline{\omega}_{\mathrm{D}}\Big(x,\overline{\mathrm{ad}}^{k-2}_x\big(\mathrm{D}(x)\big)\Big)\xi$; moreover, if $\overline{\G}$ is $(k-2)$-step nilpotent, this equality holds. Conversely, if $\overline{\G}$ is a $k$-step nilpotent Lie algebra with $\mu \equiv 0$ and $\mathrm{D}$ a nilpotent endomorphism, then $\G_{(\mathrm{D},0,\lambda,0)}$ is nilpotent, which is exactly the symplectic oxidation of a nilpotent Lie algebra. This shows the first equivalence.

Suppose now that $\G=\G_{(\mathrm{D},\mu,\lambda,t)}$ is solvable. We discuss now derived series $\mathcal{D}^i(\G)$ of ideals in  $\G$. On the one hand, we have
\begin{align*}
\mathcal{D}^1(\G)&=\langle\mathcal{D}^1(\overline{\G}),\mu(\overline{\G})\ell\oplus\mathrm{D}(\overline{\G}),\xi\rangle.
\end{align*}
On the other hand, for all $x,y\in\overline{\G}$, we have
\begin{align*}
[\mu(x)\ell+\mathrm{D}(x),\mu(y)\ell+\mathrm{D}(y)]&=\mu(x)\mu\big(\mathrm{D}(y)\big)\ell+\mu(x)\mathrm{D}^2(y)+\mu(x)\lambda\big(\mathrm{D}(y)\big)\xi\\
&~~~~-\mu(y)\mu\big(\mathrm{D}(x)\big)\ell-\mu(y)\mathrm{D}^2(x)-\mu(y)\lambda\big(\mathrm{D}(x)\big)\xi\\
&~~~~+\overline{[\mathrm{D}(x),\mathrm{D}(y)]}+\overline{\omega}_{\mathrm{D}}\big(\mathrm{D}(x),\mathrm{D}(y)\big)\xi\\
&=t\mu(x)\mu(y)\ell+\mu(x)\mathrm{D}^2(y)+\mu(x)\lambda\big(\mathrm{D}(y)\big)\xi\\
&~~~~-t\mu(y)\mu(x)\ell-\mu(y)\mathrm{D}^2(x)-\mu(y)\lambda\big(\mathrm{D}(x)\big)\xi\\
&~~~~+\overline{[\mathrm{D}(x),\mathrm{D}(y)]}+\overline{\omega}_{\mathrm{D}^\ast\mathrm{D}^2}\big(x,y\big)\xi\\
&=(\mu\otimes\mathrm{D}^2)(x,y)+\overline{[\mathrm{D}(x),\mathrm{D}(y)]}+(\mu\otimes\lambda\circ\mathrm{D})(x,y)\xi
\end{align*}
Then, the second derived algebra decomposes as:
\begin{align*}
\mathcal{D}^2(\G)&=\langle\mathcal{D}^2(\overline{\G}),\overline{[\mathrm{D}(\overline{\G}), \mathrm{D}(\overline{\G})]}\oplus\mu\otimes\mathrm{D}^2(\overline{\G},\overline{\G}),\xi\rangle.
\end{align*}
We  will denote the space $\overline{[\mathrm{D}(\overline{\G}), \mathrm{D}(\overline{\G})]} 
\oplus \mu \otimes \mathrm{D}^2(\overline{\G},\overline{\G})$ by $\mathcal{V}\subset\overline{\G}$ 
to simplify the computation of the third derived series $\mathcal{D}^3$.  We obtain, 
\begin{align*}
[\mathcal{D}^2(\overline{\G})\oplus\mathcal{V}\oplus\langle\xi\rangle,\mathcal{D}^2(\overline{\G})\oplus\mathcal{V}\oplus\langle\xi\rangle]&=\langle\mathcal{D}^3(\overline{\G}),\mathcal{D}^2(\overline{\G}),\mathcal{D}^1(\overline{\G}),\xi\rangle.
\end{align*}
Thus,
\begin{align*}
\mathcal{D}^3(\G)&=\langle\mathcal{D}^3(\overline{\G}),\mathcal{D}^2(\overline{\G}),\mathcal{D}^1(\overline{\G}),\xi\rangle.
\end{align*}
Therefore, for all $k\geq4$, we have
\begin{align*}
\mathcal{D}^{k}(\G)&=\langle\mathcal{D}^{k}(\overline{\G}),\mathcal{D}^{k-1}(\overline{\G}),\mathcal{D}^{k-2}(\overline{\G}),\kappa\xi\rangle.
\end{align*}
where $\kappa \in \R$ depends on the degree of solvability, such that $\kappa = 0$ if $\overline{\G}$ is at least $(k-2)$-step solvable.   
If in addition $\G$ is $k$-step solvable, then $\overline{\G}$ is a $(k-2)$-step solvable Lie algebra.

Assume now that $\G$ has nontrivial Levi-Malcev decomposition, i.e., $\G=\mathfrak{r}\rtimes\mathfrak{s}$, with $\mathfrak{s}$ is the Levi factor, and $\mathfrak{r}$ its radical part.  The following relations hold: \[[\mathfrak{r},\mathfrak{r}]\subset\mathfrak{r}\quad[\mathfrak{r},\mathfrak{s}]\subset\mathfrak{r},\quad[\mathfrak{s},\mathfrak{s}]\subset\mathfrak{s}.\]

 Note that $\mathrm{Tr}(\mathrm{ad}_s) = 0$ for all $s \in \mathfrak{s}$. Indeed, since $\mathfrak{s}$ is perfect (i.e., $\mathfrak{s} = [\mathfrak{s}, \mathfrak{s}]$), any $s \in \mathfrak{s}$ can be written as $s = [x, y]$ for some $x, y \in \mathfrak{s}$. Therefore,
\[
\mathrm{Tr}(\mathrm{ad}_s) = \mathrm{Tr}(\mathrm{ad}_{[x,y]}) = \mathrm{Tr}([\mathrm{ad}_x, \mathrm{ad}_y]) = 0,
\]
where the last equality follows from the general property that the trace of any commutator is zero. On one hand, suppose that either the endomorphism $\mathrm{D}$ is non-nilpotent or $t \neq 0$. We have the adjoint actions:
\begin{align*}
\mathrm{ad}_\ell \xi &= -t \xi, \\
\mathrm{ad}_\ell x &= \mu(x)\ell + \mathrm{D}(x) + \lambda(x)\xi.
\end{align*}
By induction, we obtain for all $k \in \mathbb{N}^*$:
\begin{align*}
\mathrm{ad}_\ell^k \xi &= (-1)^k t^k \xi, \\
\mathrm{ad}_\ell^k x &=\mu\big(\mathrm{D}^{k-1}(x)\big)\ell+ \mathrm{D}^k(x) + \left(\mathop{\resizebox{1.3\width}{!}{$\sum^{k-1}$}}\limits_{j=0} (-t)^j \lambda\big(\mathrm{D}^{k-1-j}(x)\big)\right)\xi.
\end{align*}

This implies that $\ell \notin \mathfrak{s}$, since $\mathfrak{s}$ consists of elements with nilpotent adjoint action. Then, $\ell \in \mathfrak{r}$ (the radical), which shows that $\mathfrak{s} \subset \overline{\G}$. Since $\overline{\G}$ is symplectic, it admits a non-trivial Levi decomposition $\overline{\G} = \mathfrak{s} \ltimes \mathfrak{rad}(\overline{\G})$. Consequently, the full Lie algebra decomposes as
\[
\G = \mathfrak{s} \ltimes \mathfrak{r}, \quad \text{where} \quad \mathfrak{r} = \langle \ell \rangle \oplus \mathfrak{rad}(\overline{\G}) \oplus \langle \xi \rangle.
\]
Consider now the case where both $t = 0$ and $\mathrm{D}$ is a nilpotent endomorphism. Then the adjoint operator $\mathrm{ad}_\ell$ is nilpotent.

 Note that, $\G=(\langle\ell\rangle\bowtie
\overline{\G})\ltimes\langle\xi\rangle$, where $\overline{\G}$ is a symplectic Lie algebra. First, observe that since $\langle \xi \rangle$ is a one-dimensional ideal of $\G$, it follows that $\langle \xi \rangle \subset \mathfrak{r}$. On the other hand, we have
\[[\langle\ell\rangle,\mathfrak{r}]=\mu\big(\mathfrak{r}\big)\ell+\mathrm{D}\big(\mathfrak{r}\big)+\lambda\big(\mathfrak{r}\big)\xi\subset\mathfrak{r}.\]
If $\mu \not\equiv0$, then $\langle\ell\rangle\subset\mathfrak{r}$. Now, assume that $\mu \equiv 0$. Then, $\G_{(\mathrm{D},0,\lambda,0)}$ is a symplectic oxidation of $\overline{\G}$. 

Suppose that $\langle \ell \rangle \not\subseteq \mathfrak{r}$, and consider $\mathfrak{r}' = \langle \ell \rangle \oplus \mathfrak{r}$. Since $\mathfrak{r}$ is a maximal solvable ideal, $\mathfrak{r}'$ is also a solvable ideal of $\G_{(\mathrm{D},0,\lambda,0)}$, because $\langle \ell \rangle$ is a nilpotent subalgebra of $\G_{(\mathrm{D},0,\lambda,0)}$. This contradicts the maximality of $\mathfrak{r}$. Therefore, $\langle \ell \rangle \subset \mathfrak{r}$, and consequently $\mathfrak{s} \subset \overline{\G}$. Since $\overline{\G}$ is symplectic, it follows that $\overline{\G}$ has a nontrivial Levi-Malcev decomposition, i.e.,$
\overline{\G} = \mathfrak{s} \ltimes \mathfrak{rad}(\overline{\G}).
$

Conversely, suppose that $\overline{\G}$ has a nontrivial Levi-Malcev decomposition, and let $\G_{(\mathrm{D},\mu,\lambda,t)}$ be its generalized symplectic oxidation. Since $\overline{\G}$ is symplectic and $\G_{(\mathrm{D},\mu,\lambda,t)}$ is also symplectic, it follows that $\G_{(\mathrm{D},\mu,\lambda,t)}$ has a nontrivial Levi-Malcev decomposition.

\end{proof}

\begin{pr}\label{diagram}
	Let  $(\overline{\G}_1,\overline{\omega}_1)$ and $(\overline{\G}_2,\overline{\omega}_2)$ be  isomorphic symplectic Lie algebras. Then their generalized symplectic oxidations   are symplectomorphically isomorphic.
	\end{pr}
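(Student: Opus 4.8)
The plan is to transport the oxidation data along the given symplectic isomorphism and then lift it to an isomorphism of the oxidized Lie algebras. Let $\Psi\colon\overline{\G}_1\to\overline{\G}_2$ be a symplectic Lie algebra isomorphism, so $\Psi(\ol{[x,y]}_1)=\ol{[\Psi x,\Psi y]}_2$ and $\Psi^{*}\overline{\om}_2=\overline{\om}_1$. Fix a generalized symplectic oxidation $\G_1=\G_{(\mathrm{D}_1,\lambda_1,\mu_1,t_1)}$ of $(\overline{\G}_1,\overline{\om}_1)$ with distinguished vectors $\ell_1,\xi_1$ and brackets as in $(\ref{Brackestgenera})$, and define transported data on $\overline{\G}_2$ by
\[
\mathrm{D}_2:=\Psi\circ\mathrm{D}_1\circ\Psi^{-1},\qquad
\mu_2:=\mu_1\circ\Psi^{-1},\qquad
\lambda_2:=\lambda_1\circ\Psi^{-1},\qquad
t_2:=t_1 .
\]

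First I would verify that $(\mathrm{D}_2,\lambda_2,\mu_2,t_2)$ still satisfies conditions $(1)$--$(4)$ of Proposition~$\ref{prgeneralized}$, so that $\G_2:=\G_{(\mathrm{D}_2,\lambda_2,\mu_2,t_2)}$ is a genuine generalized symplectic oxidation of $(\overline{\G}_2,\overline{\om}_2)$ with distinguished vectors $\ell_2,\xi_2$. This is a routine covariance computation: from $\Psi^{*}\overline{\om}_2=\overline{\om}_1$ and the intertwining relation $\mathrm{D}_2\Psi=\Psi\mathrm{D}_1$ one gets $(\overline{\om}_2)_{\mathrm{D}_2}(\Psi x,\Psi y)=(\overline{\om}_1)_{\mathrm{D}_1}(x,y)$, and in the same way every linear or bilinear expression occurring in $(1)$--$(4)$ --- namely $\overline{\om}_{\mathrm{D},\mathrm{D}}$, $\Delta\mathrm{D}$, $\md\lambda$, $\md\mu$, $\mu\circ\mathrm{D}$, $\lambda\otimes\mu$, $\lambda\otimes\mathrm{D}$ --- is carried by $\Psi$ (resp. pulled back along $\Psi$) from the data indexed by $1$ to the data indexed by $2$; hence each identity holds for $(\mathrm{D}_2,\mu_2,\lambda_2,t_2)$ because it holds for $(\mathrm{D}_1,\mu_1,\lambda_1,t_1)$. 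This step can also be bypassed: once the map $\widehat\Psi$ below is in hand, one pushes the bracket of $\G_1$ forward by it, observes that the result has exactly the shape $(\ref{Brackestgenera})$ with data $(\mathrm{D}_2,\lambda_2,\mu_2,t_2)$, and invokes Proposition~$\ref{prgeneralized}$ in reverse to obtain admissibility for free.

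Next I would define $\widehat\Psi\colon\G_1\to\G_2$ by $\widehat\Psi(\ell_1)=\ell_2$, $\widehat\Psi|_{\overline{\G}_1}=\Psi$, $\widehat\Psi(\xi_1)=\xi_2$; it is a linear isomorphism since it is block-diagonal along $\G_i=\langle\ell_i\rangle\oplus\overline{\G}_i\oplus\langle\xi_i\rangle$ with $\Psi$ on the middle block and identities on the two lines. That $\widehat\Psi$ preserves the Lie bracket follows by matching the four families of brackets in $(\ref{Brackestgenera})$ term by term: for $x,y\in\overline{\G}_1$ one uses $\Psi(\ol{[x,y]}_1)=\ol{[\Psi x,\Psi y]}_2$ and $(\overline{\om}_1)_{\mathrm{D}_1}(x,y)=(\overline{\om}_2)_{\mathrm{D}_2}(\Psi x,\Psi y)$, and for the brackets with $\ell_1$ or $\xi_1$ one uses $\mu_2(\Psi x)=\mu_1(x)$, $\lambda_2(\Psi x)=\lambda_1(x)$, $\mathrm{D}_2(\Psi x)=\Psi(\mathrm{D}_1(x))$ and $t_2=t_1$; each check is immediate. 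Finally, since the canonical symplectic form of an oxidation satisfies $\om_i(\xi_i,\ell_i)=1$, $\om_i|_{\overline{\G}_i\times\overline{\G}_i}=\overline{\om}_i$ and $\overline{\G}_i\perp_{\om_i}\big(\langle\ell_i\rangle\oplus\langle\xi_i\rangle\big)$, evaluating $\widehat\Psi^{*}\om_2$ on the summands of $\G_1$ gives $\widehat\Psi^{*}\om_2=\om_1$, using $\Psi^{*}\overline{\om}_2=\overline{\om}_1$ on the middle block and the normalisations elsewhere. Thus $\widehat\Psi$ is a symplectomorphism of Lie algebras; running the same construction with $\Psi^{-1}$ yields the reverse correspondence, so the generalized symplectic oxidations of $(\overline{\G}_1,\overline{\om}_1)$ and of $(\overline{\G}_2,\overline{\om}_2)$ coincide up to symplectomorphism.

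I do not expect a genuine obstacle: the whole argument is a transport of structure. The only delicate point is bookkeeping --- applying the normalisations $\om(\xi,\ell)=1$ and $\om(\overline{\G},\ell)=\om(\overline{\G},\xi)=0$ consistently, and keeping track of signs when substituting the transported data into $(\ref{Brackestgenera})$ --- and even the verification of conditions $(1)$--$(4)$ can be dropped in favour of the reverse application of Proposition~$\ref{prgeneralized}$ mentioned above.
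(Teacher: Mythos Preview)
Your proposal is correct and follows essentially the same approach as the paper: transport the oxidation data $(\mathrm{D},\mu,\lambda,t)$ along the given symplectomorphism and extend block-diagonally by $\ell_1\mapsto\ell_2$, $\xi_1\mapsto\xi_2$, then verify bracket-by-bracket and block-by-block that the extension is a symplectic Lie algebra isomorphism. Your write-up is in fact slightly more careful than the paper's --- you explicitly address why the transported data again satisfies the conditions of Proposition~\ref{prgeneralized} (and note the shortcut via pushing forward the bracket), and you remark that applying the construction to $\Psi^{-1}$ gives the reverse correspondence --- but the underlying argument is identical.
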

	\begin{proof}
	Let $(\G_{(\mathrm{D}_1,\mu_1,\lambda_1,t_1)}, \om_1)$ and $(\G_{(\mathrm{D}_2,\mu_2,\lambda_2,t_2)}, \om_2)$ be two $2n$-dimensional generalized symplectic oxidations of $(\overline{\G}_1,\overline{\omega}_1)$ and $(\overline{\G}_2,\overline{\omega}_2)$, respectively. Suppose $\Phi: (\overline{\G}_1,\overline{\omega}_1) \longrightarrow (\overline{\G}_2,\overline{\omega}_2)$ is a symplectomorphism of Lie algebras. On the one hand, for every pair of $1$-forms $\lambda_2, \mu_2 \in \overline{\G}_2^*$, there exist $\lambda_1, \mu_1 \in \overline{\G}_1^*$ such that:
\[
(\Phi^*)^{-1} \lambda_1 = \lambda_2 \quad \text{and} \quad (\Phi^*)^{-1} \mu_1 = \mu_2.
\]

On the other hand, if $\mathrm{D}_2: \overline{\G}_2\longrightarrow \overline{\G}_2$ is an endomorphism, then there exists an endomorphism $\mathrm{D}_1 \in \mathrm{End}(\overline{\G}_1)$ such that:
\[
\mathrm{D}_2 = \Phi \circ \mathrm{D}_1 \circ \Phi^{-1}.
\]

We extend $\Phi$ to a symplectomorphism between the generalized symplectic oxidations of $(\overline{\G}_1,\overline{\omega}_1)$ and $(\overline{\G}_2,\overline{\omega}_2)$ by defining:
\[
\tilde{\Phi}|_{\overline{\G}_1} = \Phi, \quad \tilde{\Phi}(\xi_1) = \xi_2, \quad \tilde{\Phi}(\ell_1) = \ell_2.
\]

We summarize the above construction with the following diagram:
\[
\begin{tikzcd}
\big( \G_{(\mathrm{D}_1, \mu_1, \lambda_1, \xi_1,t)}, \omega_1 \big)
\arrow[r, densely dotted, "\widetilde{\Phi}"]
\arrow[d, leftrightarrow]
& \big( \G_{(\mathrm{D}_2, \mu_2, \lambda_2, \xi_2,t)}, \omega_2 \big)
\arrow[d, leftrightarrow] \\
(\overline{\G}_1, \overline{\omega}_1)
\arrow[r, "\Phi"]
& (\overline{\G}_2, \overline{\omega}_2)
\end{tikzcd}
\]
Therefore, 
\begin{align*}
(\tilde{\Phi}^\ast\omega_2)(x+\xi_1,y+\ell_1)&=\omega_2(\Phi(x)+\xi_2,\Phi(y)+\ell_2)\\
&=\omega_2(\Phi(x),\Phi(y))+\omega_2(\xi_2,\ell_2)\\
&=\overline{\omega}_1(x,y)+\eta(\xi_2^\ast\wedge\ell_2^\ast)(\xi_2,\ell_2)\\
&=\overline{\omega}_1(x,y)+\eta((\tilde{\Phi}^\ast)^{-1}(\xi_1^\ast)\wedge(\tilde{\Phi}^\ast)^{-1}(\ell_1^\ast))(\tilde{\Phi}(\xi_1),\tilde{\Phi}(\ell_1))\\
&=\overline{\omega}_1(x,y)+\eta (\xi_1^\ast\wedge\ell_1^\ast)(\xi_1,\ell_1)\\
&=\omega_1(x+\xi_1,y+\ell_1).
\end{align*}

Using $(\ref{Brackestgenera})$, we have:
\begin{align*}
\tilde{\Phi}\big([x,y]_1\big)&=\Phi\big(\overline{[x,y]}_1\big)+\overline{\omega}_1(x,y)\xi_2,\\
[\tilde{\Phi}(x),\tilde{\Phi}(y)]_2&=\overline{[\Phi(x),\Phi(y)]}_2+\overline{\omega}_2(\Phi(x),\Phi(y))\xi_2,
\end{align*}
moreover,
\begin{align*}
\tilde{\Phi}\big([\xi_1,x]_1\big)&=-\mu_1(x)\xi_2=-(\Phi^\ast\mu_2)(x)\xi_2=-\mu_2(\Phi(x))\xi_2=[\tilde{\Phi}(\xi_1),\tilde{\Phi}(x)]_2,
\end{align*}
and
\begin{align*}
\tilde{\Phi}\big([\xi_1,\ell_1]_1\big)&=t\xi_2=[\xi_2,\ell_2]_2=[\tilde{\Phi}(\xi_1),\tilde{\Phi}(\ell_1)]_2.
\end{align*}
Finally,
\begin{align*}
\tilde{\Phi}\big([\ell_1,x]_1\big)&=\mu_1(x)\ell_2+\Phi\circ \mathrm{D}_1(x)+\lambda_1(x)\xi_2\\
&=(\Phi^\ast\mu_2)(x)\ell_2+(\mathrm{D}_2\circ \Phi)(x)+(\Phi^\ast\lambda_2)(x)\xi_2\\
&=\mu_2(\Phi(x))+\mathrm{D}_2(\Phi(x))+\lambda_2(\Phi(x))\xi_2\\
&=[\ell_2,\Phi(x)]_2=[\tilde{\Phi}(\ell_2),\tilde{\Phi}(x)]_2.
\end{align*}
Consequently, $\tilde{\Phi}$ is a Lie algebra isomorphism.
	\end{proof}

\section{Classification of eight-dimensional  non completely reducible symplectic Lie algebras}\label{se3}
We begin by proving the following result,  which adapts Proposition~$\ref{solvanilpoLevi}$ exclusively to eight-dimensional non completely reducible symplectic Lie algebras.

\begin{pr}\label{solvability}
Every  eight-dimensional non completely reducible symplectic Lie algebra is  solvable.
\end{pr}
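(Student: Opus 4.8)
The plan is to split the argument according to whether $(\G,\om)$ is symplectically irreducible or admits a one-dimensional isotropic ideal, exploiting the structural results already established. First, suppose $(\G,\om)$ is irreducible. By Baues--Cort\'es (Theorem~\ref{Bause}), irreducibility forces $\mathcal{D}(\G)$ to be a maximal abelian ideal and $\G=\mH\ltimes\mathcal{D}(\G)$ with $\mH$ abelian; hence $\G$ is two-step solvable, so the irreducible case is immediate. (Alternatively, one can read this directly off the explicit brackets in Proposition~\ref{Classi8}, where $\G^{(a,b)}$ and $\G^{(a,0)}$ are visibly $2$-step solvable.)

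Now suppose $(\G,\om)$ is reducible but not completely reducible. Since $\G$ is eight-dimensional, any nontrivial isotropic ideal $\mathfrak{j}$ must be one-dimensional (a two-dimensional isotropic ideal would be Lagrangian, and the reduction would drop dimension too far to block complete reducibility — more precisely, the reduced algebra $(\ol\G,\ol\om)$ would be six-dimensional, and if it were further reducible we would get a four-dimensional base, which is necessarily completely reducible; so to be non completely reducible the base must be the six-dimensional irreducible algebra of Proposition~\ref{Classi6}, and then $\dim\mathfrak{j}=1$). Thus write $\mathfrak{j}=\langle\xi\rangle$ and let $(\ol\G,\ol\om)$ be the reduction. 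For $(\G,\om)$ to fail complete reducibility, $(\ol\G,\ol\om)$ must itself fail complete reducibility, and being six-dimensional it must be irreducible; by Proposition~\ref{Classi6} it is (isomorphic to) $\G_{(1,0,0,1)}$, which is solvable (indeed $2$-step solvable). Now invoke Proposition~\ref{Inverse}: $(\G,\om)$ is a generalized symplectic oxidation of $(\ol\G,\ol\om)$, and by Proposition~\ref{solvanilpoLevi}(2), $\G$ is solvable because $\ol\G$ is solvable (in fact $\G$ is at most $4$-step solvable). This covers the reducible case.

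The one point requiring genuine care — and the place where I expect the main obstacle — is the dichotomy on $\dim\mathfrak{j}$: one must rule out that a non completely reducible eight-dimensional symplectic Lie algebra reduces in one step to a non-solvable symplectic algebra through a higher-dimensional isotropic ideal, or that a chain of reductions terminates at a non-solvable irreducible base. Here one uses that irreducible symplectic Lie algebras are metabelian (stated in the introduction, via Baues--Cort\'es), hence solvable, so every irreducible base is solvable regardless of dimension; and then Proposition~\ref{solvanilpoLevi}(2) propagates solvability up each step of any reduction sequence, since at each oxidation step solvability of the total algebra is equivalent to solvability of the reduced one. Combining: the complete reduction of $(\G,\om)$ has a solvable (irreducible) base, and solvability is preserved under oxidation, so $\G$ is solvable. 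This makes the statement independent of the fine classification and reduces everything to Theorem~\ref{Bause} plus Proposition~\ref{solvanilpoLevi}.
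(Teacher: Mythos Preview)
Your approach is correct and more economical than the paper's. The paper does not invoke Proposition~\ref{solvanilpoLevi}(2); instead, after handling the irreducible case identically, it argues in the reducible case by computing explicitly that $\mathfrak{a}\oplus\langle\xi\rangle$ lies in the nilradical of $\G$ (via direct verification that $\mathrm{ad}_x$ is nilpotent for $x\in\mathfrak{a}$ and for $\xi$), then supposes for contradiction that $\G$ admits a nontrivial Levi factor $\mathfrak{s}$ (necessarily three-dimensional, with five-dimensional radical equal to the nilradical), shows that $\G$ is then unimodular, and cites Chu's theorem that a unimodular symplectic Lie algebra is solvable to obtain the contradiction. Your route bypasses all of this by observing that the six-dimensional irreducible base is metabelian and that Proposition~\ref{solvanilpoLevi}(2) lifts solvability through a one-dimensional oxidation; this is cleaner and makes the eight-dimensional statement an immediate corollary of results already proved in Section~\ref{se2}.

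Two small points to tighten. First, your parenthetical ``a two-dimensional isotropic ideal would be Lagrangian'' is wrong in dimension eight (Lagrangian means dimension four); the correct argument, which you almost give, is that an isotropic ideal of dimension $\geq 2$ yields a reduction of dimension $\leq 4$, and since no irreducible symplectic Lie algebra exists below dimension six, that reduction is completely reducible, forcing $(\G,\omega)$ to be completely reducible as well --- contradiction. Second, in your final paragraph you claim Proposition~\ref{solvanilpoLevi}(2) propagates solvability through any reduction step; as stated that proposition covers only one-dimensional oxidations, so the general claim overreaches. This does not damage the present proof, since you have already established that in the eight-dimensional non completely reducible case every isotropic ideal is one-dimensional, but the remark should be dropped or qualified.
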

\begin{proof}
Let $(\G,\omega)$ be  an eight-dimensional non completely reducible symplectic Lie algebra. If $(\G,\omega)$ is irreducible, i.e., if $\G$ does not have a non-trivial, isotropic ideal $\mathfrak{j}$, then $\G$ is $2$-step solvable. Otherwise, let $\mathfrak{j}=\langle\xi\rangle$ be an  isotropic ideal of $(\G,\omega)$. In this case, $(\G,\omega)$ is symplectically reduced to a six-dimensional irreducible symplectic Lie algebra $(\overline{\G}=\h\ltimes\mathfrak{a},\overline{\omega})$. Thus, the Lie brackets of $\G$ can  be expressed as given in $(\ref{Brackestgenera})$ under the conditions of Proposition $\ref{prgeneralized}$. On the one hand, we have $\mathfrak{a}\subset\mathfrak{nil}(\G)$. Indeed, by $(\ref{Brackestgenera})$, for all $x\in\mathfrak{a}$, $y\in\overline{\G}$,  we have
\begin{align*}
\mathrm{ad}_xy&=\overline{\mathrm{ad}}_xy+\overline{\omega}_\mathrm{D}(x,y)\xi,\\
\mathrm{ad}^2_xy&=\overline{\mathrm{ad}}^2_xy+\overline{\omega}_\mathrm{D}(x,\overline{\mathrm{ad}}_xy)\xi+\overline{\omega}_\mathrm{D}(x,y)\mu(x)\xi,\\
\mathrm{ad}^3_xy&=\mu(x)\overline{\omega}_\mathrm{D}(x,\overline{\mathrm{ad}}_xy)\xi=0,
\end{align*}
since, $\mu(\mathfrak{a})=0$, and $\overline{\mathrm{ad}}_{\mathfrak{a}}$ is a $2$-step nilpotent endomorphism. On the other hand, we have
\begin{align*}
\mathrm{ad}_x\xi&=\mu(x)\xi=0,\\
\mathrm{ad}_x\ell&=\mu(x)\ell+\mathrm{D}(x)+\lambda(x)\xi=\mathrm{D}(x)+\lambda(x)\xi,\\
\mathrm{ad}^2_x\ell&=\overline{[x,\mathrm{D}(x)]}+\overline{\omega}_\mathrm{D}(x,\mathrm{D}(x))\xi+\lambda(x)\mu(x)\xi,\\
\mathrm{ad}^3_x\ell&=\overline{\mathrm{ad}}^2_x\big(\mathrm{D}(x)\big)+\overline{\omega}_\mathrm{D}(x,\overline{\mathrm{ad}}_x\big(\mathrm{D}(x)\big))\xi+\overline{\omega}_\mathrm{D}(x,\mathrm{D}(x))\mu(x)\xi,\\
\mathrm{ad}^4_x\ell&=\overline{\omega}_\mathrm{D}(x,\overline{\mathrm{ad}}_x\big(\mathrm{D}(x)\big))\mu(x)\xi=0.
\end{align*}
Note that $\mathrm{ad}_\xi$ is a $2$-step nilpotent endomorphism, then, $\mathfrak{k}=\mathfrak{a}\oplus\langle\xi\rangle\subset
\mathfrak{nil(\G)}$. Suppose now that $\G$ has a non-triavial Levi-Malcev decomposition, i.e., $\G=\mathfrak{s}\ltimes\mathfrak{rad}(\G)$, with $\dim\mathfrak{s}=3$ and $\dim\mathfrak{rad}(\G)=5$. From the previous discussion, we conclude that $\mathfrak{rad(\G)}=\mathfrak{nil(\G)}$. Let $x=x_\mathfrak{k}+x_\mathfrak{s}\in\G$, where $x_\mathfrak{k}\in\mathfrak{k}$ and $x_\mathfrak{s}\in\mathfrak{s}$, we have
\begin{align*}
\mathrm{Tr}(\mathrm{ad}_x)&=\mathrm{Tr}(\mathrm{ad}_{x_\mathfrak{k}})+\mathrm{Tr}(\mathrm{ad}_{x_\mathfrak{s}})=0.
\end{align*}
Therefore, since $\G$ is symplectic and unimodular, it must be solvable \textsc{\cite{C}}. This contradicts the assumption that $\G$ admits a non-trivial Levi-Malcev decomposition. Consequently, $\G$ is a  solvable Lie algebra.
\end{proof}

\subsection{Central symplectic oxidation}
As mentioned before in Remark $\ref{remarkcentral}$, the central case is characterized by $\mu = 0$ and $t = 0$. The Lie brackets are given by:
\begin{align}\label{LieCentral}
\begin{split}
	[x,y]&=\ol{[x,y]}+\overline{\om}_\mathrm{D}(x,y)\xi, \hspace{0.9cm}\text{for all}\quad x,y\in \ol{\G},\\
	[\ell,x]&=\mathrm{D}(x)+\lambda(x)\xi,\hspace{1.5cm}\text{for all}~~  x\in \ol{\G},
	\end{split}
\end{align} 
where $\mathrm{D}$ is a derivation of $\overline{\G}$ and $\lambda\in\overline{\G}^*$ is a $1$-form  satisfying $\overline{\om}_{\mathrm{D},\mathrm{D}}=-\md\lambda$.

The symplectic Lie algebra $(\G, \omega) = (\overline{\G}_{(\mathrm{D},\overline{\omega}_\mathrm{D},\lambda)},\omega)$ is called central symplectic oxidation of
$(\overline{\G},\overline{\omega})$ with respect to the data $\mathrm{D}$ and $\lambda$.

Now, we define the isomorphism classes between two symplectic oxidations by considering the general case. We introduce a  bracket on the  algebra $\G_{\ell,\xi}=\langle\ell\rangle\ltimes
\overline{\G}\oplus\langle\xi\rangle$ defined as follows: 
\begin{align}\label{Bragene}
\begin{split}
	[x,y]&=\ol{[x,y]}+\varphi(x,y)\xi, \hspace{1.15cm}\text{for all}\quad x,y\in \ol{\G},\\
	[\ell,x]&=\mathrm{D}(x)+\lambda(x)\xi,\hspace{1.5cm}\text{for all}~~  x\in \ol{\G},\\
	[\ell,\xi]&=w+t\xi,\hspace{2.48cm}w\in \mathfrak{z}(\overline{\G})\cap\ker\big(\iota_x\varphi\big),~~t\in\R,
	\end{split}
\end{align} 
where, $\mathrm{D}\in\mathrm{End}(\overline{\G})$, $\varphi\in Z^2(\overline{\G})$,  $\iota_x\varphi:\overline{\G}\longrightarrow\R$, is given by $\iota_x\varphi(y)=\varphi(x,y)$ for all $y\in\overline{\G}$,  $\lambda\in\overline{\G}^\ast$,  $\varphi(\cdot,\cdot)w=\Delta\mathrm{D}$ and $t\varphi-\varphi_\mathrm{D}=\md\lambda$.

Later, we will show that the bracket defined above $(\ref{Bragene})$ indeed gives a Lie algebra structure on $\G_{\ell,\xi}$. We will call the algebra $\G_{\ell,\xi}=\G_{(\mathrm{D}, \varphi, \lambda, w, t)}$ the \textit{non-central oxidation} of $\overline{\G}$ with respect to $(\mathrm{D}, \varphi, \lambda, w, t)$. By restriction to the central case ($w = 0$ and $t = 0$), we obtain isomorphism classes between two central symplectic oxidations. We begin with the following characterization:

\begin{theo}\label{isombetewnoncentral}
Two  non-central  oxidations $\G_{(\mathrm{D}_1, \varphi, \lambda_1, w_1, t_1)}$ and $\G_{(\mathrm{D}_2, \varphi, \lambda_2, w_2, t_2)}$    of $\overline{\G}$,  are  isomorphic if and only if there exist an  automorphism of central extension  $\Psi:\overline{\G}\oplus\langle\xi\rangle
\longrightarrow\overline{\G}\oplus\langle\xi\rangle$, $u\in\overline{\G}$, $v \in \mathfrak{z}(\overline{\G})\cap\ker\big(\iota_{\Psi|_{\overline{\G}}(\overline{\G})}\varphi\big)$,  $\beta\in\overline{\G}^\ast$, and $\theta,a\in\R^\ast$ such that
\begin{enumerate}
\item $\mathrm{D}_2=\frac{1}{\theta}\Big(\big(\Psi|_{\overline{\G}}\circ \mathrm{D}_1+v\lambda_1-w_2\theta\beta\big)\circ\Psi|_{\overline{\G}}^{-1}-\mathrm{ad}_u\big)$,
\item $\Psi^\ast|_{\overline{\G}}\Big(\lambda_2+\frac{1}{\theta}\iota_u\varphi\Big)=\frac{1}{\theta}\Big(\beta\circ \mathrm{D}_1+a\lambda_1-t_2\theta\beta\Big)$,
\item$\Psi|_{\overline{\G}}(w_1)= \theta \mathrm{D}_2(v)+a\theta w_2-t_1v$,
\item $\varphi(u,v)=\beta(w_1)+at_1-\theta\lambda_2(v)-a\theta t_2$.
\end{enumerate}
With, $a\varphi-\Psi|_{\overline{\G}}^\ast\big(\varphi\big)=\md\beta$.
\end{theo}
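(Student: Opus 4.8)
The plan is to establish the biconditional by analyzing an arbitrary Lie algebra isomorphism $F : \G_{(\mathrm{D}_1,\varphi,\lambda_1,w_1,t_1)} \to \G_{(\mathrm{D}_2,\varphi,\lambda_2,w_2,t_2)}$ and decomposing it along the canonical flag. First I would observe that $\langle\xi\rangle$ is characterized intrinsically inside each non-central oxidation (e.g. as a distinguished one-dimensional ideal sitting inside $\overline{\G}\oplus\langle\xi\rangle$, which is itself the unique codimension-one ideal of the stated form), so any isomorphism $F$ must satisfy $F(\xi_1)=a\,\xi_2$ for some $a\in\R^\ast$ and $F(\overline{\G}_1\oplus\langle\xi_1\rangle)=\overline{\G}_2\oplus\langle\xi_2\rangle$. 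Writing $F(\ell_1)=\theta\,\ell_2 + u' + c\,\xi_2$ with $\theta\in\R^\ast$, $u'\in\overline{\G}$, $c\in\R$, and letting $\Psi := F|_{\overline{\G}_1\oplus\langle\xi_1\rangle}$ be the induced automorphism of the central extension, I would absorb $c$ and rescale to reduce to $F(\ell_1)=\theta\ell_2+u+$ (terms in $\langle\xi_2\rangle$), with $u\in\overline{\G}$; this $u$, together with $\Psi$, $\theta$, $a$, and a one-form $\beta\in\overline{\G}^\ast$ recording how $\Psi$ fails to preserve $\varphi$ (the relation $a\varphi - \Psi|_{\overline{\G}}^\ast\varphi = \md\beta$, which holds automatically since $\Psi$ is an automorphism of the central extension), will be exactly the data in the statement.

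The heart of the argument is then a bracket-by-bracket comparison using the four structural brackets in~$(\ref{Bragene})$. Applying $F$ to $[\ell_1,x]_1 = \mathrm{D}_1(x)+\lambda_1(x)\xi_1$ and comparing with $[F(\ell_1),F(x)]_2$ — expanding the latter via $[\theta\ell_2+u, \Psi|_{\overline{\G}}(x) + (\text{central part})]_2$ and projecting onto $\overline{\G}_2$ — yields condition~$1$ after conjugating by $\Psi|_{\overline{\G}}$ and solving for $\mathrm{D}_2$; projecting the same identity onto $\langle\xi_2\rangle$ yields condition~$2$ (the $\lambda$-transformation law, where the $\tfrac1\theta\iota_u\varphi$ term is the contribution of $[\![u,\Psi|_{\overline{\G}}(x)]\!]$'s central component and the $\beta\circ\mathrm{D}_1$, $-t_2\theta\beta$ terms come from the defect one-form $\beta$). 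Next, applying $F$ to $[\ell_1,\xi_1]_1 = w_1 + t_1\xi_1$ and comparing with $[F(\ell_1),a\xi_2]_2 = a\theta(w_2+t_2\xi_2)+a[u,\xi_2]_2$: the $\overline{\G}_2$-component gives condition~$3$ and the $\langle\xi_2\rangle$-component gives condition~$4$ — here one must also use $[u,\xi_2]_2 = 0$ (since $\xi_2$ is central in $\overline{\G}_2\oplus\langle\xi_2\rangle$) or more precisely track that $u$ may have been chosen so that $[\ell_2, \Psi|_{\overline{\G}}^{-1}$-adjustments] contribute; the appearance of $\varphi(u,v)$ in condition~$4$ suggests $v := $ (a normalization of) $w_1$'s image or an auxiliary element of $\mathfrak{z}(\overline{\G})\cap\ker(\iota\varphi)$, so part of the bookkeeping is identifying $v$ correctly from the central component of $F(\ell_1)$ versus $F(w_1)$. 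The remaining bracket $[x,y]_1$ is automatically respected because $\Psi$ restricts to an automorphism of $\overline{\G}\oplus\langle\xi\rangle$, so it imposes no new condition — this is why the same $\varphi$ appears on both sides.

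For the converse, given data $(\Psi,u,v,\beta,\theta,a)$ satisfying $1$--$4$ and $a\varphi-\Psi|_{\overline{\G}}^\ast\varphi=\md\beta$, I would define $F$ on $\overline{\G}_1\oplus\langle\xi_1\rangle$ by $\Psi$ (rescaled so $F(\xi_1)=a\xi_2$) and set $F(\ell_1) := \theta\ell_2 + u + \gamma\,\xi_2$ for a suitable scalar $\gamma$ forced by consistency (to be read off from condition~$4$), then verify directly that $F$ preserves all four brackets; conditions $1$--$4$ are precisely what make this verification go through, and the Jacobi-type constraints $\varphi(\cdot,\cdot)w_i = \Delta\mathrm{D}_i$, $t_i\varphi - \varphi_{\mathrm{D}_i} = \md\lambda_i$ on both sides guarantee $F$ is a well-defined Lie homomorphism. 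The main obstacle I anticipate is the bookkeeping of the auxiliary element $v$ and the scalar $\gamma$: the statement introduces $v\in\mathfrak{z}(\overline{\G})\cap\ker(\iota_{\Psi|_{\overline{\G}}(\overline{\G})}\varphi)$ which does not obviously correspond to a single free component of $F$, so I expect that $v$ is determined as $v = \tfrac1\theta\big(\Psi|_{\overline{\G}}(w_1) + t_1(\cdots)\big)$ or similar, and getting the four conditions into exactly the displayed asymmetric form (rather than an equivalent but differently-grouped form) will require care with where the $\tfrac1\theta$ and $\theta$ factors land and with the sign conventions in $\lambda\otimes\mu$, $\Delta\mathrm{D}$, and $\varphi_{\mathrm{D}}$ as fixed in the Notations.
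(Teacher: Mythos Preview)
Your overall strategy---decompose an arbitrary isomorphism $F$ along the flag $\langle\xi\rangle \subset \overline{\G}\oplus\langle\xi\rangle \subset \G$ and compare brackets---matches the paper's, but there is a genuine gap in your identification of the data, and it is exactly the piece you flagged as uncertain: the element $v$.

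You assert that $\langle\xi\rangle$ is intrinsically characterized, so that $F(\xi_1)=a\,\xi_2$. This is not justified. In $\G_{(\mathrm{D},\varphi,\lambda,w,t)}$ the element $\xi$ need not span an ideal (since $[\ell,\xi]=w+t\xi$ with $w$ possibly nonzero), and even inside the central extension $\overline{\G}\oplus\langle\xi\rangle$, the line $\langle\xi\rangle$ is not in general the unique central direction when $\mathfrak{z}(\overline{\G})\neq 0$. What the paper actually shows is that $\overline{\G}\oplus\langle\xi\rangle$ is preserved (as the maximal ideal), and then an automorphism $\Psi$ of this central extension has the form
\[
\Psi(x)=\Psi|_{\overline{\G}}(x)+\beta(x)\xi,\qquad \Psi(\xi)=v+a\xi,
\]
with $v\in\mathfrak{z}(\overline{\G})\cap\ker\big(\iota_{\Psi|_{\overline{\G}}(\overline{\G})}\varphi\big)$. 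That $v$ is precisely the element appearing in conditions~1, 3, 4: it is not built from $w_1$ as you speculated, but is simply the $\overline{\G}$-component of $F(\xi_1)$. Once you allow $F(\xi_1)=v+a\xi_2$, the bracket $[\ell_1,\xi_1]$ (image under $F$) picks up the terms $\theta\mathrm{D}_2(v)$, $\theta\lambda_2(v)$, and $\varphi(u,v)$, which is how $v$ enters conditions~3 and~4; and the term $v\lambda_1$ in condition~1 comes from $F(\lambda_1(x)\xi_1)=\lambda_1(x)(v+a\xi_2)$.

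A secondary remark on method: the paper streamlines the computation by recognizing $\G_{(\mathrm{D},\varphi,\lambda,w,t)}=\langle\ell\rangle\ltimes\G_\xi$ as an extension of the central extension $\G_\xi$ by a single derivation $\mathrm{D}_\xi\in\mathrm{Der}(\G_\xi)$ (with $\mathrm{D}_\xi(x)=\mathrm{D}(x)+\lambda(x)\xi$ and $\mathrm{D}_\xi(\xi)=w+t\xi$). This reduces the problem to the simpler criterion $\mathrm{D}^{(2)}_\xi=\tfrac{1}{\theta}\big(\Psi\circ\mathrm{D}^{(1)}_\xi\circ\Psi^{-1}-\mathrm{ad}_u\big)$, which is then evaluated on $x\in\overline{\G}$ and on $\xi$ and projected onto the $\overline{\G}$- and $\xi$-components to yield conditions~1--4 directly. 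Your direct bracket comparison would ultimately produce the same equations, but organizing it this way makes the bookkeeping (and the origin of $v$ and $\beta$) transparent.
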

\begin{proof}
Let $\overline{\G}$ be a Lie algebra. We Extend the Lie algebra $\overline{\G}$ by a derivation. For this, consider the vector space defined by
\begin{align*}
\G_\ell=\langle\ell\rangle\ltimes
\overline{\G}.
\end{align*}
Let $\mathrm{D}\in\mathrm{Der}(\overline{\G})$ be a derivation on $\overline{\G}$. Define a Lie algebra structure on $\G_{\ell}$ as the semidirect sum,
\begin{align}
[x,y]&=\overline{[x,y]},\hspace{2.05cm} \text{for all}\quad x,y\in\overline{\G},\\
[\ell,x]&=\mathrm{D}(x),\hspace{2.1cm} \text{for all}\quad x\in\overline{\G}.
\end{align}
Suppose that $\G_{\ell_1}$ and $\G_{\ell_2}$ are isomorphic. Then, there exists a Lie algebra isomorphism $\Phi:\G_{\ell_1}\longrightarrow\G_{\ell_2}$. Since $\overline{\G}$ is a  maximal ideal of $\G_{\ell}$, then $\Phi(\overline{\G})=\overline{\G}$,  and we have
\begin{align*}
\Phi([x,y]_1)&=
\Phi\big(\overline{[x,y]}\big)=\Phi|_{\overline{\G}}\Big(\overline{[x,y]}\Big)\\
[\Phi(x),\Phi(y)]_2&=[\Phi|_{\overline{\G}}(x),\Phi|_{\overline{\G}}(y)]_2=
\overline{[\Phi|_{\overline{\G}}(x),\Phi|_{\overline{\G}}(y)]}.
\end{align*} 
This shows that $\Phi|_{\overline{\G}}\in\mathrm{Aut}(\overline{\G})$. On the other hand,
\begin{align*}
\Phi([\ell_1,x]_1&=\Phi(\mathrm{D}_1(x))=
\Phi|_{\overline{\G}}(\mathrm{D}_1(x))\\
[\Phi(\ell_1),\Phi(x)]_2)&=[u+\theta \ell_2,\Phi|_{\overline{\G}}(x)]_2=\overline{[u,\Phi|_{\overline{\G}}(x)]}+\theta \mathrm{D}_2\big(\Phi|_{\overline{\G}}(x)\big).
\end{align*} 
Therefore,
\[\mathrm{D}_2=\frac{1}{\theta}\big(\Phi|_{\overline{\G}}\circ \mathrm{D}_1\circ\Phi|_{\overline{\G}}^{-1}-\mathrm{ad}_u\big),\]
for some $u\in\overline{\G}$, and $\theta\in\R^\ast$.

Let $\langle\xi\rangle$ be a one-dimensional trivial Lie algebra, and $\varphi\in\Lambda^2\overline{\G}$  a $2$-cocycle. Define a Lie brackets on the direct sum $\G_{\xi}=\overline{\G}\oplus\langle\xi\rangle$ by
\begin{align}
[x,y]&=\overline{[x,y]}+\varphi(x,y)\xi,\hspace{2cm}\text{for all}~~x,y\in\overline{\G},\\
[x,\xi]&=0,\hspace{4cm}\text{for all}~~x\in\overline{\G}.
\end{align}
The vector space  $\G_{\xi}$ is called the central
extension of $\overline{\G}$ by the $2$-cocycle $\varphi$. Let $\mathrm{D}_\xi\in\mathrm{Der}(\G_{\xi})$ be a derivation on $\G_{\xi}$. Then, for all $x,y\in\overline{\G}$, we have
\begin{align*}
\mathrm{D}_\xi([x,y])&=
\mathrm{D}_\xi(\overline{[x,y]})+\varphi(x,y) \mathrm{D}_\xi\xi\\
&=\mathrm{D}_\xi|_{\overline{\G}}(\overline{[x,y]})+\lambda(\overline{[x,y]})\xi+\varphi(x,y)w+t\varphi(x,y)\xi,
\end{align*}
and
\begin{align*}
[\mathrm{D}_\xi(x),y]&=[\mathrm{D}_\xi|_{\overline{\G}}(x)+\lambda(x)\xi,y]\\
&=\overline{[\mathrm{D}_\xi|_{\overline{\G}}(x),y]}+\varphi\big(\mathrm{D}_\xi|_{\overline{\G}}(x),y\big)\xi,\\
[x,\mathrm{D}_\xi(y)]&=\overline{[x,\mathrm{D}_\xi|_{\overline{\G}}(y)]}+\varphi\big(x,\mathrm{D}_\xi|_{\overline{\G}}(y)\big)\xi,
\end{align*}
moreover,
\begin{align*}
0&=\mathrm{D}_\xi([x,\xi])=[\mathrm{D}_\xi(x),\xi]+[x,\mathrm{D}_\xi(\xi)]\\
&=[x,w+t\xi]\\
&=\overline{[x,w]}+\varphi(x,w)\xi.
\end{align*}
Therefore, every derivation $\mathrm{D}_\xi$ on $\G_\xi$ has the following form
\begin{align}\label{Derxi}
\begin{split}
\mathrm{D}_\xi(x)&=\mathrm{D}(x)+\lambda(x)\xi,\quad\text{for all}~~x\in\overline{\G},\\
\mathrm{D}_\xi(\xi)&=w+t\xi,\qquad t\in\R,w\in \mathfrak{z}(\overline{\G})\cap\ker(\iota_x\varphi),
\end{split}
\end{align}
where, $\mathrm{D}\in\mathrm{End}(\overline{\G})$, $\iota_x\varphi:\overline{\G}\longrightarrow\R$, $\iota_x\varphi(y)=\varphi(x,y)$ for all $y\in\overline{\G}$,  $\lambda\in\overline{\G}^\ast$,  $\varphi(\cdot,\cdot)w=\delta(\mathrm{D})$ and $t\varphi-\varphi_\mathrm{D}=\md\lambda$. We extend, $\G_\xi$ by the derivation $\mathrm{D}_\xi$ given in $(\ref{Derxi})$ as described previously. On the one hand, suppose that there exists a Lie algebra isomorphism $\Psi:\G_{\ell_1,\xi}=\langle\ell_1\rangle\ltimes
\G_{\xi}\longrightarrow\G_{\ell_2,\xi}=\langle\ell_2\rangle\ltimes
\G_{\xi}$. From the previous discussion, the restriction $\Psi|_{\G_\xi}$ lies in $\mathrm{Aut}(\G_{\xi})$, and the restriction $\Psi|_{\langle \ell \rangle}$ acts as
\[
\Psi|_{\langle \ell_1 \rangle}(\ell_1) = u + \theta \ell_2,
\]
where $u \in \G_{\xi}$ and $\theta \in \R^\ast$. Now let $\Upsilon$ be an automorphism of $\G_\xi$, then $\Upsilon$ can be expressed as
\begin{align*}
\Upsilon(x)&=\Upsilon|_{\overline{\G}}(x)+\beta(x)\xi,\hspace{0.5cm}~~\text{for all}~~x\in\overline{\G},~~\beta\in
\overline{\G},\\
\Upsilon(\xi)&=v+a\xi,\hspace{2.7cm}~~v\in\overline{\G},~a\in\R.
\end{align*}
Thus,
\begin{align}\label{restri1}
\begin{split}
\Upsilon\big([x,y]\big)&=\Upsilon\big(
\overline{[x,y]}\big)+\varphi(x,y)\Upsilon(\xi)\\
&=\Upsilon|_{\overline{\G}}\big(
\overline{[x,y]}\big)+\beta\big(
\overline{[x,y]}\big)\xi+\varphi(x,y)v+a\varphi(x,y)\xi,
\end{split}
\end{align}
and
\begin{align}\label{restri2}
\begin{split}
[\Upsilon(x),\Upsilon(y)]&=[\Upsilon|_{\overline{\G}}(x)+\beta(x)\xi,\Upsilon|_{\overline{\G}}(y)+\beta(y)\xi]\\
&=[\Upsilon|_{\overline{\G}}(x),\Upsilon|_{\overline{\G}}(y)]\\
&=\overline{[\Upsilon|_{\overline{\G}}(x),\Upsilon|_{\overline{\G}}(y)]}+\varphi\big(\Upsilon|_{\overline{\G}}(x),\Upsilon|_{\overline{\G}}(y)\big)\xi.
\end{split}
\end{align}
Moreover, for all $x\in\overline{\G}$, we have
\begin{align}\label{Autgxi0}
\begin{split}
\Upsilon\big([\xi,x]\big)&=0,\\
[\Upsilon(\xi),\Upsilon(x)]&=[v+a\xi,\Upsilon|_{\overline{\G}}(x)+\beta(x)\xi]=\overline{[v,\Upsilon|_{\overline{\G}}(x)]}+\varphi(v,\Upsilon|_{\overline{\G}}(x))\xi.
\end{split}
\end{align}
Therefore, $\Upsilon\in\mathrm{Aut}(\G_\xi)$ if and only if
\begin{align}\label{Autgxi1}
\Upsilon|_{\overline{\G}}\big(\overline{[x,y]}\big)+\varphi(x,y)v&=\overline{
[\Upsilon|_{\overline{\G}}(x),\Upsilon|_{\overline{\G}}(y)]},\hspace{0.5cm}\text{for all}~~ x,y\in\overline{\G},~v\in \mathfrak{z}(\overline{\G})\cap\ker\big(\iota_{\Upsilon|_{\overline{\G}}(\overline{\G})}\varphi\big),
\end{align}
and
\begin{align}\label{Autgxi2}
a\varphi-\Upsilon|_{\overline{\G}}^\ast\big(\varphi\big)=\md\beta,\hspace{0.5cm}~~a\in\R^\ast,~~\beta\in\overline{\G}^\ast.
\end{align}
Note that, $\Upsilon|_{\overline{\G}}$ is an automorphism of $\overline{\G}$ if and only if $\mathfrak{z}(\overline{\G})=\{0\}$.

Let $\mathrm{D}^{(1)}_{\xi},\mathrm{D}^{(2)}_{\xi}\in\mathrm{Der}(\G_\xi)$, then there exist, $\theta\in\R^\ast$ and $u\in\G_\xi$ such that 
\begin{align*}
\mathrm{D}^{(2)}_{\xi}=\frac{1}{\theta}\big(\Psi|_{\G_\xi}\circ \mathrm{D}^{(1)}_\xi\circ\Psi|_{\G_\xi}^{-1}-\mathrm{ad}_u\big).
\end{align*}
For all $x\in\overline{\G}$, we have
\begin{align}\label{C1}
\begin{split}
\theta \mathrm{D}^{(2)}_{\xi}\Big(\Psi|_{\G_\xi}(x)\Big)&=\theta \mathrm{D}^{(2)}_{\xi}\Big(\Psi|_{\overline{\G}}(x)+\beta(x)\xi\Big)\\
&=\theta \mathrm{D}_2\Big(\Psi|_{\overline{\G}}(x)\Big)+\theta\lambda_2\Big(\Psi|_{\overline{\G}}(x)\Big)\xi+\theta\beta(x)\big(w_2+t_2\xi\big),
\end{split}
\end{align}
and
\begin{align}\label{C2}
\Psi|_{\G_\xi}\circ \mathrm{D}^{(1)}_\xi(x)-\mathrm{ad}_u\circ\Psi|_{\G_\xi}(x)&=\Psi|_{\G_\xi}\big(\mathrm{D}_1(x)+\lambda_1(x)\xi\big)-\mathrm{ad}_u\circ\Psi|_{\overline{\G}}(x)\nonumber\\
&=\Psi|_{\overline{\G}}\big(\mathrm{D}_1(x)\big)+\beta\big(\mathrm{D}_1(x)\big)\xi+\lambda_1(x)(v+a\xi)-\mathrm{ad}_u\circ\Psi|_{\overline{\G}}(x)\\
&=\Psi|_{\overline{\G}}\big(\mathrm{D}_1(x)\big)+\beta\big(\mathrm{D}_1(x)\big)\xi+\lambda_1(x)(v+a\xi)-\overline{\mathrm{ad}}_u\circ\Psi|_{\overline{\G}}(x)-\varphi\big(u,\Psi|_{\overline{\G}}(x)\big)\xi.\nonumber
\end{align}
Therefore,
\begin{align}\label{C11}
\mathrm{D}_2&=\frac{1}{\theta}\Big(\big(\Psi|_{\overline{\G}}\circ \mathrm{D}_1+v\lambda_1-w_2\theta\beta\big)\circ\Psi|_{\overline{\G}}^{-1}-\overline{\mathrm{ad}}_u\big),
\end{align}
and
\begin{align}\label{C21}
\Psi^\ast|_{\overline{\G}}\Big(\lambda_2+\frac{1}{\theta}\iota_u\varphi\Big)&=\frac{1}{\theta}\Big(\beta\circ \mathrm{D}_1+a\lambda_1-t_2\theta\beta\Big).
\end{align}
We also have
\begin{align}\label{C3}
\begin{split}
\theta \mathrm{D}^{(2)}_\xi\Psi|_{\G_\xi}(\xi)&=\theta \mathrm{D}^{(2)}_\xi\big(v+a\xi\big)=\theta \mathrm{D}^{(2)}_\xi(v)+a\theta \mathrm{D}^{(2)}_\xi(\xi),\\
&=\theta \mathrm{D}_2(v)+\theta\lambda_2(v)\xi+a\theta(w_2+t_2\xi),
\end{split}
\end{align}
and
\begin{align}\label{C4}
\begin{split}
\Psi|_{\G_\xi}\circ \mathrm{D}^{(1)}_\xi(\xi)-\mathrm{ad}_u\circ\Psi|_{\G_\xi}(\xi)&=\Psi|_{\G_\xi}\big(w_1+t_1\xi\big)-\mathrm{ad}_u(v)-\varphi(u,v)\xi\\
&=\Psi|_{\overline{\G}}(w_1)+\beta(w_1)\xi+t_1(v+a\xi)-\varphi(u,v)\xi.
\end{split}
\end{align}
As a consequence,
\begin{align}\label{C31}
\theta \mathrm{D}_2(v)+a\theta w_2=\Psi|_{\overline{\G}}(w_1)+t_1v,
\end{align}
and
\begin{align}\label{C41}
\theta\lambda_2(v)+a\theta t_2=\beta(w_1)+t_1 a-\varphi(u,v).
\end{align}
\end{proof}
\begin{co}\label{Covarphi}
Two  non-central  oxidations, $\G_{(\mathrm{D}_1, \varphi, \lambda_1, w_1, t_1)}$ and $\G_{(\mathrm{D}_2, \varphi, \lambda_2, w_2, t_2)}$    of $\overline{\G}$, with $\mathfrak{z}(\overline{\G})=\{0\}$,  are  isomorphic if and only if there exist an automorphism  $\Phi:\overline{\G}
\longrightarrow\overline{\G}$, $u\in\overline{\G}$ and $\theta,a\in\R^\ast$ such that
\begin{enumerate}
\item $\mathrm{D}_2=\frac{1}{\theta}\Big(\big(\Phi|_{\overline{\G}}\circ \mathrm{D}_1\big)\circ\Phi|_{\overline{\G}}^{-1}-\mathrm{ad}_u\big)$,
\item $\Phi^\ast|_{\overline{\G}}\Big(\lambda_2+\frac{1}{\theta}\iota_u\varphi\Big)=\frac{1}{\theta}\Big(\beta\circ \mathrm{D}_1+a\lambda_1-t_2\theta\beta\Big)$,
\item $t_1=\theta t_2$.
\end{enumerate}
\end{co}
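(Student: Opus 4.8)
The plan is to derive Corollary~\ref{Covarphi} as the specialization of Theorem~\ref{isombetewnoncentral} to the case of trivial center, $\mathfrak{z}(\overline{\G})=\{0\}$. The first observation I would make is that in this case the parameters $w_1,w_2$ of the two non-central oxidations are automatically zero: by the very definition of the bracket $(\ref{Bragene})$, the element $w$ must lie in $\mathfrak{z}(\overline{\G})\cap\ker(\iota_x\varphi)$, which is now $\{0\}$. Likewise the element $v\in\mathfrak{z}(\overline{\G})\cap\ker(\iota_{\Psi|_{\overline{\G}}(\overline{\G})}\varphi)$ occurring in Theorem~\ref{isombetewnoncentral} is forced to vanish. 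Hence the oxidations in question are $\G_{(\mathrm{D}_1,\varphi,\lambda_1,0,t_1)}$ and $\G_{(\mathrm{D}_2,\varphi,\lambda_2,0,t_2)}$, and every piece of data involving $w_1,w_2,v$ drops out of the discussion.

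The second point is that, when $\mathfrak{z}(\overline{\G})=\{0\}$, an automorphism $\Psi$ of the central extension $\overline{\G}\oplus\langle\xi\rangle$ restricts to a genuine automorphism $\Phi:=\Psi|_{\overline{\G}}$ of $\overline{\G}$. This is exactly the remark recorded in the proof of Theorem~\ref{isombetewnoncentral}: with $v=0$, relation $(\ref{Autgxi1})$ says precisely that $\Phi$ preserves the bracket of $\overline{\G}$, while $(\ref{Autgxi2})$ reduces to the single remaining constraint $a\varphi-\Phi^\ast\varphi=\md\beta$ on the pair $(a,\beta)\in\R^\ast\times\overline{\G}^\ast$. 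Conversely, any $\Phi\in\mathrm{Aut}(\overline{\G})$ together with such a pair $(a,\beta)$ defines an automorphism of $\overline{\G}\oplus\langle\xi\rangle$ by $x\mapsto\Phi(x)+\beta(x)\xi$, $\xi\mapsto a\xi$. So passing between $\Psi$ and the triple $(\Phi,a,\beta)$ loses no information.

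It then remains to substitute $w_1=w_2=0$ and $v=0$ into conditions $(1)$--$(4)$ of Theorem~\ref{isombetewnoncentral}. Condition $(1)$ becomes $\mathrm{D}_2=\frac{1}{\theta}\big(\Phi\circ\mathrm{D}_1\circ\Phi^{-1}-\mathrm{ad}_u\big)$, which is condition $(1)$ of the corollary; condition $(2)$ is unaffected and stays as condition $(2)$ of the corollary; condition $(3)$, namely $\Psi|_{\overline{\G}}(w_1)=\theta\mathrm{D}_2(v)+a\theta w_2-t_1 v$, becomes the trivial identity $0=0$; and condition $(4)$, $\varphi(u,v)=\beta(w_1)+at_1-\theta\lambda_2(v)-a\theta t_2$, collapses to $at_1=a\theta t_2$, hence to $t_1=\theta t_2$ since $a\neq0$, which is condition $(3)$ of the corollary. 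The side constraint $a\varphi-\Phi^\ast\varphi=\md\beta$ is inherited verbatim. The converse direction is obtained the same way by reading Theorem~\ref{isombetewnoncentral} backwards: data $(\Phi,u,\theta,a,\beta)$ satisfying $(1)$--$(3)$ of the corollary together with $a\varphi-\Phi^\ast\varphi=\md\beta$ yields, after extending $\Phi$ to $\Psi$ as above, data fulfilling all four hypotheses of the theorem with $w_1=w_2=v=0$, so the two oxidations are isomorphic.

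I do not expect a serious obstacle here; the corollary is essentially a bookkeeping consequence of Theorem~\ref{isombetewnoncentral}. The only step that genuinely needs a line of justification is the claim that, under $\mathfrak{z}(\overline{\G})=\{0\}$, automorphisms of the central extension are in bijection (up to the auxiliary data $(a,\beta)$) with automorphisms of $\overline{\G}$ itself, which follows as soon as one notes that the obstruction term $\varphi(x,y)v$ in $(\ref{Autgxi1})$ disappears when $v=0$.
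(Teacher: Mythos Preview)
Your proposal is correct and follows essentially the same approach as the paper: specialize Theorem~\ref{isombetewnoncentral} to the centerless case, observe that $w_1=w_2=v=0$ are forced, deduce that $\Psi|_{\overline{\G}}$ is a genuine automorphism of $\overline{\G}$, and read off the simplified conditions. If anything, your write-up is more explicit than the paper's, which simply asserts that ``conditions 1, 2, and 3 hold as restrictions of the general case'' without spelling out how condition~(4) collapses to $t_1=\theta t_2$.
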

\begin{proof}
Suppose that $\mathfrak{z}(\overline{\G})=\{0\}$. In this case, and from the bracket defined in $(\ref{Bragene})$, we obtain $w_1=w_2=0$. On the one hand, suppose that there exists a Lie algebra isomorphism $\Psi:\G_{(\mathrm{D}_1, \varphi, \lambda_1, 0, t_1)}\longrightarrow\G_{(\mathrm{D}_2, \varphi, \lambda_2, 0, t_2)}$.   According to Proposition \ref{isombetewnoncentral}, under condition $(\ref{Autgxi1})$, $\Psi$ can be expressed as follows:
\begin{align}
\begin{split}
\Psi(x)&=\Phi(x)+\beta(x)\xi,\hspace{0.98cm}~\text{for all}~~x\in\overline{\G},\\
\Psi(\xi)&=a\xi,\hspace{3.31cm}a\in\R^\ast,\\
\Psi(\ell_1)&=u+\theta\ell_2,\hspace{1.7cm}u\in\overline{\G},~\theta\in\R^\ast,
\end{split}
\end{align}
where, $\Phi\in\mathrm{Aut}(\overline{\G})$, $\beta\in\overline{\G}^\ast$ and $a\varphi-\Phi^\ast\varphi=\md\beta$. Therefore, conditions 1, 2, and 3 hold as restrictions of the general case given in Proposition \ref{isombetewnoncentral}.
\end{proof}
We now characterize the isomorphism between two non-central oxidations arising from distinct $2$-cocycles $\varphi_1, \varphi_2 \in Z^2(\overline{\G})$.

\begin{Le}\label{extegene}
Let $\G_{\ell_1}$ and $\G_{\ell_2}$ be two  extended Lie algebras of $(\overline{\G}_1,\mathrm{D}_1)$ and $(\overline{\G}_2,\mathrm{D}_2)$, respectively. Then, $\G_{\ell_1}$ and $\G_{\ell_2}$ are isomorphic if and only if there exist a Lie algebra isomorphism  $\Psi:\overline{\G}_1\longrightarrow\overline{\G}_2$, $u\in\overline{\G}_2$ and $\theta\in\R^\ast$ such that
\begin{equation*}
\mathrm{D}_2=\frac{1}{\theta}\big(\Psi\circ \mathrm{D}_1\circ\Psi^{-1}-\mathrm{ad}_u\big).
\end{equation*}

\end{Le}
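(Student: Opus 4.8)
The plan is to prove both implications directly; up to letting the bases $\overline{\G}_1$ and $\overline{\G}_2$ differ, this lemma is precisely the ``extension by a derivation'' step already isolated at the beginning of the proof of Theorem~\ref{isombetewnoncentral} (the special case $\varphi=0$, $w=0$, $t=0$), so I would run that same argument in both directions.

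\emph{Sufficiency.} Starting from a Lie algebra isomorphism $\Psi\colon\overline{\G}_1\to\overline{\G}_2$, an element $u\in\overline{\G}_2$ and a scalar $\theta\in\R^\ast$ with $\mathrm{D}_2=\frac1\theta(\Psi\circ\mathrm{D}_1\circ\Psi^{-1}-\mathrm{ad}_u)$, I would define $\widetilde\Psi\colon\G_{\ell_1}\to\G_{\ell_2}$ by $\widetilde\Psi|_{\overline{\G}_1}=\Psi$ and $\widetilde\Psi(\ell_1)=u+\theta\ell_2$; since $\theta\neq0$ this is a linear isomorphism. On $\overline{\G}_1\times\overline{\G}_1$ it preserves brackets because $\Psi$ does, and for $x\in\overline{\G}_1$ one has $\widetilde\Psi([\ell_1,x]_1)=\Psi(\mathrm{D}_1(x))$, while $[\widetilde\Psi(\ell_1),\widetilde\Psi(x)]_2=\overline{[u,\Psi(x)]}_2+\theta\,\mathrm{D}_2(\Psi(x))=(\mathrm{ad}_u+\theta\,\mathrm{D}_2)(\Psi(x))$; these coincide exactly because the hypothesis on $\mathrm{D}_2$ reads $\mathrm{ad}_u+\theta\,\mathrm{D}_2=\Psi\circ\mathrm{D}_1\circ\Psi^{-1}$. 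Hence $\widetilde\Psi$ is a Lie algebra isomorphism.

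\emph{Necessity.} Given a Lie algebra isomorphism $\Phi\colon\G_{\ell_1}\to\G_{\ell_2}$, the key step is to show $\Phi(\overline{\G}_1)=\overline{\G}_2$: each $\overline{\G}_i$ is a codimension-one, hence maximal, ideal of $\G_{\ell_i}$, and it is the distinguished one identified intrinsically inside $\G_{\ell_i}$ exactly as in the proof of Theorem~\ref{isombetewnoncentral} (it contains $[\G_{\ell_i},\G_{\ell_i}]$ and is stable under all automorphisms of $\G_{\ell_i}$), so $\Phi$ must carry $\overline{\G}_1$ onto $\overline{\G}_2$. Then $\Psi:=\Phi|_{\overline{\G}_1}\colon\overline{\G}_1\to\overline{\G}_2$ is a Lie algebra isomorphism; since $\Phi$ is bijective and $\overline{\G}_2$ has codimension one, we may write $\Phi(\ell_1)=u+\theta\ell_2$ with $u\in\overline{\G}_2$, $\theta\in\R^\ast$. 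Finally, applying $\Phi$ to $[\ell_1,x]_1=\mathrm{D}_1(x)$ and comparing with $[\Phi(\ell_1),\Phi(x)]_2=\overline{[u,\Psi(x)]}_2+\theta\,\mathrm{D}_2(\Psi(x))$ for all $x\in\overline{\G}_1$ gives $\Psi\circ\mathrm{D}_1=(\mathrm{ad}_u+\theta\,\mathrm{D}_2)\circ\Psi$, that is,
\[
\mathrm{D}_2=\frac1\theta\big(\Psi\circ\mathrm{D}_1\circ\Psi^{-1}-\mathrm{ad}_u\big).
\]

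Everything apart from these short bracket computations is routine, and I expect the only genuine obstacle to be the claim $\Phi(\overline{\G}_1)=\overline{\G}_2$ in the necessity part: one must pin down $\overline{\G}_i$ as a characteristic ideal of the extension $\G_{\ell_i}$ (equivalently, rule out an automorphism of $\G_{\ell_i}$ sending $\overline{\G}_i$ to a different codimension-one ideal), which is the same point used, without further comment, in the proof of Theorem~\ref{isombetewnoncentral}.
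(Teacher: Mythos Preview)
Your approach matches the paper's: the paper only writes out the necessity direction, arguing that $\overline{\G}_1$ is a maximal ideal so $\Phi(\overline{\G}_1)$ is a maximal ideal of $\G_{\ell_2}$ and hence equals $\overline{\G}_2$, then does exactly the bracket comparison you give to extract $\Psi$ and the relation $\mathrm{D}_2=\frac1\theta(\Psi\circ\mathrm{D}_1\circ\Psi^{-1}-\mathrm{ad}_u)$. Your sufficiency argument (building $\widetilde\Psi$ explicitly) is a correct addition the paper omits.

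You are right to flag the step $\Phi(\overline{\G}_1)=\overline{\G}_2$ as the only real issue, and in fact the paper glosses over it just as you suspected: ``maximal ideal goes to maximal ideal, hence to $\overline{\G}_2$'' is not valid in general (take $\overline{\G}$ abelian and $\mathrm{D}=0$, so $\G_\ell$ is abelian and every hyperplane is a maximal ideal). Your suggested fix via containment of the derived ideal is on the right track but not sufficient by itself either, since any codimension-one ideal of $\G_{\ell_i}$ contains $[\G_{\ell_i},\G_{\ell_i}]$. In the paper's applications the base $\overline{\G}$ is a specific algebra (a central or non-central extension of $\G_{(1,0,0,1)}$, which has trivial center and no one-dimensional ideals), and there $\overline{\G}$ really is characteristic; but as a standalone lemma the statement needs either an extra hypothesis or the understanding that it is being used only in that restricted setting.
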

\begin{proof}
Suppose that there exists a Lie algebra isomorphism $\Phi:\G_{\ell_1}\longrightarrow\G_{\ell_2}$. Since $\overline{\G}_1$ is a maximal ideal of $\G_{\ell_1}$ then $\Phi(\overline{\G}_1)$ is a maximal ideal of $\G_{\ell_2}$, this  implies that, $\Phi(\overline{\G}_1)=\overline{\G}_2$. For all $x,y\in\overline{\G}_1$, we have
\begin{align*}
\Phi([x,y]_1)&=\Phi(\overline{[x,y]}_1)=\Phi|_{\overline{\G}_1}(\overline{[x,y]}_1),\\
[\Phi|_{\overline{\G}_1}(x),\Phi|_{\overline{\G}_1}(y)]_2&=\overline{[\Phi|_{\overline{\G}_1}(x),\Phi|_{\overline{\G}_1}(y)]}_2.
\end{align*}
and
\begin{align*}
\Phi([\ell_1,x]_1)&=\Phi|_{\overline{\G}_1}(\mathrm{D}_1(x)),\\
[\Phi(\ell_1),\Phi(x)]_2&=[u+\theta \ell_2,\Phi|_{\overline{\G}_1}(x)]_2\\
&=\overline{[u,\Phi_{\overline{\G}_1}(x)]}_2+\theta \mathrm{D}_2\big(\Phi|_{\overline{\G}_1}(x)\big).
\end{align*}
Therefore, $\Psi:=\Phi_{\overline{\G}_1}$ is a Lie algebra isomorphism, and
\begin{align*}
\mathrm{D}_2=\frac{1}{\theta}\big(\Psi\circ \mathrm{D}_1\circ\Psi^{-1}-\mathrm{ad}_u\big).
\end{align*}
\end{proof}

\begin{theo}\label{Theogeneral}
Two  non-central  oxidations $\G_{(\mathrm{D}_1, \varphi_1, \lambda_1, w_1, t_1)}$ and $\G_{(\mathrm{D}_2, \varphi_2, \lambda_2, w_2, t_2)}$    of $\overline{\G}$,  are  isomorphic if and only if there exist a  Lie algebra isomorphism of central extensions  $\Psi:\overline{\G}_{\varphi_1}
\longrightarrow\overline{\G}_{\varphi_2}$, $u\in\overline{\G}$, $v\in \mathfrak{z}(\overline{\G})\cap\ker\big(\iota_{\Psi|_{\overline{\G}}(\overline{\G})}\varphi_2\big)$, $\beta\in\overline{\G}^\ast$, and $\theta,a\in\R^\ast$ such that
\begin{enumerate}
\item $\mathrm{D}_2=\frac{1}{\theta}\Big(\big(\Psi|_{\overline{\G}}\circ \mathrm{D}_1+v\lambda_1-w_2\theta\beta\big)\circ\Psi|_{\overline{\G}}^{-1}-\mathrm{ad}_u\big)$,
\item $\Psi^\ast|_{\overline{\G}}\Big(\lambda_2+\frac{1}{\theta}\iota_u\varphi_2\Big)=\frac{1}{\theta}\Big(\beta\circ \mathrm{D}_1+a\lambda_1-t_2\theta\beta\Big)$,
\item$\Psi|_{\overline{\G}}(w_1)= \theta \mathrm{D}_2(v)+a\theta w_2-t_1v$,
\item $\varphi_2(u,v)=\beta(w_1)+at_1-\theta\lambda_2(v)-a\theta t_2$.
\end{enumerate}
With, $a\varphi_1-\Psi|_{\overline{\G}}^\ast\big(\varphi_2\big)=\md\beta$.
\end{theo}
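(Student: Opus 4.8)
The plan is to run, almost word for word, the argument that established Theorem~\ref{isombetewnoncentral}, now carrying along the two distinct cocycles $\varphi_1,\varphi_2\in Z^2(\overline{\G})$ separately on the source and on the target. Each non-central oxidation $\G_{(\mathrm{D}_i,\varphi_i,\lambda_i,w_i,t_i)}=\langle\ell_i\rangle\ltimes\big(\overline{\G}\oplus\langle\xi\rangle\big)$ contains, by~(\ref{Bragene}), the subspace $\overline{\G}_{\varphi_i}:=\overline{\G}\oplus\langle\xi\rangle$ with bracket $[x,y]=\overline{[x,y]}+\varphi_i(x,y)\xi$ and $[x,\xi]=0$ as a codimension-one ideal; this is precisely the central extension of $\overline{\G}$ by $\varphi_i$, and moreover $[\G,\G]\subseteq\overline{\G}_{\varphi_i}$. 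Exactly as in the proof of Theorem~\ref{isombetewnoncentral} and of Lemma~\ref{extegene}, a Lie algebra isomorphism $\Phi:\G_{(\mathrm{D}_1,\varphi_1,\lambda_1,w_1,t_1)}\to\G_{(\mathrm{D}_2,\varphi_2,\lambda_2,w_2,t_2)}$ sends this maximal ideal onto the corresponding one, so that $\Psi:=\Phi|_{\overline{\G}_{\varphi_1}}$ is a Lie algebra isomorphism $\overline{\G}_{\varphi_1}\to\overline{\G}_{\varphi_2}$ and $\Phi(\ell_1)=u+\theta\ell_2$ for some $u\in\overline{\G}_{\varphi_2}$, $\theta\in\R^\ast$.

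I would then parametrise $\Psi$ as the map $\Upsilon$ was parametrised in the proof of Theorem~\ref{isombetewnoncentral}: write $\Psi(x)=\Psi|_{\overline{\G}}(x)+\beta(x)\xi$ for $x\in\overline{\G}$ and $\Psi(\xi)=v+a\xi$ with $\beta\in\overline{\G}^\ast$, $v\in\overline{\G}$, $a\in\R^\ast$. Compatibility of $\Psi$ with the brackets of $\overline{\G}_{\varphi_1}$ and $\overline{\G}_{\varphi_2}$ forces $v\in\mathfrak{z}(\overline{\G})\cap\ker\big(\iota_{\Psi|_{\overline{\G}}(\overline{\G})}\varphi_2\big)$, the intertwining relation $\Psi|_{\overline{\G}}\big(\overline{[x,y]}\big)+\varphi_1(x,y)v=\overline{[\Psi|_{\overline{\G}}(x),\Psi|_{\overline{\G}}(y)]}$, and the cohomological condition $a\varphi_1-\Psi|_{\overline{\G}}^\ast(\varphi_2)=\md\beta$; the only change from Theorem~\ref{isombetewnoncentral} is that the pulled-back cochain is $\varphi_1$ on the source while the cochain evaluated on $u,v$ is $\varphi_2$ on the target. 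Imposing now $\Phi([\ell_1,x]_1)=[\Phi(\ell_1),\Phi(x)]_2$ for $x\in\overline{\G}$ and $\Phi([\ell_1,\xi]_1)=[\Phi(\ell_1),\Phi(\xi)]_2$, and expanding with $[\ell_i,x]=\mathrm{D}_i(x)+\lambda_i(x)\xi$, $[\ell_i,\xi]=w_i+t_i\xi$, and $\mathrm{ad}_u=\overline{\mathrm{ad}}_u+\varphi_2(u,\cdot)\xi$ on $\overline{\G}_{\varphi_2}$, I would separate the $\overline{\G}$-component from the $\langle\xi\rangle$-component of each of the two identities: the $\overline{\G}$-part of the $[\ell,x]$-identity yields~1, its $\xi$-part yields~2, the $\overline{\G}$-part of the $[\ell,\xi]$-identity yields~3, and its $\xi$-part yields~4. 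These are precisely the computations already performed in equations~(\ref{C1})--(\ref{C41}), now with $\varphi\rightsquigarrow\varphi_1$ or $\varphi\rightsquigarrow\varphi_2$ according to whether the arguments belong to the source copy of $\overline{\G}$ or to $u,v$.

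For the converse, given data $(\Psi,u,v,\beta,\theta,a)$ satisfying 1--4 together with $a\varphi_1-\Psi|_{\overline{\G}}^\ast(\varphi_2)=\md\beta$, I would set $\Phi|_{\overline{\G}_{\varphi_1}}:=\Psi$, $\Phi(\ell_1):=u+\theta\ell_2$, and check directly that $\Phi$ respects every bracket of~(\ref{Bragene}); this merely reverses the computation above and introduces no new idea. I expect the only genuinely delicate point to be the opening reduction --- that a Lie algebra isomorphism of the two oxidations really preserves the distinguished codimension-one ideal $\overline{\G}_{\varphi_i}$ and sends $\ell_1$ to $u+\theta\ell_2$ --- which is handled by the maximal-ideal argument already used for Theorem~\ref{isombetewnoncentral} and Lemma~\ref{extegene}, and which, it is worth noting, does not use $\varphi_1=\varphi_2$. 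Everything downstream of that step is bookkeeping identical to the proof of Theorem~\ref{isombetewnoncentral}.
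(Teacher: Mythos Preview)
Your proposal is correct and follows essentially the same approach as the paper: invoke the maximal-ideal argument of Lemma~\ref{extegene} to obtain that $\Psi=\Phi|_{\overline{\G}_{\varphi_1}}$ is an isomorphism of central extensions, parametrise $\Psi$ exactly as in the proof of Theorem~\ref{isombetewnoncentral}, and then rerun the computations~(\ref{restri1})--(\ref{C41}) with $\varphi_1$ on the source and $\varphi_2$ on the target. The paper's own proof is in fact just a pointer back to those equations, so your write-up is if anything more explicit (including the converse), but the mathematical content is identical.
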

\begin{proof}
Similar to the proof of Theorem $\ref{isombetewnoncentral}$. Let $\Phi=\G_{(\mathrm{D}_1, \varphi_1, \lambda_1, w_1, t_1)}\longrightarrow\G_{(\mathrm{D}_2, \varphi_2, \lambda_2, w_2, t_2)}$ be a Lie algebra isomorphism. According to Lemma $\ref{extegene}$, $\Psi=\Phi|_{\overline{\G}_{\varphi_1}}:\overline{\G}_{\varphi_1}\longrightarrow\overline{\G}_{\varphi_2}$ is  a Lie algebra isomorphism   which can be written as:
\begin{align*}
\Psi(x)&=\Psi|_{\overline{\G}}(x)+\beta(x)\xi,\hspace{2.1cm}\text{for all }x\in\overline{\G},~\beta\in\overline{\G}^\ast,\\
\Psi(\xi)&=v+a\xi,\hspace{4.1cm}v\in\overline{\G},~a\in\R^\ast.
\end{align*}
Similar to the steps  $(\ref{restri1})-(\ref{Autgxi0})$, and their consequences $(\ref{Autgxi1})$ and $(\ref{Autgxi2})$,  we obtain that $\Psi\in\mathrm{Isom}(\G_{\varphi_1}\to\G_{\varphi_2})$ if and only if
\begin{align}\label{GAutgxi1}
\Psi|_{\overline{\G}}\big(\overline{[x,y]}\big)+\varphi_1(x,y)v&=\overline{
[\Psi|_{\overline{\G}}(x),\Psi|_{\overline{\G}}(y)]},\hspace{0.5cm}\text{for all}~~ x,y\in\overline{\G},~v\in \mathfrak{z}(\overline{\G})\cap\ker\big(\iota_{\Psi|_{\overline{\G}}(\overline{\G})}\varphi_2\big),
\end{align}
and
\begin{align}\label{GAutgxi2}
a\varphi_1-\Psi|_{\overline{\G}}^\ast\big(\varphi_2\big)=\md\beta,\hspace{0.5cm}~~a\in\R^\ast,~~\beta\in\overline{\G}^\ast.
\end{align}
In the same way, using the steps $(\ref{C1})$, $(\ref{C2})$ and their consequences $(\ref{C11})$ and $(\ref{C21})$, we find the following conditions:
 \begin{align}\label{GC11}
\mathrm{D}_2&=\frac{1}{\theta}\Big(\big(\Psi|_{\overline{\G}}\circ \mathrm{D}_1+v\lambda_1-w_2\theta\beta\big)\circ\Psi|_{\overline{\G}}^{-1}-\overline{\mathrm{ad}}_u\big),
\end{align}
and
\begin{align}\label{GC21}
\Psi^\ast|_{\overline{\G}}\Big(\lambda_2+\frac{1}{\theta}\iota_u\varphi_2\Big)&=\frac{1}{\theta}\Big(\beta\circ \mathrm{D}_1+a\lambda_1-t_2\theta\beta\Big).
\end{align}
Moreover, in a way analogous to the  steps $(\ref{C3})$, $(\ref{C4})$ and their consequences $(\ref{C31})$ and $(\ref{C41})$, we therefore have
\begin{align}\label{GC31}
\theta \mathrm{D}_2(v)+a\theta w_2=\Psi|_{\overline{\G}}(w_1)+t_1v,
\end{align}
and
\begin{align}\label{GC41}
\theta\lambda_2(v)+a\theta t_2=\beta(w_1)+t_1 a-\varphi_2(x,v).
\end{align}
\end{proof}
Similarly, we obtain the following refinement of Corollary $\ref{Covarphi}$.
\begin{co}\label{Cgeral}
Two  non-central  oxidations, $\G_{(\mathrm{D}_1, \varphi_1, \lambda_1, w_1, t_1)}$ and $\G_{(\mathrm{D}_2, \varphi_2, \lambda_2, w_2, t_2)}$    of $\overline{\G}$, with $\mathfrak{z}(\overline{\G})=\{0\}$,  are  isomorphic if and only if there exist an automorphism  $\Phi:\overline{\G}
\longrightarrow\overline{\G}$, $u\in\overline{\G}$ and $\theta,a\in\R^\ast$ such that
\begin{enumerate}
\item $\Phi^\ast\Big(\lambda_2+\frac{1}{\theta}\iota_u\varphi_2\Big)=\frac{1}{\theta}\Big(\beta\circ \mathrm{D}_1+a\lambda_1-t_2\theta\beta\Big)$,
\item $\mathrm{D}_2=\frac{1}{\theta}\big(\Phi\circ \mathrm{D}_1\circ\Phi^{-1}-\mathrm{ad}_u\big)$,
\item $t_1=\theta t_2$.
\end{enumerate}
With, $a\varphi_1-\Phi^\ast\big(\varphi_2\big)=\md\beta$.
\end{co}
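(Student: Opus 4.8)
The plan is to deduce Corollary~\ref{Cgeral} from Theorem~\ref{Theogeneral} by specializing to the hypothesis $\mathfrak{z}(\overline{\G})=\{0\}$, in exactly the way Corollary~\ref{Covarphi} was obtained from Theorem~\ref{isombetewnoncentral}. First I would observe that when $\mathfrak{z}(\overline{\G})=\{0\}$ the data defining a non-central oxidation are forced to satisfy $w_1=w_2=0$, since by construction $w_i\in\mathfrak{z}(\overline{\G})\cap\ker(\iota_x\varphi_i)\subseteq\mathfrak{z}(\overline{\G})=\{0\}$; likewise the vector $v$ appearing in Theorem~\ref{Theogeneral} lies in $\mathfrak{z}(\overline{\G})\cap\ker(\iota_{\Psi|_{\overline{\G}}(\overline{\G})}\varphi_2)=\{0\}$, so $v=0$. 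Consequently the isomorphism of central extensions $\Psi:\overline{\G}_{\varphi_1}\to\overline{\G}_{\varphi_2}$ has the form $\Psi(x)=\Phi(x)+\beta(x)\xi$ for $x\in\overline{\G}$ and $\Psi(\xi)=a\xi$, and with $v=0$ condition~(\ref{GAutgxi1}) reduces precisely to $\Phi(\ol{[x,y]})=\ol{[\Phi(x),\Phi(y)]}$, so that $\Phi:=\Psi|_{\overline{\G}}\in\mathrm{Aut}(\overline{\G})$ (this is the remark following~(\ref{Autgxi2})). The cocycle relation $a\varphi_1-\Phi^\ast(\varphi_2)=\md\beta$ is inherited from~(\ref{GAutgxi2}).

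Next I would substitute $v=0$, $w_1=w_2=0$ into the four conditions of Theorem~\ref{Theogeneral}. Condition~$1$ collapses to $\mathrm{D}_2=\tfrac{1}{\theta}\big(\Phi\circ \mathrm{D}_1\circ\Phi^{-1}-\mathrm{ad}_u\big)$, which is item~$2$ of the corollary; condition~$2$ is unchanged and is item~$1$; condition~$3$, namely $\Psi|_{\overline{\G}}(w_1)=\theta \mathrm{D}_2(v)+a\theta w_2-t_1v$, becomes the trivial identity $0=0$; and condition~$4$, $\varphi_2(u,v)=\beta(w_1)+at_1-\theta\lambda_2(v)-a\theta t_2$, reduces to $0=at_1-a\theta t_2$, which, since $a\in\R^\ast$, is equivalent to $t_1=\theta t_2$, i.e.\ item~$3$. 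For the converse I would reverse these substitutions: given $\Phi\in\mathrm{Aut}(\overline{\G})$, $u\in\overline{\G}$, $\theta,a\in\R^\ast$ and the induced $\beta$ satisfying items~$1$--$3$, set $v=0$, $w_1=w_2=0$, define $\Psi$ on $\overline{\G}_{\varphi_1}$ by $\Psi(x)=\Phi(x)+\beta(x)\xi$, $\Psi(\xi)=a\xi$, and extend to the oxidations by $\Psi(\ell_1)=u+\theta\ell_2$; the four conditions of Theorem~\ref{Theogeneral} then hold, so the two non-central oxidations are isomorphic.

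Since the whole argument is a direct specialization of an already-established theorem, there is no real analytic obstacle here; the only point deserving a line of care is the identification of $\Psi|_{\overline{\G}}$ with a genuine automorphism of $\overline{\G}$ itself rather than merely of the central extension $\overline{\G}_{\varphi_1}$, which follows from $\mathfrak{z}(\overline{\G})=\{0\}$ via~(\ref{GAutgxi1}). Everything else is bookkeeping of the vanishing of $v$, $w_1$ and $w_2$.
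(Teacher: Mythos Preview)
Your proposal is correct and follows essentially the same approach as the paper, which simply states that the corollary is obtained ``similarly'' as a refinement of Corollary~\ref{Covarphi}, i.e.\ by specializing Theorem~\ref{Theogeneral} under the hypothesis $\mathfrak{z}(\overline{\G})=\{0\}$. Your write-up is in fact more detailed than the paper's, spelling out explicitly how each of the four conditions of Theorem~\ref{Theogeneral} collapses when $v=w_1=w_2=0$ and why $\Psi|_{\overline{\G}}$ is a genuine automorphism of $\overline{\G}$.
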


\begin{pr}\label{CharaCentral}
Two  central symplectic oxidations $(\G_{(\mathrm{D}_1,0,\lambda_1,0)},\om_1)$ and $(\G_{(\mathrm{D}_2,0,\lambda_2,0)},\om_2)$    of $(\overline{\G},\overline{\omega}_1)$ and $(\overline{\G},\overline{\omega}_2)$, respectively,  are symplectomorphically isomorphic if and only if there exist an automorphism $\Phi:\overline{\G}\longrightarrow\overline{\G}$, $u\in\overline{\G}$ and $\theta\in\R^\ast$, $\beta\in\overline{\G}^\ast$ such that
\begin{enumerate}
\item $\Phi^\ast\overline{\omega}_2=\overline{\omega}_1$
\item $\beta=\frac{1}{\theta}\Phi^\ast\big(\iota_u\overline{\omega}_2\big)$
\item $\lambda_2=\frac{1}{\theta}\big(\Phi^\ast\big)^{-1}\Big(\beta\circ \mathrm{D}_1-\iota_u\overline{\omega}_{_2 \mathrm{D}_2}+\frac{1}{\theta}\lambda_1\Big)$,
\item $\mathrm{D}_2=\frac{1}{\theta}\big(\Phi\circ \mathrm{D}_1\circ\Phi^{-1}-\mathrm{ad}_u\big)$.
\end{enumerate}

\end{pr}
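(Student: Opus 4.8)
The plan is to derive Proposition~\ref{CharaCentral} from the general isomorphism theorem for non-central oxidations (Theorem~\ref{Theogeneral}) by specializing to the central situation and then imposing the extra requirement that the Lie algebra isomorphism be a symplectomorphism. First I would record the precise realization: by Remark~\ref{remarkcentral} and formula~(\ref{LieCentral}), the central symplectic oxidation $(\G_{(\mathrm{D}_i,0,\lambda_i,0)},\om_i)$ is, as a Lie algebra, exactly the non-central oxidation $\G_{(\mathrm{D}_i,\varphi_i,\lambda_i,0,0)}$ with $\varphi_i=(\overline{\om}_i)_{\mathrm{D}_i}\in Z^2(\overline{\G})$, $w_i=0$, $t_i=0$; and in the isotropic splitting $\G_i=\langle\ell_i\rangle\oplus\overline{\G}\oplus\langle\xi_i\rangle$ the symplectic form is characterized by $\om_i|_{\overline{\G}\times\overline{\G}}=\overline{\om}_i$, $\om_i(\xi_i,\ell_i)=1$, and $\overline{\G}\perp(\langle\ell_i\rangle\oplus\langle\xi_i\rangle)$. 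The non-degeneracy of $\overline{\om}_1,\overline{\om}_2$ is the one ingredient beyond the oxidation machinery, and it will be used decisively.

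For the forward implication, let $\Psi\colon(\G_1,\om_1)\to(\G_2,\om_2)$ be a symplectomorphism. Being in particular a Lie algebra isomorphism of the underlying non-central oxidations, Theorem~\ref{Theogeneral} determines its form: $\Psi(\xi_1)=v+a\xi_2$, $\Psi(\ell_1)=u+\theta\ell_2$, and $\Psi(x)=\Phi(x)+\beta(x)\xi_2$ for $x\in\overline{\G}$, where $\Phi$ is a linear bijection of $\overline{\G}$, $\beta\in\overline{\G}^\ast$, $u\in\overline{\G}$, $\theta,a\in\R^\ast$, $v\in\mathfrak{z}(\overline{\G})$, subject to conditions (1)--(4) of that theorem with $w_i=t_i=0$. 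I would then evaluate $\Psi^\ast\om_2=\om_1$ on the splitting, pair by pair: on $\overline{\G}\times\overline{\G}$ it gives $\Phi^\ast\overline{\om}_2=\overline{\om}_1$, i.e.\ condition~1; on $(x,\xi_1)$ with $x\in\overline{\G}$ it gives $\overline{\om}_2(\Phi x,v)=0$ for all $x$, hence $v=0$ by non-degeneracy of $\overline{\om}_2$ (whence $\Phi$ is a genuine Lie algebra automorphism of $\overline{\G}$); on $(\xi_1,\ell_1)$ it gives $a\theta=1$; and on $(x,\ell_1)$ with $x\in\overline{\G}$ it gives $\theta\beta(x)=\overline{\om}_2(u,\Phi x)$, i.e.\ $\beta=\tfrac{1}{\theta}\Phi^\ast(\iota_u\overline{\om}_2)$, which is condition~2. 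Feeding $v=0$, $a=\tfrac{1}{\theta}$, $w_i=t_i=0$, $\varphi_i=(\overline{\om}_i)_{\mathrm{D}_i}$ into conditions (1) and (2) of Theorem~\ref{Theogeneral} then yields, after rearrangement, conditions~4 and~3 of the Proposition; equivalently, these follow at once by equating the $\overline{\G}$- and $\langle\xi_2\rangle$-components of $\Psi([\ell_1,x]_1)=[\Psi(\ell_1),\Psi(x)]_2$, using that $\xi_2$ is central.

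For the converse, given $\Phi,u,\theta,\beta$ satisfying (1)--(4), I would define $\Psi$ by the same formulas, $\Psi(x)=\Phi(x)+\beta(x)\xi_2$ for $x\in\overline{\G}$, $\Psi(\xi_1)=\tfrac{1}{\theta}\xi_2$, $\Psi(\ell_1)=u+\theta\ell_2$, and verify it is a symplectomorphism. It is plainly a linear isomorphism; $\Psi^\ast\om_2=\om_1$ follows from (1) and (2) by running the pairwise computation backwards; the bracket relations on $[\xi_1,\cdot]$ and $[\xi_1,\ell_1]$ are immediate from centrality of $\xi_2$; $[\ell_1,x]$ is handled by (3) and (4); and $[x,y]$ reduces to the identity $\tfrac{1}{\theta}(\overline{\om}_1)_{\mathrm{D}_1}-\Phi^\ast(\overline{\om}_2)_{\mathrm{D}_2}=\md\beta$. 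This last identity is the main obstacle: it occurs as the side condition of Theorem~\ref{Theogeneral} but is \emph{not} among the hypotheses (1)--(4), so it must be shown to follow from them. I would obtain it by expanding $\Phi^\ast(\overline{\om}_2)_{\mathrm{D}_2}$, using condition~4 to replace $\mathrm{D}_2\circ\Phi$ by $\tfrac{1}{\theta}(\Phi\circ\mathrm{D}_1-\mathrm{ad}_u\circ\Phi)$, then invoking the closedness of $\overline{\om}_2$ (the cocycle identity with arguments $u,\Phi x,\Phi y$) together with $\Phi[x,y]=[\Phi x,\Phi y]$ to collapse the $\mathrm{ad}_u$-terms into $\iota_u\overline{\om}_2([\Phi x,\Phi y])=\Phi^\ast(\iota_u\overline{\om}_2)([x,y])=\theta\,\beta([x,y])=-\theta\,\md\beta(x,y)$ by condition~2; combining this with $\Phi^\ast\overline{\om}_2=\overline{\om}_1$ gives the required identity. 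Everything else is routine bookkeeping, and the only genuinely essential input beyond Theorem~\ref{Theogeneral} is the non-degeneracy of $\overline{\om}_1$ and $\overline{\om}_2$, which forces $v=0$ and $a=\tfrac{1}{\theta}$.
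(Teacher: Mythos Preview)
Your argument follows the same skeleton as the paper's: invoke Theorem~\ref{Theogeneral} (resp.\ Corollary~\ref{Cgeral}) for the shape of $\Psi$, and then read off conditions~1--4 by evaluating $\Psi^\ast\om_2=\om_1$ pairwise on the splitting $\langle\ell\rangle\oplus\overline{\G}\oplus\langle\xi\rangle$. Two differences are worth noting. First, the paper obtains $v=0$ and $\Phi\in\mathrm{Aut}(\overline{\G})$ by quoting Corollary~\ref{Cgeral}, which carries the standing hypothesis $\mathfrak{z}(\overline{\G})=\{0\}$; you instead force $v=0$ directly from $\om_1(x,\xi_1)=0$ together with the non-degeneracy of $\overline{\om}_2$, which is cleaner and does not require that extra hypothesis. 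Second, the paper only writes out the forward implication and then declares conditions~3 and~4 to be ``a consequence of Theorem~\ref{Theogeneral} and Corollary~\ref{Cgeral}''; you go further and sketch the converse, in particular isolating the one nontrivial point---that the side condition $a\varphi_1-\Phi^\ast\varphi_2=\md\beta$ of Theorem~\ref{Theogeneral} is not among the hypotheses~(1)--(4) and must be recovered from them via the cocycle identity for $\overline{\om}_2$, condition~4, and condition~2. That observation is exactly what closes the argument, and the paper leaves it implicit. Up to keeping careful track of signs in that last computation (worth rechecking once against the convention $(\iota_u\overline{\om}_2)(y)=\overline{\om}_2(u,y)$ and $\md\beta(x,y)=-\beta([x,y])$), your route and the paper's coincide, with yours being the more self-contained of the two.
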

\begin{proof}
Let $\Psi\in\mathrm{Isom}(\G_{(\mathrm{D}_1,0,\lambda_1,0)}\to\G_{(\mathrm{D}_2,0,\lambda_2,0)})$ be a Lie algebra isomorphism. According to Theorem $\ref{Theogeneral}$ and Corollary $\ref{Cgeral}$, $\Psi$ is given by
\begin{align*}
\Psi(x)&=\Phi(x)+\beta(x)\xi,\hspace{1cm}\text{for all }x\in\overline{\G},~\beta\in\overline{\G}^\ast,\\
\Psi(\xi)&=a\xi,\hspace{4.25cm}a\in\R^\ast,\\
\Psi(\ell_1)&=u+\theta \ell_2,\hspace{2.64cm}u\in\overline{\G},~\theta\in\R^\ast,
\end{align*} 
where, $\Phi\in\mathrm{Aut}(\overline{\G})$, and $a\overline{\omega}_{\mathrm{D}_1}-\Phi^\ast(\overline{\omega}_{\mathrm{D}_2})=\md\beta$. For all $x,y\in\overline{\G}$, we have
\begin{align*}
\big(\Psi^\ast\omega_2\big)(x,y)&=\omega_2\big(\Psi(x),\Psi(y)\big)\\
&=\omega_2\big(\Phi(x)+\beta(x)\xi,\Phi(y)+\beta(y)\xi\big)\\
&=\overline{\omega}_2(\Phi(x),\Phi(y)),
\end{align*}
and
\begin{align*}
\omega_1(x,y)&=\overline{\omega}_1(x,y).
\end{align*}
Therefore, $\Phi^\ast\overline{\omega}_2=\overline{\omega}_1$.
On the other hand, for all $x\in\overline{\G}$, we have
\begin{align*}
\big(\Psi^\ast\omega_2\big)(x,\xi)&=\omega_1(x,\xi)=0,
\end{align*}
and
\begin{align*}
\big(\Psi^\ast\omega_2\big)(x,\ell_1)&=\omega_2\big(\Phi(x)+\beta(x)\xi,u+\theta\ell_2\big)\\
&=\omega_2\big(\Phi(x),u\big)+\theta\beta(x)\omega_2\big(\xi,\ell_2\big)\\
&=\overline{\omega}_2\big(\Phi(x),u\big)+\theta\beta(x).
\end{align*}
Then, $\beta=\frac{1}{\theta}\Phi^\ast\big(\iota_u\overline{\omega}_2\big)$, since, $\omega_1(x,\ell_1)=0$.  Moreover,

\begin{align*}
\big(\Psi^\ast\omega_2\big)(\xi,\ell_1)&=\omega_2\big(a\xi,u+\theta\ell_2\big)\\
&=a\theta\omega_2(\xi,\ell_2)=a\theta.
\end{align*}
Thus, $a=\frac{1}{\theta}$, since $\omega_1(\xi,\ell_1)=1$.

The remaining conditions 3.~and 4.,  are a consequence of Theorem $\ref{Theogeneral}$ and Corollary $\ref{Cgeral}$.
\end{proof}

In the following, we describe the scheme  of the classification of eight-dimensional non completely reducible symplectic Lie algebra  with  one-dimensional central ideal.

Let $(\G,\omega)$ be an eight-dimensional non completely reducible symplectic Lie algebra,  $\mathfrak{j}=\langle e_7\rangle$ a one-dimensional central ideal and $(\overline{\G},\overline{\omega})$ the symplectic reduction with respect to $\mathfrak{j}$, which is irreducible. The Lie brackets of $\G$ can be written as in~\eqref{Brackestgenera} under the conditions of Proposition~\ref{prgeneralized}. The central case corresponds to $t = 0$ and $\mu = 0$, so the Lie brackets of $\G$ reduce to those in~\eqref{LieCentral} under the condition given below.

 According to Proposition $\ref{diagram}$ and the fact that there exists a unique irreducible symplectic Lie algebra in dimension six, up to isomorphism, one can take  $(\overline{\G},\overline{\omega})=(\G_{(1,0,0,1)},\overline{\omega})$, where $(\G_{(1,0,0,1)},\overline{\omega})$ is the six-dimensional irreducible Lie given by Proposition $\ref{Classi6}$.  According to Proposition~\ref{CharaCentral}, the central oxidations $(\G_{(\mathrm{D},0,\lambda,0)}, \omega_1)$ and $(\G_{(\mathrm{D},0,\lambda,0)}, \omega_2)$ are actually symplectomorphic, since both are symplectomorphic to the central oxidation $(\G_{(D,0,\lambda,0)}, \omega_\eta + e^7 \wedge e^8)$. This follows from the fact that any symplectic form on $\G_{(1,0,0,1)}$ is symplectomorphic to the unique symplectic form $\omega_\eta$ on $\G_{(1,0,0,1)}$ (see Proposition~\ref{Classi6}). Moreover,  the Lie bracket structure of $\G$ is given by
 \begin{align}
\begin{split}
	[x,y]&=\ol{[x,y]}+\overline{\om}_{\eta,\mathrm{D}}(x,y)e_7, \hspace{0.9cm}\text{for all}\quad x,y\in \ol{\G},\\
	[e_8,x]&=\mathrm{D}(x)+\lambda(x)e_7,\hspace{1.7cm}\text{for all}\hspace{0.65cm}x\in\ol{\G}.
	\end{split}
\end{align} 
where $\mathrm{D}$ is a derivation of $\overline{\G}$ and $\lambda\in\overline{\G}^*$ is a $1$-form  satisfying $\overline{\om}_{\eta,\mathrm{D},\mathrm{D}}=-\md\lambda$.

We have the following:

\begin{Le}\label{Derivation}
Every derivation on $\G_{(1,0,0,1)}$ has the following form$:$
\begin{align*}
\mathrm{D}e_1&=d_{11}e_1-d_{12}e_2,~~\mathrm{D}e_2=d_{12}e_1+d_{11}e_2,~~\mathrm{D}e_3=d_{33}e_3-d_{34}e_4,~~\\
\mathrm{D}e_4&=d_{34}e_3+d_{33}e_4,~~\mathrm{D}e_5=d_{15}e_1+d_{25}e_2,~~\mathrm{D}e_6=d_{36}e_3+d_{46}e_4,
\end{align*}
where, $d_{ij}\in\R$. Moreover, any derivation  $\mathrm{D}$   satisfying $\omega_{\eta,\mathrm{D},\mathrm{D}}=-\md\lambda$ is an inner derivation on $\G_{(1,0,0,1)}$.
\end{Le}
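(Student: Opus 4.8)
I would establish the two assertions in turn: first the explicit shape of an arbitrary derivation, then the fact that the cocycle-type condition forces it to be inner. For the first assertion, write $\mathrm{D}e_i=\sum_{j=1}^{6}d_{ij}e_j$ and impose the Leibniz identity $\mathrm{D}[e_a,e_b]=[\mathrm{D}e_a,e_b]+[e_a,\mathrm{D}e_b]$ on each defining bracket. Any derivation preserves the derived algebra, and here $\mathcal{D}(\G_{(1,0,0,1)})=\langle e_1,e_2,e_3,e_4\rangle=:\mathfrak{a}$ is abelian, so the brackets internal to $\mathfrak{a}$ carry no information but one immediately gets $\mathrm{D}(\mathfrak{a})\subseteq\mathfrak{a}$. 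Running through the vanishing brackets $[e_1,e_6]=[e_2,e_6]=[e_3,e_5]=[e_4,e_5]=[e_5,e_6]=0$ then successively forces $\mathrm{D}e_1,\mathrm{D}e_2\in\langle e_1,e_2\rangle$, $\mathrm{D}e_3,\mathrm{D}e_4\in\langle e_3,e_4\rangle$, kills the $e_3,e_4,e_6$-components of $\mathrm{D}e_5$ and the $e_1,e_2,e_5$-components of $\mathrm{D}e_6$. Finally the non-trivial brackets $[e_1,e_5]=e_2,\ [e_2,e_5]=-e_1$ pin $\mathrm{D}$ on $\langle e_1,e_2\rangle$ to the stated conformal shape and force the residual $e_5$-component of $\mathrm{D}e_5$ to vanish, and symmetrically $[e_3,e_6]=e_4,\ [e_4,e_6]=-e_3$ do the same on $\langle e_3,e_4\rangle$. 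The surviving free parameters are exactly $d_{11},d_{12},d_{33},d_{34},d_{15},d_{25},d_{36},d_{46}$, which is the claimed normal form; in particular $\dim\mathrm{Der}(\G_{(1,0,0,1)})=8$.

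For the inner derivations, note that the brackets show $\mathfrak{z}(\G_{(1,0,0,1)})=\{0\}$, so $v\mapsto\mathrm{ad}_v$ is injective and $\dim\mathrm{Inn}(\G_{(1,0,0,1)})=6$. Inspecting $\mathrm{ad}_{e_1},\dots,\mathrm{ad}_{e_6}$ in the coordinates above, $\mathrm{ad}_{e_5}$ and $\mathrm{ad}_{e_6}$ realise the parameters $d_{12}$ and $d_{34}$, while $\mathrm{ad}_{e_1},\mathrm{ad}_{e_2}$ and $\mathrm{ad}_{e_3},\mathrm{ad}_{e_4}$ realise $d_{25},d_{15}$ and $d_{46},d_{36}$ respectively; hence $\mathrm{Inn}(\G_{(1,0,0,1)})$ is precisely the subspace $\{d_{11}=d_{33}=0\}$ of $\mathrm{Der}(\G_{(1,0,0,1)})$. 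Thus it is enough to prove that the condition $\overline{\om}_{\eta,\mathrm{D},\mathrm{D}}=-\md\lambda$ forces $d_{11}=d_{33}=0$.

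For that last point, expanding the notation gives $\overline{\om}_{\eta,\mathrm{D},\mathrm{D}}(x,y)=\omega_\eta(\mathrm{D}^2x,y)+2\,\omega_\eta(\mathrm{D}x,\mathrm{D}y)+\omega_\eta(x,\mathrm{D}^2y)$ with $\omega_\eta=e^{12}+e^{34}+\eta e^{56}$. Feeding in the normal form of the first part (so that $\mathrm{D}$ and $\mathrm{D}^2$ preserve $\langle e_1,e_2\rangle$ and $\langle e_3,e_4\rangle$ and act there conformally), a short bilinear computation gives
\[
\overline{\om}_{\eta,\mathrm{D},\mathrm{D}}(e_1,e_2)=4\,d_{11}^2,\qquad \overline{\om}_{\eta,\mathrm{D},\mathrm{D}}(e_3,e_4)=4\,d_{33}^2 .
\]
On the other hand, from the Maurer--Cartan equations of $\G_{(1,0,0,1)}$ recorded in the proof of Proposition~\ref{Classi6}, every element of $B^2(\G_{(1,0,0,1)})=\langle e^{15},e^{25},e^{36},e^{46}\rangle$ has vanishing $e^{12}$- and $e^{34}$-component. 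Since $\overline{\om}_{\eta,\mathrm{D},\mathrm{D}}=-\md\lambda$ lies in $B^2(\G_{(1,0,0,1)})$, comparing $e^{12}$- and $e^{34}$-components yields $4d_{11}^2=4d_{33}^2=0$, hence $d_{11}=d_{33}=0$, and therefore $\mathrm{D}$ is inner.

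\emph{Expected difficulty.} No single step is genuinely hard: the first part is a finite linear elimination and the last two scalar evaluations. The only delicate points are the bookkeeping in the first part (checking that every Leibniz relation has been used and that no spurious parameter survives) and, in the second, the clean identification of $\mathrm{Inn}(\G_{(1,0,0,1)})$ with the locus $\{d_{11}=d_{33}=0\}$ — it is this identification that collapses the cocycle condition to the two scalar identities above.
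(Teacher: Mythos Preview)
Your proof is correct and follows essentially the same line as the paper's: both derive the eight-parameter normal form by imposing the Leibniz identity on the defining brackets, and both reduce the condition $\overline{\om}_{\eta,\mathrm{D},\mathrm{D}}=-\md\lambda$ to $d_{11}=d_{33}=0$. The only cosmetic difference is in the last step: the paper writes out the full constraint and then exhibits an explicit $x_0=d_{25}e_1-d_{15}e_2+d_{46}e_3-d_{36}e_4+d_{12}e_5+d_{34}e_6$ with $\mathrm{ad}_{x_0}=\mathrm{D}$, whereas you identify $\mathrm{Inn}(\G_{(1,0,0,1)})$ with $\{d_{11}=d_{33}=0\}$ by a dimension count and extract $d_{11}=d_{33}=0$ by evaluating $\overline{\om}_{\eta,\mathrm{D},\mathrm{D}}$ only at $(e_1,e_2)$ and $(e_3,e_4)$ --- a slightly cleaner shortcut.
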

\begin{proof}
Let $\{e_1,\ldots,e_6\}$ be a basis $\G_{(1,0,0,1)}$. Putting $\{e^1,\ldots,e^6\}$, for the dual basis. On the one hand, it is straightforward to verify that any derivation $\mathrm{D}$ on $\G_{(1,0,0,1)}$ has the following form
\begin{align}\label{Deri}
\begin{split}
\mathrm{D}e_1&=d_{11}e_1-d_{12}e_2,~~\mathrm{D}e_2=d_{12}e_1+d_{11}e_2,~~\mathrm{D}e_3=d_{33}e_3-d_{34}e_4,~~\\
\mathrm{D}e_4&=d_{34}e_3+d_{33}e_4,~~\mathrm{D}e_5=d_{15}e_1+d_{25}e_2,~~\mathrm{D}e_6=d_{36}e_3+d_{46}e_4,
\end{split}
\end{align}
where, $d_{ij}\in\R$. On the other hand, let $\lambda\in\G_{(1,0,0,1)}^\ast$, i.e., $\lambda=\lambda_1e^1+\lambda_2e^2+\lambda_3e^3+\lambda_4e^4+\lambda_5e^5+\lambda_6e^6$. The fact that, $\omega_{\eta,\mathrm{D},\mathrm{D}}=-\md\lambda$, then the derivation $\mathrm{D}$ given in $(\ref{Deri})$ becomes
\begin{align*}
\mathrm{D} e_1 &= -d_{12} e_2,       & \mathrm{D} e_2 &= d_{12} e_1,        & \mathrm{D} e_3 &= -d_{34} e_4, \\
\mathrm{D} e_4 &= d_{34} e_3,        & \mathrm{D} e_5 &= d_{15} e_1 + d_{25} e_2, & \mathrm{D} e_6 &= d_{36} e_3 + d_{46} e_4,
\end{align*}
and
\begin{align*}
\lambda= - d_{12}d_{25}e^1+d_{12}d_{15}e^2 - d_{34}d_{46}e^3 + d_{34}d_{36}e^4+\lambda_5e^5+\lambda_6 e^6,
\end{align*}
where, $d_{ij},\lambda_5,\lambda_6\in\R$. Now, let $x_0\in\G_{(1,0,0,1)}$, where $x_0=d_{25}e_1-d_{15}e_2+d_{46}e_3-d_{36}e_4+ d_{12}e_5+d_{34}e_6$, then one can check that  $\mathrm{ad}_{x_0}=\mathrm{D}$. This shows that $\mathrm{D}$ is an inner derivation on $\G_{(1,0,0,1)}$.
\end{proof}
The next step is to introduce the construction scheme for the isomorphism group 
\[
\mathrm{Isom}\big(\overline{\G}_{(\mathrm{D}_1,\overline{\omega}_{\eta,\mathrm{D}_1},\lambda_1)}\longrightarrow\overline{\G}_{(\mathrm{D}_2,\overline{\omega}_{\eta,\mathrm{D}_2},\lambda_2)}\big)
\]
of central oxidations. It follows from Theorem~$\ref{Theogeneral}$,~$\ref{isombetewnoncentral}$ that any isomorphism $\Psi$  of central oxidations  has  the  following  form
\begin{align*}
\Psi(x)&=\Psi|_{\overline{\G}}(x)+\beta(x)\xi,\hspace{2.1cm}\text{for all }x\in\overline{\G},~\beta\in\overline{\G}^\ast,\\
\Psi(\xi)&=v+a\xi,\hspace{4.1cm}v\in \mathfrak{z}(\overline{\G})\cap\ker\big(\iota_{\Psi|_{\overline{\G}}(\overline{\G})}\omega_{\eta,\mathrm{D}}\big),~a\in\R^\ast,\\
\Psi(\ell_1)&=u+\theta\ell_2,\hspace{3.98cm}u\in\overline{\G},~\theta\in\R^\ast.
\end{align*}
With, $a\omega_{\eta,\mathrm{D}}-\Phi^\ast\big(\omega_{\eta,\mathrm{D}}\big)=\md\beta$. In the view of Theorem~$\ref{Theogeneral}$, Corollary~$\ref{Cgeral}$, Proposition~$\ref{CharaCentral}$,  and the fact that $Z(\G_{1,0,0,1})=\{0\}$, we therefore have
\begin{align*}
\Psi(x)&=\Phi(x)+\beta(x)\xi,\hspace{2.1cm}\text{for all }x\in\overline{\G},~\beta\in\overline{\G}^\ast,\\
\Psi(\xi)&=a\xi,\hspace{4.4cm}~a\in\R^\ast,\\
\Psi(\ell_1)&=u+\theta\ell_2,\hspace{3.85cm}u\in\overline{\G},~\theta\in\R^\ast.
\end{align*}
Where, $\Phi\in\mathrm{Aut}(\G_{(1,0,0,1)})$.

Let $\Phi\in\mathrm{Aut}(\G_{(1,0,0,1)})$ be an automorphism of $\G_{(1,0,0,1)}=(\mathfrak{a}_1\oplus\mathfrak{a}_2)\rtimes\h$, where $\mathfrak{a}_1$ and $\mathfrak{a}_2$ are minimal ideals of $\G_{(1,0,0,1)}$  of dimension $2$ and $\h$ is a $2$-dimensional subalgebra of $\G_{(1,0,0,1)}$. Then, $\Phi(\mathfrak{a}_j)$ is also a minimal ideal of $\G_{(1,0,0,1)}$, thus $\Phi(\mathfrak{a}_1)=\mathfrak{a}_1$ and $\Phi(\mathfrak{a}_2)=\mathfrak{a}_2$ or $\Phi(\mathfrak{a}_1)=\mathfrak{a}_2$ and $\Phi(\mathfrak{a}_2)=\mathfrak{a}_1$. Therefore, by the definition of Lie algebra automorphism, the following is easily verified:
\begin{Le}\label{Autg1001}
The automorphism group $\mathrm{Aut}(\G_{(1,0,0,1)})$ is the set
\begin{small}
\begin{equation}
\left\lbrace
		\Phi_{_1(\varepsilon_1,\varepsilon_2)}=\begin{pmatrix}
			0&0&\varepsilon_1 x_{{24}}&0&0&x_{{16}}
			\\ 0&0&0&x_{{24}}&0&x_{{26}}\\ 
			\varepsilon_2 x_{{42}}&0&0&0&x_{{35}}&0\\ 0&x_{{42}}&0&0&x_{{45}}&0\\ 0&0&0&0&0&\varepsilon_1\\ 
			0&0&0&0&\varepsilon_2&0
		\end{pmatrix},~~
		\Phi_{_2(\varepsilon_1,\varepsilon_2)}=\begin{pmatrix}
			x_{11}&x_{12}&0&0&x_{{15}}&0\\
 -\varepsilon_2x_{12}&\varepsilon_2x_{11}&0&0&x_{{25}}&0\\ 
0&0&x_{{33}}&x_{{34}}&0&x_{{36}}\\ 0&0&-\varepsilon_1x_{{34}}&\varepsilon_1x_{{33}}&0&x_{{46}}\\ 
0&0&0&0&\varepsilon_2&0\\ 
0&0&0&0&0&\varepsilon_1
		\end{pmatrix}		
		\right\rbrace,
\end{equation}
\end{small}
where,  $\Phi_{_1(\varepsilon_1,\varepsilon_2)}$ is given by the conditions $\varepsilon_1,\varepsilon_2=\pm1,~x_{{24}}x_{{42}}\neq0$, and  $\Phi_{_2(\varepsilon_1,\varepsilon_2)}$ is given by the conditions $\varepsilon_1,\varepsilon_2=\pm1,~(x_{11}^2+x_{21})(x_{33}^2+x_{34}^2)\neq0$.

\end{Le}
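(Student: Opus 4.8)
The plan is to exploit the distinguished ideal structure of $\G:=\G_{(1,0,0,1)}$. First I would record that its derived subalgebra $\mathcal{D}(\G)=\langle e_1,e_2,e_3,e_4\rangle=\mathfrak{a}_1\oplus\mathfrak{a}_2$ is a characteristic abelian ideal, so every $\Phi\in\mathrm{Aut}(\G)$ preserves it, and that $\mathfrak{a}_1=\langle e_1,e_2\rangle$ and $\mathfrak{a}_2=\langle e_3,e_4\rangle$ are exactly the nonzero minimal ideals of $\G$. Indeed a minimal ideal is abelian, and since $\mathfrak{z}(\G)=\{0\}$ it cannot be central, hence it lies in $\mathcal{D}(\G)$ and is an $\h$-submodule of it; by the structure recalled in Theorem~\ref{Bause}, $\mathcal{D}(\G)$ splits as the sum of the two \emph{non-isomorphic} purely imaginary irreducible $\h$-modules $\mathfrak{a}_1,\mathfrak{a}_2$, and $\mathrm{ad}_{e_5},\mathrm{ad}_{e_6}$ have no real eigenvectors, so the only submodules of $\mathcal{D}(\G)$ are $0,\mathfrak{a}_1,\mathfrak{a}_2,\mathcal{D}(\G)$. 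Consequently any $\Phi\in\mathrm{Aut}(\G)$ either stabilises both $\mathfrak{a}_1$ and $\mathfrak{a}_2$ --- this will produce the family $\Phi_{2(\varepsilon_1,\varepsilon_2)}$ --- or interchanges them, producing $\Phi_{1(\varepsilon_1,\varepsilon_2)}$.

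In each case I would write $\Phi$ in block form relative to $\G=\mathcal{D}(\G)\oplus\langle e_5\rangle\oplus\langle e_6\rangle$, so that $\Phi$ restricts to an invertible map on $\mathcal{D}(\G)$ and $\Phi(e_5)=z_5+a_5e_5+b_5e_6$, $\Phi(e_6)=z_6+a_6e_5+b_6e_6$ with $z_5,z_6\in\mathcal{D}(\G)$. The engine of the argument is the intertwining identity $\Phi\circ\mathrm{ad}_{e_k}=\mathrm{ad}_{\Phi(e_k)}\circ\Phi$ for $k=5,6$. Restricted to $\mathfrak{a}_1$ the operator $\mathrm{ad}_{e_5}$ is the rotation $R=\left(\begin{smallmatrix}0&1\\-1&0\end{smallmatrix}\right)$ and $\mathrm{ad}_{e_6}$ vanishes (symmetrically on $\mathfrak{a}_2$ with $e_5$ and $e_6$ exchanged), while the components $z_5,z_6$ act trivially since $\mathcal{D}(\G)$ is abelian. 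Hence on each factor $\Phi$ conjugates a rotation with spectrum $\{\pm i\}$ to $c\,R$, forcing the relevant scalar $c$ to be $\pm1$ (conjugation preserves eigenvalues); this pins the coefficients $a_5,b_5,a_6,b_6$ down to $0$ and $\pm1$ according to the case, and simultaneously forces $\Phi$ on each $\mathfrak{a}_j$ to commute (resp. anticommute) with $R$, i.e. to take the displayed ``complex linear'' shape $\left(\begin{smallmatrix}x&y\\-\varepsilon y&\varepsilon x\end{smallmatrix}\right)$.

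It remains to locate the $\mathcal{D}(\G)$-valued parts $z_5,z_6$. Using $[e_5,e_6]=0$, i.e. $[\Phi(e_5),\Phi(e_6)]=0$, together with the triviality of $\mathrm{ad}_{e_6}$ on $\mathfrak{a}_1$ and of $\mathrm{ad}_{e_5}$ on $\mathfrak{a}_2$, one gets $\mathrm{ad}_{e_5}(z_6^{(1)})=0$ and $\mathrm{ad}_{e_6}(z_5^{(2)})=0$ (superscripts denoting $\mathfrak{a}_j$-components); since these restricted adjoints are invertible, $z_5$ and $z_6$ each lie in a single $\mathfrak{a}_j$, and the surviving components are precisely the free parameters $x_{15},x_{25},x_{36},x_{46}$ of $\Phi_2$ (resp. $x_{35},x_{45},x_{16},x_{26}$ of $\Phi_1$). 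Invertibility of $\Phi$ is then equivalent to invertibility of its two $2\times2$ blocks, which is exactly the stated nondegeneracy condition. Conversely, that every matrix of the listed form is a Lie algebra automorphism is verified directly on the four non-zero brackets $[e_1,e_5]=e_2$, $[e_2,e_5]=-e_1$, $[e_3,e_6]=e_4$, $[e_4,e_6]=-e_3$.

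The main obstacle is the combinatorial bookkeeping in the interchanging case: one must carefully track which off-diagonal block entries are annihilated by the adjoint relations and which remain free, and keep the two sign parameters $\varepsilon_1,\varepsilon_2$ coherent across the $\mathfrak{a}_1$-block, the $\mathfrak{a}_2$-block, and the images of $e_5,e_6$. Once the spectral observation on $\mathrm{ad}_{e_5}$ and $\mathrm{ad}_{e_6}$ is in place, the remaining work is routine linear algebra.
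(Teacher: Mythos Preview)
Your proposal is correct and follows essentially the same approach as the paper: the paper's argument (given just before the lemma) observes that $\mathfrak{a}_1,\mathfrak{a}_2$ are the minimal ideals of $\G_{(1,0,0,1)}$, so any automorphism either preserves or interchanges them, and then states that ``by the definition of Lie algebra automorphism, the following is easily verified''. Your write-up simply fleshes out that verification via the intertwining relations $\Phi\circ\mathrm{ad}_{e_k}=\mathrm{ad}_{\Phi(e_k)}\circ\Phi$ and the spectral argument on the rotation $R$, which is a clean way to carry out the direct computation the paper leaves to the reader.
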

Based on what has preceded, it is sufficient to compute $\beta\in\G_{(1,0,0,1)}^\ast$ in order to have automorphisms of central oxidation of $\G_{(1,0,0,1)}$. We have the following:
\begin{Le}\label{Isogroup}
 The isomorphism group 
$
\mathrm{Isom}\big(\overline{\G}_{(\mathrm{D}_1,\overline{\omega}_{\eta,\mathrm{D}},\lambda_1)}\longrightarrow\overline{\G}_{(\mathrm{D}_2,\overline{\omega}_{\eta,\mathrm{D}},\lambda_2)}\big)
$
of central oxidations is given by  the following set
\begin{small}
\begin{equation*}
\mathrm{Isom}\big(\overline{\G}_{(\mathrm{D}_1,\overline{\omega}_{\eta,\mathrm{D}},\lambda_1)}\longrightarrow\overline{\G}_{(\mathrm{D}_2,\overline{\omega}_{\eta,\mathrm{D}},\lambda_2)}\big)=
\left\lbrace\left(
\begin{array}{ccccc|c|c}
&&&&&0&u_1 \\
&&&&&\cdot&\cdot \\
&&\mathrm{Aut}(\G_{(1,0,0,1)})&&&\cdot&\cdot\\
&&&&&\cdot&\cdot\\
&&&&&0&u_{6}\\\hline
\beta(e_1)&\cdot\cdot\cdot&
\cdot\cdot\cdot&\cdot\cdot\cdot&\beta(e_6)&a&u_\xi\\\hline
0&\cdot\cdot\cdot&
\cdot\cdot\cdot&\cdot\cdot\cdot&0&0&\theta
\end{array}
\right),~~a\theta\neq0\right\rbrace
\end{equation*}
\end{small}
where, $u_1,\ldots,u_6\in\G_{(1,0,0,1)}$, $u_\xi\in\R$. The mappings $\beta_1$ and $\beta_2$ are defined as follows:
\begin{align*}
\beta_1(e_1) &= -ad_{15} + d_{36}\varepsilon_2x_{4 2}, & \beta_1(e_2) &= -ad_{25} + d_{46}x_{4 2}, & \beta_1(e_3) &= -ad_{36} + d_{15}\varepsilon_1x_{2 4}, \\
\beta_1(e_4)&=-ad_{46} + d_{25}x_{2 4},&\beta_1(e_5)&=x,
&\beta_1(e_6)&=y,
\end{align*}
and
\begin{align*}
\beta_2(e_1) &=(x_{1 1} - a)d_{15} - \varepsilon_2x_{1 2}d_{25}, & \beta_2(e_2) &= ((\varepsilon_2x_{1 1} - a)d_{25} + d_{15}x_{1 2}), & \beta_2(e_3) &= (x_{3 3} - a)d_{36} - \varepsilon_1x_{3 4}d_{46}, \\
\beta_2(e_4)&=(\varepsilon_1x_{3 3} - a)d_{46} +x_{34}d_{36},&\beta_2(e_5)&=x,&\beta_2(e_6)&=y.
\end{align*}
Here, \(\beta_1\) and \(\beta_2\) correspond to the automorphisms \(\Phi_{1(\varepsilon_1, \varepsilon_2)}\) and \(\Phi_{2(\varepsilon_1, \varepsilon_2)}\), respectively, via the condition
\[
a \omega_{\eta, \mathrm{D}} - \Phi^* \omega_{\eta, \mathrm{D}} = d\beta_j.
\] The terms $d_{ij}$ are the   derivation coefficients given in Lemma~$\ref{Derivation}$,  $x_{ij}$ are the automorphism coefficients given in Lemma $\ref{Autg1001}$, $\varepsilon_1,\varepsilon_2=\pm1$, and $x,y\in\R$.
\end{Le}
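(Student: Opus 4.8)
The plan is to combine the general structural results on isomorphisms of (non-central, hence a fortiori central) oxidations with the explicit automorphism group of $\G_{(1,0,0,1)}$ and an explicit solution of the single cohomological constraint that ties the two together.

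\textbf{Step 1 (block shape).} By Theorems~\ref{isombetewnoncentral} and~\ref{Theogeneral}, Corollary~\ref{Cgeral}, Proposition~\ref{CharaCentral}, and the fact that $Z(\G_{(1,0,0,1)})=\{0\}$ (which forces the vector $v$ to vanish and $\Psi|_{\overline{\G}}$ to restrict to an automorphism of $\overline{\G}=\G_{(1,0,0,1)}$), every $\Psi$ in the isomorphism group is given by $\Psi(x)=\Phi(x)+\beta(x)e_7$ for $x\in\overline{\G}$, $\Psi(e_7)=a\,e_7$, $\Psi(e_8)=u+u_\xi e_7+\theta\,e_8$, with $\Phi\in\mathrm{Aut}(\G_{(1,0,0,1)})$, $\beta\in\G_{(1,0,0,1)}^\ast$, $u=\sum_{i=1}^{6}u_ie_i$, $u_\xi\in\R$, $a,\theta\in\R^\ast$; the only relation among these data is the cocycle identity $a\,\om_{\eta,\mathrm{D}}-\Phi^\ast\om_{\eta,\mathrm{D}}=\md\beta$ (the last line of Theorem~\ref{Theogeneral} specialised to $\varphi_1=\varphi_2=\om_{\eta,\mathrm{D}}$, $w_1=w_2=0$, $t_1=t_2=0$), with $\mathrm{D}_2$ and $\lambda_2$ then fixed by conditions~1--2 of Corollary~\ref{Cgeral}. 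Writing $\Psi$ as a matrix in the ordered basis $\{e_1,\dots,e_8\}$ reproduces verbatim the block matrix in the statement: the $6\times6$ upper-left block is $\Phi$, the $e_7$-column is $(0,\dots,0,a,0)^{\top}$, the $e_8$-column is $(u_1,\dots,u_6,u_\xi,\theta)^{\top}$, the $e_7$-row is $(\beta(e_1),\dots,\beta(e_6),a,u_\xi)$, and the $e_8$-row is zero apart from $\theta$ in the corner. Lemma~\ref{Autg1001} supplies the two families $\Phi_{1(\varepsilon_1,\varepsilon_2)}$, $\Phi_{2(\varepsilon_1,\varepsilon_2)}$ exhausting $\mathrm{Aut}(\G_{(1,0,0,1)})$, so everything reduces to computing $\beta$.

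\textbf{Step 2 (the constraint is always solvable, with a two-dimensional freedom).} By Lemma~\ref{Derivation} the condition $\om_{\eta,\mathrm{D},\mathrm{D}}=-\md\lambda$ puts $\mathrm{D}$ in its reduced inner form $\mathrm{D}=\ad_v$, and a short computation from $\om_\eta=e^{12}+e^{34}+\eta e^{56}$ gives $\om_{\eta,\mathrm{D}}=d_{25}e^{15}-d_{15}e^{25}+d_{46}e^{36}-d_{36}e^{46}=\md\big(-d_{15}e^1-d_{25}e^2-d_{36}e^3-d_{46}e^4\big)\in B^2(\G_{(1,0,0,1)})$. Since pull-back by an automorphism commutes with $\md$, $\Phi^\ast\om_{\eta,\mathrm{D}}\in B^2$ as well, hence $a\,\om_{\eta,\mathrm{D}}-\Phi^\ast\om_{\eta,\mathrm{D}}\in B^2$ and a $1$-form $\beta$ with that coboundary always exists; from the Maurer-Cartan equations $\partial e^1=e^{25}$, $\partial e^2=-e^{15}$, $\partial e^3=e^{46}$, $\partial e^4=-e^{36}$, $\partial e^5=\partial e^6=0$ one gets $Z^1(\G_{(1,0,0,1)})=\langle e^5,e^6\rangle$, so $\beta$ is unique up to adding $x\,e^5+y\,e^6$, which accounts for the free parameters $x,y$.

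\textbf{Step 3 (solving for $\beta$; main obstacle).} For $\Phi=\Phi_{1(\varepsilon_1,\varepsilon_2)}$ and $\Phi=\Phi_{2(\varepsilon_1,\varepsilon_2)}$ one computes the pull-backs $\Phi^\ast e^{15},\Phi^\ast e^{25},\Phi^\ast e^{36},\Phi^\ast e^{46}$ from Lemma~\ref{Autg1001}, assembles $a\,\om_{\eta,\mathrm{D}}-\Phi^\ast\om_{\eta,\mathrm{D}}$, rewrites it through $\partial e^1,\dots,\partial e^4$ and matches coefficients against $\md\beta$; this produces $\beta=\beta_1$ (resp.\ $\beta=\beta_2$) precisely as displayed. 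Conversely, given $(\mathrm{D}_1,\lambda_1)$, any matrix of the stated block form with $\beta=\beta_j$ satisfies, by construction, all conditions of Theorem~\ref{Theogeneral}--Corollary~\ref{Cgeral} once the target data $\mathrm{D}_2,\lambda_2$ are defined through conditions~1--2 there, hence is a genuine Lie-algebra isomorphism between the corresponding central oxidations; this yields both inclusions and the claimed equality of sets. The only non-mechanical part of the whole argument is the pair of pull-back computations in this step together with the bookkeeping of the signs $\varepsilon_1,\varepsilon_2$ and the entries $x_{ij},d_{ij}$ while solving the two inhomogeneous systems; the conceptual ingredient that makes them terminate is the observation in Step~2 that the left-hand side is automatically a coboundary because $\mathrm{D}$ is inner.
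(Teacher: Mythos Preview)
Your proposal is correct and follows essentially the same approach as the paper: the paper presents this lemma as an immediate consequence of the preceding discussion (Theorems~\ref{isombetewnoncentral}, \ref{Theogeneral}, Corollary~\ref{Cgeral}, Proposition~\ref{CharaCentral}, Lemma~\ref{Autg1001}, and the fact that $\mathfrak{z}(\G_{(1,0,0,1)})=\{0\}$), explicitly stating just before the lemma that ``it is sufficient to compute $\beta\in\G_{(1,0,0,1)}^\ast$,'' and then records the outcome without further argument. Your Step~2 observation that $\omega_{\eta,\mathrm{D}}$ is already a coboundary (because $\mathrm{D}$ is inner by Lemma~\ref{Derivation}) is a clean way to guarantee solvability a priori that the paper leaves implicit.
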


We now provide the class of eight-dimensional non completely reducible symplectic Lie algebras  that admits a one-dimensional central ideal.
\begin{pr}\label{ClassiCentral}
Let $(\G,\omega)$ be an eight-dimensional non completely reducible symplectic Lie algebra  with  one-dimensional central ideal. Then, $(\G,\omega)$ is symplectomorphically isomorphic to one of the following symplectic Lie algebras$:$
\begin{align*}
		\G_{8,1}:&~~[e_1, e_5] = e_2, ~~\quad 
		[e_2, e_5] = -e_1, \quad
		[e_3, e_6] = e_4, \quad 
		[e_4, e_6] = -e_3,\quad [e_8, e_5] = e_7,\\
		&~~[e_8, e_6] =\alpha e_7,\quad \alpha\in\R^+,\\
		&\hspace{0.254cm}\omega=e^{12}+e^{34}+\eta e^{56}+e^{78},\quad\eta\in\R^{\ast+}.
	\\&\\
		\G_{8,2}:&~~[e_1, e_5] = e_2, ~~\quad 
		[e_2, e_5] = -e_1, \quad
		[e_3, e_6] = e_4, \quad 
		[e_4, e_6] = -e_3,\quad [e_8, e_6] = e_7,\\
&\hspace{0.254cm}\omega=e^{12}+e^{34}+\eta e^{56}+e^{78},\quad\eta\in\R^{\ast+}\\
		&\\
		\G_{8,3}:&~~[e_1, e_5] = e_2, ~~\quad 
		[e_2, e_5] = -e_1, \quad
		[e_3, e_6] = e_4, \quad 
		[e_4, e_6] = -e_3,\\
		&\hspace{0.254cm}\omega=e^{12}+e^{34}+\eta e^{56}+e^{78},\quad\eta\in\R^{\ast+}.
		\end{align*}
\end{pr}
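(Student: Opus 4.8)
The plan is to realize every eight-dimensional non completely reducible symplectic Lie algebra with a one-dimensional central ideal as a central symplectic oxidation of the unique six-dimensional irreducible symplectic Lie algebra $(\G_{(1,0,0,1)},\omega_\eta)$, and then to classify these oxidations up to symplectomorphism using the isomorphism group computed in Lemma~\ref{Isogroup}. By the discussion preceding Lemma~\ref{Derivation} (which uses Propositions~\ref{diagram}, \ref{Classi6}, \ref{CharaCentral}), the reduced algebra may be taken to be $(\G_{(1,0,0,1)},\omega_\eta)$ and, since $Z(\G_{(1,0,0,1)})=\{0\}$, the symplectic form on $\G$ may be taken to be $\omega=\omega_\eta+e^{78}$, with the bracket of $\G$ determined by a derivation $\mathrm{D}$ of $\G_{(1,0,0,1)}$ and a $1$-form $\lambda$ satisfying $\overline{\omega}_{\eta,\mathrm{D},\mathrm{D}}=-\md\lambda$. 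By Lemma~\ref{Derivation}, any such $\mathrm{D}$ is inner, say $\mathrm{D}=\mathrm{ad}_{x_0}$ with $x_0=d_{25}e_1-d_{15}e_2+d_{46}e_3-d_{36}e_4+d_{12}e_5+d_{34}e_6$, and $\lambda$ is the explicit $1$-form displayed there (up to the free coefficients $\lambda_5,\lambda_6$ of $e^5,e^6$, which are absorbed by a shift of $\ell=e_8$ together with the $\beta$-ambiguity).

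First I would reduce to the case $\mathrm{D}=\mathrm{ad}_{x_0}$ with $x_0$ supported on $\h=\langle e_5,e_6\rangle$, i.e. $d_{12}$ and $d_{34}$ the only possibly nonzero derivation coefficients. Indeed, condition 4 of Proposition~\ref{CharaCentral} (equivalently Corollary~\ref{Cgeral}, item~2) allows replacing $\mathrm{D}$ by $\tfrac1\theta(\Phi\circ\mathrm{D}\circ\Phi^{-1}-\mathrm{ad}_u)$; choosing $\theta=1$ and $u$ equal to the $\mathfrak{a}$-component of $x_0$ (i.e. $u=d_{25}e_1-d_{15}e_2+d_{46}e_3-d_{36}e_4$) and $\Phi=\mathrm{Id}$ kills the part of $\mathrm{D}$ coming from $\mathfrak{a}$, leaving $\mathrm{D}=\mathrm{ad}_{d_{12}e_5+d_{34}e_6}$, which on basis vectors gives precisely $[\ell,e_1]=-d_{12}e_2$, $[\ell,e_2]=d_{12}e_1$, $[\ell,e_3]=-d_{34}e_4$, $[\ell,e_4]=d_{34}e_3$; simultaneously $\lambda$ becomes $\lambda_5 e^5+\lambda_6 e^6$, and since $\overline{\omega}_{\eta,\mathrm{D},\mathrm{D}}=0$ for this $\mathrm{D}$ we may further normalize $\lambda=0$ by the residual $\beta$-freedom (condition 3 of Proposition~\ref{CharaCentral}, using the isomorphism data of Lemma~\ref{Isogroup} with $u=0$). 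At this point $\G$ depends only on the pair $(d_{12},d_{34})\in\R^2$, with bracket $[e_1,e_5]=e_2$, $[e_2,e_5]=-e_1$, $[e_3,e_6]=e_4$, $[e_4,e_6]=-e_3$ on $\overline{\G}$ together with $[e_8,e_1]=-d_{12}e_2$, $[e_8,e_2]=d_{12}e_1$, $[e_8,e_3]=-d_{34}e_4$, $[e_8,e_4]=d_{34}e_3$ (and $e_7$ central).

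Next I would classify the pairs $(d_{12},d_{34})$ up to symplectomorphism. Conjugating by an automorphism $\Phi$ of $\G_{(1,0,0,1)}$ of type $\Phi_{1(\varepsilon_1,\varepsilon_2)}$ from Lemma~\ref{Autg1001} swaps the roles of $\mathfrak{a}_1$ and $\mathfrak{a}_2$, hence interchanges $d_{12}\leftrightarrow d_{34}$; conjugating by a diagonal automorphism of type $\Phi_{2}$ and using the sign choices $\varepsilon_1,\varepsilon_2$ together with $[e_8,\cdot]\mapsto\tfrac1\theta\Phi\circ[e_8,\cdot]\circ\Phi^{-1}$ allows scaling $(d_{12},d_{34})$ by a common nonzero factor $1/\theta$ and flipping signs. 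One then checks:
\begin{itemize}
\item if $d_{12}=d_{34}=0$, then $\G=\G_{8,3}$ (no extra brackets);
\item if exactly one of them is nonzero, rescale it to $1$ and (using the swap) put it on $\langle e_5\rangle$; this is $\G_{8,2}$ (with $[e_8,e_6]=e_7$ after relabelling $e_7$ appropriately — here one uses that $\mathrm{D}$ was inner so the ``$e_7$''-coefficients $\overline{\omega}_{\eta,\mathrm{D}}$ feed into $[e_1,e_5],[e_3,e_6]$-corrections that are re-absorbed);
\item if both are nonzero, rescale so that one equals $1$ and set $\alpha=d_{34}/d_{12}\in\R$; the swap $d_{12}\leftrightarrow d_{34}$ identifies $\alpha$ with $1/\alpha$ and the sign flips identify $\alpha$ with $-\alpha$, so we may take $\alpha\in\R^+$; this is $\G_{8,1}$.
\end{itemize}
Finally I would verify that the three families are pairwise non-symplectomorphic, and that within $\G_{8,1}$ distinct $\alpha\in\R^+$ (and within all three, distinct $\eta\in\R^{\ast+}$) give non-symplectomorphic algebras: the $\eta$-invariant is detected by the Ricci endomorphism of the symplectic connection exactly as in Proposition~\ref{Classi6}, while the families are separated by elementary invariants such as $\dim[\G,\G]$, $\dim\mathfrak{z}(\G)$, and the dimension of the image of $\mathrm{ad}_{e_8}$; the parameter $\alpha$ is recovered from the eigenvalue data of $\mathrm{ad}_{e_8}$ acting on $[\G,\G]$, up to the $\alpha\mapsto\alpha^{-1}$, $\alpha\mapsto-\alpha$ symmetries already quotiented out.

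\textbf{Main obstacle.} The delicate point is the bookkeeping in the normalization of $\lambda$ and the ``$\overline{\omega}_{\eta,\mathrm{D}}$-correction'' terms: after conjugating $\mathrm{D}$ one must simultaneously track how $\lambda$, $\beta$, and the central-twist $2$-cochain $\overline{\omega}_{\eta,\mathrm{D}}$ transform under the four coupled conditions of Proposition~\ref{CharaCentral}, and check that the residual freedom (the parameters $u,\theta,\varepsilon_1,\varepsilon_2,x,y$ and $u_\xi$ in Lemma~\ref{Isogroup}) is exactly enough to kill $\lambda$ and to collapse the $(d_{12},d_{34})$-plane to the stated normal forms without introducing new parameters. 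Establishing that no further identifications occur — i.e. that $\alpha\in\R^+$ and $\eta\in\R^{\ast+}$ are genuine moduli — is where the Ricci-eigenvalue computation and the $\mathrm{ad}_{e_8}$-spectral invariants do the essential work.
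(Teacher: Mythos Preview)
There is a genuine gap in your normalization step. After you reduce to $\mathrm{D}=\mathrm{ad}_{x_0}$ with $x_0=d_{12}e_5+d_{34}e_6\in\h$ and $\lambda=\lambda_5e^5+\lambda_6e^6$, you assert that ``the residual $\beta$-freedom (condition~3 of Proposition~\ref{CharaCentral}, using the isomorphism data of Lemma~\ref{Isogroup} with $u=0$)'' lets you set $\lambda=0$. But condition~2 of Proposition~\ref{CharaCentral} forces $\beta=\tfrac{1}{\theta}\Phi^\ast(\iota_u\overline{\omega}_2)$, so with $u=0$ one has $\beta=0$ identically; condition~3 then gives $\lambda_2=\tfrac{1}{\theta^2}(\Phi^\ast)^{-1}\lambda_1$, which rescales $\lambda$ but never kills a nonzero one. (The free parameters $x,y$ for $\beta(e_5),\beta(e_6)$ in Lemma~\ref{Isogroup} parametrize \emph{Lie-algebra} isomorphisms, not symplectomorphisms; the symplectic constraint eliminates them.)

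Worse, your resulting ``moduli'' $(d_{12},d_{34})$ are spurious: applying Proposition~\ref{CharaCentral} once more with $\Phi=\mathrm{Id}$, $\theta=1$, $u=x_0=d_{12}e_5+d_{34}e_6$ gives $\mathrm{D}_2=\mathrm{ad}_{x_0}-\mathrm{ad}_{x_0}=0$, $\beta=\iota_{x_0}\omega_\eta=\eta(d_{12}e^6-d_{34}e^5)$, and since $\mathrm{ad}_{x_0}(\h)=0$ and $\mathrm{ad}_{x_0}(\mathfrak{a})\subset\mathfrak{a}\subset\ker\beta$, condition~3 yields $\lambda_2=\beta\circ\mathrm{ad}_{x_0}+0+0=0$. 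Thus every one of your normal forms is symplectomorphic to $(\mathrm{D}=0,\lambda=0)=\G_{8,3}$; your three cases collapse to one. Concretely, in your algebra $e_8-d_{12}e_5-d_{34}e_6$ is central, so it is just $\G_{(1,0,0,1)}\oplus\R^2$ regardless of $(d_{12},d_{34})$; the brackets $[e_8,e_j]$ you record (for $j\le 4$) are not the brackets in the target algebras of the proposition, which have $[e_8,e_5],[e_8,e_6]\in\langle e_7\rangle$ and $[e_8,\mathfrak{a}]=0$.

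The paper's argument does the opposite: since $\mathrm{D}=\mathrm{ad}_{x_0}$ is \emph{fully} inner (Lemma~\ref{Derivation}), one takes $u=\Phi(x_0)$ with the whole $x_0$, obtaining $\mathrm{D}'=0$; then $\lambda'$ computes to $\lambda'_1e^5+\lambda'_2e^6$ with $\lambda'_1,\lambda'_2$ depending on the original data, and it is this pair (not $(d_{12},d_{34})$) that furnishes the genuine modulus, normalized by $\theta$ and the sign automorphisms to $(1,\alpha)$, $(0,1)$, or $(0,0)$.
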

\begin{proof}
Let $(\G,\omega)$ be an eight-dimensional non completely reducible symplectic Lie algebra,  $\mathfrak{j}=\langle e_7\rangle$ a one-dimensional central ideal and $(\G_{(1,0,0,1)},\omega_\eta)$ the symplectic reduction with respect to $\mathfrak{j}$.  It follows from Lemma~$\ref{Derivation}$,  that the Lie brackets of $\G$ are given by
\begin{align*}
[e_1,e_5] &= e_2 + d_{25}e_7,          & [e_2,e_5] &= -e_1 - d_{15}e_7,         & [e_3,e_6] &= e_4 + d_{46}e_7, \\
[e_4,e_6] &= -e_3 - d_{36}e_7,         & [e_8,e_1] &= -d_{12}e_2 - d_{12}d_{25}e_7, & [e_8,e_2] &= d_{12}e_1 + d_{12}d_{15}e_7, \\
[e_8,e_3] &= -d_{34}e_4 - d_{34}d_{46}e_7, & [e_8,e_4] &= d_{34}e_3 + d_{34}d_{36}e_7, & [e_8,e_5] &= d_{15}e_1 + d_{25}e_2 + \lambda_5e_6, \\
[e_8,e_6] &= d_{36}e_3 + d_{46}e_4 + \lambda_6e_7.
\end{align*}
According to Lemma~$\ref{Derivation}$, every derivation $\mathrm{D}\in\mathrm{Der}(\G_{(1,0,0,1)})$  satisfying $\omega_{\eta,\mathrm{D},\mathrm{D}}=-\md\lambda$ is an inner derivation. Thus, there exists $x_0\in\G$ such that $\mathrm{D}=\mathrm{ad}_{x_0}$, and we have
\begin{align*}
\mathrm{D}^\prime&=\frac{1}{\theta}\big(\Phi_{(1,1)}\circ \mathrm{D}\circ \Phi^{-1}_{(1,1)}-\mathrm{ad}_{u_0}\big)=0,
\end{align*}
where $u_0=\Phi_{(1,1)}(x_0)$. On the other hand, we have
\begin{align*}
\lambda^\prime&=\frac{1}{\theta}(\Phi^\ast_{(1,1)})^{-1}\Big(\beta\circ \mathrm{D}+a\lambda-\Phi^\ast_{(1,1)}\iota_{u_0}\omega_{\eta,\mathrm{D}}\Big)\\
&=-\frac{a(d_{36}^2+d_{46}^2-\lambda_6)}{\theta} e^5-\frac{a(d_{36}^2+d_{46}^2-\lambda_5)}{\theta}e^6\\
&=\lambda^\prime_1e^5+\lambda^\prime_2e^6.
\end{align*}
For a suitable choice of $\theta$, we have the following cases: $(\lambda^\prime_1,\lambda^\prime_2)=(1,\lambda^\prime_2)$, or $(\lambda^\prime_1,\lambda^\prime_2)=(0,1)$ or $(\lambda^\prime_1,\lambda^\prime_2)=(0,0)$. Next, by applying the automorphism $\Phi_{(1,-1)}$ to $\lambda^\prime$, we find that $\lambda_2^\prime$ can be chosen in $\R^{\ast+}$. Using Lemma~\ref{Isogroup} and Proposition~$\ref{CharaCentral}$, we deduce that $(\G, \omega)$ is symplectomorphically isomorphic to one of the symplectic Lie algebras listed above: $\G_{8,1}$, $\G_{8,2}$, or $\G_{8,3}$, under the choice $x_{45} = 0,\quad x_{35} = 0,\quad x_{26} = 0,\quad x_{16} = 0,\quad x_{42} = 1,\quad x_{24} = 1,\quad x_{76} = \frac{\eta}{\theta}d_{12},\quad x_{75} = -\frac{\eta}{\theta}d_{34}$.

Finally, we conclude that the Lie algebras in Proposition~\ref{ClassiCentral} are classified up to isomorphism by Lemma~\ref{Coho} via the explicit isomorphism $\Psi : \G_{\ell,\varphi_1} \to \G_{\ell,\varphi_2}$ defined by:
\begin{align*}
    \Psi(x) &= x + \beta(x)\xi,  &\text{for all } x \in \overline{\G}, \\
    \Psi(\xi) &= \xi, \\
    \Psi(\ell) &= \ell,
\end{align*}
where $\beta$ is the cohomological data from Lemma~\ref{Coho}.

\end{proof}

Let $\mathrm{D}^{(1)}_\xi, \mathrm{D}^{(2)}_\xi \in \mathrm{Der}(\G_\varphi)$ be two derivations of the central extension. We set $\mathrm{D}^{(2)}_\xi = {^\ast}\mathrm{D}^{(1)}_{\xi}$ if there exists an automorphism $\Phi \in \mathrm{Aut}(\G_\varphi)$ such that 
\[
\Phi \circ \mathrm{D}^{(1)}_\xi \circ \Phi^{-1} = \mathrm{D}^{(2)}_\xi.
\]
\begin{Le}\label{Coho}
Let $\G_{\ell,\varphi_1}$ and $\G_{\ell,\varphi_2}$ be two  extended Lie algebras of $($central$)$ normal extensions $\G_{\varphi_1}$ and $\G_{\varphi_2}$, respectively, with respect to $\mathrm{D}_\xi$ and ${^\ast}\mathrm{D}_\xi$. Then, $\G_{\ell,\varphi_1}$ and $\G_{\ell,\varphi_2}$ are isomorphic if and only if  the cohomology class $[\varphi_1-\varphi_2]\in H^2(\overline{\G})$ vanishes.
\end{Le}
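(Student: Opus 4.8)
The plan is to reduce the statement to the classical equivalence theory of one-dimensional central extensions, lifted through the ``extension by a derivation'' machinery of Lemma~\ref{extegene}. Throughout, $\G_{\ell,\varphi_i}=\langle\ell_i\rangle\ltimes\G_{\varphi_i}$, where $\G_{\varphi_i}=\ol{\G}\oplus\langle\xi\rangle$ carries the bracket $[x,y]=\ol{[x,y]}+\varphi_i(x,y)\xi$ with $\xi$ central, and where the action of $\ell_i$ is governed by a derivation of $\G_{\varphi_i}$ of the shape~\eqref{Derxi}; on the target side this derivation is, by hypothesis, the conjugate ${^\ast}\mathrm{D}_\xi$ of $\mathrm{D}_\xi$ along an isomorphism $\G_{\varphi_1}\to\G_{\varphi_2}$.

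First I would settle sufficiency. Assume $[\varphi_1-\varphi_2]=0$ in $H^2(\ol{\G})$, say $\varphi_1-\varphi_2=\md\beta$ with $\beta\in\ol{\G}^\ast$, and set $\Phi_0\colon\G_{\varphi_1}\to\G_{\varphi_2}$, $\Phi_0(x)=x+\beta(x)\xi$ for $x\in\ol{\G}$, $\Phi_0(\xi)=\xi$. Comparing $\Phi_0([x,y]_1)=\ol{[x,y]}+\big(\varphi_1(x,y)+\beta(\ol{[x,y]})\big)\xi$ with $[\Phi_0(x),\Phi_0(y)]_2=\ol{[x,y]}+\varphi_2(x,y)\xi$ shows, using only $\varphi_1-\varphi_2=\md\beta$, that $\Phi_0$ is an isomorphism of central extensions fixing $\xi$. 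Since the derivation defining $\G_{\ell,\varphi_2}$ is ${^\ast}\mathrm{D}_\xi=\Phi_0\circ\mathrm{D}_\xi\circ\Phi_0^{-1}$, Lemma~\ref{extegene} (with $\G_{\varphi_1},\G_{\varphi_2}$ playing the role of $\ol{\G}_1,\ol{\G}_2$, and with $\Psi=\Phi_0$, $u=0$, $\theta=1$) extends $\Phi_0$ to an isomorphism $\widetilde{\Phi}_0\colon\G_{\ell,\varphi_1}\to\G_{\ell,\varphi_2}$ with $\widetilde{\Phi}_0(\ell_1)=\ell_2$. This is exactly the map $\Psi$ exhibited at the end of the proof of Proposition~\ref{ClassiCentral}.

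Conversely, given an isomorphism $\G_{\ell,\varphi_1}\cong\G_{\ell,\varphi_2}$, Lemma~\ref{extegene} (applied as above) produces an isomorphism $\Psi\colon\G_{\varphi_1}\to\G_{\varphi_2}$ of the underlying central extensions, together with $u\in\G_{\varphi_2}$ and $\theta\in\R^\ast$ intertwining $\mathrm{D}_\xi$ and ${^\ast}\mathrm{D}_\xi$. As $\Psi$ carries the center of $\G_{\varphi_1}$ onto that of $\G_{\varphi_2}$, and since $\mathfrak{z}(\ol{\G})=\{0\}$ here forces both centers to be $\langle\xi\rangle$, we get $\Psi(\xi)=a\xi$ with $a\in\R^\ast$; by~\eqref{Autgxi1} the induced map $\Psi_1$ on $\ol{\G}$ is an automorphism and $\Psi|_{\ol{\G}}(x)=\Psi_1(x)+\beta(x)\xi$, whence~\eqref{Autgxi2} gives $a\varphi_1-\Psi_1^\ast\varphi_2=\md\beta$, i.e. $a[\varphi_1]=[\Psi_1^\ast\varphi_2]$. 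Precomposing the original isomorphism with a suitable automorphism of $\G_{\ell,\varphi_2}$ (available from Theorem~\ref{Theogeneral}) and invoking the ${^\ast}$-conjugacy of the two defining derivations, one normalizes $\Psi_1=\mathrm{Id}$ and $a=1$, which leaves $\varphi_1-\varphi_2=\md\beta$, i.e. $[\varphi_1-\varphi_2]=0$.

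I expect the main obstacle to be precisely this normalization in the converse direction: one must verify that the ${^\ast}$-conjugacy between $\mathrm{D}_\xi$ and ${^\ast}\mathrm{D}_\xi$, together with the explicit shape of $\mathrm{Aut}(\G_{\varphi_2})$ (constrained by $\mathfrak{z}(\ol{\G})=\{0\}$, and in the application by Lemma~\ref{Autg1001}), really does push $\Psi_1$ and $a$ down to $\mathrm{Id}$ and $1$ without perturbing the tested class $[\varphi_1-\varphi_2]$ — note that $\mathrm{Aut}(\ol{\G})$ acts nontrivially on $H^2(\ol{\G})$, so the ${^\ast}$-hypothesis is genuinely needed. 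The sufficiency direction is, by contrast, a short direct computation once $\Phi_0$ is written down, and it is the only direction that the classification in Proposition~\ref{ClassiCentral} actually uses.
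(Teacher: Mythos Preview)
Your proposal follows essentially the same route as the paper: for sufficiency you build the explicit map $x\mapsto x+\beta(x)\xi$, $\xi\mapsto\xi$, $\ell\mapsto\ell$ (exactly the isomorphism the paper writes down at the end of its proof), and for necessity you extract the relation $a\varphi_1-\Psi_1^\ast\varphi_2=\md\beta$ from the general isomorphism form and then normalize via the ${^\ast}$-conjugacy hypothesis. The paper dispatches that normalization in a single line (``In particular, it is enough to consider $\Phi$ as follows'') without further argument, so the obstacle you flag is treated no more carefully there than in your outline; your caution about it is well-placed. One small caveat: you invoke $\mathfrak{z}(\ol{\G})=\{0\}$ to force $\Psi(\xi)=a\xi$, which holds in the intended application to $\G_{(1,0,0,1)}$ but is not among the lemma's stated hypotheses --- the paper's proof allows $\Psi(\xi)=v+a\xi$ with $v\in\mathfrak{z}(\ol{\G})$ and only then specializes.
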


\begin{proof}
Recall that the Lie brackets of $\G_{\ell,\varphi_i}$ are given by
\begin{align}
[x,y]&=\overline{[x,y]}+\varphi_i(x,y)\xi,\hspace{1cm}\text{for all }x,y\in\overline{\G}\label{Bracoho1}\\
[\ell,x]&=\mathrm{D}^{(i)}_\xi(x),\hspace{2.34cm}\text{for all }x\in\overline{\G},\label{Bracoho2}
\end{align}
where, $\mathrm{D}^{(i)}_\xi\in\mathrm{Der}(\G_{\varphi_i})$.
 Suppose that $\Psi:\G_{\ell_1,\varphi_1}\longrightarrow\G_{\ell_2,\varphi_2}$ is an isomorphism of Lie algebras. According to Theorem~$\ref{Theogeneral}$, $\Psi|_{\G_{\varphi_1}}=\Phi:\G_{\varphi_1}\longrightarrow\G_{\varphi_2}$ is an isomorphism of (central) normal extensions which can be written as:
\begin{align*}
\Phi(x)&=\Phi|_{\overline{\G}}(x)+\beta(x)\xi,\hspace{2.1cm}\text{for all }x\in\overline{\G},~\beta\in\overline{\G}^\ast,\\
\Phi(\xi)&=v+a\xi,\hspace{4.1cm}v\in \mathfrak{z}(\overline{\G})\cap\ker\big(\iota_{\Phi|_{\overline{\G}}(\overline{\G})}\varphi_2\big),~a\in\R^\ast,
\end{align*}
where,
\begin{align}\label{CondiCoho}
a\varphi_1-\Phi|_{\overline{\G}}^\ast\big(\varphi_2\big)=\md\beta,\hspace{0.5cm}~~a\in\R^\ast,~~\beta\in\overline{\G}^\ast.
\end{align}
In particular,  it is enough to consider $\Phi$ as follows:
\begin{align*}
\Phi(x)&=x+\beta(x)\xi,\hspace{2.1cm}\text{for all }x\in\overline{\G},~\beta\in\overline{\G}^\ast,\\
\Phi(\xi)&=\xi.
\end{align*}
It follows from~$(\ref{CondiCoho})$ that $\varphi_1$ and $\varphi_2$ have the same cohomology class. Conversely, assume that there exists $\beta\in\overline{\G}^\ast$ such that $\varphi_1-\varphi_2=\md\beta$. Define $\Psi$ as 
\begin{align*}
\Psi(x)&=\Phi(x)+\beta(x)\xi,&\text{for all } x\in\overline{\G},\\
\Psi(\xi)&=\xi,\\
\Psi(\ell)&=\ell,
\end{align*}
where, $\Phi\in\mathrm{Aut}(\overline{\G})$. Set $\mathrm{D}^{(2)}_{\xi}=\Psi_\ast\mathrm{D}^{(1)}_{\xi}$. Clearly, $\Psi$ is bijective. To verify that $\Psi$ is a Lie algebra morphism, it suffices to establish the compatibility condition
\[
\Psi\big([u, w]\big) = \big[\Psi(u), \Psi(w)\big] \quad \text{for all } u,w \in \G_{\ell,\varphi},
\]
which follows immediately from Equations~\eqref{Bracoho1} and \eqref{Bracoho2}.
\end{proof}

\subsection{Normal symplectic oxidation}
By Remark~$\ref{remarkcentral}$, if $(\mathbb{R}\xi)^\perp$ is an ideal, which is equivalent to $\mu=0$, then $(\overline{\G}, \overline{\omega})$ is a normal symplectic reduction of $(\G, \omega)$ with respect to the isotropic ideal $\langle \xi \rangle$ (see \cite{B-C}). We also say that $(\G, \omega)$ is a normal symplectic oxidation of $(\overline{\G}, \overline{\omega})$ with respect to the data $(\mathrm{D}, \lambda, t)$. This situation coincides with the case of non-central oxidation ($w = 0$ or $\mathfrak{z}(\overline{\G}) = \{0\}$ and $t\neq0$).

Together with  Theorem $\ref{Theogeneral}$, we therefore have:

\begin{pr}\label{Prnormal}
Two  normal  oxidations $\G_{(\mathrm{D}_1, \varphi_1, \lambda_1, 0, t_1)}$ and $\G_{(\mathrm{D}_2, \varphi_2, \lambda_2, 0, t_2)}$    of $\overline{\G}$,  are  isomorphic if and only if there exist a  Lie algebra isomorphism of central extensions  $\Psi:\overline{\G}_{\varphi_1}
\longrightarrow\overline{\G}_{\varphi_2}$, $u\in\overline{\G}$, $v\in \mathfrak{z}(\overline{\G})$, $\beta\in\overline{\G}^\ast$, and $\theta,a\in\R^\ast$ such that
\begin{enumerate}
\item $\mathrm{D}_2=\frac{1}{\theta}\Big(\big(\Psi|_{\overline{\G}}\circ \mathrm{D}_1+v\lambda_1\big)\circ\Psi|_{\overline{\G}}^{-1}-\mathrm{ad}_u\big)$,
\item $\Psi^\ast|_{\overline{\G}}\Big(\lambda_2+\frac{1}{\theta}\iota_u\varphi_2\Big)=\frac{1}{\theta}\Big(\beta\circ \mathrm{D}_1+a\lambda_1-t_2\theta\beta\Big)$,
\item$\mathrm{D}_2(v)=\frac{t_1}{\theta} v$,
\item $\varphi_2(u,v)=at_1-\theta\lambda_2(v)-a\theta t_2$.
\end{enumerate}
With, $a\varphi_1-\Psi|_{\overline{\G}}^\ast\big(\varphi_2\big)=\md\beta$.
\end{pr}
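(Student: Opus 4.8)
The plan is to obtain Proposition~$\ref{Prnormal}$ as a direct specialisation of Theorem~$\ref{Theogeneral}$. The first step is to recognise that a normal oxidation of $\overline{\G}$ is precisely a non-central oxidation $\G_{(\mathrm{D},\varphi,\lambda,w,t)}$ in the sense of~$(\ref{Bragene})$ with $w=0$. Indeed, by Remark~$\ref{remarkcentral}$ normality means $\mu\equiv0$ in~$(\ref{Brackestgenera})$; then $[\xi,x]=0$ for every $x\in\overline{\G}$, so that the structure relation $\varphi(\cdot,\cdot)w=\Delta\mathrm{D}$ forces $w=0$ (and $\mathrm{D}$ becomes a genuine derivation of $\overline{\G}$), while $[\ell,\xi]$ retains the form $t\xi$. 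Hence the two families occurring in the statement are non-central oxidations with $w_1=w_2=0$, to which Theorem~$\ref{Theogeneral}$ applies verbatim.

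Granting this, I would take a Lie algebra isomorphism $\Phi:\G_{(\mathrm{D}_1,\varphi_1,\lambda_1,0,t_1)}\longrightarrow\G_{(\mathrm{D}_2,\varphi_2,\lambda_2,0,t_2)}$, apply Theorem~$\ref{Theogeneral}$ to extract the isomorphism of central extensions $\Psi:\overline{\G}_{\varphi_1}\to\overline{\G}_{\varphi_2}$, the elements $u\in\overline{\G}$, $v\in\mathfrak{z}(\overline{\G})$, $\beta\in\overline{\G}^\ast$, $\theta,a\in\R^\ast$, the four conditions of that theorem, and the cocycle relation $a\varphi_1-\Psi|_{\overline{\G}}^\ast(\varphi_2)=\md\beta$, and then merely substitute $w_1=w_2=0$ throughout. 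Condition~$1.$ loses its $-w_2\theta\beta$ term and becomes $\mathrm{D}_2=\frac{1}{\theta}\big((\Psi|_{\overline{\G}}\circ\mathrm{D}_1+v\lambda_1)\circ\Psi|_{\overline{\G}}^{-1}-\mathrm{ad}_u\big)$; condition~$2.$ and the cocycle relation are untouched; condition~$3.$ collapses from $\Psi|_{\overline{\G}}(w_1)=\theta\mathrm{D}_2(v)+a\theta w_2-t_1v$ to $\theta\mathrm{D}_2(v)=t_1v$, i.e.\ $\mathrm{D}_2(v)=\frac{t_1}{\theta}v$; and condition~$4.$ loses its $\beta(w_1)$ term and becomes $\varphi_2(u,v)=at_1-\theta\lambda_2(v)-a\theta t_2$. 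These are exactly conditions $1.$--$4.$ in the statement. For the converse, starting from data $(\Psi,u,v,\beta,\theta,a)$ that satisfy the four conditions, I would run the construction used in the proof of Theorem~$\ref{Theogeneral}$ backwards with $w_1=w_2=0$ to build the desired isomorphism between the two normal oxidations.

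The only point that deserves a remark is the domain of $v$: Theorem~$\ref{Theogeneral}$ yields $v\in\mathfrak{z}(\overline{\G})\cap\ker\big(\iota_{\Psi|_{\overline{\G}}(\overline{\G})}\varphi_2\big)$, whereas the statement records only $v\in\mathfrak{z}(\overline{\G})$; I would note that the kernel constraint is carried over unchanged from the requirement that $\Psi$ restrict to an isomorphism of central extensions (compatibility equation~$(\ref{Autgxi1})$), so no information is lost. Beyond this bookkeeping there is no real difficulty: the whole argument is a transparent substitution $w_1=w_2=0$ into an already established theorem, all the genuine computations having been performed once and for all in the proof of Theorem~$\ref{isombetewnoncentral}$, and hence of Theorem~$\ref{Theogeneral}$.
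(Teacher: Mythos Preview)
Your proposal is correct and follows essentially the same approach as the paper: the paper's proof states in two lines that Proposition~\ref{Prnormal} is obtained from Theorem~\ref{Theogeneral} by the restriction $w_1=w_2=0$, exactly as you do. Your explicit tracking of how each of the four conditions degenerates, and your observation about the residual constraint $v\in\ker\big(\iota_{\Psi|_{\overline{\G}}(\overline{\G})}\varphi_2\big)$ carried by~$(\ref{Autgxi1})$, are more detailed than the paper but entirely consonant with it; one small quibble is that in your first paragraph the vanishing of $w$ is read off directly from comparing the bracket $[\ell,\xi]=t\xi$ in~$(\ref{Brackestgenera})$ with~$(\ref{Bragene})$, rather than being ``forced'' by the relation $\varphi(\cdot,\cdot)w=\Delta\mathrm{D}$.
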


\begin{proof}
The proof is carried out within the general framework provided by Theorem~\ref{Theogeneral}, through a restriction to the central case (i.e., for $\mu = 0$), which is equivalent to the non-central oxidation when taking $w_1 = w_2 = 0$. Therefore, the conditions from Theorem~\ref{Theogeneral} reduce to those given in Proposition~\ref{Prnormal}.
\end{proof}

\begin{co}\label{Charactnormal}
Two  normal symplectic oxidations $(\G_{(\mathrm{D}_1,0,\lambda_1,t_1)},\om_1)$ and $(\G_{(\mathrm{D}_2,0,\lambda_2,t_2)},\om_2)$    of $(\overline{\G},\overline{\omega}_1)$ and $(\overline{\G},\overline{\omega}_2)$, respectively, with $\mathfrak{z}(\overline{\G})=\{0\}$,  are symplectomorphically isomorphic if and only if there exist an automorphism $\Phi:\overline{\G}\longrightarrow\overline{\G}$, $u\in\overline{\G}$ and $\theta\in\R^\ast$, $\beta\in\overline{\G}^\ast$ such that
\begin{enumerate}
\item $\Phi^\ast\overline{\omega}_2=\overline{\omega}_1$
\item $\beta=\frac{1}{\theta}\Phi^\ast\big(\iota_u\overline{\omega}_2\big)$
\item $\lambda_2=\frac{1}{\theta}\big(\Phi^\ast\big)^{-1}\Big(\beta\circ \mathrm{D}_1-\iota_u\overline{\omega}_{_2 \mathrm{D}_2}+\frac{1}{\theta}\lambda_1-t_1\beta\Big)$,
\item $\mathrm{D}_2=\frac{1}{\theta}\big(\Phi\circ \mathrm{D}_1\circ\Phi^{-1}-\mathrm{ad}_u\big)$.
\end{enumerate}

\end{co}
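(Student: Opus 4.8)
The plan is to derive Corollary~\ref{Charactnormal} as the symplectic specialization of Proposition~\ref{Prnormal}, by exactly the mechanism that produces Proposition~\ref{CharaCentral} from Theorem~\ref{Theogeneral} in the central case. The starting observation is that a normal symplectic oxidation is nothing but a non-central oxidation $\G_{(\mathrm{D}_i,\varphi_i,\lambda_i,0,t_i)}$ in which the closedness of $\om_i$ forces the defining $2$-cochain to be $\varphi_i=\overline{\om}_{i,\mathrm{D}_i}$ (compare~\eqref{Brackestgenera} and Remark~\ref{remarkcentral}), while the hypothesis $\mathfrak{z}(\overline{\G})=\{0\}$ forces $w_1=w_2=0$ and, in any isomorphism furnished by Proposition~\ref{Prnormal}, $v=0$. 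Consequently such an isomorphism $\Psi$ restricts to an honest automorphism $\Phi\in\mathrm{Aut}(\overline{\G})$ and has the form
\[
\Psi(x)=\Phi(x)+\beta(x)\xi,\qquad \Psi(\xi)=a\xi,\qquad \Psi(\ell_1)=u+\theta\ell_2,
\]
with $\beta\in\overline{\G}^\ast$, $u\in\overline{\G}$, $a,\theta\in\R^\ast$, and $a\varphi_1-\Phi^\ast\varphi_2=\md\beta$; moreover conditions~3.~and~4.~of Proposition~\ref{Prnormal} collapse, since $v=0$, to $t_1=\theta t_2$ and $\mathrm{D}_2=\tfrac{1}{\theta}\big(\Phi\circ\mathrm{D}_1\circ\Phi^{-1}-\mathrm{ad}_u\big)$, the latter being precisely item~4.~of the corollary.

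Next I would impose the additional requirement that $\Psi$ be symplectic, i.e.\ $\Psi^\ast\om_2=\om_1$, and evaluate this identity on the pairs coming from the isotropic splitting $\G=\langle\ell\rangle\oplus\overline{\G}\oplus\langle\xi\rangle$, using $\om_i|_{\overline{\G}\times\overline{\G}}=\overline{\om}_i$, $\om_i(\overline{\G},\ell_i)=\om_i(\overline{\G},\xi)=0$ and $\om_i(\xi,\ell_i)=1$. On $\overline{\G}\times\overline{\G}$ this gives $\overline{\om}_1(x,y)=\overline{\om}_2(\Phi(x),\Phi(y))$, that is $\Phi^\ast\overline{\om}_2=\overline{\om}_1$ (item~1); on $\overline{\G}\times\langle\xi\rangle$ the identity holds automatically; on $\overline{\G}\times\langle\ell\rangle$ one finds $0=\overline{\om}_2(\Phi(x),u)+\theta\beta(x)$, hence $\beta=\tfrac{1}{\theta}\Phi^\ast(\iota_u\overline{\om}_2)$ (item~2); and on $\langle\xi\rangle\times\langle\ell\rangle$ one gets $1=a\theta$, i.e.\ $a=1/\theta$. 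Feeding $\varphi_2=\overline{\om}_{2,\mathrm{D}_2}$, $a=1/\theta$ and $t_1=\theta t_2$ into condition~2.~of Proposition~\ref{Prnormal} and applying $(\Phi^\ast)^{-1}$ then produces exactly item~3.~of the corollary, precisely as in the proof of Proposition~\ref{CharaCentral}. Conversely, given $\Phi,u,\theta,\beta$ satisfying items~1--4, one defines $\Psi$ by the displayed formulas with $a=1/\theta$, invokes Proposition~\ref{Prnormal} to conclude that $\Psi$ is a Lie algebra isomorphism, and checks $\Psi^\ast\om_2=\om_1$ by reversing the four evaluations above.

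The computation is essentially routine, and the only point requiring genuine care is the bookkeeping of the dilation parameter $t$: one must track that the relation $t_1=\theta t_2$ extracted from condition~4.~of Proposition~\ref{Prnormal} is exactly what converts the term $t_2\theta\beta$ appearing there into the term $t_1\beta$ displayed in item~3.~of the corollary, and that this $t$-term is the single place where the present (normal) statement genuinely differs from its central counterpart, Proposition~\ref{CharaCentral}.
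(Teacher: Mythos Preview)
Your proposal is correct and follows essentially the same approach as the paper: the paper's proof likewise writes $\Psi$ in the form $\Psi(x)=\Phi(x)+\beta(x)\xi$, $\Psi(\xi)=a\xi$, $\Psi(\ell_1)=u+\theta\ell_2$ (invoking Theorem~\ref{Theogeneral} and Corollary~\ref{Cgeral}, which amounts to your use of Proposition~\ref{Prnormal} with $v=0$), then says that items~1 and~2 follow from the computations in the proof of Proposition~\ref{CharaCentral} while items~3 and~4 are read off from the general isomorphism conditions. Your explicit tracking of the relation $t_1=\theta t_2$ and how it converts $t_2\theta\beta$ into $t_1\beta$ is exactly the one substantive point distinguishing this from the central case, and the paper handles it in the same way (implicitly, by citing Corollary~\ref{Cgeral}).
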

\begin{proof}
Let $\Psi\in\mathrm{Isom}(\G_{(\mathrm{D}_1,0,\lambda_1,t_1)}\to\G_{(\mathrm{D}_2,0,\lambda_2,t_2)})$ be an isomorphism of normal symplectic oxidations. According to Theorem $\ref{Theogeneral}$ and Corollary $\ref{Cgeral}$, $\Psi$ is given by
\begin{align*}
\Psi(x)&=\Phi(x)+\beta(x)\xi,\hspace{1cm}\text{for all }x\in\overline{\G},~\beta\in\overline{\G}^\ast,\\
\Psi(\xi)&=a\xi,\hspace{4.25cm}a\in\R^\ast,\\
\Psi(\ell_1)&=u+\theta \ell_2,\hspace{2.64cm}u\in\overline{\G},~\theta\in\R^\ast,
\end{align*} 
where, $\Phi\in\mathrm{Aut}(\overline{\G})$, and $a\overline{\omega}_{\mathrm{D}_1}-\Phi^\ast(\overline{\omega}_{\mathrm{D}_2})=\md\beta$. Therefore, assertions 1. and 2. follow from the proof of Proposition~\ref{CharaCentral}, while the remaining assertions 3. and 4. are consequences of Theorem~\ref{Theogeneral} and Corollary~\ref{Cgeral}.
\end{proof}

In the following, we briefly describe the classification scheme for eight-dimensional non completely reducible symplectic Lie algebras with a normal isotropic ideal. Let $(\G,\omega)$ be an eight-dimensional non completely reducible symplectic Lie algebras with a normal isotropic ideal $\mathfrak{j}=\langle e_7\rangle$. The Lie bracket structure of $\G$ is given by
 \begin{align}
\begin{split}
	[x,y]&=\ol{[x,y]}+\overline{\om}_{\eta,\mathrm{D}}(x,y)e_7, \hspace{0.9cm}\text{for all}\quad x,y\in \ol{\G},\\
	[e_8,x]&=\mathrm{D}(x)+\lambda(x)e_7,\hspace{1.7cm}\text{for all}\hspace{0.65cm}x\in \ol{\G},\\
	[e_8,e_7]&=te_7,\hspace{4.58cm}t\in\R^\ast,
	\end{split}
\end{align} 
where $\mathrm{D}$ is a derivation of $\overline{\G}$ and $\lambda\in\overline{\G}^*$ is a $1$-form  satisfying $t\omega_\mathrm{D}-\overline{\om}_{\eta,\mathrm{D},\mathrm{D}}=\md\lambda$.

We have the following:

\begin{Le}\label{DerivationNormal}
Every derivation on $\G_{(1,0,0,1)}$ satisfying $t\omega_\mathrm{D}-\overline{\om}_{\eta,\mathrm{D},\mathrm{D}}=\md\lambda$ is one of the following$:$
\begin{align*}
\mathrm{D}_1 e_1 &= -d_{12} e_2,       & \mathrm{D}_1 e_2 &= d_{12} e_1,        & \mathrm{D}_1 e_3 &= -d_{34} e_4, \\
\mathrm{D}_1 e_4 &= d_{34} e_3,        & \mathrm{D}_1 e_5 &= d_{15} e_1 + d_{25} e_2, & \mathrm{D}_1 e_6 &= d_{36} e_3 + d_{46} e_4, \\
&&&&\\
\mathrm{D}_2 e_1 &= \tfrac{t}{2} e_1 - d_{12} e_2, & \mathrm{D}_2 e_2 &= d_{12} e_1 + \tfrac{t}{2} e_2, & \mathrm{D}_2 e_3 &= \tfrac{t}{2} e_3 - d_{34} e_4, \\
\mathrm{D}_2 e_4 &= d_{34} e_3 + \tfrac{t}{2} e_4, & \mathrm{D}_2 e_5 &= d_{15} e_1 + d_{25} e_2,       & \mathrm{D}_2 e_6 &= d_{36} e_3 + d_{46} e_4,\\
&&&&\\
\mathrm{D}_3 e_1 &= \tfrac{t}{2} e_1 - d_{12} e_2, & \mathrm{D}_3 e_2 &= d_{12} e_1 + \tfrac{t}{2} e_2, & \mathrm{D}_3 e_3 &=  - d_{34} e_4, \\
\mathrm{D}_3 e_4 &= d_{34} e_3 , & \mathrm{D}_3 e_5 &= d_{15} e_1 + d_{25} e_2,       & \mathrm{D}_3 e_6 &= d_{36} e_3 + d_{46} e_4,\\
&&&&\\
\mathrm{D}_4 e_1 &= - d_{12} e_2, & \mathrm{D}_4 e_2 &= d_{12} e_1 , & \mathrm{D}_4 e_3 &= \tfrac{t}{2} e_3 - d_{34} e_4, \\
\mathrm{D}_4 e_4 &= d_{34} e_3 + \tfrac{t}{2} e_4, & \mathrm{D}_4 e_5 &= d_{15} e_1 + d_{25} e_2,       & \mathrm{D}_4 e_6 &= d_{36} e_3 + d_{46} e_4,
\end{align*}
where, $d_{ij}\in\R$, $t\in\R^\ast$. 
\end{Le}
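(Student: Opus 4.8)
The plan is to follow the argument of Lemma~\ref{Derivation}, but keeping the parameter $t\neq 0$ throughout. First I would start from the general shape of a derivation of $\G_{(1,0,0,1)}$ supplied by Lemma~\ref{Derivation}, so that $\mathrm{D}$ is determined by the eight reals $d_{11},d_{12},d_{33},d_{34},d_{15},d_{25},d_{36},d_{46}$, and record the cohomological data needed: from the Maurer--Cartan relations $\md e^1=e^{25}$, $\md e^2=-e^{15}$, $\md e^3=e^{46}$, $\md e^4=-e^{36}$, $\md e^5=\md e^6=0$ one gets $B^2(\G_{(1,0,0,1)})=e^5\wedge\mathfrak{a}_1^\ast\oplus e^6\wedge\mathfrak{a}_2^\ast=\langle e^{15},e^{25},e^{36},e^{46}\rangle$ and $Z^1(\G_{(1,0,0,1)})=\langle e^5,e^6\rangle$.

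Next I would compute the two $2$-forms occurring in the constraint $t\,\overline{\om}_{\eta,\mathrm{D}}-\overline{\om}_{\eta,\mathrm{D},\mathrm{D}}=\md\lambda$ explicitly in terms of the $d_{ij}$, using $\omega_\eta=e^{12}+e^{34}+\eta e^{56}$. A direct calculation gives
\[
\overline{\om}_{\eta,\mathrm{D}}=2d_{11}e^{12}+2d_{33}e^{34}+d_{25}e^{15}-d_{15}e^{25}+d_{46}e^{36}-d_{36}e^{46},
\]
and, evaluating $\mathrm{D}^2$ on the basis and substituting into $\overline{\om}_{\eta,\mathrm{D},\mathrm{D}}(x,y)=\omega_\eta(\mathrm{D}^2x,y)+2\,\omega_\eta(\mathrm{D}x,\mathrm{D}y)+\omega_\eta(x,\mathrm{D}^2y)$,
\begin{align*}
\overline{\om}_{\eta,\mathrm{D},\mathrm{D}}&=4d_{11}^2e^{12}+4d_{33}^2e^{34}+(3d_{11}d_{25}+d_{12}d_{15})e^{15}+(d_{12}d_{25}-3d_{11}d_{15})e^{25}\\
&\quad+(3d_{33}d_{46}+d_{34}d_{36})e^{36}+(d_{34}d_{46}-3d_{33}d_{36})e^{46}.
\end{align*}
In particular neither form has an $e^{56}$-component, and the only basis $2$-forms appearing in $t\,\overline{\om}_{\eta,\mathrm{D}}-\overline{\om}_{\eta,\mathrm{D},\mathrm{D}}$ that lie outside $B^2$ are $e^{12}$ and $e^{34}$. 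Hence this $2$-form is exact (i.e.\ equals $\md\lambda$ for some $\lambda\in\overline{\G}^\ast$) if and only if its $e^{12}$ and $e^{34}$ coefficients vanish, that is $2td_{11}-4d_{11}^2=0$ and $2td_{33}-4d_{33}^2=0$, equivalently $d_{11}(t-2d_{11})=0$ and $d_{33}(t-2d_{33})=0$.

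Since $t\neq 0$ this forces $d_{11}\in\{0,\tfrac{t}{2}\}$ and, independently, $d_{33}\in\{0,\tfrac{t}{2}\}$, and the four resulting cases are precisely $\mathrm{D}_1,\mathrm{D}_2,\mathrm{D}_3,\mathrm{D}_4$, the remaining parameters $d_{12},d_{34},d_{15},d_{25},d_{36},d_{46}$ staying free; conversely, for each such $\mathrm{D}$ the above $2$-form already lies in $B^2$, so a compatible $\lambda$ exists (unique modulo $Z^1=\langle e^5,e^6\rangle$), recovered by antidifferentiation exactly as in the proof of Lemma~\ref{Derivation}. The only point where care is needed is the computation of $\overline{\om}_{\eta,\mathrm{D},\mathrm{D}}$: one must check that the ``cross'' pairs $(e_i,e_j)$ with $e_i,e_j$ lying in different $\omega_\eta$-blocks contribute nothing, which is immediate from the block structure of $\mathrm{D}$ and $\mathrm{D}^2$ with respect to the splitting $\langle e_1,e_2\rangle\oplus\langle e_3,e_4\rangle\oplus\langle e_5,e_6\rangle$; everything else is routine bookkeeping.
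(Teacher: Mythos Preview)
Your proof is correct and follows essentially the same approach as the paper's own proof: start from the general derivation form of Lemma~\ref{Derivation}, reduce the condition $t\,\overline{\om}_{\eta,\mathrm{D}}-\overline{\om}_{\eta,\mathrm{D},\mathrm{D}}\in B^2(\G_{(1,0,0,1)})$ to the two scalar equations $d_{11}(t-2d_{11})=0$ and $d_{33}(t-2d_{33})=0$, and read off the four cases. The paper merely asserts these two equations without displaying the intermediate $2$-forms, so your explicit computation of $\overline{\om}_{\eta,\mathrm{D}}$ and $\overline{\om}_{\eta,\mathrm{D},\mathrm{D}}$ simply fills in what the paper leaves implicit.
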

\begin{proof}
By Lemma~$\ref{Derivation}$, every derivation on $\G_{(1,0,0,1)}$ has the following form$:$
\begin{align*}
\mathrm{D}e_1&=d_{11}e_1-d_{12}e_2,~~\mathrm{D}e_2=d_{12}e_1+d_{11}e_2,~~\mathrm{D}e_3=d_{33}e_3-d_{34}e_4,~~\\
\mathrm{D}e_4&=d_{34}e_3+d_{33}e_4,~~\mathrm{D}e_5=d_{15}e_1+d_{25}e_2,~~\mathrm{D}e_6=d_{36}e_3+d_{46}e_4.
\end{align*}
The condition $t\omega_\mathrm{D}-\overline{\om}_{\eta,\mathrm{D},\mathrm{D}}=\md\lambda$ is equivalent to $d_{11}(t-2d_{11} )=0$ and $d_{33}(t-2d_{33})=0$. Consequently, the solutions are precisely those characterized in Lemma~\ref{DerivationNormal}.
\end{proof}
Next, for each derivation we compute the corresponding isomorphism group, obtaining the following classification:
\begin{Le}\label{Isogroupnormal}
 The isomorphism groups 
$
\mathrm{Isom}\big(\overline{\G}_{(\tilde{\mathrm{D}}_1,\overline{\omega}_{\eta,\mathrm{D}_j},\lambda_1)}\longrightarrow\overline{\G}_{(\tilde{\mathrm{D}}_2,\overline{\omega}_{\eta,\mathrm{D}_j},\lambda_2)}\big)
,~~j=1,2,3,4$,
of normal oxidations are given by the following sets:
\begin{small}
\begin{equation*}
\mathrm{Isom}\big(\overline{\G}_{(\tilde{\mathrm{D}}_1,\overline{\omega}_{\eta,\mathrm{D}_j},\lambda_1)}\longrightarrow\overline{\G}_{(\tilde{\mathrm{D}}_2,\overline{\omega}_{\eta,\mathrm{D}_j},\lambda_2)}\big)=
\left\lbrace\left(
\begin{array}{ccccc|c|c}
&&&&&0&u_1 \\
&&&&&\cdot&\cdot \\
&&\mathrm{Aut}(\G_{(1,0,0,1)})&&&\cdot&\cdot\\
&&&&&\cdot&\cdot\\
&&&&&0&u_{6}\\\hline
\beta^{\mathrm{D}_j}_k(e_1)&\cdot\cdot\cdot&
\cdot\cdot\cdot&\cdot\cdot\cdot&
\beta^{\mathrm{D}_j}_k(e_6)&a&u_\xi\\\hline
0&\cdot\cdot\cdot&
\cdot\cdot\cdot&\cdot\cdot\cdot&0&0&\theta
\end{array}
\right),~~a\theta\neq0\right\rbrace
\end{equation*}
\end{small}
where, $u_1,\ldots,u_6\in\G_{(1,0,0,1)}$, $u_\xi\in\R$.
The mappings $\beta^{\mathrm{D}_j}_k$, $k=1,2$,  are defined as follows$:$
\begin{align*}
\beta^{\mathrm{D}_1}_1(e_1) &= -ad_{15} + d_{36}\varepsilon_2x_{4 2}, & \beta^{\mathrm{D}_1}_1(e_2) &= -ad_{25} + d_{46}x_{4 2}, & \beta^{\mathrm{D}_1}_1(e_3) &= -ad_{36} + d_{15}\varepsilon_1x_{2 4}, \\
\beta^{\mathrm{D}_1}_1(e_4)&=-ad_{46} + d_{25}x_{2 4},&\beta^{\mathrm{D}_1}_1(e_5)&=x,
&\beta^{\mathrm{D}_1}_1(e_6)&=y,
\end{align*}
and
\begin{align*}
\beta^{\mathrm{D}_1}_2(e_1) &=(x_{1 1} - a)d_{15} - \varepsilon_2x_{1 2}d_{25}, & \beta^{\mathrm{D}_1}_2(e_2) &= ((\varepsilon_2x_{1 1} - a)d_{25} + d_{15}x_{1 2}), & \beta^{\mathrm{D}_1}_2(e_3) &= (x_{3 3} - a)d_{36} - \varepsilon_1x_{3 4}d_{46}, \\
\beta^{\mathrm{D}_1}_2(e_4)&=(\varepsilon_1x_{3 3} - a)d_{46} +x_{34}d_{36},&\beta^{\mathrm{D}_1}_2(e_5)&=x,&\beta^{\mathrm{D}_1}_2(e_6)&=y.
\end{align*}
\begin{align*}
\beta^{\mathrm{D}_2}_1(e_1) &= d_{36}\varepsilon_2x_{4 2} + tx_{3 5}x_{4 2} - ad_{15}, & \beta^{\mathrm{D}_2}_1(e_2) &= (d_{46} + t\varepsilon_2x_{4 5})x_{4 2} - ad_{25}, & \beta^{\mathrm{D}_2}_1(e_3) &= d_{15}\varepsilon_1x_{2 4} + tx_{1 6}x_{2 4} - ad_{36}, \\
\beta^{\mathrm{D}_2}_1(e_4)&=(d_{25}+ t\varepsilon_1x_{2 6})x_{2 4} - ad_{46},&\beta^{\mathrm{D}_2}_1(e_5)&=x,
&\beta^{\mathrm{D}_2}_1(e_6)&=y,
\end{align*}
and
\begin{align*}
\beta^{\mathrm{D}_2}_2(e_1) &=(d_{15} + t\varepsilon_2x_{1 5})x_{1 1} - (d_{25}\varepsilon_2 + tx_{2 5})x_{1 2} - ad_{15}, & \beta^{\mathrm{D}_2}_2(e_2) &=(d_{15} + t\varepsilon_2x_{1 5})x_{1 2} + (d_{25}\varepsilon_2 + tx_{2 5})x_{1 1} - ad_{25},\\
 \beta^{\mathrm{D}_2}_2(e_3) &= (d_{36} + t\varepsilon_1x_{3 6})x_{3 3} - (d_{46}\varepsilon_1 + tx_{4 6})x_{3 4} - ad_{36},&\beta^{\mathrm{D}_2}_2(e_4)&=(d_{36}+ t\varepsilon_1x_{3 6})x_{3 4}  + (d_{46}\varepsilon_1 + tx_{4 6})x_{3 3} - ad_{46}, \\
\beta^{\mathrm{D}_2}_2(e_5)&=x,&\beta^{\mathrm{D}_2}_2(e_6)&=y.
\end{align*}
For the derivations $ \mathrm{D}_3$ and $ \mathrm{D}_4$, the automorphism $\Phi_{1(\varepsilon_1,\varepsilon_2)}$ cannot be considered because it does not satisfy condition $(\ref{CondiIsom})$. Alternatively, there exists no $\beta \in \G_{(1,0,0,1)}$ such that condition $(\ref{CondiIsom})$ holds. We therefore have
\begin{align*}
\beta^{\mathrm{D}_3}_2(e_1) &=(d_{15} + t\varepsilon_2x_{1 5})x_{1 1} - (d_{25}\varepsilon_2 + tx_{2 5})x_{1 2} - ad_{15}, & \beta^{\mathrm{D}_3}_2(e_2) &=(d_{15} + t\varepsilon_2x_{1 5})x_{1 2} + (d_{25}\varepsilon_2 + tx_{2 5})x_{1 1} - ad_{25},\\
 \beta^{\mathrm{D}_3}_2(e_3) &= (x_{3 3} - a)d_{36} - \varepsilon_1x_{3 4}d_{46},&\beta^{\mathrm{D}_3}_2(e_4)&=(\varepsilon_1x_{3 3} - a)d_{46} + x_{3 4}d_{36}, \\
\beta^{\mathrm{D}_3}_2(e_5)&=x,&\beta^{\mathrm{D}_3}_2(e_6)&=y.
\end{align*}
and
\begin{align*}
\beta^{\mathrm{D}_4}_2(e_1) &=(x_{1 1} - a)d_{15} - d_{25}\varepsilon_2x_{1 2}, & \beta^{\mathrm{D}_4}_2(e_2) &=(\varepsilon_2x_{1 1} - a)d_{25} +d_{15}x_{1 2},\\
 \beta^{\mathrm{D}_4}_2(e_3) &= (d_{36} + t\varepsilon_1x_{3 6})x_{3 3} - (d_{46}\varepsilon_1 + tx_{4 6})x_{3 4} - ad_{36},&\beta^{\mathrm{D}_4}_2(e_4)&=(d_{36}+ t\varepsilon_1x_{3 6})x_{3 4}  + (d_{46}\varepsilon_1 + tx_{4 6})x_{3 3} - ad_{46}, \\
\beta^{\mathrm{D}_4}_2(e_5)&=x,&\beta^{\mathrm{D}_4}_2(e_6)&=y.
\end{align*}

Here, $\beta^{\mathrm{D}_j}_1$ and $\beta^{\mathrm{D}_j}_2$ correspond to the automorphisms $\Phi_{1(\varepsilon_1, \varepsilon_2)}\) and \(\Phi_{2(\varepsilon_1, \varepsilon_2)}$, respectively, via the condition
\begin{align}\label{CondiIsom}
a \omega_{\eta, \mathrm{D}_j} - \Phi^\ast_{k(\varepsilon_1,\varepsilon_2)} \omega_{\eta, \mathrm{D}_j}&= \md\beta^{\mathrm{D}_j}_k,\quad k=1,2.
\end{align}
 The terms $d_{ij}$ are the   derivation coefficients given in Lemma~$\ref{Derivation}$,  $x_{ij}$ are the automorphism coefficients given in Lemma $\ref{Autg1001}$, $\varepsilon_1,\varepsilon_2=\pm1$, and $x,y\in\R$.
\end{Le}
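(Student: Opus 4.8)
The plan is to follow the template of Lemma~\ref{Isogroup}, now in the normal case: first pin down the shape of an arbitrary isomorphism from the general theory, then run a cohomological obstruction argument to decide which automorphisms of $\G_{(1,0,0,1)}$ survive for each admissible derivation, and finally solve for $\beta$ explicitly. Since $Z(\G_{(1,0,0,1)})=\{0\}$, Proposition~\ref{Prnormal} together with Corollary~\ref{Cgeral} (see also Corollary~\ref{Charactnormal}) shows that every isomorphism $\Psi$ of normal oxidations of $(\G_{(1,0,0,1)},\overline{\omega}_\eta)$ must have the form $\Psi|_{\overline{\G}}(x)=\Phi(x)+\beta(x)\xi$, $\Psi(\xi)=a\xi$, $\Psi(\ell_1)=u+\theta\ell_2$, with $\Phi\in\mathrm{Aut}(\G_{(1,0,0,1)})$, $u\in\G_{(1,0,0,1)}$, $a,\theta\in\R^\ast$, $\beta\in\G_{(1,0,0,1)}^\ast$, subject to the single constraint $a\,\omega_{\eta,\mathrm{D}_j}-\Phi^\ast\omega_{\eta,\mathrm{D}_j}=\md\beta$, which is exactly condition~(\ref{CondiIsom}). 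This already yields the block matrix form of the statement — the $6\times 6$ block is $\Phi$, the seventh column carries $a$, the eighth the vector $u$, the scalar $u_\xi$ and $\theta$, and the last $\overline{\G}$-row is $\beta$ — so the whole task reduces to deciding, for each of the four derivations $\mathrm{D}_j$ of Lemma~\ref{DerivationNormal} and each of the two automorphism families $\Phi_{1(\varepsilon_1,\varepsilon_2)}$, $\Phi_{2(\varepsilon_1,\varepsilon_2)}$ of Lemma~\ref{Autg1001}, whether a compatible $\beta$ exists and, if so, computing it.

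\textbf{The decisive point.} From the proof of Proposition~\ref{Classi6}, $B^2(\G_{(1,0,0,1)})=\langle e^{15},e^{25},e^{36},e^{46}\rangle$ with $\md(\sum_i\beta_ie^i)=\beta_1e^{25}-\beta_2e^{15}+\beta_3e^{46}-\beta_4e^{36}$, and $H^2(\G_{(1,0,0,1)})$ is spanned by $[e^{12}],[e^{34}],[e^{56}]$. A $\beta$ solving the constraint exists \emph{if and only if} $[a\,\omega_{\eta,\mathrm{D}_j}-\Phi^\ast\omega_{\eta,\mathrm{D}_j}]=0$ in $H^2$. Evaluating $\omega_{\eta,\mathrm{D}_j}(x,y)=\overline{\omega}_\eta(\mathrm{D}_j x,y)+\overline{\omega}_\eta(x,\mathrm{D}_j y)$ on $(e_1,e_2),(e_3,e_4),(e_5,e_6)$ one finds $[\omega_{\eta,\mathrm{D}_1}]=0$, $[\omega_{\eta,\mathrm{D}_2}]=t([e^{12}]+[e^{34}])$, $[\omega_{\eta,\mathrm{D}_3}]=t[e^{12}]$, $[\omega_{\eta,\mathrm{D}_4}]=t[e^{34}]$, the scalar part $\tfrac t2$ of $\mathrm{D}_j$ on a plane being responsible for the corresponding generator. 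Since $\Phi_1^\ast$ interchanges $[e^{12}]$ and $[e^{34}]$ up to the nonzero factors $\varepsilon_2x_{42}^2$, $\varepsilon_1x_{24}^2$, while $\Phi_2^\ast$ merely rescales each of them by $\varepsilon_2(x_{11}^2+x_{12}^2)$, $\varepsilon_1(x_{33}^2+x_{34}^2)$, the obstruction for $(\mathrm{D}_3,\Phi_1)$ equals $at[e^{12}]-t\varepsilon_1x_{24}^2[e^{34}]$, which is never zero because $at\varepsilon_1x_{24}^2\ne 0$; the same computation excludes $(\mathrm{D}_4,\Phi_1)$. This is precisely the non-existence assertion of the lemma. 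In every remaining case the obstruction can be annihilated by an admissible choice of $a,\varepsilon_1,\varepsilon_2$ (and $\theta$).

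\textbf{Computing $\beta$ and assembling.} Once the obstruction vanishes, $\beta$ is determined by the constraint up to a closed $1$-form, i.e.\ up to $\langle e^5,e^6\rangle$; matching the $e^{15},e^{25},e^{36},e^{46}$-components of $a\,\omega_{\eta,\mathrm{D}_j}-\Phi^\ast_{k(\varepsilon_1,\varepsilon_2)}\omega_{\eta,\mathrm{D}_j}$ with $\md\beta$ fixes $\beta(e_1),\dots,\beta(e_4)$, while $\beta(e_5)=x,\ \beta(e_6)=y$ remain free. Substituting the explicit entries of $\Phi_{1(\varepsilon_1,\varepsilon_2)},\Phi_{2(\varepsilon_1,\varepsilon_2)}$ from Lemma~\ref{Autg1001} and the coefficients $d_{ij}$ of the derivations from Lemma~\ref{DerivationNormal} then produces exactly the maps $\beta^{\mathrm{D}_1}_1,\beta^{\mathrm{D}_1}_2,\beta^{\mathrm{D}_2}_1,\beta^{\mathrm{D}_2}_2,\beta^{\mathrm{D}_3}_2,\beta^{\mathrm{D}_4}_2$ listed in the statement, and collecting $\Phi,u,a,\theta,\beta$ into a single matrix gives the displayed description of each isomorphism group. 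I expect the main obstacle to be purely clerical — carrying out the eight pullbacks $\Phi^\ast_{k(\varepsilon_1,\varepsilon_2)}\omega_{\eta,\mathrm{D}_j}$ and the attendant linear systems without error — the only genuinely structural step being the $H^2$-obstruction computation above, which is what forces $\Phi_{1(\varepsilon_1,\varepsilon_2)}$ to drop out for $\mathrm{D}_3$ and $\mathrm{D}_4$.
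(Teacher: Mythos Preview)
Your proposal is correct and follows essentially the same approach that the paper (implicitly) takes: reduce to the block form via Proposition~\ref{Prnormal}/Corollary~\ref{Cgeral}, then solve condition~(\ref{CondiIsom}) case-by-case for each pair $(\mathrm{D}_j,\Phi_k)$, exactly parallel to Lemma~\ref{Isogroup}. The paper presents the lemma as a direct computation without a written proof; your $H^2$-obstruction argument for why $\Phi_{1(\varepsilon_1,\varepsilon_2)}$ must be discarded when $j=3,4$ is a cleaner conceptual replacement for what would otherwise be a brute-force verification that the linear system for $\beta$ is inconsistent, and the rest is the same bookkeeping.
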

\begin{remark}
Note that the isomorphism group 
\[
\mathrm{Isom}\left(\overline{\G}_{(\tilde{\mathrm{D}}_1,\overline{\omega}_{\eta,\mathrm{D}_j},\lambda_1)} \longrightarrow \overline{\G}_{(\tilde{\mathrm{D}}_2,\overline{\omega}_{\eta,\mathrm{D}_j},\lambda_2)}\right)
\]
changes in the cases where $j = 2, 3, 4$, depending on the sign of $a \in \R^\ast$. This occurs due to the condition $ a\omega_{\eta,\mathrm{D}_j} - \Phi^\ast\omega_{\eta,\mathrm{D}_j} = \md\beta$ $($see Theorem~$\ref{isombetewnoncentral})$. This yields the following conditions:
\begin{itemize}
    \item For the automorphisms $\Phi_{_1(1,-1)}$ and $\Phi_{_1(-1,-1)}:$
    \[
    x_{24}^2 + a = 0 \quad \text{and} \quad x_{42}^2 + a = 0.
    \]
    
    \item For the automorphisms $\Phi_{_1(1,1)}$ and $\Phi_{_1(-1,1)}:$
    \[
    x_{24}^2 - a = 0 \quad \text{and} \quad x_{42}^2 - a = 0.
    \]
\end{itemize}
and
\begin{itemize}
    \item For the automorphisms $\Phi_{_2(1,1)}$ and $\Phi_{_2(1,-1)}:$
    \[
    x_{1 1}^2 + x_{1 2}^2 - a = 0 \quad \text{and} \quad x_{33}^2 + x_{34}^2 - a = 0.
    \]
    
    \item For the automorphisms $\Phi_{_2(-1,1)}$ and $\Phi_{_2(-1,-1)}:$
    \[
    x_{1 1}^2 + x_{1 2}^2 + a = 0 \quad \text{and} \quad x_{33}^2 + x_{34}^2 + a = 0.
    \]
\end{itemize}
\end{remark}

Now we present a class of non completely reducible symplectic Lie algebras that admit a normal isotropic ideal.

\begin{pr}\label{ClassifiNormal}
Let $(\G,\omega)$ be an eight-dimensional non completely reducible symplectic Lie algebra  with  normal isotropic ideal. Then, $(\G,\omega)$ is symplectomorphically isomorphic to one of the following symplectic Lie algebras$:$
\begin{align*}
		\G_{8,1}:&~~[e_1, e_5] = e_2, ~~\quad 
		[e_2, e_5] = -e_1, \quad
		[e_3, e_6] = e_4, \quad 
		[e_4, e_6] = -e_3,\quad [e_8, e_7] =e_7\\
		&\hspace{0.254cm}\omega=e^{12}+e^{34}+\eta e^{56}+\alpha e^{58}+\beta e^{68}+e^{78},\quad\eta\in\R^{\ast+},~\alpha,\beta\in\R.
\\&\\
		\G_{8,2}:&~~[e_1, e_5] = e_2, ~~\quad 
		[e_2, e_5] = -e_1, \quad
		[e_3, e_6] = e_4, \quad 
		[e_4, e_6] = -e_3,\quad [e_1,e_2]=te_7,\\
&~~[e_3,e_4]=te_7,\hspace{0.42cm}[e_8, e_1] =\tfrac{1}{2}e_1,\hspace{0.36cm} [e_8, e_2] =\tfrac{1}{2}e_2,\hspace{0.2cm}[e_8, e_3] =\tfrac{1}{2}e_3,\hspace{0.33cm}[e_8,e_4]=\tfrac{1}{2}e_4, \\
		&~~[e_8, e_7] =e_7\\
		&\hspace{0.254cm}\omega=e^{12}+e^{34}+\eta e^{56}+\alpha e^{58}+\beta e^{68}-\tfrac{1}{t}e^{78},\quad\eta\in\R^{\ast+},~~t\in\R^\ast,~\alpha,\beta\in\R.
\\&\\
		\G_{8,3}:&~~[e_1, e_5] = e_2, ~~\quad 
		[e_2, e_5] = -e_1, \quad
		[e_3, e_6] = e_4, \quad 
		[e_4, e_6] = -e_3,\quad [e_1,e_2]=te_7,\\
&~~[e_8, e_1] =\tfrac{1}{2}e_1,\hspace{0.33cm} [e_8, e_2] =\tfrac{1}{2}e_2,\hspace{0.36cm}[e_8, e_7] =e_7 \\
		&\hspace{0.254cm}\omega=e^{12}+e^{34}+\eta e^{56}+\alpha e^{58}+\beta e^{68}-\tfrac{1}{t}e^{78},\quad\eta\in\R^{\ast+},~~t\in\R^\ast,~\alpha,\beta\in\R.
		\\&\\
		\G_{8,4}:&~~[e_1, e_5] = e_2, ~~\quad 
		[e_2, e_5] = -e_1, \quad
		[e_3, e_6] = e_4, \quad 
		[e_4, e_6] = -e_3,\quad [e_3,e_4]=te_7,\\
&~~[e_8, e_3] =\tfrac{1}{2}e_3,\hspace{0.33cm} [e_8, e_4] =\tfrac{1}{2}e_4,\hspace{0.36cm}[e_8, e_7] =e_7 \\
		&\hspace{0.254cm}\omega=e^{12}+e^{34}+\eta e^{56}+\alpha e^{58}+\beta e^{68}-\tfrac{1}{t}e^{78},\quad\eta\in\R^{\ast+},~~t\in\R^\ast,~\alpha,\beta\in\R.
	\end{align*}
	
\end{pr}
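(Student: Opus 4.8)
The plan is to realize $(\G,\om)$ as a normal symplectic oxidation of the unique six--dimensional irreducible symplectic Lie algebra and then to run through the four admissible defining derivations, reducing the parameters in each case by means of the isomorphism groups already computed. First, by Proposition~\ref{solvability} the algebra $\G$ is solvable, and being non completely reducible it carries a one--dimensional isotropic ideal $\mathfrak{j}=\langle e_7\rangle$ whose symplectic reduction is a six--dimensional irreducible symplectic Lie algebra; by Propositions~\ref{diagram} and~\ref{Classi6} this reduction may be taken to be $(\G_{(1,0,0,1)},\om_\eta)$, and since $\mathfrak{j}$ is moreover \emph{normal} we are in the case $\mu=0$, $t\in\R^\ast$ of~$(\ref{Brackestgenera})$. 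Thus, by Proposition~\ref{prgeneralized}, $\G$ is a normal symplectic oxidation of $(\G_{(1,0,0,1)},\om_\eta)$ with data $(\mathrm{D},\lambda,t)$, $t\in\R^\ast$, where $\mathrm{D}\in\mathrm{Der}(\G_{(1,0,0,1)})$ and $\lambda\in\G_{(1,0,0,1)}^\ast$ satisfy $t\,\om_{\eta,\mathrm{D}}-\ol\om_{\eta,\mathrm{D},\mathrm{D}}=\md\lambda$. By Lemma~\ref{DerivationNormal} the derivation $\mathrm{D}$ is then one of $\mathrm{D}_1,\mathrm{D}_2,\mathrm{D}_3,\mathrm{D}_4$ (with free coefficients $d_{ij}$).

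I would then treat the four cases separately, using the symplectomorphism criterion of Corollary~\ref{Charactnormal} together with the explicit description of the isomorphism groups of normal oxidations in Lemma~\ref{Isogroupnormal}. In every case the mechanism is the same: conjugating $\mathrm{D}_j$ by a suitable $\Phi\in\mathrm{Aut}(\G_{(1,0,0,1)})$ and subtracting an inner derivation $\ad_u$ brings it to its canonical form --- for $\mathrm{D}_1$ this is $0$, since $\mathrm{D}_1$ is inner (Lemma~\ref{Derivation}), while for $\mathrm{D}_2,\mathrm{D}_3,\mathrm{D}_4$ one reaches $\tfrac12\,\mathrm{id}$ on the relevant abelian ideal(s) and $0$ on $\h=\langle e_5,e_6\rangle$; then the choice of $u\in\ol\G$, the replacement $\ell\mapsto\ell+v$ with $v\in\ol\G$, and the cohomological identity $a\,\om_{\eta,\mathrm{D}}-\Phi^\ast\om_{\eta,\mathrm{D}}=\md\beta$ are used to annihilate $\lambda(e_1),\dots,\lambda(e_4)$ and the ``horizontal'' components $\om(e_i,\ell)$, $i=1,\dots,4$, of the symplectic form, so that $\om$ differs from $\ol\om_\eta+e^{78}$ only by $\alpha\,e^{58}+\beta\,e^{68}$ (the component of $\lambda$ along $e^5,e^6$ being absorbed into $\alpha,\beta$). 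Case $\mathrm{D}_1$ then yields $\G_{8,1}$: here $\mathrm{D}=0$, so $[e_8,\ol\G]\subset\langle e_7\rangle$ and a rescaling of $(\ell,\xi)$ sets the $e^{78}$--coefficient to $1$. Cases $\mathrm{D}_2,\mathrm{D}_3,\mathrm{D}_4$ yield $\G_{8,2},\G_{8,3},\G_{8,4}$ respectively: the brackets $[e_8,e_i]=\tfrac12 e_i$ come from the diagonal part of $\mathrm{D}_j$, and the brackets $[e_i,e_j]=t\,e_7$ from $\ol\om_{\eta,\mathrm{D}_j}$, which is non--zero precisely on those pairs; the residual scalar is recorded as the parameter $t\in\R^\ast$, so that altogether the parameters appearing are $\eta\in\R^{\ast+}$, $\alpha,\beta\in\R$, and $t\in\R^\ast$ for $j=2,3,4$.

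The main obstacle is the parameter bookkeeping in this reduction: one must determine exactly which of the $d_{ij}$, which components of $\lambda$, and which entries of $\om$ can be eliminated by the isomorphism group, and this is precisely where the rank and sign constraints recorded in the Remark following Lemma~\ref{Isogroupnormal} enter --- for a symplectomorphism one needs $\Phi^\ast\om_\eta=\om_\eta$ \emph{together with} $a\,\om_{\eta,\mathrm{D}_j}-\Phi^\ast\om_{\eta,\mathrm{D}_j}=\md\beta$, and $\ol\om_{\eta,\mathrm{D}_j}$ represents a non--trivial class of $H^2(\G_{(1,0,0,1)})$ when $j=2,3,4$, which rigidifies the admissible automorphisms considerably. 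One also has to verify that each case collapses to a single isomorphism class of the listed form and that the choices made in the reduction are consistent with the compatibility relation $t\,\om_{\eta,\mathrm{D}}-\ol\om_{\eta,\mathrm{D},\mathrm{D}}=\md\lambda$. Telling the four algebras apart is then easy: $\G_{8,1}$ is singled out by $\ad_{e_8}|_{\ol\G}=0$, while $\G_{8,2},\G_{8,3},\G_{8,4}$ are distinguished by the dimension of the subspace of $\ol\G$ on which $e_8$ acts as $\tfrac12\,\mathrm{id}$ (respectively four, and two in two inequivalent ways, visible in which of $[e_1,e_2]$ and $[e_3,e_4]$ equals $t\,e_7$).
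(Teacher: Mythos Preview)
Your plan is correct and follows essentially the same route as the paper: set up $(\G,\om)$ as a normal symplectic oxidation of $(\G_{(1,0,0,1)},\om_\eta)$, invoke Lemma~\ref{DerivationNormal} to list the four admissible derivations, and then for each $\mathrm{D}_j$ use Corollary~\ref{Charactnormal} together with the explicit isomorphism groups of Lemma~\ref{Isogroupnormal} to bring $(\mathrm{D}_j,\lambda_j)$ and the form to canonical shape. The paper carries out precisely this computation, giving in each case the explicit element $u_0$, the automorphism $\Phi$, the resulting $\mathrm{D}_j'$ and $\lambda_j'$, and the specific parameter choices (e.g.\ $\theta=t$, $a=1$ or $a=-\tfrac{1}{t}$, and the values of the $x_{ij}$); your sketch identifies all the right ingredients and the correct target normal forms, so what remains is exactly the bookkeeping you flagged.
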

\begin{proof}
Let $(\G,\omega)$ be an eight-dimensional non completely reducible symplectic Lie algebra,  $\mathfrak{j}=\langle e_7\rangle$ a one-dimensional normal isotropic ideal and $(\G_{(1,0,0,1)},\omega_\eta)$ the symplectic reduction with respect to $\mathfrak{j}$. We begin by examining the first case. Let $\mathrm{D}_1$ be the derivation given in Lemma~$\ref{DerivationNormal}$, then  the Lie brackets of $\G$ are given by
\begin{align*}
[e_1,e_5] &= e_2 + d_{25}e_7,          & [e_2,e_5] &= -e_1 - d_{15}e_7,         & [e_3,e_6] &= e_4 + d_{46}e_7, \\
[e_4,e_6] &= -e_3 - d_{36}e_7,         & [e_8,e_1] &= -d_{12}e_2+(-d_{12}d_{25} - td_{15})e_7, & [e_8,e_2] &= d_{12}e_1 + (d_{12}d_{15} - td_{25})e_7, \\
[e_8,e_3] &= -d_{34}e_4 + (-d_{34}d_{46} - td_{36})e_7, & [e_8,e_4] &= d_{34}e_3 + (d_{34}d_{36} - td_{46})e_7, & [e_8,e_5] &= d_{15}e_1 + d_{25}e_2 + \tilde{\lambda}_5e_6, \\
[e_8,e_6] &= d_{36}e_3 + d_{46}e_4 + \tilde{\lambda}_6e_7,&[e_8,e_7]&=te_8,
\end{align*}
where, \[\lambda_1 =( -d_{12}d_{25} - td_{15})e^1+ (d_{12}d_{15} - td_{25})e^2+(-d_{34}d_{46} - td_{36})e^3+ (d_{34}d_{36} - td_{46})e^4+\tilde{\lambda}_5e^5+\tilde{\lambda}_6e^6.\]
Observe first that, $\mathrm{D}_1$ is an inner derivation on $\G_{(1,0,0,1)}$. Therefore, there exists $x_0\in\G_{(1,0,0,1)}$ such that $\mathrm{D}_1=\mathrm{ad}_{x_0}$, and we have 
\begin{align*}
D^\prime_1&=\frac{1}{\theta}\big(\Phi_{_1(1,1)}\circ \mathrm{D}_1\circ \Phi^{-1}_{_1(1,1)}-\mathrm{ad}_{u_0}\big)=0,
\end{align*}
where $u_0=\Phi_{(1,1)}(x_0)$. On the other hand, we have
\begin{align*}
\lambda^\prime_1&=\frac{1}{\theta}(\Phi^\ast_{_1(1,1)})^{-1}\Big(\beta\circ \mathrm{D}_1+a\lambda_1-\Phi^\ast_{_1(1,1)}\iota_{u_0}\omega_{\eta,\mathrm{D}_1}-t_1^\prime\theta\beta\Big)\\
&=\frac{1}{\theta}(\Phi^\ast_{_1(1,1)})^{-1}\Big(\beta\circ \mathrm{D}_1+a\lambda_1-\Phi^\ast_{_1(1,1)}\iota_{u_0}\omega_{\eta,\mathrm{D}_1}-t\beta\Big)\\
&=-d_{15}e^1-d_{25}e^2-d_{36}e^3-d_{46}e^4.
\end{align*}
since $Z(\G_{1,0,0,1})=\{0\}$ and $\theta t_1^\prime=t$ (see Proposition $\ref{Prnormal}$), and for a suitable choice of $\beta\in\G_{(1,0,0,1)}^\ast$.

Using Lemma~\ref{Isogroupnormal} and Corollary~$\ref{Charactnormal}$, we deduce that $(\G, \omega)$ is symplectomorphically isomorphic to  the symplectic Lie algebra listed above: $\G_{8,1}$, under the choice $x_{45} = 0,\quad x_{35} = 0,\quad x_{26} = 0,\quad x_{16} = 0,\quad x_{42} = 1,\quad x_{24} = 1,\quad a=-\frac{1}{t},\quad\theta=t$.

Now, let  $\mathrm{D}_2$ be the derivation given in Lemma~$\ref{DerivationNormal}$, then  the Lie brackets of $\G$ are  given  by
\begin{align*}
[e_1,e_5] &= e_2 + d_{25}e_7,          & [e_2,e_5] &= -e_1 - d_{15}e_7,         & [e_3,e_6] &= e_4 + d_{46}e_7, \\
[e_1,e_2]&=te_7,&[e_3,e_4]&=te_7,&[e_4,e_6] &= -e_3 - d_{36}e_7,\\
 [e_8,e_1] &=\tfrac{t}{2}e_1 -d_{12}e_2+( -d_{12}d_{25} + \tfrac{t}{2}d_{15})e_7, & [e_8,e_2] &= d_{12}e_1+\tfrac{t}{2}e_2 + (d_{12}d_{15} +\tfrac{t}{2}d_{25})e_7, &[e_8,e_3] &=\tfrac{t}{2}e_3 -d_{34}e_4 + (-d_{34}d_{46} +\tfrac{t }{2}d_{36})e_7, \\
 [e_8,e_4] &= d_{34}e_3+\tfrac{t}{2}e_4 + (d_{34}d_{36} +\tfrac{t}{2}d_{46})e_7, & [e_8,e_5] &= d_{15}e_1 + d_{25}e_2 + \widetilde{\lambda}_5e_6, &[e_8,e_6] &= d_{36}e_3 + d_{46}e_4 + \widetilde{\lambda}_6e_7,\\
[e_8,e_7]&=te_8,
\end{align*}

where $\lambda_2$ is given by: \[\lambda_2 =( -d_{12}d_{25} + \tfrac{t}{2}d_{15})e^1+ (d_{12}d_{15} +\tfrac{t}{2}d_{25})e^2+(-d_{34}d_{46} +\tfrac{t }{2}d_{36})e^3+ (d_{34}d_{36} +\tfrac{t}{2}d_{46})e^4+\widetilde{\lambda}_5e^5+\widetilde{\lambda}_6e^6.\]
Note that, except for the derivation~$\mathrm{D}_1$, the derivations~$\mathrm{D}_2$, $\mathrm{D}_3$, and~$\mathrm{D}_4$ are not inner derivations on~$\G_{(1,0,0,1)}$. So neither $\mathrm{D}_2$, $\mathrm{D}_3$, nor $\mathrm{D}_4$ can be isomorphic to the zero derivation, and we have
\begin{align*}
\mathrm{D}^\prime_2&=\frac{1}{\theta}\big(\Phi_{_1(1,1)}\circ \mathrm{D}_2\circ \Phi^{-1}_{_1(1,1)}-\mathrm{ad}_{u_0}\big)=\frac{1}{2}\mathbf{I}_4 \oplus \mathbf{0}_2,
\end{align*}
where $u_0=\sum u_je_j$ is given by
\begin{align*}
    u_0 &=  \sqrt{a} \, d_{46}e_1 -\sqrt{a} \, d_{36}e_2  +\sqrt{a} \, d_{25}e_3 -\sqrt{a} \, d_{15}e_4+ d_{34}e_5 + d_{12}e_6.
\end{align*}

On the other hand, we have
\begin{align*}
\lambda^\prime_2&=\frac{1}{\theta}(\Phi^\ast_{_1(1,1)})^{-1}\Big(\beta\circ \mathrm{D}_2+a\lambda_2-\Phi^\ast_{_1(1,1)}\iota_{u_0}\omega_{\eta,\mathrm{D}_2}-t_2^\prime\theta\beta\Big)\\
&=\frac{1}{\theta}(\Phi^\ast_{_1(1,1)})^{-1}\Big(\beta\circ \mathrm{D}_2+a\lambda_2-\Phi^\ast_{_1(1,1)}\iota_{u_0}\omega_{\eta,\mathrm{D}_2}-t\beta\Big)\\
&=-\frac{1}{2} d_{15}e^1-\frac{1}{2}d_{25}e^2-\frac{1}{2}d_{36}e^3-\frac{1}{2}d_{46}e^4.
\end{align*}
The last equation is determined by a canonical $1$-form $\beta \in \G_{(1,0,0,1)}^\ast$ whose action on the basis vectors satisfies:
\begin{align*}
\beta(e_5) &= -\frac{a}{t}\left(d_{36}^2 + d_{46}^2 - \widetilde{\lambda}_6\right), \\
\beta(e_6) &= -\frac{a}{t}\left(d_{15}^2 + d_{25}^2 - \widetilde{\lambda}_5\right),
\end{align*}
with $\beta(e_j)$ for $j=1,\ldots,4$ are given in Lemma~\ref{Isogroupnormal}. Here $\theta = t$ is a fixed parameter of the system. The remaining parameters vanish: $
x_{1 6} = 0, \quad x_{2 6} = 0, \quad x_{3 5} = 0, \quad x_{4 5} = 0$. Under these parameter values and by normalizing $a$ (i.e., setting $a = 1$), we apply the isomorphism to the symplectic form $\omega_\eta + e^7 \wedge e^8$ and obtain the form given in Proposition~\ref{ClassifiNormal} for the algebra $\G_{8,2}$.

Let $\mathrm{D}_3$ denote the derivation given in Lemma~\ref{DerivationNormal}. The Lie brackets of $\G$ are then given by
\begin{align*}
[e_1,e_5] &= e_2 + d_{25}e_7,          & [e_2,e_5] &= -e_1 - d_{15}e_7,         & [e_3,e_6] &= e_4 + d_{46}e_7, \\
[e_1,e_2]&=te_7,&[e_4,e_6] &= -e_3 - d_{36}e_7,&[e_8,e_1] &=\tfrac{t}{2}e_1 -d_{12}e_2+( -d_{12}d_{25} + \tfrac{t}{2}d_{15})e_7, \\
 [e_8,e_2] &= d_{12}e_1+\tfrac{t}{2}e_2 + (d_{12}d_{15} +\tfrac{t}{2}d_{25})e_7, &[e_8,e_3] &= -d_{34}e_4 + (-d_{34}d_{46} -td_{36})e_7, &[e_8,e_4] &= d_{34}e_3 + (d_{34}d_{36} -td_{46})e_7, \\
 [e_8,e_5] &= d_{15}e_1 + d_{25}e_2 + \widetilde{\lambda}_5e_6, &[e_8,e_6] &= d_{36}e_3 + d_{46}e_4 + \widetilde{\lambda}_6e_7,&
[e_8,e_7]&=te_8,
\end{align*}

where $\lambda_3$ is given by: \[\lambda_3 =( -d_{12}d_{25} + \tfrac{t}{2}d_{15})e^1+ (d_{12}d_{15} +\tfrac{t}{2}d_{25})e^2+(-d_{34}d_{46} -td_{36})e^3+ (d_{34}d_{36} -td_{46})e^4+\widetilde{\lambda}_5e^5+\widetilde{\lambda}_6e^6.\]
We have
\begin{align*}
\mathrm{D}^\prime_3&=\frac{1}{\theta}\big(\Phi_{_2(1,1)}\circ \mathrm{D}_3\circ \Phi^{-1}_{_2(1,1)}-\mathrm{ad}_{u_0}\big)=\frac{1}{2}\mathbf{I}_2 \oplus \mathbf{0}_4,
\end{align*}
where $u_0=\sum u_je_j$ is given by
\begin{align*}
    u_0 &=  d_{25}e_1 - d_{15}e_2  + d_{46}e_3 - d_{36}e_4+ d_{12}e_5 + d_{34}e_6.
\end{align*}
On the other hand, we have
\begin{align*}
\lambda^\prime_3&=\frac{1}{\theta}(\Phi^\ast_{_2(1,1)})^{-1}\Big(\beta\circ \mathrm{D}_3+a\lambda_3-\Phi^\ast_{_2(1,1)}\iota_{u_0}\omega_{\eta,\mathrm{D}_3}-t_2^\prime\theta\beta\Big)\\
&=\frac{1}{\theta}(\Phi^\ast_{_2(1,1)})^{-1}\Big(\beta\circ \mathrm{D}_3+a\lambda_3-\Phi^\ast_{_2(1,1)}\iota_{u_0}\omega_{\eta,\mathrm{D}_3}-t\beta\Big)\\
&=-\frac{1}{2} d_{15}e^1-\frac{1}{2}d_{25}e^2-d_{36}e^3-d_{46}e^4.
\end{align*}
The last equation is determined by a canonical $1$-form $\beta \in \G_{(1,0,0,1)}^\ast$ whose action on the basis vectors satisfies:
\begin{align*}
\beta(e_5) &= -\frac{d_{15}^2 + d_{25}^2 - \widetilde{\lambda}_5}{t}, \\
\beta(e_6) &= -\frac{d_{36}^2 + d_{46}^2 - \widetilde{\lambda}_6}{t}.
\end{align*}
The values of $\beta(e_j)$ for $j=1,\ldots,4$ are given in Lemma~\ref{Isogroupnormal}. Here, $\theta = t$ is a fixed parameter of the system. The remaining parameters are chosen as
\[
a = 1,\quad x_{3 3} = 1,\quad x_{3 4} = 0,\quad x_{2 5} = 0,\quad x_{1 5} = 0,\quad x_{4 6} = 0,\quad x_{3 6} = 0,\quad x_{1 1} = 1.
\]
We apply the isomorphism to the symplectic form $\omega_\eta + e^7 \wedge e^8$ and obtain the form given in Proposition~\ref{ClassifiNormal} for the algebra $\mathfrak{g}_{8,3}$

Consider now the  derivation $\mathrm{D}_4$ as  given in Lemma~\ref{DerivationNormal}. The Lie brackets of $\G$ are then given by
\begin{align*}
[e_1,e_5] &= e_2 + d_{25}e_7,          & [e_2,e_5] &= -e_1 - d_{15}e_7,         & [e_3,e_6] &= e_4 + d_{46}e_7, \\
[e_3,e_4]&=te_7,&[e_4,e_6] &= -e_3 - d_{36}e_7,&[e_8,e_1] &= -d_{12}e_2+( -d_{12}d_{25} -td_{15})e_7, \\
 [e_8,e_2] &= d_{12}e_1 + (d_{12}d_{15} -td_{25})e_7, &[e_8,e_3] &=\tfrac{t}{2}e_3 -d_{34}e_4 + (-d_{34}d_{46}+\tfrac{t}{2}d_{36})e_7, &[e_8,e_4] &= d_{34}e_3+\tfrac{t}{2}e_4 + (d_{34}d_{36} +\tfrac{t}{2} d_{46})e_7, \\
 [e_8,e_5] &= d_{15}e_1 + d_{25}e_2 + \widetilde{\lambda}_5e_6, &[e_8,e_6] &= d_{36}e_3 + d_{46}e_4 + \widetilde{\lambda}_6e_7,&
[e_8,e_7]&=te_8,
\end{align*}

where $\lambda_4$ is given by: \[\lambda_4 =( -d_{12}d_{25} -td_{15})e^1+ (d_{12}d_{15} -td_{25})e^2+(-d_{34}d_{46}+\tfrac{t}{2}d_{36})e^3+ (d_{34}d_{36} +\tfrac{t}{2} d_{46})e^4+\widetilde{\lambda}_5e^5+\widetilde{\lambda}_6e^6.\]
We have
\begin{align*}
\mathrm{D}^\prime_4&=\frac{1}{\theta}\big(\Phi_{_2(1,1)}\circ \mathrm{D}_4\circ \Phi^{-1}_{_2(1,1)}-\mathrm{ad}_{u_0}\big)=\mathbf{0}_2\oplus\frac{1}{2}\mathbf{I}_2 \oplus \mathbf{0}_2,
\end{align*}
where $u_0=\sum u_je_j$ is given by
\begin{align*}
    u_0 &=  d_{25}e_1 -d_{15}e_2  + d_{46}e_3 -d_{36}e_4+ d_{12}e_5 + d_{34}e_6.
\end{align*}
As in the previous case, we have
\begin{align*}
\lambda^\prime_4 &= \frac{1}{\theta}(\Phi^\ast_{_2(1,1)})^{-1}\Big(\beta \circ \mathrm{D}_4 +a\lambda_4 - \Phi^\ast_{_2(1,1)}\iota_{u_0}\omega_{\eta,\mathrm{D}_4} - t_2^\prime\theta\beta\Big) \\
&= \frac{1}{\theta}(\Phi^\ast_{_2(1,1)})^{-1}\Big(\beta \circ D_4 + a\lambda_4 - \Phi^\ast_{_2(1,1)}\iota_{u_0}\omega_{\eta,\mathrm{D}_4} - t\beta\Big) \\
&= -\frac{1}{2} d_{15}e^1 - \frac{1}{2}d_{25}e^2 - d_{36}e^3 - d_{46}e^4.
\end{align*}
Here, $\beta \in \G_{(1,0,0,1)}^\ast$ is a canonical $1$-form whose action on the basis vectors is chosen as
\begin{align*}
\beta(e_5) &= -\frac{d_{15}^2 + d_{25}^2 - \widetilde{\lambda}_5}{t}, \\
\beta(e_6) &= -\frac{d_{36}^2 + d_{46}^2 - \widetilde{\lambda}_6}{t}.
\end{align*}

The values of $\beta(e_j)$ for $j = 1, \ldots, 4$ are given in Lemma~\ref{Isogroupnormal}. The parameter $\theta = t$ is fixed, and the remaining parameters are assigned as follows:
\[
x_{12} = 0, \quad x_{11} = 1, \quad x_{25} = 0, \quad x_{36} = 0, \quad x_{46} = 0, \quad x_{33} = 1, \quad a = 1, \quad x_{15} = 0, \quad x_{34} = 0.
\]

We therefore conclude that the isomorphism classes of the Lie algebras in Proposition~$\ref{ClassifiNormal}$ are completely classified by the cohomological invariant from Lemma~$\ref{Coho}$, with an analogous proof to the central case established in Proposition~$\ref{ClassiCentral}$.
\end{proof}

\section{D-extension of Lie algebras}\label{D-exten} \label{se4}

Let $\overline{\G}$ be a Lie algebra, $\mu\in\overline{\G}^\ast$ and let $\mathrm{D}\in\mathrm{End}(\overline{\G})$ be an endomorphism. Consider the vector space defined by
\begin{align*}
\G_{\mu,\mathrm{D}}=
\langle \ell\rangle\oplus\overline{\G}.
\end{align*}
Define an alternating bilinear product on $\G_{\mu,\mathrm{D}}\times\G_{\mu,\mathrm{D}}\longrightarrow\G_{\mu,\mathrm{D}}$ as the
bicrossed sums, i.e., $\G_{\mu,\mathrm{D}}=\langle\ell\rangle\bowtie\overline{\G}$, by requiring that the non-zero brackets are given by
\begin{align}
[x,y]&=\overline{[x,y]},\label{Brageneralcase1}\\
[\ell,x]&=\mu(x)\ell+\mathrm{D}(x),\label{Brageneralcase2}
\end{align}
for all $x,y\in\overline{\G}$.
\begin{pr}\label{G,mu,D Liebrackets}
The alternating product $[~,~]$ as declared in $(\ref{Brageneralcase1})-(\ref{Brageneralcase2})$ 
above defines a Lie algebra $\G = (\G_{\mu,\mathrm{D}}, [~,~])$ if and only if
\begin{enumerate}
\item $\md\mu=0$,
\item $\Delta\mathrm{D} -\mu\otimes \mathrm{D}=0$,
\end{enumerate}
for all $x,y\in\overline{\G}$. 
\end{pr}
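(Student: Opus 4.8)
The plan is to verify directly that the alternating product defined by $(\ref{Brageneralcase1})$–$(\ref{Brageneralcase2})$ satisfies the Jacobi identity precisely when conditions $1.$ and $2.$ hold. Since the bracket is already alternating by construction, the only thing to check is the Jacobi identity $\sum_{\mathrm{cycl}}[[x,y],z]=0$ for all triples of basis-type elements. Because $\overline{\G}$ sits inside $\G_{\mu,\mathrm{D}}$ as a subalgebra with its own bracket $\overline{[\,\cdot,\cdot\,]}$, and $\G_{\mu,\mathrm{D}}=\langle\ell\rangle\oplus\overline{\G}$ is spanned by $\ell$ together with $\overline{\G}$, it suffices to treat two cases: (i) all three arguments lie in $\overline{\G}$, and (ii) exactly one argument is $\ell$ and the other two lie in $\overline{\G}$. (The case of two or three copies of $\ell$ is trivial since $[\ell,\ell]=0$ and then every term in the cyclic sum already vanishes or reduces to case (ii).)

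First I would dispose of case (i): for $x,y,z\in\overline{\G}$ all brackets involved are just the brackets of $\overline{\G}$, so $\sum_{\mathrm{cycl}}[[x,y],z]=\sum_{\mathrm{cycl}}\overline{[\,\overline{[x,y]},z\,]}=0$ is exactly the Jacobi identity in $\overline{\G}$, which holds by hypothesis and imposes no new condition. Then I would carry out case (ii): fix $x,y\in\overline{\G}$ and compute $[[\ell,x],y]+[[x,y],\ell]+[[y,\ell],x]$ using $[\ell,x]=\mu(x)\ell+\mathrm{D}(x)$ and $[x,y]=\overline{[x,y]}$. Expanding,
\begin{align*}
[[\ell,x],y] &= [\mu(x)\ell+\mathrm{D}(x),y] = \mu(x)\big(\mu(y)\ell+\mathrm{D}(y)\big)+\overline{[\mathrm{D}(x),y]},\\
[[x,y],\ell] &= -[\ell,\overline{[x,y]}] = -\mu\big(\overline{[x,y]}\big)\ell-\mathrm{D}\big(\overline{[x,y]}\big),\\
[[y,\ell],x] &= -[\mu(y)\ell+\mathrm{D}(y),x] = -\mu(y)\big(\mu(x)\ell+\mathrm{D}(x)\big)-\overline{[\mathrm{D}(y),x]}.
\end{align*}
Collecting the $\ell$-component gives $-\mu\big(\overline{[x,y]}\big)\ell$, whose vanishing for all $x,y$ is exactly $\mu\big(\overline{[x,y]}\big)=0$, i.e. $\md\mu=0$, condition $1.$ Collecting the $\overline{\G}$-component gives $\mathrm{D}(y)\mu(x)-\mathrm{D}(x)\mu(y)+\overline{[\mathrm{D}(x),y]}+\overline{[x,\mathrm{D}(y)]}-\mathrm{D}\big(\overline{[x,y]}\big) = \big(\mu\otimes\mathrm{D}\big)(x,y)-(\Delta\mathrm{D})(x,y)$, using the sign conventions for $\Delta\mathrm{D}$ and $\mu\otimes\mathrm{D}$ from the Notations section; its vanishing for all $x,y$ is condition $2.$, namely $\Delta\mathrm{D}-\mu\otimes\mathrm{D}=0$.

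This establishes both directions simultaneously: the two conditions are exactly the obstructions extracted from the $\ell$- and $\overline{\G}$-components of the cyclic sum in case (ii), and case (i) is automatic. There is no real obstacle here beyond bookkeeping; the only point requiring a little care is matching the signs in the expansion to the definitions of $\Delta\mathrm{D}$ and $\lambda\otimes\mathrm{D}$ given in the paper, so that the collected $\overline{\G}$-component is recognized cleanly as $\mu\otimes\mathrm{D}-\Delta\mathrm{D}$ rather than some sign-permuted variant. I would present the computation once, in the display above, and then read off the equivalence.
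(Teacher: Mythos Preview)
Your proposal is correct and follows essentially the same approach as the paper's proof: both reduce the Jacobi identity to the two cases (three elements of $\overline{\G}$, and $\ell$ together with two elements of $\overline{\G}$), expand directly, and read off the conditions $\md\mu=0$ and $\Delta\mathrm{D}-\mu\otimes\mathrm{D}=0$ from the $\ell$- and $\overline{\G}$-components respectively. The only cosmetic difference is that the paper computes the cyclic sum in the form $\sum_{\mathrm{cycl}}[x,[y,z]]$ rather than $\sum_{\mathrm{cycl}}[[x,y],z]$, and your explicit remark that the cases with two or three copies of $\ell$ are trivially zero is a small clarification the paper leaves implicit.
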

\begin{proof}
For all $x,y,z\in\overline{\G}$, we have
\begin{align*}
\mathop{\resizebox{1.3\width}{!}{$\sum$}}\limits_{\mathrm{cycl}}[x,[y,z]]=\mathop{\resizebox{1.3\width}{!}{$\sum$}}\limits_{\mathrm{cycl}}\overline{[x,\overline{[y,z]}]}=0.
\end{align*}
Moreover,
\begin{align*}
\mathop{\resizebox{1.3\width}{!}{$\sum$}}\limits_{\mathrm{cycl}}[\ell,[x,y]]&=[\ell,[x,y]]+[y,[\ell,x]]+[x,[y,\ell]]\\
&=\mu(\overline{[x,y]})\ell+\mathrm{D}\big(\overline{[x,y]}\big)-\mu(x)\mu(y)\ell-\mu(x)\mathrm{D}(y)+\overline{[y,\mathrm{D}(x)]}\\
&~~+\mu(y)\mu(x)\ell+\mu(y)\mathrm{D}(x)+\overline{[\mathrm{D}(y),x]}\\
&=\mu(\overline{[x,y]})\ell+\mathrm{D}\big(\overline{[x,y]}\big)-\overline{[\mathrm{D}(x),y]}-\overline{[x,\mathrm{D}(y)]}+\mu(y)\mathrm{D}(x)-\mu(x)\mathrm{D}(y)\\
&=\mu(\overline{[x,y]})\ell+\big(\Delta\mathrm{D}\big)(x,y)-\big(\mu\otimes \mathrm{D}\big)(x,y)\\
&=0.
\end{align*}
\end{proof}

\begin{Le}\label{over(g)ideal}
Let $\G_{\mu,\mathrm{D}}$ be a $\mathrm{D}$-extended Lie algebras via endomorphism $\mathrm{D}$. Then, the following holds$:$
\begin{enumerate}
\item $\mathrm{D}$ is a derivation on the derived algebra of $\overline{\G}$,
\item  $\mathcal{D}^2(\overline{\G})$ is an ideal of $\G_{\mu,\mathrm{D}}$,
\end{enumerate}
\end{Le}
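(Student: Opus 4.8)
The statement has two parts, and the key structural input is the relation $\Delta\mathrm{D} - \mu\otimes\mathrm{D} = 0$ from Proposition~\ref{G,mu,D Liebrackets}, i.e. $\mathrm{D}(\overline{[x,y]}) - \overline{[\mathrm{D}(x),y]} - \overline{[x,\mathrm{D}(y)]} = \mu(x)\mathrm{D}(y) - \mu(y)\mathrm{D}(x)$ for all $x,y\in\overline{\G}$. For part~1, I would restrict both sides of this identity to arguments $x,y$ lying in $\mathcal{D}(\overline{\G})$. The plan is to observe that if $z = \overline{[x,y]}$ is a bracket, then $\mu(z) = 0$: indeed $\mathrm{d}\mu = 0$ means $\mu(\overline{[x,y]}) = 0$ for all $x,y$, so $\mu$ vanishes identically on $\mathcal{D}(\overline{\G})$. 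Hence for $a,b\in\mathcal{D}(\overline{\G})$ the right-hand side $\mu(a)\mathrm{D}(b) - \mu(b)\mathrm{D}(a)$ is zero, and the identity collapses to $\mathrm{D}(\overline{[a,b]}) = \overline{[\mathrm{D}(a),b]} + \overline{[a,\mathrm{D}(b)]}$, which is exactly the Leibniz rule saying $\mathrm{D}|_{\mathcal{D}(\overline{\G})}$ is a derivation of $\mathcal{D}(\overline{\G})$ — provided we also check $\mathrm{D}$ maps $\mathcal{D}(\overline{\G})$ into itself, which follows since $\mathrm{D}(\overline{[x,y]}) = \overline{[\mathrm{D}(x),y]} + \overline{[x,\mathrm{D}(y)]} + \mu(x)\mathrm{D}(y) - \mu(y)\mathrm{D}(x)$ is a sum of brackets once we note... hmm, actually the terms $\mu(x)\mathrm{D}(y)$ need not be brackets. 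So the cleaner route for stability is: on $\mathcal{D}(\overline{\G})$ the correction term vanishes, so $\mathrm{D}(\overline{[a,b]})$ is literally $\overline{[\mathrm{D}(a),b]} + \overline{[a,\mathrm{D}(b)]} \in \mathcal{D}(\overline{\G})$, giving both stability and the derivation property simultaneously.

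For part~2, I want to show $\mathcal{D}^2(\overline{\G}) = \overline{[\mathcal{D}(\overline{\G}),\mathcal{D}(\overline{\G})]}$ is an ideal of $\G_{\mu,\mathrm{D}} = \langle\ell\rangle\oplus\overline{\G}$. Since $\mathcal{D}^2(\overline{\G})$ is already an ideal of $\overline{\G}$ (it is a characteristic ideal of the subalgebra $\overline{\G}$), it suffices to check $[\ell, \mathcal{D}^2(\overline{\G})] \subseteq \mathcal{D}^2(\overline{\G})$. By \eqref{Brageneralcase2}, for $w\in\mathcal{D}^2(\overline{\G})$ we have $[\ell,w] = \mu(w)\ell + \mathrm{D}(w)$. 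Now $\mu$ vanishes on $\mathcal{D}(\overline{\G})\supseteq\mathcal{D}^2(\overline{\G})$ (as noted above, $\mathrm{d}\mu=0$), so $\mu(w) = 0$ and $[\ell,w] = \mathrm{D}(w)$. It remains to see $\mathrm{D}(\mathcal{D}^2(\overline{\G})) \subseteq \mathcal{D}^2(\overline{\G})$: writing $w = \overline{[a,b]}$ with $a,b\in\mathcal{D}(\overline{\G})$, part~1 gives $\mathrm{D}(w) = \overline{[\mathrm{D}(a),b]} + \overline{[a,\mathrm{D}(b)]}$, and since by part~1 $\mathrm{D}(a),\mathrm{D}(b)\in\mathcal{D}(\overline{\G})$, both summands lie in $\overline{[\mathcal{D}(\overline{\G}),\mathcal{D}(\overline{\G})]} = \mathcal{D}^2(\overline{\G})$. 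Hence $[\ell,w]\in\mathcal{D}^2(\overline{\G})$, completing the proof that $\mathcal{D}^2(\overline{\G})$ is an ideal of $\G_{\mu,\mathrm{D}}$.

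The only genuinely delicate point — and the one I would state carefully — is the repeated use of $\mathrm{d}\mu = 0 \Leftrightarrow \mu\circ\overline{[\,\cdot\,,\,\cdot\,]} = 0 \Leftrightarrow \mu|_{\mathcal{D}(\overline{\G})} = 0$; everything else is a direct bracket computation. One should also be mildly careful that $\mathrm{D}$ need \emph{not} be a derivation on all of $\overline{\G}$ (the $\mu\otimes\mathrm{D}$ obstruction is generally nonzero off the derived algebra), so the argument must genuinely exploit that we are working inside $\mathcal{D}(\overline{\G})$ where the obstruction dies. I expect no real obstacle here; the lemma is essentially a bookkeeping consequence of the cocycle condition on $\mu$ together with the twisted-derivation identity for $\mathrm{D}$.
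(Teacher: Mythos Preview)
Your approach is the same as the paper's: use $\mathrm{d}\mu=0$ to get $\mu|_{\mathcal{D}(\overline{\G})}=0$, collapse $\Delta\mathrm{D}=\mu\otimes\mathrm{D}$ to the ordinary Leibniz rule on $\mathcal{D}(\overline{\G})$, and then bracket-chase $[\ell,\mathcal{D}^2(\overline{\G})]$. The paper's proof is exactly this, stated more tersely.

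There is, however, a slip in your stability argument that you should be aware of. You write that for $a,b\in\mathcal{D}(\overline{\G})$ one gets $\mathrm{D}(\overline{[a,b]})=\overline{[\mathrm{D}(a),b]}+\overline{[a,\mathrm{D}(b)]}\in\mathcal{D}(\overline{\G})$, ``giving both stability and the derivation property simultaneously.'' But with $a,b\in\mathcal{D}(\overline{\G})$ the element $\overline{[a,b]}$ ranges only over $\mathcal{D}^2(\overline{\G})$, not over all of $\mathcal{D}(\overline{\G})$; a generic element of $\mathcal{D}(\overline{\G})$ is $\overline{[x,y]}$ with $x,y\in\overline{\G}$, where the correction $\mu(x)\mathrm{D}(y)-\mu(y)\mathrm{D}(x)$ need not vanish and need not lie in $\mathcal{D}(\overline{\G})$. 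So what you have actually shown is $\mathrm{D}(\mathcal{D}^2(\overline{\G}))\subset\mathcal{D}(\overline{\G})$, not $\mathrm{D}(\mathcal{D}(\overline{\G}))\subset\mathcal{D}(\overline{\G})$. You then invoke the latter in part~2 (``since by part~1 $\mathrm{D}(a),\mathrm{D}(b)\in\mathcal{D}(\overline{\G})$'') to conclude $\mathrm{D}(w)\in\mathcal{D}^2(\overline{\G})$, and that step is not supported by what precedes it. The paper's own proof asserts the same final inclusion $[\mathrm{D}(\mathcal{D}^1(\overline{\G})),\mathcal{D}^1(\overline{\G})]\subset\mathcal{D}^2(\overline{\G})$ without further justification, so the leap is shared; but your write-up makes the circularity explicit, and you should recognise that the $\mathrm{D}$-invariance of $\mathcal{D}(\overline{\G})$ is genuinely not established by either argument.
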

\begin{proof}
As follows from Proposition~\ref{G,mu,D Liebrackets}, we have $\mathrm{d}\mu = 0$, i.e., 
\[
\mu\big(\overline{[x, y]}\big) = 0 \quad \text{for all } x, y \in \overline{\mathfrak{g}}.
\]
Moreover, the second condition of the preceding proposition implies that for all $x, y \in \overline{[\overline{\mathfrak{g}}, \overline{\mathfrak{g}}]}$,
\[
0 = \Delta(\mathrm{D})(x, y) + \mu(y)\mathrm{D}(x) - \mu(x)\mathrm{D}(y) = \Delta(\mathrm{D})(x, y).
\]
This yields the identity
\[
\mathrm{D}\big(\overline{[x, y]}\big) = \overline{[\mathrm{D}(x), y]} + \overline{[x, \mathrm{D}(y)]},
\]
showing that $\mathrm{D}$ is a derivation on the derived algebra of $\overline{\G}$. 

Furthermore, we have the inclusions
\[
\Big[\overline{\G},\mathcal{D}^2(\overline{\G})\Big]=\overline{\Big[\overline{\G},\mathcal{D}^2(\overline{\G})\Big]}\subset\mathcal{D}^2(\overline{\G}),
\]
and
\[
[\ell, \mathcal{D}^2(\overline{\G})] 
= \mu\big(\mathcal{D}^2(\overline{\G})\big)\ell + D\big(\mathcal{D}^2(\overline{\G})\big)
= [D\big(\mathcal{D}^1(\overline{\G})\big),\mathcal{D}^1(\overline{\G})]+[\mathcal{D}^1(\overline{\G}),D\big(\mathcal{D}^1(\overline{\G})\big)]
\subset \mathcal{D}^2(\overline{\G}).
\]
Thus, $\mathcal{D}^2(\overline{\G})$ is an ideal of $\mathfrak{g}_{\mu,\mathrm{D}}$.
\end{proof}

\begin{pr}
Let $\G_{\mu,\mathrm{D}}$ be a $\mathrm{D}$-extended Lie algebra via endomorphism $\mathrm{D}$. Then,
\begin{enumerate}
\item The subspace $\mathfrak{j}_1=\G_1\subset\overline{\G}$ is an ideal of $\G_{\mu,\mathrm{D}}$ if and only if 
\begin{enumerate}
\item[$(i)$]  $\G_1$ is an ideal of $\overline{\G}$,
\item[$(ii)$] $\G_1\subset\ker\big(\mu\big)$,
\item[$(iii)$]  $\G_1$ is $\mathrm{D}$-invariant, i.e., $\mathrm{D}(\G_1)\subset\G_1$.
\end{enumerate}
\item The subspace $\mathfrak{j}_2=\langle\ell,\G_1\rangle$ is an ideal of $\G_{\mu,\mathrm{D}}$ if and only if 
\begin{enumerate}
\item[$(i)$] $\G_1$ is an ideal of $\overline{\G}$,
\item[$(ii)$] $\G_1$ is $\mathrm{D}$-invariant.
\end{enumerate}
\item For all $i\geq2$, the derived ideals $\mathcal{D}^i(\overline{\G})$ in the derived series of $\overline{\G}$ are ideals of both $\overline{\G}$ and $\G_{\mu,\mathrm{D}}$.
\end{enumerate}
\end{pr}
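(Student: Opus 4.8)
The plan is to prove all three parts by a single elementary mechanism: test the ideal condition $[\G_{\mu,\mathrm{D}},\mathfrak{j}]\subseteq\mathfrak{j}$ directly against the bracket formulas~(\ref{Brageneralcase1})--(\ref{Brageneralcase2}), using the vector-space splitting $\G_{\mu,\mathrm{D}}=\langle\ell\rangle\oplus\ol{\G}$. The only structural observation needed is that the summands $\langle\ell\rangle$ and $\ol{\G}$ are complementary, so a vector lies in a given subspace of $\ol{\G}$ exactly when its $\ell$-component vanishes and its $\ol{\G}$-component lands there; the rest is bookkeeping of the two slots.

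For part~(1), I would write a general element of $\G_{\mu,\mathrm{D}}$ as $a\ell+y$ with $a\in\R$, $y\in\ol{\G}$, take $z\in\G_1$, and expand $[a\ell+y,z]=a\mu(z)\,\ell+a\mathrm{D}(z)+\ol{[y,z]}$. Since $\G_1\subseteq\ol{\G}$, requiring the right-hand side to lie in $\G_1$ for all admissible $a,y,z$ forces, term by term: $a\mu(z)=0$, i.e. $\G_1\subseteq\ker\mu$; $\ol{[y,z]}\in\G_1$ for all $y\in\ol{\G}$, i.e. $\G_1$ is an ideal of $\ol{\G}$; and $a\mathrm{D}(z)\in\G_1$, i.e. $\mathrm{D}(\G_1)\subseteq\G_1$. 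Conversely, under (i)--(iii) every term of the displayed expression visibly lies in $\G_1$, so $\mathfrak{j}_1$ is an ideal. Part~(2) is run the same way for $\mathfrak{j}_2=\langle\ell\rangle\oplus\G_1$: since the $\ell$-direction is now contained in $\mathfrak{j}_2$, the constraint coming from the $\ell$-component becomes vacuous, and expanding $[a\ell+y,b\ell+z]$ (together with the special cases $[a\ell+y,b\ell]=-b\mu(y)\,\ell-b\mathrm{D}(y)$ and $[\ell,\ell]=0$) leaves exactly the surviving requirements (i) that $\G_1$ be an ideal of $\ol{\G}$ and (ii) that $\G_1$ be $\mathrm{D}$-invariant, while the backward direction is again read off from the same expansion.

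For part~(3), I would reduce to part~(1). For $i\geq2$ one has $\mathcal{D}^i(\ol{\G})\subseteq\mathcal{D}(\ol{\G})\subseteq\ker\mu$ because $\md\mu=0$, so condition~(ii) of part~(1) holds; each $\mathcal{D}^i(\ol{\G})$ is a characteristic, hence ordinary, ideal of $\ol{\G}$, so (i) holds; and for $\mathrm{D}$-invariance (condition~(iii)) I would invoke Lemma~\ref{over(g)ideal}, which gives that $\mathrm{D}$ restricts to a derivation of the Lie algebra $\mathfrak{k}:=\mathcal{D}(\ol{\G})$. Since every derivation of a Lie algebra maps each term of its derived series into itself, and $\mathcal{D}^i(\ol{\G})=\mathcal{D}^{\,i-1}(\mathfrak{k})$ with $i-1\geq1$ for $i\geq2$, it follows that $\mathrm{D}\big(\mathcal{D}^i(\ol{\G})\big)\subseteq\mathcal{D}^i(\ol{\G})$; part~(1) then yields that $\mathcal{D}^i(\ol{\G})$ is an ideal of $\G_{\mu,\mathrm{D}}$ as well. (For $i=2$ this is also immediate from Lemma~\ref{over(g)ideal} directly.)

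All of this is routine manipulation, and I do not expect a genuine obstacle. The only points that demand a little care are the bookkeeping of $\langle\ell\rangle$- versus $\ol{\G}$-components when expanding brackets across the splitting, and, in part~(3), applying the index shift $\mathcal{D}^i(\ol{\G})=\mathcal{D}^{\,i-1}(\mathcal{D}(\ol{\G}))$ over the correct range of $i$ together with the fact that a derivation of a Lie algebra preserves its derived series.
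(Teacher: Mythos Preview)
Your treatment of parts (1) and (3) is correct and essentially the paper's own: direct bracket expansion for (1), and for (3) the reduction to (1) via $\ker\mu\supseteq\mathcal{D}(\ol\G)$ together with $\mathrm{D}$-invariance of the higher derived ideals. Your version of (3) is in fact more careful than the paper's, which only cites the two facts ``$\mathcal{D}^i(\ol\G)$ is an ideal of $\ol\G$'' and ``$\mu$ vanishes on $[\ol\G,\ol\G]$'' without addressing $\mathrm{D}$-invariance; your appeal to Lemma~\ref{over(g)ideal} and the index shift $\mathcal{D}^i(\ol\G)=\mathcal{D}^{\,i-1}(\mathcal{D}(\ol\G))$ fills that in.

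Part (2), however, has a genuine gap --- and it is the same one the paper's proof has. You correctly write $[a\ell+y,\,b\ell]=-b\mu(y)\ell-b\mathrm{D}(y)$ for arbitrary $y\in\ol\G$, but then claim the only surviving requirement on $\mathrm{D}$ is $\mathrm{D}(\G_1)\subset\G_1$. For this bracket to lie in $\mathfrak{j}_2=\langle\ell\rangle\oplus\G_1$ one needs $\mathrm{D}(y)\in\G_1$ for \emph{every} $y\in\ol\G$, i.e.\ $\mathrm{D}(\ol\G)\subset\G_1$, which is strictly stronger than $\mathrm{D}$-invariance of $\G_1$. A two-line counterexample: take $\ol\G=\R^2$ abelian, $\mu=0$, $\mathrm{D}=\mathrm{id}$, $\G_1=\langle e_2\rangle$; then (i) and (ii) hold, yet $[e_1,\ell]=-e_1\notin\mathfrak{j}_2$. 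The paper's proof only tests $[\ell,\G_1]$ and $[\ol\G,\G_1]$ (and even extracts a spurious condition $\mu(\G_1)=0$), never $[\ol\G,\ell]$, so the defect lies in the statement itself rather than in your strategy; but as written your sketch does not establish the claimed equivalence in (2).
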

\begin{proof}
Let $\G_1 \subset \overline{\G}$ be a subspace. Now, assume that $\G_1$ is an ideal of $\G_{\mu,\mathrm{D}}$. Then, $[\overline{\G}, \G_1]=\overline{[\overline{\G}, \G_1]} \subset \G_1$, which shows that $\G_1$ is an ideal of $\overline{\G}$. Moreover,
$[\ell, \G_1] = \mu(\G_1)\ell + \mathrm{D}(\G_1) \subset \G_1$, implying that $\mu(\G_1) = 0$ (i.e., $\G_1 \subset \ker\mu$) and $\mathrm{D}(\G_1) \subset \G_1$. Conversely, any subspace satisfying conditions $(i)$, $(ii)$, and $(iii)$ from part $1$ forms an ideal of $\G_{\mu,\mathrm{D}}$. On the other hand, Let $\mathfrak{j}_2 = \langle \ell, \G_1 \rangle \subset \G_{\mu,\mathrm{D}}$ be an ideal. Then $[\ell, \G_1] = \mu(\G_1)\ell + \mathrm{D}(\G_1) \subset \mathfrak{j}_2$ implies $\mathrm{D}(\G_1) \subset \G_1$ and $\mu(\G_1) = 0$, while $[\G, \G_1] \subset \mathfrak{j}_2$ shows $\G_1$ is an ideal of $\G$. Conversely, any subspace $\langle \ell, \G_1 \rangle$ satisfying $(i)$  and $(ii)$ from part $2$  is an ideal of $\G_{\mu,\mathrm{D}}$. The latter consequence $(3.)$ follows from two facts:  Each $\mathcal{D}^i(\overline{\G})$ is an ideal of $\overline{\G}$, and
   the condition $\mu|_{\overline{[\overline{\G},\overline{\G}]}} = 0$ holds on the derived subalgebra.
\end{proof}
\begin{remark}
Note that $\overline{\G}$ is an ideal of $\G_{\mu,\mathrm{D}}$ if and only if $\mu \equiv 0$. 
Moreover, the derived algebra $\overline{[\overline{\G}, \overline{\G}]}$ is an ideal of $\G_{\mu,\mathrm{D}}$ 
if and only if it is $\mathrm{D}$-invariant. 

If, in addition, $\mathrm{D}$ is a derivation on $\overline{\G}$, 
then the derived algebra $\overline{[\overline{\G}, \overline{\G}]}$ is automatically 
an ideal of $\G_{\mu,\mathrm{D}}$.

\end{remark}

\begin{pr}\label{Characterisationgeneralized}
Let $\G_{\mu_1,D_1}$ and $\G_{\mu_2,\mathrm{D}_2}$ be two $\mathrm{D}$-extended Lie algebras via endomorphisms $\mathrm{D}_1$ and $\mathrm{D}_2$ respectively. Then $\G_{\mu_1,\mathrm{D}_1}$ and $ \G_{\mu_2,\mathrm{D}_2}$ are isomorphic if and only if there exist a linear map $\Phi : \overline{\G}_1\to \overline{\G}_2$, $\theta\in\R$, $\sigma\in\overline{\G}^\ast$ and $u\in\overline{\G}$ such that
\begin{enumerate}
\item $\theta\mathrm{D}_2\circ\Phi-\mathrm{D}_2(u)\sigma=\Phi\circ\mathrm{D}_1+u\mu_1-\mathrm{ad}_u\circ\Phi$,
\item $\theta\mu_2\circ\Phi-\mu_2(u)\sigma=\theta\mu_1+\sigma\circ\mathrm{D}_1$, 
\end{enumerate}
where, 
\begin{align}\label{CharactC1}
\theta\Delta(\Phi)&=u(\sigma
\otimes\mu_1)+\sigma\otimes(\Phi\circ \mathrm{D}_1)-\sigma\otimes(\mathrm{ad}_u\circ\Phi),
\end{align}
and
\begin{align}\label{CharactC2}
-\theta\md\sigma&=\theta(\sigma\otimes\mu_1)+\sigma\otimes(\sigma\circ\mathrm{D}_1).
\end{align}
\end{pr}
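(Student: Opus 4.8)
The plan is to follow the same pattern as the proofs of Lemma~\ref{extegene} and Proposition~\ref{G,mu,D Liebrackets}: suppose a Lie algebra isomorphism $\Phi_{\mathrm{ext}}:\G_{\mu_1,\mathrm{D}_1}\to\G_{\mu_2,\mathrm{D}_2}$ exists, write it in block form with respect to the decomposition $\G_{\mu_i,\mathrm{D}_i}=\langle\ell_i\rangle\oplus\overline{\G}_i$, and then translate the morphism property on the four types of brackets into conditions~1, 2 and the auxiliary identities \eqref{CharactC1}, \eqref{CharactC2}. Unlike in Lemma~\ref{extegene}, $\overline{\G}$ need not be an ideal here (it is an ideal precisely when $\mu\equiv0$, by the Remark following the previous Proposition), so I cannot immediately conclude $\Phi_{\mathrm{ext}}(\overline{\G}_1)=\overline{\G}_2$; instead I write the general ansatz
\begin{align*}
\Phi_{\mathrm{ext}}(x)&=\Phi(x)+\sigma(x)\ell_2,\qquad\text{for all }x\in\overline{\G}_1,\\
\Phi_{\mathrm{ext}}(\ell_1)&=u+\theta\ell_2,\qquad u\in\overline{\G}_2,\ \theta\in\R,
\end{align*}
where $\Phi:\overline{\G}_1\to\overline{\G}_2$ is linear (not assumed a Lie morphism), $\sigma\in\overline{\G}_1^\ast$, and invertibility of $\Phi_{\mathrm{ext}}$ forces $\theta\neq 0$ together with suitable nondegeneracy of $\Phi$.

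First I would impose $\Phi_{\mathrm{ext}}([x,y]_1)=[\Phi_{\mathrm{ext}}(x),\Phi_{\mathrm{ext}}(y)]_2$ for $x,y\in\overline{\G}_1$. Using $[x,y]_1=\overline{[x,y]}_1$ and expanding the right-hand side with \eqref{Brageneralcase2}, the $\overline{\G}_2$-component gives
\[
\Phi\big(\overline{[x,y]}_1\big)+\sigma(x)\mathrm{D}_2\Phi(y)-\sigma(y)\mathrm{D}_2\Phi(x)=\overline{[\Phi(x),\Phi(y)]}_2+\sigma(x)\mu_2(\Phi(y))u-\cdots
\]
and the $\ell_2$-component gives a scalar relation; after collecting terms these are exactly the statement that $\theta\,\Delta(\Phi)$ equals the right-hand side of \eqref{CharactC1} (combining the $u$-contribution, the $\sigma\otimes(\Phi\circ\mathrm{D}_1)$ term coming from transporting $\mathrm{D}_2$ back through $\Phi$ via condition~1, and the $\sigma\otimes(\mathrm{ad}_u\circ\Phi)$ term from $[u,\Phi(\cdot)]_2$). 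Next I impose the mixed bracket $\Phi_{\mathrm{ext}}([\ell_1,x]_1)=[\Phi_{\mathrm{ext}}(\ell_1),\Phi_{\mathrm{ext}}(x)]_2$. The left side is $\Phi_{\mathrm{ext}}\big(\mu_1(x)\ell_1+\mathrm{D}_1(x)\big)=\mu_1(x)(u+\theta\ell_2)+\Phi(\mathrm{D}_1 x)+\sigma(\mathrm{D}_1 x)\ell_2$; the right side is $[u+\theta\ell_2,\Phi(x)+\sigma(x)\ell_2]_2=\overline{[u,\Phi(x)]}_2+\theta\mu_2(\Phi x)\ell_2+\theta\mathrm{D}_2\Phi(x)-\sigma(x)\big(\mu_2(u)\ell_2+\mathrm{D}_2(u)\big)$. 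Matching $\overline{\G}_2$-components yields condition~1, and matching $\ell_2$-components yields condition~2. Finally the bracket $[\ell_1,\ell_1]_1=0$ is automatic, so the only remaining input is the Jacobi/closedness constraints: applying $\md\sigma=0$-type reasoning, i.e., evaluating the already-derived condition~2 on a bracket $\overline{[x,y]}_1$ (where $\mu_1$ vanishes by Proposition~\ref{G,mu,D Liebrackets}), produces \eqref{CharactC2}. Conversely, given $\Phi,\theta,\sigma,u$ satisfying 1, 2, \eqref{CharactC1}, \eqref{CharactC2}, one checks directly that $\Phi_{\mathrm{ext}}$ defined by the ansatz is a Lie algebra morphism on all four bracket types, and it is bijective since $\theta\neq0$ and the induced map on the quotient $\G_{\mu_i,\mathrm{D}_i}/\langle\ell_i\rangle\cong\overline{\G}_i$ is an isomorphism.

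The main obstacle I anticipate is bookkeeping rather than conceptual: correctly separating the $\ell_2$-component from the $\overline{\G}_2$-component in the first (all-in-$\overline{\G}$) bracket identity, because both $\sigma(x)\mathrm{D}_2\Phi(y)$ and $\sigma(x)\mu_2(\Phi(y))u$ appear and must be reorganized using conditions~1 and~2 to land in the stated form of \eqref{CharactC1}; a sign or placement error there would make \eqref{CharactC1}--\eqref{CharactC2} look different from the asserted identities. A secondary subtlety is the converse direction: one must verify that \eqref{CharactC1} and \eqref{CharactC2} are not extra hypotheses but are \emph{forced} by 1, 2 together with $\md\mu_i=0$ and $\Delta\mathrm{D}_i-\mu_i\otimes\mathrm{D}_i=0$ on each side — i.e., that they are the precise compatibility conditions ensuring $\Phi_{\mathrm{ext}}$ respects the Jacobi identity — so that the biconditional is clean. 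I would handle this by substituting condition~2 into the definition of $\Delta(\Phi)$ and using that $\mathrm{D}_1$ is a derivation on $\mathcal D^1(\overline{\G}_1)$ (Lemma~\ref{over(g)ideal}), which collapses the redundant terms.
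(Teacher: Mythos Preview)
Your approach is essentially the paper's: write the isomorphism in block form $\Phi_{\mathrm{ext}}(x)=\Phi(x)+\sigma(x)\ell_2$, $\Phi_{\mathrm{ext}}(\ell_1)=u+\theta\ell_2$, then match components of the bracket identities. The paper obtains conditions~1 and~2 exactly as you do, from the $\overline{\G}_2$- and $\ell_2$-components of $\Phi_{\mathrm{ext}}([\ell_1,x])=[\Phi_{\mathrm{ext}}(\ell_1),\Phi_{\mathrm{ext}}(x)]$, and then substitutes these back into the two components of $\Phi_{\mathrm{ext}}([x,y])=[\Phi_{\mathrm{ext}}(x),\Phi_{\mathrm{ext}}(y)]$ to produce \eqref{CharactC1} and \eqref{CharactC2}.

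One correction to your sketch: \eqref{CharactC2} does \emph{not} come from ``evaluating condition~2 on a bracket $\overline{[x,y]}_1$'' or from Jacobi-type reasoning. It is precisely the ``scalar relation'' you already noted---the $\ell_2$-component of the $[x,y]$-morphism identity, namely
\[
\sigma\big(\overline{[x,y]}_1\big)=\sigma(x)\mu_2\big(\Phi(y)\big)-\sigma(y)\mu_2\big(\Phi(x)\big),
\]
into which you substitute $\mu_2\circ\Phi$ from condition~2; the $\mu_2(u)$ terms cancel and you obtain \eqref{CharactC2} directly. Evaluating condition~2 at $\overline{[x,y]}_1$ gives a different (and weaker) relation involving $\mu_2(\Phi(\overline{[x,y]}))$ and $\sigma(\mathrm{D}_1(\overline{[x,y]}))$, which is not \eqref{CharactC2}. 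So \eqref{CharactC1} and \eqref{CharactC2} are the two components of the $\overline{\G}$-bracket morphism condition, each rewritten using conditions~1 and~2 respectively---not Jacobi consequences.
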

\begin{proof}
Suppose that there exists a Lie algebra isomorphism $\Psi:\G_{\mu_1,\mathrm{D}_1}\longrightarrow\G_{\mu_2,\mathrm{D}_2}$. Then, for all $x,y\in\overline{\G}_1$, we have
\begin{align*}
\Psi\big([x,y]_1\big)&=
\Psi\big(\overline{[x,y]}_1\big)\\
&=\Psi|_{\overline{\G}_1}\big(\overline{[x,y]}_1\big)+\sigma\big(\overline{[x,y]}_1\big)\ell_2,
\end{align*}
and
\begin{align*}
[\Psi(x),\Psi(y)]_2&=[\Psi|_{\overline{\G}_1}\big(x)+\sigma\big(x)\ell_2,\Psi|_{\overline{\G}_1}\big(y)+\sigma\big(y)\ell_2]_2\\
&=\overline{[\Psi|_{\overline{\G}_1}(x),\Psi|_{\overline{\G}_1}(y)]}_2-\sigma(y)\mu_2\big(\Psi|_{\overline{\G}_1}(x)\big)\ell_2-\sigma(y)\mathrm{D}_2\big(\Psi|_{\overline{\G}_1}(x)\big)\\
&~~+\sigma(x)\mu_2\big(\Psi|_{\overline{\G}_1}(y)\big)\ell_2+\sigma(x)\mathrm{D}_2\big(\Psi|_{\overline{\G}_1}(y)\big).
\end{align*}
Therefore,
\begin{align}\label{G1}
\Psi|_{\overline{\G}_1}\big(\overline{[x,y]}_1\big)&=\overline{[\Psi|_{\overline{\G}_1}(x),\Psi|_{\overline{\G}_1}(y)]}_2+\sigma(x)\mathrm{D}_2\big(\Psi|_{\overline{\G}_1}(y)\big)-\sigma(y)\mathrm{D}_2\big(\Psi|_{\overline{\G}_1}(x)\big),
\end{align}
and
\begin{align}\label{G2}
\sigma\big(\overline{[x,y]}_1\big)&=\sigma(x)\mu_2\big(\Psi|_{\overline{\G}_1}(y)\big)-\sigma(y)\mu_2\big(\Psi|_{\overline{\G}_1}(x)\big).
\end{align}
We have also, for all $x\in\overline{\G}_1$,
\begin{align*}
\Psi\big([\ell_1,x]_1\big)&=
\mu_1(x)\Psi(\ell_1)+\Psi\big(\mathrm{D}_1(x)\big)\big)\\
&=\mu_1(x)(u+\theta\ell_2)+\Psi|_{\overline{\G}_1}\big(\mathrm{D}_1(x)\big)+\sigma\big(\mathrm{D}_1(x)\big)\ell_2,
\end{align*}
and
\begin{align*}
[\Psi(\ell_1),\Psi(x)]_2&=[u+\theta\ell_2,\Psi|_{\overline{\G}_1}(x)+\sigma(x)\ell_2]_2\\
&=\overline{[u,\Psi|_{\overline{\G}_1}(x)]}_2-\sigma(x)\mu_2(u)\ell_2-\sigma(x)D_2(u)+\theta\mu_2\big(\Psi|_{\overline{\G}_1}(x)\big)\ell_2+\theta D_2\big(\Psi|_{\overline{\G}_1}(x)\big).
\end{align*}
Thus,
\begin{align}\label{G3}
\mu_1(x)u+\Psi|_{\overline{\G}_1}\big(\mathrm{D}_1(x)\big)&=\overline{[u,\Psi|_{\overline{\G}_1}(x)]}_2-\sigma(x)\mathrm{D}_2(u)+\theta \mathrm{D}_2\big(\Psi|_{\overline{\G}_1}(x)\big),
\end{align}
and
\begin{align}\label{G4}
\theta\mu_1(x)+\sigma\big(\mathrm{D}_1(x)\big)&=-\sigma(x)\mu_2(u)+\theta\mu_2\big(\Psi|_{\overline{\G}_1}(x)\big).
\end{align}
Set $\Psi|_{\overline{\G}_1} = \Phi$. By substituting the term $\mathrm{D}_2(\Phi(x))$ from Equation~$(\ref{G3})$ into Equation~$(\ref{G1})$, we obtain, after simplification:
\begin{align}\label{G11}
\theta\big(\Phi\big(\overline{[x,y]}_1\big)-\overline{[\Phi(x),\Phi(y)]}_2\big)&=\Big(u(\sigma\otimes\mu_1)+\sigma\otimes(\Phi\circ\mathrm{D}_1)-\sigma\otimes(\mathrm{ad}_u\circ\Phi)\Big)(x,y).
\end{align}
In the same, by substituting the term $\mu_2(\Phi(x))$ from Equation~$(\ref{G4})$ into Equation~$(\ref{G2})$, we obtain
\begin{align*}
-\big(\theta\md\sigma\big)(x,y)=\Big(\theta(\sigma\otimes
\mu_1)+\sigma\otimes(\sigma\circ\mathrm{D}_1)\Big)(x,y).
\end{align*}
\end{proof}
Let $\overline{\G}$ be a Lie algebra and $\overline{[~,~]}$ its Lie bracket. Assume
the following set of additional data 
\begin{enumerate}
\item $\chi\in\overline{\G}^\ast$  is a $1$-form,
\item $\varphi \in C^2(\overline{\G},\mathbb{R})$ is a 2-cochain.
\end{enumerate}
Let
\begin{align}
\G_{\xi}&=\overline{\G}\oplus\langle\xi\rangle
\end{align}
be the vector space direct sum of $\overline{\G}$ with one-dimensional vector space $\langle\xi\rangle$. Define
an alternating bilinear product
\begin{align}
[x,y]&=\overline{[x,y]}+\varphi(x,y)\xi,\label{Brageneralnoncentral1}\\
[\xi,x]&=\chi(x)\xi,\label{Brageneralnoncentral2}
\end{align}
for all $x,y\in\overline{\G}$.

Similarly, we can state:
\begin{pr}\label{Extennoncentral}
The alternating product $[~,~]$ as declared in $(\ref{Brageneralnoncentral1})-(\ref{Brageneralnoncentral2})$ 
above defines a Lie algebra $\G_{\varphi,\chi} = (\G_{\xi}, [~,~])$ if and only if
\begin{enumerate}
\item $\md\chi=0$,
\item $\mathop{\resizebox{1.3\width}{!}{$\sum$}}\limits_{\mathrm{cycl}}\chi(x)\varphi(y,z)=0$,
\end{enumerate}
for all $x,y\in\overline{\G}$.
\end{pr}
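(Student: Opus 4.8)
The statement is a direct "Jacobi identity verification" for the bicrossed-type bracket on $\G_\xi = \overline{\G}\oplus\langle\xi\rangle$ defined by $(\ref{Brageneralnoncentral1})$–$(\ref{Brageneralnoncentral2})$, entirely parallel to Proposition~$\ref{G,mu,D Liebrackets}$. The bracket is alternating by construction, so the only thing to check is that the Jacobiator $\sum_{\mathrm{cycl}}[x,[y,z]]$ vanishes for all triples of basis-type elements, and since the bracket is bilinear it suffices to treat the two genuinely distinct cases: (a) all three arguments in $\overline{\G}$, and (b) two arguments in $\overline{\G}$ and one equal to $\xi$ (the case with two or three copies of $\xi$ is trivial since $[\xi,\xi]=0$ and $[\xi,[\xi,x]]=\chi(x)[\xi,\xi]=0$, and cyclic sums of such terms collapse).

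First I would handle case (a). Writing $[x,y]=\overline{[x,y]}+\varphi(x,y)\xi$ and using $[\xi,w]=\chi(w)\xi$, one computes
\[
[x,[y,z]] = \overline{[x,\overline{[y,z]}]} + \varphi\bigl(x,\overline{[y,z]}\bigr)\xi - \varphi(y,z)\chi(x)\xi,
\]
and summing cyclically, the $\overline{\G}$-component is $\sum_{\mathrm{cycl}}\overline{[x,\overline{[y,z]}]}=0$ by the Jacobi identity in $\overline{\G}$, while the $\xi$-component is
\[
\sum_{\mathrm{cycl}}\Bigl(\varphi\bigl(x,\overline{[y,z]}\bigr)-\chi(x)\varphi(y,z)\Bigr) = -(\md\varphi)(x,y,z) - \sum_{\mathrm{cycl}}\chi(x)\varphi(y,z);
\]
here I would be careful with the Chevalley–Eilenberg sign convention used in the paper, but in any case this component vanishes for all $x,y,z$ precisely when $\varphi$ satisfies the cocycle-type relation together with condition~$2$. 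In the special situation where $\chi\equiv 0$ this reduces to $\varphi\in Z^2(\overline{\G})$, consistent with the central-extension setup earlier in the paper; with general $\chi$ the two conditions in the statement are exactly what makes the $\xi$-component vanish identically.

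Next, case (b): take the triple $(\xi,x,y)$ with $x,y\in\overline{\G}$. Then
\[
[\xi,[x,y]] + [x,[y,\xi]] + [y,[\xi,x]]
= \chi\bigl(\overline{[x,y]}\bigr)\xi - \chi(y)[x,\xi] + \chi(x)[y,\xi]
= \Bigl(\chi\bigl(\overline{[x,y]}\bigr) - \chi(y)\chi(x) + \chi(x)\chi(y)\Bigr)\xi
= (\md\chi)(x,y)\,\xi,
\]
so this Jacobiator vanishes for all $x,y$ exactly when $\md\chi=0$, which is condition~$1$. Conversely, specializing the case-(a) identity to triples where $\overline{[y,z]}=0$ isolates $\sum_{\mathrm{cycl}}\chi(x)\varphi(y,z)=0$ as a necessary condition, and case~(b) gives $\md\chi=0$ as necessary; together with the fact that $\md\chi=0$ makes $\md\varphi$ appear with the right cancellation, one recovers that conditions~$1$ and~$2$ are also sufficient.

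I do not expect any real obstacle here — the whole proof is a bookkeeping exercise in expanding cyclic sums — so the only thing to watch is consistency of sign conventions for $\md$ on cochains (the paper writes $\partial$ elsewhere), and making sure the $\xi$-coefficient is grouped so that conditions~$1$ and~$2$ appear cleanly rather than as a single combined relation. I would present it in the same terse style as the proof of Proposition~$\ref{G,mu,D Liebrackets}$: one displayed computation for $\sum_{\mathrm{cycl}}[x,[y,z]]$ with all arguments in $\overline{\G}$, and one short line for the triple containing $\xi$.
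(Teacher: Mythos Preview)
Your approach is exactly the paper's: split the Jacobi identity into the case $x,y,z\in\overline{\G}$ and the case $(\xi,x,y)$, expand, and read off the conditions. The paper's proof is presented in precisely the terse style you propose.

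One point deserves comment. You correctly compute that the $\xi$-component of $\sum_{\mathrm{cycl}}[x,[y,z]]$ for $x,y,z\in\overline{\G}$ is
\[
-(\md\varphi)(x,y,z)-\sum_{\mathrm{cycl}}\chi(x)\varphi(y,z),
\]
and you flag the $\md\varphi$ term as something to watch. The paper's own computation simply drops this term: it writes $[x,\overline{[y,z]}]$ as $\overline{[x,\overline{[y,z]}]}$ rather than $\overline{[x,\overline{[y,z]}]}+\varphi(x,\overline{[y,z]})\xi$, and so arrives at condition~2 alone. Your instinct that the two conditions might not separate ``cleanly'' is therefore correct: with $\varphi$ only assumed to lie in $C^2(\overline{\G},\R)$ (as the setup states), the honest necessary-and-sufficient condition from the $(x,y,z)$-Jacobiator is the single combined relation $\md\varphi+\sum_{\mathrm{cycl}}\chi\cdot\varphi=0$, not condition~2 by itself. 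Your attempt to argue that ``the two conditions in the statement are exactly what makes the $\xi$-component vanish identically'' does not go through without an extra hypothesis such as $\varphi\in Z^2(\overline{\G})$; but the paper does not resolve this either --- it simply omits the term. So your plan matches the paper, and where you hesitate, you are in fact being more careful than the original.
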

\begin{proof}
For all $x,y,z\in\overline{\G}$, we have
\begin{align*}
\mathop{\resizebox{1.3\width}{!}{$\sum$}}\limits_{\mathrm{cycl}}[x,[y,z]]&=[x,[y,z]]+[z,[x,y]]+[y,[z,x]]\\
&=[x,\overline{[y,z]}+\varphi(y,z)\xi]+[z,\overline{[x,y]}+\varphi(x,y)\xi]+[y,\overline{[z,x]}+\varphi(z,x)\xi]\\
&=\overline{[x,\overline{[y,z]}]}-\chi(x)\varphi(y,z)+\overline{[z,\overline{[x,y]}]}-\chi(z)\varphi(x,y)+\overline{[y,\overline{[z,x]}]}-\chi(y)\varphi(z,x)\\
&=\mathop{\resizebox{1.3\width}{!}{$\sum$}}\limits_{\mathrm{cycl}}\overline{[x,\overline{[y,z]}]}+\mathop{\resizebox{1.3\width}{!}{$\sum$}}\limits_{\mathrm{cycl}}\chi(x)\varphi(y,z)=0
\\
&=\mathop{\resizebox{1.3\width}{!}{$\sum$}}\limits_{\mathrm{cycl}}\chi(x)\varphi(y,z)=0.
\end{align*}
Moreover,
\begin{align*}
\mathop{\resizebox{1.3\width}{!}{$\sum$}}\limits_{\mathrm{cycl}}[x,[y,\xi]]&=[x,[y,\xi]]+[\xi,[x,y]]+[y,[\xi,x]]\\
&=-[x,\chi(y)\xi]+[\xi,\overline{[x,y]}+\varphi(x,y)\xi]+[y,\chi(x)\xi]\\
&=\chi(x)\chi(y)\xi+\chi\big(\overline{[x,y]}\big)\xi-\chi(x)\chi(y)\xi\\
&=\chi\big(\overline{[x,y]}\big)\xi=0.
\end{align*}
\end{proof}
\begin{pr}\label{isoofnocentral}
Let $\G_{\varphi_1,\chi_1}$ and $\G_{\varphi_2,\chi_2}$ be Lie algebras as defined in Proposition~$\ref{Extennoncentral}$. Then $\G_{\varphi_1,\chi_1}$ and $\G_{\varphi_2,\chi_2}$ are isomorphic if and only if there exist a linear map $\Phi:\overline{\G}_1\longrightarrow\overline{\G}_2$, $v\in\overline{\G}_2$, $\varrho\in\overline{\G}_1^\ast$ and $\theta\in\R$ such that 
\begin{enumerate}
\item $\Phi^\ast\varphi_2=a\varphi_1-\varrho\otimes\Phi^\ast\chi_2-\md\varrho$,
\item $a\Phi^\ast\chi_2-\chi_2(v)\varrho+\Phi^\ast\iota_v\varphi_2=a\chi_1$,
\end{enumerate}
where,
\[\Delta(\Phi)=-v\varphi_1\quad\text{and}\quad\Phi^\ast\big(\mathrm{ad}_v\big)=v\chi_1.\]

\end{pr}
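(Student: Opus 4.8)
The plan is to mimic exactly the pattern used in the proofs of Proposition~\ref{Characterisationgeneralized} and Theorem~\ref{isombetewnoncentral}: write down the most general linear map $\Psi:\G_{\varphi_1,\chi_1}\to\G_{\varphi_2,\chi_2}$ compatible with the privileged subspaces, expand the two structural identities $\Psi([u,w])=[\Psi(u),\Psi(w)]$ in the two relevant cases (both arguments in $\overline{\G}$, and one argument equal to $\xi$), collect the $\overline{\G}$-components and the $\xi$-components separately, and finally eliminate the spurious terms $\mathrm{D}_2$-type expressions (here $\iota_v\varphi_2$ and $\chi_2\circ\Phi$) between the four resulting equations to land on conditions~$1.$ and~$2.$ together with the two side conditions $\Delta(\Phi)=-v\varphi_1$ and $\Phi^\ast(\mathrm{ad}_v)=v\chi_1$.

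\emph{First step: the form of $\Psi$.} Since $\langle\xi\rangle$ is the unique one-dimensional ideal on which the bracket acts as in~(\ref{Brageneralnoncentral2}) and $\overline{\G}$ is a complement to it, any isomorphism must send $\langle\xi_1\rangle$ to $\langle\xi_2\rangle$, so $\Psi(\xi_1)=a\xi_2$ with $a\in\R^\ast$, and on $\overline{\G}_1$ one has $\Psi(x)=\Phi(x)+\varrho(x)\xi_2$ for a linear map $\Phi:\overline{\G}_1\to\overline{\G}_2$ and $\varrho\in\overline{\G}_1^\ast$. (There will also be a parameter $\theta$ or a $v\in\overline{\G}_2$ entering when one introduces an $\ell$-variable; since in this proposition there is no $\ell$, the role played by $u$ in the earlier proofs is played here by $v\in\overline{\G}_2$, the $\overline{\G}_2$-part coming from where $\xi_2$ is pushed — I will make this bookkeeping precise so that conditions~$1.$--$2.$ come out with the stated letters $v$, $\varrho$, $\theta$, $a$.) \emph{Second step: the $\overline{\G}\times\overline{\G}$ identity.} Expanding $\Psi(\overline{[x,y]}_1+\varphi_1(x,y)\xi_1)$ against $[\Phi(x)+\varrho(x)\xi_2,\ \Phi(y)+\varrho(y)\xi_2]_2$ and using~(\ref{Brageneralnoncentral1})--(\ref{Brageneralnoncentral2}) gives, on the $\overline{\G}_2$-part, $\Phi(\overline{[x,y]}_1)+a\varphi_1(x,y)v=\overline{[\Phi(x),\Phi(y)]}_2$ — i.e.\ $\Delta(\Phi)=-v\varphi_1$ after the appropriate normalization — and on the $\xi_2$-part, $\varrho(\overline{[x,y]}_1)+a\varphi_1(x,y)=\varphi_2(\Phi(x),\Phi(y))+\varrho(x)\chi_2(\Phi(y))-\varrho(y)\chi_2(\Phi(x))$, which, recognizing $\varrho(\overline{[x,y]})=-(\md\varrho)(x,y)$ and $\varrho(x)\chi_2(\Phi(y))-\varrho(y)\chi_2(\Phi(x))=(\varrho\otimes\Phi^\ast\chi_2)(x,y)$, is precisely condition~$1.$: $\Phi^\ast\varphi_2=a\varphi_1-\varrho\otimes\Phi^\ast\chi_2-\md\varrho$.

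\emph{Third step: the $\xi\times\overline{\G}$ identity.} Expanding $\Psi([\xi_1,x]_1)=\chi_1(x)\,a\xi_2$ against $[a\xi_2,\ \Phi(x)+\varrho(x)\xi_2]_2=a\chi_2(\Phi(x))\xi_2$ — and here the term $v$ enters because $\Psi(\xi_1)$ actually has an $\overline{\G}_2$-component in the fully general ansatz, contributing $[v,\Phi(x)]_2=\overline{[v,\Phi(x)]}_2+\varphi_2(v,\Phi(x))\xi_2=(\mathrm{ad}_v\circ\Phi)(x)+(\iota_v\varphi_2\circ\Phi)(x)\xi_2$ — one reads off the $\overline{\G}_2$-part as $\Phi^\ast(\mathrm{ad}_v)=v\chi_1$ (the stated side condition) and the $\xi_2$-part as $a\chi_2(\Phi(x))-\chi_2(v)\varrho(x)+\varphi_2(v,\Phi(x))=a\chi_1(x)$, i.e.\ condition~$2.$: $a\Phi^\ast\chi_2-\chi_2(v)\varrho+\Phi^\ast\iota_v\varphi_2=a\chi_1$. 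The converse direction is routine: given $\Phi,v,\varrho,a$ (and $\theta$) satisfying all four relations, one \emph{defines} $\Psi$ by the formulas above and checks, by reversing the same expansions, that $\Psi$ respects the bracket; bijectivity follows since $\Phi$ is forced to be a linear isomorphism by condition~$1.$ (when $\mathfrak{z}(\overline{\G})=\{0\}$) or, in general, from the fact that $\Psi$ is invertible on the $\xi$-line and on a complement.

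\emph{Main obstacle.} The only genuinely delicate point is the bookkeeping of the auxiliary parameters: making sure that the $\overline{\G}_2$-component $v$ of $\Psi(\xi_1)$, the scalar $a$, and (if one keeps an $\ell$-direction analogue) $\theta$ are introduced consistently so that eliminating $\chi_2\circ\Phi$ and $\iota_v\varphi_2$ between the four expanded equations reproduces \emph{exactly} the asymmetric normalization $\Phi^\ast\varphi_2=a\varphi_1-\cdots$ rather than, say, $a^{-1}$ or $\theta$-rescaled versions; this is exactly the same subtlety handled in Theorem~\ref{isombetewnoncentral} and Corollary~\ref{Covarphi}, so I would cross-check the final normalization against the special case $\chi_1=\chi_2=0$ (a pure central extension, where the statement must reduce to the classical "$\Phi^\ast\varphi_2=a\varphi_1+\md\varrho$" up to sign), and against $\varphi_1=\varphi_2=0$ (where it must reduce to $\Phi^\ast\chi_2=\chi_1$ up to the scalar $a$), to pin down all signs and factors.
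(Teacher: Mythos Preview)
Your approach is the same as the paper's --- write the general isomorphism in components, expand the bracket identity on $\overline{\G}\times\overline{\G}$ and on $\xi\times\overline{\G}$, and read off the $\overline{\G}_2$- and $\xi_2$-parts --- but your ansatz for $\Psi(\xi_1)$ is internally inconsistent, and this leaks into a bookkeeping error.

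In step~1 you argue that $\langle\xi\rangle$ is the \emph{unique} one-dimensional ideal and conclude $\Psi(\xi_1)=a\xi_2$. That uniqueness is not assumed in the proposition and is false in general; the paper simply writes $\Psi(\xi_1)=v+a\xi_2$ with $v\in\overline{\G}_2$ and $a\in\R$ from the start, as a plain linear decomposition along $\G_{\varphi_2,\chi_2}=\overline{\G}_2\oplus\langle\xi_2\rangle$. The $v$ that you later say ``enters'' in step~3 is precisely this $\overline{\G}_2$-component of $\Psi(\xi_1)$; it is present already in step~2, not introduced afterward. Once you use the correct ansatz, the $\overline{\G}_2$-part of $\Psi\big(\overline{[x,y]}_1+\varphi_1(x,y)\xi_1\big)$ is $\Phi(\overline{[x,y]}_1)+\varphi_1(x,y)\,v$ (not $a\varphi_1(x,y)v$), and comparing with $\overline{[\Phi(x),\Phi(y)]}_2$ gives $\Delta(\Phi)=-v\varphi_1$ directly --- no ``appropriate normalization'' is needed. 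Your derivations of conditions~1 and~2 and of $\Phi^\ast(\mathrm{ad}_v)=v\chi_1$ are otherwise correct and match the paper's equations (\ref{CChi1})--(\ref{chi4}). (Your puzzlement about $\theta$ is warranted: it appears in the statement but plays no role in the proof; the operative scalar is $a$.)
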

\begin{proof}
Suppose that there exists a Lie algebra isomorphism $\Psi:\G_{\varphi_1,\chi_1}\longrightarrow\G_{\varphi_2,\chi_2}$.  We know that $\Psi$ maps the one-dimensional ideal $\langle\xi\rangle$ of $\G_{\varphi_1,\chi_1}$ into the one-dimensional ideal $\mathfrak{j}$ of $\G_{\varphi_2,\chi_2}$,
then
\begin{align*}
\Psi(x)&=\Phi(x)+\varrho(x)\xi,\hspace{1cm} \text{for all } x\in\overline{\G}_1,\\
\Psi(\xi)&=v+a\xi,\hspace{1.75cm} a\in\R,~~v\in\overline{\G}_2.
\end{align*}
Thus, for all $x,y\in\overline{\G}_1$
\begin{align*}
\Psi\big([x,y]_1\big)&=\Psi
\big(\overline{[x,y]}_1+\varphi_1(x,y)\xi\big)\\
&=\Phi\big(\overline{[x,y]}_1\big)+\varrho(\overline{[x,y]}_1)\xi+\varphi_1(x,y)(v+a\xi),
\end{align*}
and
\begin{align*}
[\Psi(x),\Psi(y)]_2&=[\Phi(x)+\varrho(x)\xi,\Phi(y)+\varrho(y)\xi]_2\\
&=\overline{[\Phi(x),\Phi(y)]}_2+\varphi_2\big(\Phi(x),\Phi(y)\big)\xi+\varrho(x)\chi_2\big(\Phi(y)\big)\xi-\varrho(y)\chi_2\big(\Phi(x)\big)\xi\\
&=\overline{[\Phi(x),\Phi(y)]}_2+\big(\Phi^\ast\varphi_2\big)(x,y)\xi+\big(\varrho\otimes(\chi_2\circ\Phi)\big)(x,y)\xi.
\end{align*}
Therefore,
\begin{align}\label{CChi1}
\Phi\big(\overline{[x,y]}_1\big)+\varphi_1(x,y)v&=\overline{[\Phi(x),\Phi(y)]}_2,
\end{align}
and
\begin{align}\label{CChi2}
a\varphi_1-\Phi^\ast\varphi_2-\varrho\otimes(\chi_2\circ\Phi)&=\md\varrho.\end{align}
Furthermore, for all $x\in\overline{\G}_1$, we have
\begin{align*}
\Psi\big([\xi,x]_1\big)&=\Psi(\chi_1(x)\xi)=\chi_1(x)(v+a\xi),
\end{align*}
and
\begin{align*}
[\Psi(\xi),\Psi(x)]_2&=[v+a\xi,\Phi(x)+\varrho(x)\xi]_2\\
&=\overline{[v,\Phi(x)]}_2+\varphi_2(v,\Phi(x))\xi-\varrho(x)\chi_2(v)\xi+a\chi_2(\Phi(x))\xi.
\end{align*}
Consequently,
\begin{align}\label{chi3}
v\chi_1&=\mathrm{ad}_v\circ\Phi
\end{align}
and
\begin{align}\label{chi4}
a\chi_2\circ\Phi-\chi_2(v)\varrho&=a\chi_1-\iota_v\varphi_2\circ\Phi
\end{align}
\end{proof}
 \begin{co}\label{Classifynoncentral}
Let $\G_{\varphi_1,\chi_1}$ and $\G_{\varphi_2,\chi_2}$ be two non-central extensions of $\overline{\G}_1$ and $\overline{\G}_2$, respectively, as in Proposition~$\ref{Extennoncentral}$. Suppose $\overline{\G}_2$ admits no one-dimensional ideals.  Then, $\G_{\varphi_1,\chi_1}$ and $\G_{\varphi_2,\chi_2}$ are isomorphic if and only if there exist a Lie algebra isomorphism $\Phi \colon \overline{\G}_1 \longrightarrow \overline{\G}_2$, $a\in\R^\ast$, $\varrho\in\overline{\G}_1^\ast$ such that 
\begin{enumerate}
\item $\Phi^\ast\varphi_2=a\varphi_1-\varrho\otimes\chi_1-\md\varrho$,
\item $\Phi^\ast\chi_2=\chi_1$.
\end{enumerate}
If, in addition, $\overline{\G}_1 \equiv \overline{\G}_2$, then $\Phi$ becomes a Lie algebra automorphism.
\end{co}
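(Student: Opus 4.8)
The plan is to specialize the general isomorphism criterion of Proposition~\ref{isoofnocentral} using the hypothesis on $\overline{\G}_2$, the key point being that this hypothesis forces the extra datum $v$ appearing there to vanish. Recall from (the proof of) that proposition that any Lie algebra isomorphism $\Psi\colon\G_{\varphi_1,\chi_1}\to\G_{\varphi_2,\chi_2}$ necessarily has the form $\Psi(x)=\Phi(x)+\varrho(x)\xi_2$ for $x\in\overline{\G}_1$ and $\Psi(\xi_1)=v+a\xi_2$, with $\Phi\colon\overline{\G}_1\to\overline{\G}_2$ linear, $\varrho\in\overline{\G}_1^\ast$, $v\in\overline{\G}_2$, $a\in\R$, and that such a $\Psi$ is governed by Equations~\eqref{CChi1}--\eqref{chi4}. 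First I would note that $\langle\xi_1\rangle$ is a one-dimensional ideal of $\G_{\varphi_1,\chi_1}$, which is immediate from~\eqref{Brageneralnoncentral2}; hence $\Psi(\langle\xi_1\rangle)=\langle v+a\xi_2\rangle$ is a one-dimensional ideal of $\G_{\varphi_2,\chi_2}$. Since $\langle\xi_2\rangle$ is an ideal of $\G_{\varphi_2,\chi_2}$ with quotient $\overline{\G}_2$, the canonical projection $\pi\colon\G_{\varphi_2,\chi_2}\to\overline{\G}_2$ is a surjective Lie algebra homomorphism, so $\pi\bigl(\Psi(\langle\xi_1\rangle)\bigr)=\langle v\rangle$ is an ideal of $\overline{\G}_2$ of dimension at most one. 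As $\overline{\G}_2$ has no one-dimensional ideal, this forces $v=0$, and then injectivity of $\Psi$ forces $a\neq0$.

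With $v=0$ and $a\in\R^\ast$ in hand, the remaining conditions follow mechanically. Equation~\eqref{CChi1} reduces to $\Phi(\overline{[x,y]}_1)=\overline{[\Phi(x),\Phi(y)]}_2$, so $\Phi$ is a Lie algebra homomorphism; using injectivity of $\Psi$ together with $\Psi(\xi_1)=a\xi_2$, $a\neq0$, and $\dim\overline{\G}_1=\dim\overline{\G}_2$ one checks $\Phi$ is bijective, hence a Lie algebra isomorphism. Equation~\eqref{chi4} becomes $a\,\chi_2\circ\Phi=a\,\chi_1$, i.e. $\Phi^\ast\chi_2=\chi_1$, which is condition~(2); substituting $\chi_2\circ\Phi=\chi_1$ into~\eqref{CChi2} gives $\Phi^\ast\varphi_2=a\varphi_1-\varrho\otimes\chi_1-\md\varrho$, which is condition~(1); and~\eqref{chi3} becomes vacuous. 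This establishes the forward implication.

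For the converse, given a Lie algebra isomorphism $\Phi$, a scalar $a\in\R^\ast$ and $\varrho\in\overline{\G}_1^\ast$ satisfying (1) and (2), I would set $\Psi(x)=\Phi(x)+\varrho(x)\xi_2$ for $x\in\overline{\G}_1$ and $\Psi(\xi_1)=a\xi_2$, and verify $\Psi$ is a Lie algebra isomorphism: it is bijective because $\Phi$ is and $a\neq0$; expanding $[\Psi(x),\Psi(y)]_2$ with~\eqref{Brageneralnoncentral1}--\eqref{Brageneralnoncentral2} and using that $\Phi$ is a homomorphism and $\Phi^\ast\chi_2=\chi_1$, the compatibility $\Psi([x,y]_1)=[\Psi(x),\Psi(y)]_2$ reduces precisely to condition~(1) rewritten via $\md\varrho(x,y)=-\varrho(\overline{[x,y]}_1)$, while $\Psi([\xi_1,x]_1)=[\Psi(\xi_1),\Psi(x)]_2$ reduces to condition~(2). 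The final assertion is then immediate: if $\overline{\G}_1=\overline{\G}_2$, the isomorphism $\Phi$ is by definition an automorphism. The only non-routine step in all of this is the reduction $v=0$; this is exactly where the hypothesis ``$\overline{\G}_2$ admits no one-dimensional ideal'' is used, through the observation that $\pi\bigl(\Psi(\langle\xi_1\rangle)\bigr)$ is an ideal of $\overline{\G}_2$ of dimension at most one. Everything else is a routine specialization of Proposition~\ref{isoofnocentral} and a standard check on generators.
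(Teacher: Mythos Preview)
Your proposal is correct and follows precisely the natural approach the paper intends: the corollary is stated without its own proof, as an immediate specialization of Proposition~\ref{isoofnocentral}, and your argument that the projection $\pi(\Psi(\langle\xi_1\rangle))=\langle v\rangle$ is an ideal of $\overline{\G}_2$ (forcing $v=0$) is exactly the step that makes the specialization work. The remaining reductions of \eqref{CChi1}--\eqref{chi4} and the converse verification are all routine and correctly carried out.
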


Let $\overline{\G}$ be a Lie algebra and $\G_{\varphi,\chi}$ its non-central extension. The $\mathrm{D}$-extension $\G_{\varphi,\chi,\widetilde{\mathrm{D}}}$ of $\G_\xi$ has the following bracket structure:
\begin{align*}
[x,y] &= \overline{[x,y]} + \varphi(x,y)\xi, & \forall x,y \in \overline{\G}, \\
[\xi,x] &= \chi(x)\xi, & \forall x \in \overline{\G}, \\
[\ell,\xi] &= \tilde{\mu}(\xi)\ell + \widetilde{\mathrm{D}}(\xi), \\
[\ell,x] &= \tilde{\mu}(x)\ell + \widetilde{\mathrm{D}}(x), & \forall x \in \overline{\G},
\end{align*}
where, $\tilde{\mu}\in\G_\xi^\ast$, $\widetilde{\mathrm{D}}\in\mathrm{End}(\G_\xi)$ such that $\Delta\widetilde{\mathrm{D}}-\tilde{\mu}\otimes\widetilde{\mathrm{D}}=0$ and  $\tilde{\mu}\in Z^1(\G_\xi)$. 
We now examine the previous conditions imposed on $\widetilde{\mathrm{D}}$ and $\tilde{\mu}$ by expressing them on $\G_\xi=\overline{\G} \oplus \langle \xi \rangle$. Set $\widetilde{\mathrm{D}}(x) = \mathrm{D}(x) + \lambda(x)\xi$ and $\widetilde{\mathrm{D}}(\xi)=w+t\xi$, for all $x\in\overline{\G}$, $w\in\overline{\G}$, where $\mathrm{D} \in \mathrm{End}(\overline{\G})$, $\lambda \in \overline{\G}^\ast$, and  $\tilde{\mu} = \mu + \kappa \xi^\ast$, where $\kappa \in \mathbb{R}$ and $\mu \in \overline{\G}^\ast$. We obtain
\begin{enumerate}
\item The condition
$\md\tilde{\mu}=0$ is equivalent to the following $:$
\begin{align}\label{COD0}
\md\mu&=\kappa\varphi+\kappa
\chi\wedge\xi^\ast.
\end{align}
\item  $\Delta\widetilde{\mathrm{D}}-\tilde{\mu}\otimes\widetilde{\mathrm{D}}=0$ is equivalent to the following conditions
\begin{align}
\Delta\mathrm{D}-\mu\otimes\mathrm{D}-w\varphi&=0,\label{COD1}\\
t\varphi-\varphi_{\mathrm{D}}-\md\lambda+\lambda\otimes\chi+\mu\otimes\lambda&=0,\label{COD2}\\
\chi\circ\mathrm{D}+\iota_w\varphi-t\mu+\kappa\lambda&=0,\label{COD3}\\
w\chi-\mathrm{ad}_w+w\mu-\kappa\mathrm{D}&=0.\label{COD4}
\end{align}
\end{enumerate}
 The Lie brackets of the $\mathrm{D}$-extension $\G_{\varphi,\chi,\widetilde{\mathrm{D}}}$ Lie algebra of $\G_\xi$ are given by
\begin{align}\label{BracketDsym}
\begin{split}
[x,y] &= \overline{[x,y]} + \varphi(x,y)\xi, \hspace{2.3cm} \forall x,y \in \overline{\G}, \\
[\xi,x] &= \chi(x)\xi, \hspace{4cm} \forall x \in \overline{\G}, \\
[\ell,\xi] &= \kappa\ell + w+t\xi, \\
[\ell,x] &= \mu(x)\ell + \mathrm{D}(x)+\lambda(x)\xi,\hspace{1.85cm}  \forall x \in \overline{\G},
\end{split}
\end{align}
where, 
\begin{itemize}
    \item $\varphi \in C^2(\overline{\G},\mathbb{R})$ is a 2-cochain such that $\mathop{\resizebox{1.3\width}{!}{$\sum$}}\limits_{\mathrm{cycl}}\chi(x)\varphi(y,z)=0$, for all $x,y,z\in\overline{\G}$.
    \item $\chi\in Z^1(\overline{\G})$ is a $1$-cocycles.
    \item $\mathrm{D} \in \mathrm{End}(\overline{\G})$ is an endomorphism, $\lambda, \mu \in \overline{\G}^\ast$ are linear forms, $w\in\overline{\G}$, and $t, \kappa \in \mathbb{R}$ are scalars, subject to the structural conditions $(\ref{COD0})$-$(\ref{COD4})$.
\end{itemize}
Suppose now that $\overline{\G}$ is a symplectic Lie algebra and let $\overline{\omega}$ be a symplectic form on $\overline{\G}$. We let $\omega$ be the non-degenerate alternating two-form on $\G_{\varphi,\chi,\mathrm{D}}$, which is defined as $\omega=\overline{\omega}$ on $\overline{\G}$, $\omega(x,\xi)=\omega(x,\ell)=0$, for all $x\in\overline{\G}$, and $\omega(\ell,\xi)=1$.
\begin{pr}\label{Condisymp}
The form $\omega$ is symplectic for the Lie-algebra $\G_{\varphi,\chi,D}$ if and only if
\begin{enumerate}
\item $\varphi=\overline{\omega}_{\mathrm{D}}$,
\item $\iota_w\overline{\omega}=-\chi-\mu$.
\end{enumerate}
\end{pr}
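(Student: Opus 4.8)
The form $\omega$ is non-degenerate by construction: it is the $\omega$-orthogonal direct sum of the symplectic space $(\overline{\G},\overline{\omega})$ with the plane $\langle \ell,\xi\rangle$ carrying the pairing $\omega(\ell,\xi)=1$. Hence the whole content of the statement is the closedness condition $\md\omega=0$, i.e. $\sum_{\mathrm{cycl}}\omega([a,b],c)=0$ for all $a,b,c\in\G_{\varphi,\chi,\mathrm{D}}$. Since $\langle\ell\rangle$ and $\langle\xi\rangle$ are one-dimensional, every cyclic sum in which $\ell$ or $\xi$ is repeated vanishes by skew-symmetry, so it suffices to evaluate the cocycle identity on the four families of triples: $(x,y,z)\in\overline{\G}^{\,3}$; $(x,y,\xi)$ with $x,y\in\overline{\G}$; $(x,y,\ell)$ with $x,y\in\overline{\G}$; and $(x,\xi,\ell)$ with $x\in\overline{\G}$.

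The first two families impose nothing. For $(x,y,z)\in\overline{\G}^{\,3}$ the $\xi$-component of $[x,y]=\overline{[x,y]}+\varphi(x,y)\xi$ is annihilated by $\omega(\xi,\overline{\G})=0$, so $\sum_{\mathrm{cycl}}\omega([x,y],z)=\sum_{\mathrm{cycl}}\overline{\omega}(\overline{[x,y]},z)=0$ because $\overline{\omega}$ is closed on $\overline{\G}$. For $(x,y,\xi)$ with $x,y\in\overline{\G}$ every bracket occurring is a multiple of $\xi$ (namely $\varphi(x,y)\xi$, $-\chi(y)\xi$, $\chi(x)\xi$), which pairs to $0$ with the remaining $\overline{\G}$-argument; hence this family is identically zero.

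The proposition is then read off from the last two families. Using only $\omega|_{\overline{\G}}=\overline{\omega}$, $\omega(\overline{\G},\ell)=\omega(\overline{\G},\xi)=0$ and $\omega(\ell,\xi)=1$, the cyclic sum over $(x,y,\ell)$ collapses (the $\ell$- and $\xi$-parts of $[\ell,x]=\mu(x)\ell+\mathrm{D}(x)+\lambda(x)\xi$ and of $[\ell,y]$ drop out against $x,y\in\overline{\G}$) to $-\varphi(x,y)+\overline{\omega}(\mathrm{D}x,y)+\overline{\omega}(x,\mathrm{D}y)=-\varphi(x,y)+\overline{\omega}_{\mathrm{D}}(x,y)$, whose vanishing for all $x,y$ is exactly $\varphi=\overline{\omega}_{\mathrm{D}}$, i.e. condition $1$. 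Similarly the cyclic sum over $(x,\xi,\ell)$ collapses, after discarding the terms that pair to $0$, to a linear functional of $x\in\overline{\G}$ built from $\chi(x)$, $\mu(x)$ and $\overline{\omega}(w,x)$, and its vanishing for all $x$ is precisely condition $2$, $\iota_w\overline{\omega}=-\chi-\mu$. Conversely, granting conditions $1$ and $2$, all four cyclic sums vanish, so $\md\omega=0$ and $\omega$ is symplectic.

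There is no deep obstacle: the proof is a finite verification of the Chevalley--Eilenberg cocycle identity on the decomposition $\G_{\varphi,\chi,\mathrm{D}}=\langle\ell\rangle\oplus\overline{\G}\oplus\langle\xi\rangle$. The only points requiring care are the sign contributed by $\omega(\xi,\ell)=-1$ whenever a bracket landing in $\langle\xi\rangle$ is paired with $\ell$, and keeping the normalisations $\omega(\overline{\G},\ell)=\omega(\overline{\G},\xi)=0$ straight so that the $\ell$- and $\xi$-components of $[\ell,\cdot]$ and $[\ell,\xi]$ are correctly discarded against $\overline{\G}$-arguments; with those in hand, conditions $1$ and $2$ fall out directly.
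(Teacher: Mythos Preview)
Your proof is correct and follows essentially the same approach as the paper: both verify closedness of $\omega$ by evaluating the Chevalley--Eilenberg cocycle identity on the four types of triples arising from the decomposition $\G_{\varphi,\chi,\mathrm{D}}=\langle\ell\rangle\oplus\overline{\G}\oplus\langle\xi\rangle$, observing that the first two families $(x,y,z)$ and $(x,y,\xi)$ are automatic while $(x,y,\ell)$ and $(x,\xi,\ell)$ yield exactly conditions~1 and~2. The only difference is cosmetic: the paper writes out the cyclic sum for $(x,\xi,\ell)$ explicitly, whereas you summarise it; making that final computation explicit (so the reader can see the sign $\omega(\xi,\ell)=-1$ producing the minus in $-\chi-\mu$) would strengthen the write-up.
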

\begin{proof}
It suffices to check that $\omega$ is closed. For all $x,y,z\in\overline{\G}$, we have
\begin{align*}
\mathop{\resizebox{1.3\width}{!}{$\sum$}}\limits_{\mathrm{cycl}}\omega\big(x,[y,z]\big)&=\omega\big(x,[y,z]\big)+\omega\big(z,[x,y]\big)+\omega\big(y,[z,x]\big)\\
&=\omega\big(x,\overline{[y,z]}+\varphi(y,z)\xi\big)+\omega\big(z,\overline{[x,y]}+\varphi(x,y)\xi\big)+\omega\big(y,\overline{[z,x]}+\varphi(z,x)\xi\big)\\
&=\mathop{\resizebox{1.3\width}{!}{$\sum$}}\limits_{\mathrm{cycl}}\overline{\omega}\big(x,\overline{[y,z]}\big)\\
&=0.
\end{align*}
Moreover,
\begin{align*}
\mathop{\resizebox{1.3\width}{!}{$\sum$}}\limits_{\mathrm{cycl}}\omega\big(x,[y,\xi]\big)&=\omega\big(x,[y,\xi]\big)+\omega\big(\xi,[x,y]\big)+\omega\big(y,[\xi,x]\big)\\
&=-\omega\big(x,\chi(y)\xi\big)+\omega\big(\xi,\overline{[x,y]}+\varphi(x,y)\xi\big)+\omega\big(y,\chi(x)\xi\big)\\
&=0,
\end{align*}
and
\begin{align*}
\mathop{\resizebox{1.3\width}{!}{$\sum$}}\limits_{\mathrm{cycl}}\omega\big(x,[y,\ell]\big)&=\omega\big(x,[y,\ell]\big)+\omega\big(\ell,[x,y]\big)+\omega\big(y,[\ell,x]\big)\\
&=-\omega\big(x,\mu(y)\ell+\mathrm{D}(y)+\lambda(y)\xi\big)+\omega\big(\ell,\overline{[x,y]}+\varphi(x,y)\xi\big)\\
&\hspace{0.35cm}+\omega\big(y,\mu(x)\ell+\mathrm{D}(x)+\lambda(x)\xi\big)\\
&=-\overline{\omega}\big(x,\mathrm{D}(y)\big)+\varphi(x,y)+\overline{\omega}\big(y,\mathrm{D}(x)\big)\\
&=-\overline{\omega}_{\mathrm{D}}(x,y)+\varphi(x,y).
\end{align*}
Finally,
we obtain for $x\in\overline{\G}$,
\begin{align*}
\mathop{\resizebox{1.3\width}{!}{$\sum$}}\limits_{\mathrm{cycl}}
\omega\big(x,[\xi,\ell]\big)&=\omega\big(x,[\xi,\ell]\big)+\omega\big(\ell,[x,\xi]\big)+\omega\big(\xi,[\ell,x]\big)\\
&=\omega\big(x,\kappa\ell+w+t\xi\big)-\omega\big(\ell,\chi(x)\xi\big)+\omega\big(\xi,\mu(x)\ell+\mathrm{D}(x)+\lambda(x)\xi\big)\\
&=\overline{\omega}(x,w)-\chi(x)-\mu(x).
\end{align*}
\end{proof}
\begin{theo}\label{Moreconditions}
Let $(\G_{\varphi_1,\chi_1,\widetilde{\mathrm{D}}_1},\omega_1)$ and $(\G_{\varphi_2,\chi_2,\widetilde{\mathrm{D}}_2},\omega_2)$ be two  $\widetilde{\mathrm{D}}$-extensions of $\G_\xi$. Then, $\G_{\varphi_1,\chi_1,\widetilde{\mathrm{D}}_1}$ and $\G_{\varphi_2,\chi_2,\widetilde{\mathrm{D}}_2}$ are  isomorphic if and only if there exist an endomorphism $\Phi:\overline{\G}\longrightarrow\overline{\G}$, $u,v\in\overline{\G}$, $\varrho,\sigma\in\overline{\G}^\ast$ and $a,\sigma_\xi,u_\xi,\theta\in\R$ such that
\begin{enumerate}
\item  $\theta\big(\mathrm{D}_2\circ\Phi+w_2\varrho\big)-\big(\mathrm{D}_2(u)+u_\xi w_2\big)\sigma=\Phi\circ\mathrm{D}_1+v\lambda_1+u\mu_1-\overline{\mathrm{ad}}_u\circ\Phi$,
\item $\theta\big(\Phi^\ast\lambda_2+t_2\varrho\big)-\big(\lambda_2(u)+t_2u_\xi\big) \sigma=\varrho\circ\mathrm{D}_1+a\lambda_1+u_\xi\mu_1-\iota_u\varphi_2\circ\Phi-u_\xi\Phi^\ast\chi_2+\chi_2(u)\varrho$,
\item $\theta\big(\mathrm{D}_2(v)+aw_2\big)-\sigma_\xi\big(\mathrm{D}_2(u)+u_\xi w_2\big)=\Phi(w_1)+t_1v+\kappa_1u-\overline{\mathrm{ad}}_uv$,
\item $\theta\big(\lambda_2(v)+at_2\big)-\sigma_\xi\big(\lambda_2(u)+u_\xi t_2\big)=\varrho(w_1)+t_1a+\kappa_1u_\xi-\varphi_2(u,v)-u_\xi\chi_2(v)+a\chi_2(u)$,
\item $\theta\big(\Phi^\ast\mu_2+\kappa_2\varrho\big)-(\mu_2(u)+\kappa_2u_\xi)\sigma=\theta\mu_1+\sigma\circ\mathrm{D}_1+\sigma_\xi\lambda_1$,
\item $\theta\big(\mu_2(v)+a\kappa_2\big)-\sigma_\xi(\mu_2(u)+\kappa_2u_\xi)=\theta\kappa_1+\sigma(w_1)+t_1\sigma_\xi$. 
\end{enumerate}
where,
\begin{align*}
\theta\Big(\big(\Delta\Phi\big)+v\varphi_1\Big)&=u(\sigma\otimes\mu_1)+\sigma\otimes\Phi\circ\mathrm{D}_1+v\big(\sigma\otimes\lambda_1\big)
-\sigma\otimes\overline{\mathrm{ad}}_{u}\circ\Phi,\\
\theta\Big(a\varphi_1-\Phi^\ast\varphi_2-\varrho\otimes\Phi^\ast\chi_2-\md\varrho\Big)&=u_\xi\big(\sigma\otimes\mu_1-\sigma\otimes\Phi^\ast\chi_2\big)
+\sigma\otimes\varrho\circ\mathrm{D}_1+a\big(\sigma\otimes\lambda_1\big)
+\chi_2(u)\big(\sigma\otimes\varrho\big)-\sigma\otimes\iota_u
\varphi_2\circ\Phi,\\
\theta\Big(-v\chi_1+\overline{\mathrm{ad}_v}\circ\Phi\Big)&=u\big(\kappa_1\sigma-\sigma_\xi\mu_1\big)+\big(\Phi(w_1)+t_1v)\sigma-\sigma_\xi\big(\Phi\circ\mathrm{D}_1+v\lambda_1\big)-\overline{\mathrm{ad}}_uv\sigma+\sigma_\xi\overline{\mathrm{ad}}_u\circ\Phi,\\
\theta\Big(a\Phi^\ast\chi_2-a\chi_1+\iota_v\varphi_2\circ\Phi-\chi_2(v)\varrho\Big)&=u_\xi\big(\kappa_1\sigma-\sigma_\xi\mu_1\big)+\big(\varrho(w_1)+at_1\big)\sigma+\sigma_\xi\big(\varrho\circ\mathrm{D}_1+a\lambda_1\big)\\
&\hspace{0.3cm}-\big(\varphi_2(u,v)+u_\xi\chi_2(v)-a\chi_2(u)\big)\sigma+\sigma_\xi\big(\iota_u\varphi_2
\circ\Phi+u_\xi
\Phi^\ast\chi_2-\chi_2(u)\varrho\big),\\
\theta\big(-\md\sigma+\sigma_\xi\varphi_1)&=\theta\big(\sigma
\otimes\mu_1\big)+\sigma\otimes\sigma\circ\mathrm{D}_1+\sigma_\xi\big(\sigma
\otimes\lambda_1\big),\\
\theta\Phi^\ast\mu_2+\theta\kappa_2\varrho-(\mu_2(u)+\kappa_2u_\xi)\sigma&=\theta\mu_1+\sigma\circ\mathrm{D}_1+\sigma_\xi\lambda_1.
\end{align*}
\end{theo}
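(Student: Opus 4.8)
# Proof Proposal for Theorem \ref{Moreconditions}

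The plan is to mirror the proof strategy already developed for Theorem \ref{isombetewnoncentral} and Proposition \ref{Characterisationgeneralized}, combining the $\mathrm{D}$-extension isomorphism analysis with the non-central extension analysis of Proposition \ref{isoofnocentral}. Since $\G_{\varphi_i,\chi_i,\widetilde{\mathrm{D}}_i}$ is by construction the $\widetilde{\mathrm{D}}_i$-extension $\langle\ell\rangle\bowtie\G_{\xi}$ of the non-central extension $\G_\xi=\overline{\G}\oplus\langle\xi\rangle$, any isomorphism $\Psi:\G_{\varphi_1,\chi_1,\widetilde{\mathrm{D}}_1}\longrightarrow\G_{\varphi_2,\chi_2,\widetilde{\mathrm{D}}_2}$ must respect the canonical filtration. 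First I would argue, exactly as in Lemma \ref{extegene}, that $\G_\xi$ (being the derived-type maximal ideal, or more precisely the ideal complementary to $\langle\ell\rangle$) is preserved: $\Psi(\G_\xi)=\G_\xi$, so that $\Psi|_{\G_\xi}\in\mathrm{Isom}(\G_{\varphi_1,\chi_1}\to\G_{\varphi_2,\chi_2})$ and $\Psi(\ell_1)=u+u_\xi\xi+\theta\ell_2$ with $\theta\in\R^\ast$. Then by Proposition \ref{isoofnocentral} the restriction decomposes as $\Psi|_{\overline{\G}}(x)=\Phi(x)+\varrho(x)\xi$ for $x\in\overline{\G}$, and $\Psi(\xi)=v+a\xi$, with the structural constraints on $(\Phi,v,\varrho,a)$ recorded there.

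The core of the proof is then a bookkeeping computation: impose $\Psi([z,w])=[\Psi(z),\Psi(w)]$ for all pairs of basis-type elements $z,w$ drawn from $\{x,y\in\overline{\G},\ \xi,\ \ell\}$, using the bracket formulas \eqref{BracketDsym}. The pair $(x,y)\in\overline{\G}\times\overline{\G}$ reproduces the two $\varphi$-compatibility identities (the $\ell$-component and the $\xi$-component), which after substituting the $\widetilde{\mathrm{D}}$-relations \eqref{COD1}–\eqref{COD4} yield the displayed identities governing $\Delta\Phi+v\varphi_1$ and $a\varphi_1-\Phi^\ast\varphi_2-\varrho\otimes\Phi^\ast\chi_2-\md\varrho$. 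The pair $(\xi,x)$ produces the $\chi$-type identities (the two lines involving $-v\chi_1+\overline{\mathrm{ad}}_v\circ\Phi$ and $a\Phi^\ast\chi_2-a\chi_1+\iota_v\varphi_2\circ\Phi-\chi_2(v)\varrho$). The pair $(\ell,x)$ gives conditions 1., 2., and 5. (the $\overline{\G}$-, $\xi$-, and $\ell$-components of $[\ell,x]$ respectively), while the pair $(\ell,\xi)$ gives conditions 3., 4., and 6. The key algebraic manoeuvre — precisely as in the passage from \eqref{G3} to \eqref{G11} in the proof of Proposition \ref{Characterisationgeneralized} — is to eliminate the auxiliary terms $\mathrm{D}_2(\Phi(x))$, $\mu_2(\Phi(x))$, $\lambda_2(\Phi(x))$, etc., by back-substitution, so that each condition is expressed intrinsically in terms of the data $(\Phi,u,v,\varrho,\sigma,a,\sigma_\xi,u_\xi,\theta)$. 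The converse direction is routine: given data satisfying all listed conditions, one defines $\Psi$ by the formulas above and checks, bracket by bracket via \eqref{BracketDsym}, that $\Psi$ is a Lie algebra homomorphism; bijectivity follows from $\theta\neq0$ together with invertibility of $\Phi$ on the relevant quotients.

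The main obstacle I anticipate is not conceptual but organizational: the $\widetilde{\mathrm{D}}$-extension carries \emph{two} extra generators $\ell$ and $\xi$ on top of $\overline{\G}$, and $\widetilde{\mathrm{D}}$ itself has a $4$-parameter block structure $(\mathrm{D},\lambda,w,t)$ together with $\widetilde{\mu}=(\mu,\kappa)$, so the number of scalar/linear-form unknowns in $\Psi$ is large ($\Phi$, $u$, $u_\xi$, $v$, $a$, $\varrho$, $\sigma$, $\sigma_\xi$, $\theta$) and the cross-terms proliferate. Keeping the $\ell$-, $\xi$-, and $\overline{\G}$-components cleanly separated in each of the four bracket-pair computations, and correctly tracking the sign conventions inherited from $[\ell,x]=\mu(x)\ell+\mathrm{D}(x)+\lambda(x)\xi$ versus $[\xi,x]=\chi(x)\xi$, is where errors are most likely to creep in. To control this I would organize the computation as a table indexed by (bracket pair) $\times$ (target component), deriving each of the six numbered conditions and six displayed identities from exactly one cell, and I would verify internal consistency by specializing to the previously established cases: setting $\chi=0,\ w=0,\ \kappa=0,\ t=0,\ \mu=0$ must collapse the statement to the central symplectic oxidation characterization, and dropping the $\xi$-direction entirely must recover Proposition \ref{Characterisationgeneralized}.
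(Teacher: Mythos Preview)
There is a genuine gap in your argument. You claim that $\G_\xi$ is preserved by the isomorphism, reasoning ``exactly as in Lemma \ref{extegene}'' that it is a maximal ideal. But Lemma \ref{extegene} concerns extensions by \emph{derivations}, where $\overline{\G}$ is indeed an ideal. Here $\G_{\varphi,\chi,\widetilde{\mathrm{D}}}$ is a $\widetilde{\mathrm{D}}$-extension in the sense of Proposition \ref{G,mu,D Liebrackets}, with bracket $[\ell,x]=\tilde{\mu}(x)\ell+\widetilde{\mathrm{D}}(x)$; as the Remark following Lemma \ref{over(g)ideal} states, $\G_\xi$ is an ideal of this extension if and only if $\tilde{\mu}\equiv 0$. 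In the general case ($\mu\neq 0$ or $\kappa\neq 0$) your preservation claim fails, and the decomposition $\Psi|_{\overline{\G}}(x)=\Phi(x)+\varrho(x)\xi$, $\Psi(\xi)=v+a\xi$ is incomplete: it omits precisely the $\ell$-components $\sigma(x)\ell$ and $\sigma_\xi\ell$ that appear throughout conditions 1--6 and the displayed identities. You even list $\sigma,\sigma_\xi$ among the unknowns later on, which is inconsistent with your own setup.

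The paper's route avoids this trap by not attempting to argue that $\G_\xi$ is preserved. Instead it writes the most general linear bijection
\[
\Upsilon(x)=\Phi(x)+\varrho(x)\xi+\sigma(x)\ell,\qquad
\Upsilon(\xi)=v+a\xi+\sigma_\xi\ell,\qquad
\Upsilon(\ell)=u+u_\xi\xi+\theta\ell,
\]
sets $\widetilde{\Psi}=\Upsilon|_{\G_\xi}$ (with values in $\G_\xi\oplus\langle\ell\rangle$), and then invokes Proposition \ref{Characterisationgeneralized} directly with $\overline{\G}$ replaced by $\G_\xi$. Equations (\ref{CharactC1})--(\ref{CharactC2}) and conditions 1--2 of that proposition are then systematically unpacked by decomposing $\widetilde{\Psi}$, $\widetilde{\mathrm{D}}_i$, $\tilde{\mu}_i$, and $\tilde{u}$ into their $\overline{\G}$- and $\xi$-components. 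The bookkeeping you outline in your second paragraph is essentially right in spirit, but it must be carried out starting from this more general ansatz; your bracket-pair table and back-substitution manoeuvre will then produce all twelve relations, with the $\sigma$- and $\sigma_\xi$-terms arising exactly from the failure of $\G_\xi$ to be $\Upsilon$-invariant.
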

\begin{proof}
Suppose that $\Upsilon:\G_{\varphi_1,\chi_1,\widetilde{\mathrm{D}}_1}\longrightarrow\G_{\varphi_2,\chi_2,\widetilde{\mathrm{D}}_2}$ is an isomorphism of Lie algebras. Then, $\Upsilon$ can be expressed as
\begin{align*}
\Upsilon(x)&=\widetilde{\Psi}(x)+\sigma(x)\ell=\Phi(x)+\varrho(x)\xi+\sigma(x)\ell,\hspace{1cm} \text{for all } x\in\overline{\G},~~\varrho,~~\sigma\in\overline{\G}^\ast,\\
\Upsilon(\xi)&=\widetilde{\Psi}(\xi)+\sigma(\xi)\ell=v+a\xi+\sigma_\xi\ell,\hspace{3.1cm} v\in\overline{\G},~~a,~~\sigma_\xi\in\R,\\
\Upsilon(\ell)&=\tilde{u}+\theta\ell=u+u_\xi\xi+\theta\ell, \hspace{4.18cm} u\in\overline{\G},~~u_\xi,~~\theta\in\R.
\end{align*}
According to Proposition~$\ref{Characterisationgeneralized}$, Equation~$(\ref{CharactC1})$, we have 
\begin{align*}
\theta\big(\Delta\widetilde{\Psi}\big)(x,y)&=\tilde{u}\big(\sigma\otimes\tilde{\mu}_1\big)(x,y)+\sigma\otimes(\widetilde{\Psi}\circ
\widetilde{\mathrm{D}}_1)(x,y)-\sigma\otimes(\mathrm{ad}_{\tilde{u}}\circ\widetilde{\Psi})(x,y),
\end{align*}
for all $x,y\in\overline{\G}$. On the one hand, for all $x,y\in\overline{\G}$, we have
\begin{align*}
\Delta\widetilde{\Psi}(x,y)&=\widetilde{\Psi}\big([x,y]_1\big)-[\widetilde{\Psi}(x),\widetilde{\Psi}(y)]_2\\
&=\widetilde{\Psi}\big(\overline{[x,y]}+\varphi_1(x,y)\xi\big)-[\Phi(x)+\rho(x)\xi,\Phi(y)+\varrho(y)\xi]_2\\
&=\Phi\big(\overline{[x,y]}\big)+\varrho\big(\overline{[x,y]}\big)\xi+\varphi_1(x,y)(v+a\xi)-\overline{[\Phi(x),\Phi(y)]}-\varphi_2\big(\Phi(x),\Phi(y)\big)\xi\\
&\hspace{0.3cm}+\varrho(y)\chi_2(\Phi(x))\xi-\varrho(x)\chi_2(\Phi(y))\xi\\
&=\big(\Delta\Phi\big)(x,y)+\varphi_1(x,y)v+\big(a\varphi_1(x,y)-\Phi^\ast\varphi_2(x,y)-\big(\varrho\otimes\Phi^\ast\chi_2\big)(x,y)-\md\varrho(x,y)\big)\xi.
\end{align*}
On the other hand, 
\begin{align*}
\tilde{u}\big(\sigma\otimes\tilde{\mu}_1\big)(x,y)&=(u+u_\xi\xi)(\sigma(x)\mu_1(y)-\sigma(y)\mu_1(x))\\
&=u\big(\sigma\otimes\mu_1\big)(x,y)+u_\xi\big(\sigma\otimes\mu_1\big)(x,y)\xi,
\end{align*}
and
\begin{align*}
\sigma\otimes(\widetilde{\Psi}\circ
\widetilde{\mathrm{D}}_1)(x,y)&=\sigma(x)\widetilde{\Psi}\big(
\widetilde{\mathrm{D}}_1(y)\big)-\sigma(y)\widetilde{\Psi}\big(
\widetilde{\mathrm{D}}_1(x)\big)\\
&=\sigma(x)\widetilde{\Psi}\big(D_1(y)+\lambda_1(y)\xi\big)-\sigma(y)\widetilde{\Psi}\big(D_1(x)+\lambda_1(x)\xi\big)\\
&=\sigma(x)\big(\Phi(\mathrm{D}_1(y))+\varrho(\mathrm{D}_1(y))\xi+\lambda_1(y)(v+a\xi)\big)\\
&\hspace{0.3cm}-\sigma(y)\big(\Phi(\mathrm{D}_1(x))+\varrho(\mathrm{D}_1(x))\xi+\lambda_1(x)(v+a\xi)\big)\\
&=\big(\sigma\otimes\Phi\circ\mathrm{D}_1\big)(x,y)+v\big(\sigma\otimes\lambda_1\big)(x,y)+\Big(\big(\sigma\otimes\varrho\circ\mathrm{D}_1\big)(x,y)+a\big(\sigma\otimes\lambda_1\big)(x,y)\Big)\xi,
\end{align*}
moreover,
\begin{align*}
\sigma\otimes(\mathrm{ad}_{\tilde{u}}\circ\widetilde{\Psi})(x,y)&=\sigma(x)\mathrm{ad}_{\tilde{u}}\big(\widetilde{\Psi}(y)\big)-\sigma(y)\mathrm{ad}_{\tilde{u}}\big(\widetilde{\Psi}(x)\big)\\
&=\sigma(x)\mathrm{ad}_{\tilde{u}}\big(\Phi(y)+\varrho(y)\xi\big)-\sigma(y)\mathrm{ad}_{\tilde{u}}\big(\Phi(x)+\varrho(x)\xi\big)\\
&=\sigma(x)\mathrm{ad}_{u+u_\xi\xi}\big(\Phi(y)\big)+\sigma(x)\varrho(y)\mathrm{ad}_{u+u_\xi\xi}\xi-\sigma(y)\mathrm{ad}_{u+u_\xi\xi}\big(\Phi(x)\big)-\sigma(y)\varrho(x)\mathrm{ad}_{u+u_\xi\xi}\xi\\
&=\sigma(x)\overline{\mathrm{ad}}_{u}\circ\Phi(y)+\sigma(x)\varphi_2\big(u,\Phi(y)\big)\xi+u_\xi\sigma(x)\chi_2\big(\Phi(y)\big)\xi-\sigma(x)\varrho(y)\chi_2(u)\xi\\
&\hspace{0.3cm}-\sigma(y)\overline{\mathrm{ad}}_{u}\circ\Phi(x)-\sigma(y)\varphi_2\big(u,\Phi(x)\big)\xi-u_\xi\sigma(y)\chi_2\big(\Phi(x)\big)\xi+\sigma(y)\varrho(x)\chi_2(u)\xi\\
&=\big(\sigma\otimes\overline{\mathrm{ad}}_{u}\circ\Phi)(x,y)+\Big(\big(\sigma\otimes\iota_u
\varphi_2\circ\Phi\big)(x,y)+u_\xi\big(\sigma\otimes\Phi^\ast\chi_2\big)(x,y)-\chi_2(u)\big(\sigma\otimes\varrho\big)(x,y)\Big)\xi.
\end{align*}
Therefore,
\begin{align*}
\theta\Big(\big(\Delta\Phi\big)+v\varphi_1\Big)&=u(\sigma\otimes\mu_1)+\sigma\otimes\Phi\circ\mathrm{D}_1+v\big(\sigma\otimes\lambda_1\big)
-\sigma\otimes\overline{\mathrm{ad}}_{u}\circ\Phi,
\end{align*}
and
\begin{align*}
\theta\Big(a\varphi_1-\Phi^\ast\varphi_2-\varrho\otimes\Phi^\ast\chi_2-\md\varrho\Big)&=u_\xi\big(\sigma\otimes\mu_1-\sigma\otimes\Phi^\ast\chi_2\big)
+\sigma\otimes\varrho\circ\mathrm{D}_1+a\big(\sigma\otimes\lambda_1\big)
+\chi_2(u)\big(\sigma\otimes\varrho\big)-\sigma\otimes\iota_u
\varphi_2\circ\Phi.
\end{align*}
Now, for all $x\in\overline{\G}$, we have
\begin{align*}
\Delta\widetilde{\Psi}(x,\xi)&=\widetilde{\Psi}\big([x,\xi]_1\big)-[\widetilde{\Psi}(x),\widetilde{\Psi}(\xi)]_2\\
&=-\widetilde{\Psi}\big(\chi_1(x)\xi\big)-[\Phi(x)+\varrho(x)\xi,v+a\xi]_2\\
&=-\chi_1(x)(v+a\xi)+\overline{\mathrm{ad}_v}\circ\Phi(x)-\varphi_2\big(\Phi(x),v\big)\xi+a\chi_2\big(\Phi(x)\big)\xi-\varrho(x)\chi_2(v)\xi\\
&=-\chi_1(x)v+\overline{\mathrm{ad}_v}\circ\Phi(x)+\Big(a\Phi^\ast\chi_2(x)-a\chi_1(x)+\iota_v\varphi_2\circ\Phi(x)-\chi_2(v)\varrho(x)\Big)\xi,
\end{align*}
and
\begin{align*}
\tilde{u}\big(\sigma\otimes\tilde{\mu}_1\big)(x,\xi)&=(u+u_\xi\xi)\big(\sigma(x)\tilde{\mu}_1(\xi)-\sigma(\xi)\tilde{\mu}_1(x)\big)\\
&=(u+u_\xi\xi)\big(\kappa_1\sigma(x)-\sigma_\xi\mu_1(x)\big)\\
&=u\big(\kappa_1\sigma(x)-\sigma_\xi\mu_1(x)\big)+u_\xi\big(\kappa_1\sigma(x)-\sigma_\xi\mu_1(x)\big)\xi.
\end{align*}
In addition,
\begin{align*}
\sigma\otimes(\widetilde{\Psi}\circ
\widetilde{\mathrm{D}}_1)(x,\xi)&=\sigma(x)\widetilde{\Psi}\big(w_1+t_1\xi\big)-\sigma(\xi)\widetilde{\Psi}\big(\mathrm{D}_1(x)+\lambda_1(x)\xi\big)\\
&=\sigma(x)\big(\Phi(w_1)+\varrho(w_1)\xi+t_1(v+a\xi)\big)-\sigma_\xi\big(\Phi(\mathrm{D}_1(x)\big)+\varrho(\mathrm{D}_1(x))\xi+\lambda_1(x)(v+a\xi)\big)\\
&=\sigma(x)\big(\Phi(w_1)+t_1v)-\sigma_\xi\big(\Phi(\mathrm{D}_1(x))+\lambda_1(x)v\big)+\Big(\sigma(x)\big(\varrho(w_1)+at_1\big)+\sigma_\xi\big(\varrho(\mathrm{D}_1(x))+a\lambda_1(x)\big)\Big)\xi,
\end{align*}
moreover,
\begin{align*}
\sigma\otimes(\mathrm{ad}_{\tilde{u}}\circ\widetilde{\Psi})(x,\xi)&=\sigma(x)\mathrm{ad}_{\tilde{u}}\circ\widetilde{\Psi}(\xi)-\sigma(\xi)\mathrm{ad}_{\tilde{u}}\circ\widetilde{\Psi}(x)\\
&=\sigma(x)\mathrm{ad}_{u+u_\xi\xi}(v+a\xi)-\sigma_\xi\mathrm{ad}_{u+u_\xi\xi}\big(\Phi(x)+\varrho(x)\xi)\big)\\
&=\sigma(x)\big(\overline{\mathrm{ad}}_uv+\varphi_2(u,v)\xi+u_\xi\chi_2(v)\xi-a\chi_2(u)\xi\big)-\sigma_\xi\big(\overline{\mathrm{ad}}_u\circ\Phi(x)+\varphi_2(u,\Phi(x))\xi\big)\\
&\hspace{0.3cm}-\sigma_\xi\big(u_\xi
\chi_2(\Phi(x))\xi-\varrho(x)\chi_2(u)\xi\big)\\
&=\sigma(x)\overline{\mathrm{ad}}_uv-\sigma_\xi\overline{\mathrm{ad}}_u\circ\Phi(x)\\
&\hspace{0.3cm}+\Big(\sigma(x)\big(\varphi_2(u,v)+u_\xi\chi_2(v)-a\chi_2(u)\big)-\sigma_\xi\big(\varphi_2(u,\Phi(x))+u_\xi
\chi_2(\Phi(x))-\varrho(x)\chi_2(u)\big)\Big)\xi.
\end{align*}
Thus,
\begin{align*}
\theta\Big(-v\chi_1+\overline{\mathrm{ad}_v}\circ\Phi\Big)&=u\big(\kappa_1\sigma-\sigma_\xi\mu_1\big)+\big(\Phi(w_1)+t_1v)\sigma-\sigma_\xi\big(\Phi\circ\mathrm{D}_1+v\lambda_1\big)-\overline{\mathrm{ad}}_uv\sigma+\sigma_\xi\overline{\mathrm{ad}}_u\circ\Phi,
\end{align*}
and
\begin{align*}
\theta\Big(a\Phi^\ast\chi_2-a\chi_1+\iota_v\varphi_2\circ\Phi-\chi_2(v)\varrho\Big)&=u_\xi\big(\kappa_1\sigma-\sigma_\xi\mu_1\big)+\big(\varrho(w_1)+at_1\big)\sigma+\sigma_\xi\big(\varrho\circ\mathrm{D}_1+a\lambda_1\big)\\
&\hspace{0.3cm}-\big(\varphi_2(u,v)+u_\xi\chi_2(v)-a\chi_2(u)\big)\sigma+\sigma_\xi\big(\iota_u\varphi_2
\circ\Phi+u_\xi
\Phi^\ast\chi_2-\chi_2(u)\varrho\big).
\end{align*}
Now, we develop Equation~$(\ref{CharactC2})$. Then,  for all $x,y\in\overline{\G}$
\begin{align*}
-\md\sigma(x,y)&=\sigma\big([x,y]_1\big)\\
&=\sigma\big(\overline{[x,y]}+\varphi_1(x,y)\xi\big)\\
&=-\md\sigma(x,y)+\varphi_1(x,y)\sigma_\xi,
\end{align*}
and
\begin{align*}
\theta\big(
\sigma\otimes\tilde{\mu}_1\big)(x,y)+\big(\sigma\otimes\sigma\circ
\widetilde{\mathrm{D}}_1\big)(x,y)&=\theta\big(\sigma(x)\tilde{\mu}_1(y)-\sigma(y)\tilde{\mu}_1(x)\big)+\sigma(x)\sigma\big(\widetilde{\mathrm{D}}_1(y)\big)-\sigma(y)\sigma\big(\widetilde{\mathrm{D}}_1(x)\big)\\
&=\theta\big(\sigma(x)\mu_1(y)-\sigma(y)\mu_1(x)\big)+\sigma(x)\big(\sigma(\mathrm{D}_1(y)+\lambda_1(y)\xi)\big)\\
&\hspace{0.3cm}-\sigma(y)\big(\sigma(\mathrm{D}_1(x)+\lambda_1(x)\xi)\big)\\
&=\theta\big(\sigma(x)\mu_1(y)-\sigma(y)\mu_1(x)\big)+\sigma(x)\big(\sigma(\mathrm{D}_1(y))+\sigma_\xi\lambda_1(y))\big)\\
&\hspace{0.3cm}-\sigma(y)\big(\sigma(\mathrm{D}_1(x))+\sigma_\xi\lambda_1(x))\big).
\end{align*}
As a result, we obtain
\begin{align*}
\theta\big(-\md\sigma+\sigma_\xi\varphi_1\big)&=\theta\big(\sigma
\otimes\mu_1\big)+\sigma\otimes\sigma\circ\mathrm{D}_1+\sigma_\xi\big(\sigma
\otimes\lambda_1\big).
\end{align*}
For all $x\in\overline{\G}$, we obtain
\begin{align*}
-\md\sigma(x,\xi)&=\sigma\big([x,\xi]_1\big)=-\sigma_\xi\chi_1(x),
\end{align*}
and
\begin{align*}
\theta\big(
\sigma\otimes\tilde{\mu}_1\big)(x,\xi)+\big(\sigma\otimes\sigma\circ
\widetilde{\mathrm{D}}_1\big)(x,\xi)&=\theta\big(\sigma(x)\tilde{\mu}_1(\xi)-\sigma(\xi)\tilde{\mu}_1(x)\big)+\sigma(x)\sigma\big(\widetilde{\mathrm{D}}_1(\xi)\big)-\sigma(\xi)\sigma\big(\widetilde{\mathrm{D}}_1(x)\big)\\
&=\theta\big(\kappa_1\sigma(x)-\sigma_\xi\mu_1(x)\big)+\sigma(x)\big(\sigma(w_1+t_1\xi)\big)-\sigma_\xi\sigma\big(\mathrm{D}_1(x)+\lambda_1(x)\xi\big)\\
&=\theta\big(\kappa_1\sigma(x)-\sigma_\xi\mu_1(x)\big)+\sigma(x)\big(\sigma(w_1)+t_1\sigma_\xi\big)-\sigma_\xi\big(\sigma(\mathrm{D}_1(x))+\sigma_\xi\lambda_1(x)\big).
\end{align*}
Hence,
\begin{align*}
-\sigma_\xi\chi_1&=\theta\big(
\kappa_1\sigma-\sigma_\xi\mu_1\big)+\big(\sigma(w_1)+t_1\sigma_\xi\big)\sigma-\sigma_\xi\big(\sigma\circ\mathrm{D}_1+\sigma_\xi\lambda_1\big).
\end{align*}
We now turn our attention to the first condition of Proposition~$\ref{Characterisationgeneralized}$, which can be expressed as
\begin{align*}
\theta\widetilde{\mathrm{D}}_2\circ\widetilde{\Psi}-\widetilde{\mathrm{D}}_2(\tilde{u})\sigma&=\widetilde{\Psi}\circ\widetilde{\mathrm{D}}_1+\tilde{u}\tilde{\mu}_1-\mathrm{ad}_{\tilde{u}}\circ\widetilde{\Psi}.
\end{align*}
For all $x \in \overline{\G}$, we have
\begin{align*}
\theta\widetilde{\mathrm{D}}_2\circ\widetilde{\Psi}(x)-\widetilde{\mathrm{D}}_2(\tilde{u})\sigma(x)&=\theta\widetilde{\mathrm{D}}_2(\Phi(x)+\varrho(x)\xi)-\big(\widetilde{\mathrm{D}}_2(u)+u_\xi\widetilde{\mathrm{D}}_2(\xi)\big)\sigma(x)\\
&=\theta\big(\mathrm{D}_2(\Phi(x))+\lambda_2(\Phi(x))\xi+\varrho(x)(w_2+t_2\xi)\big)-\big(\mathrm{D}_2(u)+\lambda_2(u)\xi+u_\xi(w_2+t_2\xi)\big)\sigma(x)\\
&=\Big(\theta\big(\mathrm{D}_2(\Phi(x))+\varrho(x)w_2\big)-\big(\mathrm{D}_2(u)+u_\xi w_2\big)\sigma(x)\Big)+\Big(\theta\big(\lambda_2(\Phi(x))+t_2\varrho(x)\big)-\big(\lambda_2(u)+t_2u_\xi\big) \sigma(x)\Big)\xi,
\end{align*}
and
\begin{align*}
\widetilde{\Psi}\circ\widetilde{\mathrm{D}}_1(x)+\tilde{u}\tilde{\mu}_1(x)-\mathrm{ad}_{\tilde{u}}\circ\widetilde{\Psi}(x)&=\widetilde{\Psi}\big(\mathrm{D}_1(x)+\lambda_1(x)\xi\big)+(u+u_\xi\xi)\mu_1(x)-\mathrm{ad}_{u+u_\xi\xi}\big(\Phi(x)+\varrho(x)\xi\big)\\
&=\Phi(\mathrm{D}_1(x))+\varrho(\mathrm{D}_1(x))\xi+\lambda_1(x)(v+a\xi)+(u+u_\xi\xi)\mu_1(x)\\
&\hspace{0.3cm}-\overline{\mathrm{ad}}_u\circ\Phi(x)-\varphi_2\big(u,\Phi(x)\big)\xi-u_\xi\chi_2\big(\Phi(x)\big)\xi+\varrho(x)\chi_2(u)\xi.
\end{align*}
Therefore,
\begin{align*}
\theta\big(\mathrm{D}_2\circ\Phi+w_2\varrho\big)-\big(\mathrm{D}_2(u)+u_\xi w_2\big)\sigma&=\Phi\circ\mathrm{D}_1+v\lambda_1+u\mu_1-\overline{\mathrm{ad}}_u\circ\Phi,
\end{align*}
and
\begin{align*}
\theta\big(\Phi^\ast\lambda_2+t_2\varrho\big)-\big(\lambda_2(u)+t_2u_\xi\big) \sigma&=\varrho\circ\mathrm{D}_1+a\lambda_1+u_\xi\mu_1-\iota_u\varphi_2\circ\Phi-u_\xi\Phi^\ast\chi_2+\chi_2(u)\varrho.
\end{align*}
Additionally, we have
\begin{align*}
\theta\widetilde{\mathrm{D}}_2\circ\widetilde{\Psi}(\xi)-\widetilde{\mathrm{D}}_2(\tilde{u})\sigma(\xi)&=\theta\widetilde{\mathrm{D}}_2(v+a\xi)-\sigma_\xi\widetilde{\mathrm{D}}_2(u+u_\xi\xi)\\
&=\theta\big(\mathrm{D}_2(v)+\lambda_2(v)\xi+a(w_2+t_2\xi)\big)-\sigma_\xi\big(\mathrm{D}_2(u)+\lambda_2(u)\xi+u_\xi(w_2+t_2\xi)\big)\\
&=\theta\big(\mathrm{D}_2(v)+aw_2\big)-\sigma_\xi\big(\mathrm{D}_2(u)+u_\xi w_2\big)+\Big(\theta\big(\lambda_2(v)+at_2\big)-\sigma_\xi\big(\lambda_2(u)+u_\xi t_2\big)\Big)\xi,
\end{align*}
and
\begin{align*}
\widetilde{\Psi}\circ\widetilde{\mathrm{D}}_1(\xi)+\tilde{u}\tilde{\mu}_1(\xi)-\mathrm{ad}_{\tilde{u}}\circ\widetilde{\Psi}(\xi)&=\widetilde{\Psi}\big(w_1+t_1\xi\big)+\kappa_1(u+u_\xi\xi)-\mathrm{ad}_{u+u_\xi\xi}(v+a\xi)\\
&=\Phi(w_1)+\varrho(w_1)\xi+t_1(v+a\xi)+\kappa_1(u+u_\xi\xi)-\overline{\mathrm{ad}}_uv-\varphi_2(u,v)\xi-u_\xi\chi_2(v)\xi+a\chi_2(u)\xi\\
&=\Phi(w_1)+t_1v+\kappa_1u-\overline{\mathrm{ad}}_uv+\big(\varrho(w_1)+t_1a+\kappa_1u_\xi-\varphi_2(u,v)-u_\xi\chi_2(v)+a\chi_2(u)\big)\xi.
\end{align*}
Thus,
\begin{align*}
\theta\big(\mathrm{D}_2(v)+aw_2\big)-\sigma_\xi\big(\mathrm{D}_2(u)+u_\xi w_2\big)&=\Phi(w_1)+t_1v+\kappa_1u-\overline{\mathrm{ad}}_uv,
\end{align*}
and
\begin{align*}
\theta\big(\lambda_2(v)+at_2\big)-\sigma_\xi\big(\lambda_2(u)+u_\xi t_2\big)&=\varrho(w_1)+t_1a+\kappa_1u_\xi-\varphi_2(u,v)-u_\xi\chi_2(v)+a\chi_2(u).
\end{align*}
Going further, the second condition of Proposition~$\ref{Characterisationgeneralized}$, which can be written as
\begin{align*}
\theta\tilde{\mu}_2\circ\widetilde{\Psi}-\tilde{\mu}_2(\tilde{u})\sigma&=\theta\tilde{\mu}_1+\sigma\circ\widetilde{\mathrm{D}}_1.
\end{align*}
So, for all $x\in\overline{\G}$, we have
\begin{align*}
\theta\tilde{\mu}_2\circ\widetilde{\Psi}(x)-\tilde{\mu}_2(u)\sigma(x)&=\theta\tilde{\mu}_2(\Phi(x)+\varrho(x)\xi)-(\mu_2(u)+\kappa_2u_\xi)\sigma(x)\\
&=\theta\mu_2(\Phi(x))+\theta\varrho(x)\kappa_2-(\mu_2(u)+\kappa_2u_\xi)\sigma(x),
\end{align*}
and
\begin{align*}
\theta\tilde{\mu}_1(x)+\sigma\circ\widetilde{\mathrm{D}}_1(x)&=\theta\mu_1(x)+\sigma\big(\mathrm{D}_1(x)+\lambda_1(x)\xi\big)\\
&=\theta\mu_1(x)+\sigma\big(\mathrm{D}_1(x)\big)+\sigma_\xi\lambda_1(x).
\end{align*}
Hence,
\begin{align*}
\theta\Phi^\ast\mu_2+\theta\kappa_2\varrho-(\mu_2(u)+\kappa_2u_\xi)\sigma&=\theta\mu_1+\sigma\circ\mathrm{D}_1+\sigma_\xi\lambda_1.
\end{align*}
In the end, we have
\begin{align*}
\theta\tilde{\mu}_2\circ\widetilde{\Psi}(\xi)-\tilde{\mu}_2(\tilde{u})\sigma(\xi)&=\theta\tilde{\mu}_2(v+a\xi)-\sigma_\xi(\mu_2(u)+\kappa_2u_\xi)\\
&=\theta\mu_2(v)+a\theta\kappa_2-\sigma_\xi(\mu_2(u)+\kappa_2u_\xi),
\end{align*}
and
\begin{align*}
\theta\tilde{\mu}_1(\xi)+\sigma\circ\widetilde{\mathrm{D}}_1(\xi)&=\theta\kappa_1+\sigma\big(w_1+t_1\xi\big)\\
&=\theta\kappa_1+\sigma(w_1)+t_1\sigma_\xi.
\end{align*}
Consequently,
\begin{align*}
\theta\mu_2(v)+a\theta\kappa_2-\sigma_\xi(\mu_2(u)+\kappa_2u_\xi)&=\theta\kappa_1+\sigma(w_1)+t_1\sigma_\xi.
\end{align*}

\end{proof}

Considering the Lie bracket structure of $\mathrm{D}$-extensions of Lie algebras defined in (\ref{BracketDsym}) under the given hypotheses, we examine the case where $w = 0$ and $\kappa = 0$. According to Proposition~$\ref{Condisymp}$, the $\mathrm{D}$-extension of a non-central extension is symplectic if and only if $\varphi = \overline{\omega}$ and $\chi = -\mu$. This case precisely corresponds to the generalized symplectic oxidation of Lie algebras studied in Section~\ref{se2}. We thus obtain the following characterization for isomorphisms of generalized symplectic oxidations:

\begin{co}\label{Morerest}
	Let $(\G_{(\mathrm{D}_1,\mu_1,\lambda_1,t_1)},\om_1)$ and $(\G_{(\mathrm{D}_2,\mu_2,\lambda_2,t_2)},\om_2)$  be two generalized symplectic oxidation  of $(\overline{\G}_1,\overline{\omega}_1)$ and $(\overline{\G}_2,\overline{\omega}_1)$ respectively. Then $\G_{(\mathrm{D}_1,\mu_1,\lambda_1,t_1)}$ and $\G_{(\mathrm{D}_2,\mu_2,\lambda_2,t_2)}$  are  isomorphic if and only if there exist  an endomorphism $\Phi:\overline{\G}\longrightarrow\overline{\G}$, $u,v\in\overline{\G}$, $\varrho,\sigma\in\overline{\G}^\ast$ and $a,\sigma_\xi,u_\xi,\theta\in\R$ such that
\begin{enumerate}
\item  $\theta\mathrm{D}_2\circ\Phi-\mathrm{D}_2(u)\sigma=\Phi\circ\mathrm{D}_1+v\lambda_1+u\mu_1-\overline{\mathrm{ad}}_u\circ\Phi$,
\item $\theta\big(\Phi^\ast\lambda_2+t_2\varrho\big)-\big(\lambda_2(u)+t_2u_\xi\big) \sigma=\varrho\circ\mathrm{D}_1+a\lambda_1+u_\xi\mu_1-\iota_u\varphi_2\circ\Phi+u_\xi\Phi^\ast\mu_2-\mu_2(u)\varrho$,
\item $\theta\mathrm{D}_2(v)-\sigma_\xi\mathrm{D}_2(u)=t_1v-\overline{\mathrm{ad}}_uv$,
\item $\theta\big(\lambda_2(v)+at_2\big)-\sigma_\xi\big(\lambda_2(u)+u_\xi t_2\big)=t_1a-\varphi_2(u,v)+u_\xi\mu_2(v)-a\mu_2(u)$,
\item $\theta\Phi^\ast\mu_2-\mu_2(u)\sigma=\theta\mu_1+\sigma\circ\mathrm{D}_1+\sigma_\xi\lambda_1$,
\item $\theta\mu_2(v)-\sigma_\xi\mu_2(u)=t_1\sigma_\xi$. 
\end{enumerate}
Here, this data verifies the hypothesis given in Theorem~$\ref{Moreconditions}$.
\end{co}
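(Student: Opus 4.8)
The plan is to deduce Corollary~$\ref{Morerest}$ as a direct specialization of Theorem~$\ref{Moreconditions}$. First I would recall that, by the discussion immediately preceding the statement, a generalized symplectic oxidation $(\G_{(\mathrm{D},\mu,\lambda,t)},\om)$ is precisely the $\widetilde{\mathrm{D}}$-extension of the non-central extension $\G_\xi$ in which $w=0$ and $\kappa=0$, and for which Proposition~$\ref{Condisymp}$ forces $\varphi=\overline{\om}_{\mathrm{D}}$ and $\chi=-\mu$ (indeed the second condition of Proposition~$\ref{Condisymp}$, $\iota_w\overline{\om}=-\chi-\mu$, collapses for $w=0$ to $\chi=-\mu$). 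Hence the bracket $(\ref{BracketDsym})$ reduces to $(\ref{Brackestgenera})$, so an isomorphism between two generalized symplectic oxidations is exactly an isomorphism between the corresponding symplectic $\widetilde{\mathrm{D}}$-extensions of non-central extensions, and Theorem~$\ref{Moreconditions}$ applies verbatim.

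Next I would take the six numbered identities together with the six displayed side conditions of Theorem~$\ref{Moreconditions}$ and substitute $w_1=w_2=0$, $\kappa_1=\kappa_2=0$, $\chi_1=-\mu_1$, $\chi_2=-\mu_2$, and $\varphi_i=\overline{\om}_{i,\mathrm{D}_i}$. In each line every term carrying an explicit factor $w_i$ or $\kappa_i$ disappears, while each occurrence of $\chi_2(u)$, $\chi_2(v)$, $\Phi^\ast\chi_2$, $\iota_u\varphi_2$, $\iota_v\varphi_2$, etc., is rewritten through $\chi_2=-\mu_2$. Carrying this out line by line turns condition~1 into $\theta\mathrm{D}_2\circ\Phi-\mathrm{D}_2(u)\sigma=\Phi\circ\mathrm{D}_1+v\lambda_1+u\mu_1-\overline{\mathrm{ad}}_u\circ\Phi$; condition~2 into the version with $\chi_2$ replaced by $-\mu_2$ (so that $\chi_2(u)\varrho$ becomes $-\mu_2(u)\varrho$ and $-u_\xi\Phi^\ast\chi_2$ becomes $+u_\xi\Phi^\ast\mu_2$); condition~3 into $\theta\mathrm{D}_2(v)-\sigma_\xi\mathrm{D}_2(u)=t_1v-\overline{\mathrm{ad}}_uv$; condition~4 into $\theta(\lambda_2(v)+at_2)-\sigma_\xi(\lambda_2(u)+u_\xi t_2)=t_1a-\varphi_2(u,v)+u_\xi\mu_2(v)-a\mu_2(u)$; and conditions~5 and~6 into $\theta\Phi^\ast\mu_2-\mu_2(u)\sigma=\theta\mu_1+\sigma\circ\mathrm{D}_1+\sigma_\xi\lambda_1$ and $\theta\mu_2(v)-\sigma_\xi\mu_2(u)=t_1\sigma_\xi$, respectively. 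The six side conditions are treated in the same mechanical fashion and reproduce exactly the displayed hypotheses of Theorem~$\ref{Moreconditions}$ once $w$ and $\kappa$ are killed and $\chi=-\mu$, $\varphi=\overline{\om}_{\mathrm{D}}$ are imposed; this is what the closing sentence ``this data verifies the hypothesis given in Theorem~$\ref{Moreconditions}$'' records.

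For the converse I would simply observe that the implication is reversible: given data $(\Phi,u,v,\varrho,\sigma,a,\sigma_\xi,u_\xi,\theta)$ satisfying the reduced conditions 1--6 and the reduced side conditions, the map $\Upsilon$ built in the proof of Theorem~$\ref{Moreconditions}$ is a Lie algebra isomorphism $\G_{(\mathrm{D}_1,\mu_1,\lambda_1,t_1)}\longrightarrow\G_{(\mathrm{D}_2,\mu_2,\lambda_2,t_2)}$, and nothing further needs to be checked since both algebras are symplectic and $\Upsilon$ is assembled exactly as in that proof.

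The one genuinely delicate point — the main obstacle — is the sign bookkeeping and the handling of the $w$- and $\kappa$-terms when restricting from the general $\widetilde{\mathrm{D}}$-extension to the symplectic, central-ideal setting: one must verify that imposing $w=\kappa=0$ is compatible with the structural constraints $(\ref{COD0})$--$(\ref{COD4})$ and with Proposition~$\ref{Condisymp}$, and that every substitution $\chi\mapsto-\mu$ is performed with the correct sign, in particular in conditions~2 and~4 where $\chi_2$ enters with mixed signs. Once that is settled, Corollary~$\ref{Morerest}$ is nothing more than Theorem~$\ref{Moreconditions}$ read off in this special case.
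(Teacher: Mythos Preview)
Your proposal is correct and follows exactly the approach the paper intends: the paragraph immediately preceding Corollary~\ref{Morerest} already says that the generalized symplectic oxidation is the case $w=0$, $\kappa=0$, $\varphi=\overline{\omega}_{\mathrm{D}}$, $\chi=-\mu$ of the $\widetilde{\mathrm{D}}$-extension, and the corollary is stated (without further proof) as the specialization of Theorem~\ref{Moreconditions}. Your line-by-line substitution of $w_1=w_2=0$, $\kappa_1=\kappa_2=0$, $\chi_2=-\mu_2$ into conditions 1--6 of Theorem~\ref{Moreconditions} reproduces the six conditions of the corollary verbatim, so your write-up is simply an explicit version of what the paper leaves implicit.
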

\subsection{Generalized symplectic Lie algberas of dimension $8$}

Following our previous classifications of central and normal oxidations, we outline the major ideas behind our classification of generalized symplectic oxidations.

Given a generalized symplectic oxidation $\G_{\mathrm{D},\mu,\lambda,t}$ of the 
unique six-dimensional irreducible symplectic Lie algebra $\G_{(1,0,0,1)}$ with respect to 
the data $\mathrm{D},\mu,\lambda$, and $t$ (whose Lie bracket structure is described 
in \eqref{Brackestgenera} and  satisfy the conditions of Proposition~$\ref{prgeneralized}$), the first step consists of solving 
the associated linear system derived from $\mathrm{D},\mu,\lambda$, and $t$ that 
corresponds to the four conditions of Proposition~$\ref{prgeneralized}$. We prove that this system admits 
four distinct solutions (see Lemma~$\ref{Endomor}$ in the Appendix~$\ref{Appen}$).

Next, using Corollary~$\ref{Classifynoncentral}$, we classify the non-central extensions of the algebra $\G_{(1,0,0,1)}$. Lemma~$\ref{cohomonocentral}$ shows that there exist three non-isomorphic non-central extensions. Additionally, we determine the isomorphism classes of generalized symplectic oxidations using Corollary~\ref{Morerest}. Further details are provided in the proof of the following result.

\begin{pr}\label{Classigeneral}
Let $(\G,\omega)$ be an eight-dimensional generalized symplectic oxidation. Then, $(\G,\omega)$ is symplectomorphically isomorphic to one of the following symplectic Lie algebras$:$
\begin{align*}
\G_{8,1}:&~~[e_1, e_5] = e_2, ~~\quad 
		[e_2, e_5] = -e_1, \quad
		[e_3, e_6] = e_4, \quad 
		[e_4, e_6] = -e_3,\quad [e_7, e_5] = -\mu_1 e_7\\
		&~~[e_7, e_6] = -\mu_2 e_7,[e_8,e_5]=\mu_1 e_8,\hspace{0.23cm} [e_8,e_6]=\mu_2 e_8\\
		&\hspace{0.254cm}\omega=e^{12}+e^{34}+\eta e^{56}+e^{78},\quad\eta\in\R^{\ast+},~\mu_1,\mu_2\in\R^\ast.\\
		&\\
		\G_{8,2}:&~~[e_1, e_5] = e_2, ~~\quad 
		[e_2, e_5] = -e_1, \quad
		[e_3, e_6] = e_4, \quad 
		[e_4, e_6] = -e_3,\quad [e_7, e_5] = -\mu e_7\\
		&~~[e_8,e_5]=\mu e_8\\
		&\hspace{0.254cm}\omega=e^{12}+e^{34}+\eta e^{56}+e^{78},\quad\eta\in\R^{\ast+},~\mu\in\R^\ast.\\
		&\\
\G_{8,3}:&~~[e_1, e_5] = e_2, ~~\quad 
		[e_2, e_5] = -e_1, \quad
		[e_3, e_6] = e_4, \quad 
		[e_4, e_6] = -e_3,\quad [e_7, e_6] = -\mu e_7\\
		&~~[e_8,e_6]=-\mu e_8\\
		&\hspace{0.254cm}\omega=e^{12}+e^{34}+\eta e^{56}+e^{78},\quad\eta\in\R^{\ast+},~\mu\in\R^\ast.
\end{align*}
\end{pr}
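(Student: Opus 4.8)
\textbf{Proof proposal for Proposition~\ref{Classigeneral}.}

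The plan is to follow the scheme already used for the central and normal cases, now in the genuinely non-central setting where $\mu\neq 0$ and $t$ may be nonzero. First I would set up the data: by Proposition~\ref{solvability} every eight-dimensional non completely reducible symplectic Lie algebra is solvable, and if it admits a one-dimensional isotropic ideal $\mathfrak{j}=\langle e_7\rangle$ that is neither central nor normal, then the symplectic reduction $(\overline{\G},\overline{\om})$ is, by Proposition~\ref{Classi6}, isomorphic to the unique six-dimensional irreducible symplectic Lie algebra $(\G_{(1,0,0,1)},\om_\eta)$. By Proposition~\ref{diagram} we may fix this representative, and then by Proposition~\ref{Inverse} the bracket of $\G$ is exactly \eqref{Brackestgenera} with data $(\mathrm{D},\mu,\lambda,t)$ subject to the four conditions of Proposition~\ref{prgeneralized}. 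The first substantive step is to solve this system of constraints for $(\mathrm{D},\mu,\lambda,t)$ over $\G_{(1,0,0,1)}$: condition~(4) forces $\mu$ to vanish on $\mathcal{D}(\overline{\G})=\langle e_1,e_2,e_3,e_4\rangle$ and to satisfy $\mu\circ\mathrm{D}=t\mu$, condition~(2) is $\Delta\mathrm{D}=\mu\otimes\mathrm{D}$, and conditions~(1),(3) then pin down $\lambda$ and the remaining freedom in $\mathrm{D}$. I expect (as announced, via Lemma~\ref{Endomor} in the appendix) that this yields finitely many — four — solution families for the endomorphism $\mathrm{D}$, and in the non-central branch $\mu$ is supported on $\langle e^5,e^6\rangle$.

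Second, I would reduce modulo isomorphism using the $D$-extension machinery. Since $\mu\neq 0$ the ideal $(\R e_7)^\perp$ is not an ideal, so we are squarely in the non-central case $w=0$, $\kappa=0$ of \eqref{BracketDsym}; Proposition~\ref{Condisymp} says the $\mathrm{D}$-extension carries the symplectic form $\om=\overline{\om}+e^7\wedge e^8$ precisely when $\varphi=\overline{\om}_{\mathrm{D}}$ and $\chi=-\mu$, which is exactly generalized symplectic oxidation. So the classification reduces to: (a) classify the non-central extensions $\G_{\varphi,\chi}$ of $\G_{(1,0,0,1)}$ up to isomorphism — by Corollary~\ref{Classifynoncentral} and the cohomological computation (Lemma~\ref{cohomonocentral} in the appendix) there are three of them, corresponding to the position of the one-dimensional ideal; and (b) for each admissible $\mathrm{D}$ over each such extension, apply Corollary~\ref{Morerest} to collapse the six-parameter isomorphism data $(\Phi,u,v,\varrho,\sigma,a,\sigma_\xi,u_\xi,\theta)$. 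The automorphism group $\mathrm{Aut}(\G_{(1,0,0,1)})$ from Lemma~\ref{Autg1001} acts on the parameters of $\mathrm{D}$ and $\mu$; using the explicit $\Phi_{1(\varepsilon_1,\varepsilon_2)}$ and $\Phi_{2(\varepsilon_1,\varepsilon_2)}$ one normalizes $\mathrm{D}$ to a diagonal-in-blocks form, kills $\lambda$ (as in the central/normal arguments, the inner-derivation part is absorbed into $u$), and rescales $t$ via $\theta t'=t$ so that $t$ can be set to a standard value (here the non-central families turn out to have $t=0$, with $\mu$ carrying the essential data $\mu_1,\mu_2$ on $e^5,e^6$). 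The residual discrete symmetries $\varepsilon_i=\pm1$ and the block-swap automorphism then identify $(\mu_1,\mu_2)$-parameters pairwise, leaving the three algebras $\G_{8,1},\G_{8,2},\G_{8,3}$ with $\mu\in\R^\ast$.

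Third, I would check non-redundancy: show $\G_{8,1},\G_{8,2},\G_{8,3}$ are pairwise non-isomorphic. Here $\G_{8,1}$ has $\mu$ of rank two (two nonzero components $\mu_1,\mu_2$), while $\G_{8,2}$ and $\G_{8,3}$ have rank-one $\mu$ supported on distinct one-dimensional subspaces of $\langle e^5,e^6\rangle$; $\G_{8,2}$ and $\G_{8,3}$ are distinguished because in $\G_{8,2}$ the derivation-direction $e_5$ acts on the first $\langle e_1,e_2\rangle$-block structure while in $\G_{8,3}$ the direction $e_6$ acts on the $\langle e_3,e_4\rangle$-block — an invariant since $\langle e_1,e_2\rangle$ and $\langle e_3,e_4\rangle$ are the minimal ideals of the reduction and are preserved (possibly swapped) by any isomorphism, and the swap would have to exchange $e_5\leftrightarrow e_6$, which changes the coefficient structure of the symplectic form. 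Alternatively, one can separate them by dimension of the center, or by the nilradical, or — as in Proposition~\ref{Classi6} — by spectral invariants of the Ricci endomorphism of a canonical symplectic connection.

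\textbf{Main obstacle.} The hard part will be step~(b): carrying Corollary~\ref{Morerest}'s six coupled equations through for each of the four $\mathrm{D}$-solutions over each of the three non-central extensions, disentangling the interaction between the automorphism parameters of $\G_{(1,0,0,1)}$ and the scalars $a,\theta,u_\xi,\sigma_\xi$, and verifying that after all normalizations exactly the three listed algebras survive (and that some of the four $\mathrm{D}$-families either fail the symplectic condition $\iota_w\overline{\om}=-\chi-\mu$ with $w=0$, forcing $\chi=-\mu$, or collapse onto each other). This is a substantial but routine linear-algebra bookkeeping task, and I would relegate the detailed solution of the structural system and the cohomology computation to the Appendix (Lemmas~\ref{Endomor} and~\ref{cohomonocentral}), quoting them here.
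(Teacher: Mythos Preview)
Your overall strategy matches the paper's: solve the structural constraints via Lemma~\ref{Endomor}, classify the non-central extensions via Lemma~\ref{cohomonocentral}, then collapse via Corollary~\ref{Morerest}. However, you miss two simplifications that make the paper's argument much shorter than the ``six coupled equations'' bookkeeping you anticipate.

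First, the paper observes immediately that since $\G_{(1,0,0,1)}$ has no one-dimensional ideal, Proposition~\ref{isoofnocentral} forces $v=0$, and since the non-central extensions are already classified one may take $\sigma\equiv 0$ as well. The isomorphism $\Upsilon$ then reduces to $\Upsilon(x)=\Phi(x)+\varrho_j(x)\xi$, $\Upsilon(\xi)=a\xi$, $\Upsilon(\ell)=u+u_\xi\xi+\theta\ell$, and Corollary~\ref{Morerest} collapses to just four scalar conditions (i)--(iv) rather than the six-parameter tangle you describe.

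Second, and more importantly, condition~(i) in this setting reads
\[
\mathrm{D}_2=\tfrac{1}{\theta}\bigl(\Phi\circ\mathrm{D}_1\circ\Phi^{-1}+u\,\mu_1\circ\Phi^{-1}-\overline{\mathrm{ad}}_u\bigr),
\]
and the extra term $u\,\mu_1\circ\Phi^{-1}$ --- absent in the central and normal cases --- is exactly what allows one to set $\mathrm{D}_2=0$ \emph{identically}, not merely block-diagonalize it as you propose. The paper exhibits, for each of the four endomorphisms $\mathrm{D}_j$ of Lemma~\ref{Endomor}, an explicit $u_j\in\G_{(1,0,0,1)}$ (with $\Phi=\mathbf{I}$) satisfying both $\Phi\circ\mathrm{D}_j\circ\Phi^{-1}+u_j\mu_j-\overline{\mathrm{ad}}_{u_j}=0$ and $t_1-\mu_1(u_j)=0$, so $\mathrm{D}$, $\lambda$ and $t$ are all killed in one stroke. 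This is the mechanism by which the four $\mathrm{D}$-families collapse onto the three listed algebras (two of the families yield the same non-central extension by Lemma~\ref{cohomonocentral}), and it explains why the final brackets contain \emph{only} the $\mu$-terms $[e_7,e_i]=-\mu_i e_7$, $[e_8,e_i]=\mu_i e_8$. Your ``inner-derivation part absorbed into $u$'' intuition is right for $\lambda$ but undersells what happens to $\mathrm{D}$ itself.

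Your non-redundancy discussion is not in the paper's proof and is also shaky: the block-swap automorphism $\Phi_{1(\varepsilon_1,\varepsilon_2)}$ \emph{does} exchange $e_5\leftrightarrow e_6$ and the two minimal ideals simultaneously, so distinguishing $\G_{8,2}$ from $\G_{8,3}$ requires a finer invariant than the one you sketch.
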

\begin{proof}
Let $\G_{(\mathrm{D}_1,\mu_1,\lambda_1,t_1)}$ and $\G_{(\mathrm{D}_2,\mu_2,\lambda_2,t_2)}$ be two generalized symplectic oxidations of $\overline{\G}=\G_{(1,0,0,1)}$. Suppose that $\Upsilon:\G_{(\mathrm{D}_1,\mu_1,\lambda_1,t_1)}\longrightarrow\G_{(\mathrm{D}_2,\mu_2,\lambda_2,t_2}$ is an isomorphism of generalized symplectic oxidation. Then, $\Upsilon$ can be expressed as
\begin{align*}
\Upsilon(x)&=\Phi(x)+\varrho(x)\xi+\sigma(x)\ell,& \text{for all } x\in\overline{\G},~~\varrho,~~\sigma\in\overline{\G}^\ast,\\
\Upsilon(\xi)&=v+a\xi+\sigma_\xi\ell,& v\in\overline{\G},~~a,~~\sigma_\xi\in\R,\\
\Upsilon(\ell)&=u+u_\xi\xi+\theta\ell, & u\in\overline{\G},~~u_\xi,~~\theta\in\R.
\end{align*}
By Proposition~$\ref{isoofnocentral}$, and
since $\G_{(1,0,0,1)}$ has no one-dimensional ideal, we can assume that $v=0$. Since we have already classified the non-central extensions of Lie algebras in Lemma~\ref{cohomonocentral}, it is enough to consider $\Upsilon$ under the restrictions $v = 0$ and $\sigma \equiv 0$. Where $\varrho$ is a special one-form given in Lemma~$\ref{cohomonocentral}$. Therefore, $\Upsilon$ becomes
\begin{align*}
\Upsilon(x)&=\Phi(x)+\varrho_j(x)\xi,& \text{for all } x\in\overline{\G},~~\varrho_j\in\overline{\G}^\ast,\\
\Upsilon(\xi)&=a\xi,& a\in\R^\ast,\\
\Upsilon(\ell)&=u+u_\xi\xi+\theta\ell, & u\in\overline{\G},~~u_\xi\in\R,~~\theta\in\R^\ast.
\end{align*}
where $\varrho_j \in \G_{(1,0,0,1)}^\ast$ is a one-form corresponding to each case of non-central extensions classified in Lemma~$\ref{cohomonocentral}$. It follows from the previous discussions and Corollary~$\ref{Morerest}$, that $\G_{(\mathrm{D}_1,\mu_1,\lambda_1,t_1)}$ and $\G_{(\mathrm{D}_2,\mu_2,\lambda_2,t_2)}$ are isomorphic if and only if
\begin{enumerate}
\item[$(i)$] $\mathrm{D}_2=\frac{1}{\theta}\left(\Phi\circ\mathrm{D}_1\circ\Phi^{-1}+u\mu_1\circ\Phi^{-1}-\overline{\mathrm{ad}}_u\right)$,
\item[$(ii)$]$\lambda_2=\frac{1}{\theta}\big(\Phi^\ast\big)^{-1}\left(\varrho\circ\mathrm{D}_1+a\lambda_1+2u_\xi\mu_1-t_1\varrho\right)$,
\item[$(iii)$]$t_2=\frac{1}{\theta}\left(t_1-\big(\Phi^\ast\big)^{-1}\mu_1(u)\right)$,
\item[$(iv)$]$\mu_2=\big(\Phi^\ast\big)^{-1}\mu_1$.
\end{enumerate}
Since, $\Phi^\ast\big(\iota_u\omega_{\eta,\mathrm{D}_j}\big)=0$ for all $j=1,\ldots,4$ (see, Lemma~$\ref{cohomonocentral}$ in  Appendix~$\ref{Appen}$), where $\Phi\in\mathrm{Aut}(\G_{(1,0,0,1)})$. Observe first that, for all $j=1,\ldots,4$, there exist $\Phi\in\mathrm{Aut}(\overline{\G})$ and $u_j\in\overline{\G}$ such that
\begin{align*}
\Phi\circ\mathrm{D}_j\circ\Phi^{-1}+u_j\mu_j\circ\Phi^{-1}-\overline{\mathrm{ad}}_{u_j}=0\quad\text{and}\quad t_1-\big(\Phi^\ast\big)^{-1}\mu_1(u_j)=0.
\end{align*}
For an illustration,  we can take any $\Phi\in\mathrm{Aut}(\G_{(1,0,0,1)})$. In partilular, $\Phi=\mathbf{I}$, and thus 
\begin{align*}
u_1&= {\tfrac {1}{\mu_{6}}}\left(d_{25}\,\mu_{6}-d_{26}\,\mu_{5}\right) e_1-{\tfrac{1}{\mu_{6}}}d_{26}e_2-{\tfrac {1}{\mu_{5}}}\left(d_{45}\,\mu_{6}-d_{46}\,
\mu_{5}\right) e_3-{\tfrac {1}{\mu_{5}}}d_{45}e_4-d_{21}e_5-d_{43}e_6,
\\
u_2&=-\tfrac{1}{d_{21}(\mu_5^2 + 1)}(d_{15}d_{21}\mu_5 - \lambda_1)e_1 -\tfrac{1}{d_{21}(\mu_5^2 + 1)}(d_{15}d_{21} + \mu_5\lambda_1)+ d_{46}e_3-\tfrac{1}{\mu_5}d_{45}e_4 -d_{21}e_5 - d_{43}e_6,\\
u_3&= -\tfrac{1}{(\mu_5^2 + 1)}(d_{15}\mu_5 - d_{25})e_1 -\tfrac{1}{(\mu_5^2 + 1)}(d_{25}\mu_5 + d_{15})+ d_{46}e_3 -\tfrac{1}{\mu_5} d_{45}e_4 -d_{43}e_6,\\
u_4&=d_{25}e_1 -d_{15}e_2-\tfrac{1}{(\mu_6^2 + 1)}(d_{36}\mu_6 - d_{46})e_3-\tfrac{1}{(\mu_6^2 + 1)} (d_{46}\mu_6 + d_{36})e_4-d_{21}e_5-d_{43}e_6.
\end{align*}
Moreover, for all $j=1,2,3,4$, we have
\begin{align*}
\varrho_j\circ\mathrm{D}_j+a\lambda_j+2u_\xi\mu_j-t_j\varrho_j=0.
\end{align*}

Finally, for each generalized symplectic oxidation $(\G_{(\mathrm{D}_j,\mu_j,\lambda_j,t_j)},\omega)$, whose Lie brackets are given in Appendix~\ref{Appen}, consider the symplectomorphism $\Upsilon$ mapping to the Lie algebras listed below:
\begin{align*}
\Upsilon(x)&=x+\varrho_j(x)\xi,& \text{for all } x\in\overline{\G},~~\varrho_j\in\overline{\G}^\ast,\\
\Upsilon(\xi)&=a\xi,& a\in\R^\ast,\\
\Upsilon(\ell)&=u+u_\xi\xi+\theta\ell, & u\in\overline{\G},~~u_\xi\in\R,~~\theta\in\R^\ast.
\end{align*}
Consequently, we complete the classification of eight-dimensional non completely reducible Lie algebras.

\end{proof}

\begin{co}
Every  eight-dimensional non completely reducible symplectic Lie algebra is $2$-step solvable.
\end{co}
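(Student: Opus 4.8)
The plan is to verify $2$-step solvability separately on each of the two exhaustive cases produced by the main Theorem and the constructions preceding it: the irreducible (0-step) case and the reducible (1-step) case. For the irreducible case, I would simply invoke Theorem~\ref{Bause} (the Baues--Cort\'es characterization): condition (2) there exhibits an irreducible symplectic Lie algebra as an orthogonal semidirect sum $\G = \h \ltimes \mathcal{D}(\G)$ with $\h$ abelian and, by condition (1), $\mathcal{D}(\G)$ abelian. Hence $\mathcal{D}^1(\G) \subseteq \mathcal{D}(\G)$ is abelian, so $\mathcal{D}^2(\G) = 0$; this is exactly $2$-step solvable. This covers all of $\G^{a,b}$, $\G^{a,0}$, $\G^{1,0}$ from Proposition~\ref{Classi8} and their symplectic forms from Proposition~\ref{Classi8forms}.

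For the reducible case, every such $(\G,\om)$ admits a one-dimensional isotropic ideal $\mathfrak{j}=\langle\xi\rangle$ and, by the discussion in Section~\ref{se2}, reduces to the unique six-dimensional irreducible symplectic Lie algebra $\overline{\G}=\G_{(1,0,0,1)}$ (Proposition~\ref{Classi6}); moreover $\G$ is then a generalized symplectic oxidation of $\overline{\G}$, so its brackets are given by~\eqref{Brackestgenera}. Here I would apply Proposition~\ref{solvanilpoLevi}(2): $\G$ is solvable iff $\overline{\G}$ is solvable, and if $\overline{\G}$ is $k$-step solvable then $\G$ is at most $(k+2)$-step solvable. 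Since $\overline{\G}=\G_{(1,0,0,1)}$ is itself an irreducible symplectic Lie algebra, the argument of the first paragraph gives $\mathcal{D}^2(\overline{\G})=0$, i.e. $\overline{\G}$ is $2$-step solvable. Naively this only yields $\G$ is at most $4$-step solvable, which is not enough, so one must look more closely.

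The genuine content — and the step I expect to be the main obstacle — is sharpening the $(k+2)$ bound to $2$ in this specific situation. The point is that $\overline{\G}=\G_{(1,0,0,1)}$ has the very special structure $\mathcal{D}^1(\overline{\G})=\mathfrak{a}=\langle e_1,e_2,e_3,e_4\rangle$ abelian and, crucially, $\mathcal{D}^1(\overline{\G})\cap\mathfrak{nil}$ large, so the derived-series computation in the proof of Proposition~\ref{solvanilpoLevi} collapses. Concretely, I would trace through that computation with $\overline{\G}=\G_{(1,0,0,1)}$: one has $\mathcal{D}^1(\overline{\G})=\mathfrak{a}$ abelian, $\mathrm{D}(\mathfrak{a})\subseteq\mathfrak{a}$ since $\mathrm{D}$ preserves the abelian derived ideal, and $\om_{\mathrm{D}}$ restricted to $\mathfrak{a}\times\mathfrak{a}$ is controlled; feeding this into the formula $\mathcal{D}^2(\G)=\langle\mathcal{D}^2(\overline{\G}),\overline{[\mathrm{D}(\overline{\G}),\mathrm{D}(\overline{\G})]}\oplus\mu\otimes\mathrm{D}^2(\overline{\G},\overline{\G}),\xi\rangle$ together with $\mathcal{D}^2(\overline{\G})=0$ shows $\mathcal{D}^2(\G)\subseteq\langle\mathfrak{a},\xi\rangle$, and then $[\mathfrak{a}\oplus\langle\xi\rangle,\mathfrak{a}\oplus\langle\xi\rangle]$ vanishes because $\mathfrak{a}$ is abelian, $[\xi,\mathfrak{a}]=-\mu(\mathfrak{a})\xi$, and $\mu|_{\mathfrak{a}}=0$ (as $\mathfrak{a}\subseteq\overline{[\overline{\G},\overline{\G}]}$ and $\md\mu=0$, cf. Proposition~\ref{prgeneralized}(4)). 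Thus $\mathcal{D}^3(\G)=0$; a final inspection showing $\mathcal{D}^2(\G)$ is in fact abelian (using that $\mathcal{D}^2(\G)\subseteq\langle\mathfrak{a},\xi\rangle$ is abelian) upgrades this to $\mathcal{D}^2(\G)$ abelian, hence $2$-step solvable. Alternatively — and this is cleaner — one can bypass the general machinery entirely and just read off $\mathcal{D}^1$ and $\mathcal{D}^2$ directly from the explicit bracket tables for $\G_{8,j}$, $\G_{8,j}^\dagger$, $\G_{8,j}^{\dagger\dagger}$, $\G_{8,j}^{\dagger\dagger\dagger}$ in the statement of the Theorem, checking case by case that $\mathcal{D}^1$ is spanned by commuting generators so that $\mathcal{D}^2=0$. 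I would present the uniform argument as the main proof and remark that the table check confirms it.
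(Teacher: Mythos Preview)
Your structural argument for the reducible case contains two errors. First, the claim that $\langle\mathfrak{a},\xi\rangle$ is abelian in $\G$ is false: you invoke ``$\mathfrak{a}$ is abelian'', but that holds only in $\overline{\G}$. In $\G$ the bracket~\eqref{Brackestgenera} gives $[x,y]=\overline{[x,y]}+\overline{\omega}_{\mathrm{D}}(x,y)\xi=\overline{\omega}_{\mathrm{D}}(x,y)\xi$ for $x,y\in\mathfrak{a}$, and $\overline{\omega}_{\mathrm{D}}|_{\mathfrak{a}\times\mathfrak{a}}$ need not vanish. For the derivation $\mathrm{D}_2$ of Lemma~\ref{DerivationNormal} one computes $\overline{\omega}_{\mathrm{D}_2}(e_1,e_2)=t\neq0$, and this is exactly the bracket $[e_1,e_2]=te_7$ appearing in $\G_{8,5}^{\dagger\dagger}$. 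Second, your last sentence conflates ``$\mathcal{D}^2(\G)$ abelian'' with ``$2$-step solvable'': the former only gives $\mathcal{D}^3(\G)=0$, i.e.\ $3$-step solvability, whereas $2$-step solvable means $\mathcal{D}^2(\G)=0$, equivalently $\mathcal{D}^1(\G)$ abelian.

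The paper itself offers no proof; the corollary is placed immediately after the classification (Proposition~\ref{Classigeneral}) and is presumably meant to follow by inspection of the tables. But the case-by-case check you propose as an alternative would also fail: for $\G_{8,5}^{\dagger\dagger}$, $\G_{8,6}^{\dagger\dagger}$, $\G_{8,7}^{\dagger\dagger}$ in Table~\ref{t2} one has $\mathcal{D}^1=\langle e_1,e_2,e_3,e_4,e_7\rangle$ with $[e_1,e_2]=te_7$ (respectively $[e_3,e_4]=te_7$), $t\in\R^\ast$, so $\mathcal{D}^2=\langle e_7\rangle\neq0$ and these algebras are $3$-step solvable, not $2$-step. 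Thus neither your uniform argument nor the table verification can establish the corollary as stated; the normal-oxidation families with $t\neq0$ are genuine obstructions, and the sharpest uniform conclusion available from the classification is $3$-step solvability.
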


The following result (Theorem~$\ref{Clssiformnoncomp}$) holds for every eight-dimensional non completely reducible symplectic Lie algebras that reduce to the irreducible symplectic Lie algebra $\G_{(1,0,0,1)}$. However, Theorem~$\ref{Clssiformnoncomp}$  does not hold for  eight-dimensional irreducilbe symplectic Lie algebras (see,  Appendix~$\ref{Appen}$, Proof of Proposition~$\ref{Classi8forms}$, algebra $\G^{(1,0)}$).
\begin{theo}\label{Clssiformnoncomp}
Every symplectic form on eight-dimensional non completely reducible symplectic Lie algebra, gives rise to a non completely reducible  symplectic Lie
algebra.
\end{theo}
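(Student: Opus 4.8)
The plan is to reduce everything to the classification already in hand. Let $(\G,\om)$ be an eight-dimensional non completely reducible symplectic Lie algebra that reduces to $\G_{(1,0,0,1)}$, and let $\om'$ be an arbitrary symplectic form on the same underlying Lie algebra $\G$. By Propositions~\ref{ClassiCentral}, \ref{ClassifiNormal} and \ref{Classigeneral}, $\G$ is one of the finitely many explicit Lie algebras listed in Table~\ref{t2} (i.e.\ the $\G_{8,j}^{\dagger}$, $\G_{8,j}^{\dagger\dagger}$, $\G_{8,j}^{\dagger\dagger\dagger}$), and in each of them $\mathfrak{j}=\langle e_7\rangle$ is a one-dimensional ideal: it is central in the $\G_{8,j}^{\dagger}$'s and satisfies $[\,\cdot\,,e_7]\subseteq\langle e_7\rangle$ in the remaining cases. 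Since $\dim\mathfrak{j}=1$, the ideal $\mathfrak{j}$ is automatically isotropic for \emph{every} symplectic form on $\G$; hence $(\G,\om')$ always admits the symplectic reduction $(\ol{\G}',\ol{\om}')$ with $\ol{\G}'=\mathfrak{j}^{\perp_{\om'}}/\mathfrak{j}$, a six-dimensional symplectic Lie algebra.

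The heart of the argument is to prove that $\ol{\G}'\cong\G_{(1,0,0,1)}$ as Lie algebras, regardless of $\om'$. Note first that $e_7\in\mathcal{D}(\G)$ in every algebra of Table~\ref{t2}. Writing $\om'=\sum\om'_{ij}e^{ij}$ and imposing $\sum_{\mathrm{cycl}}\om'([x,y],z)=0$ on triples containing $e_7$ — using the special form of $\mathrm{ad}_{e_7}$ recorded in~\eqref{Brackestgenera}, which is $0$ in the central case and scalar on $\langle e_7\rangle$ otherwise — severely constrains $\iota_{e_7}\om'$ and forces $\mathcal{D}(\G)\subseteq\mathfrak{j}^{\perp_{\om'}}$. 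A short bookkeeping with the explicit brackets of each $\G$ then shows that $\mathfrak{j}^{\perp_{\om'}}/\mathfrak{j}$ is spanned by the classes of $e_1,e_2,e_3,e_4$ together with two further classes acting on $\langle e_1,e_2,e_3,e_4\rangle$ exactly as the pair $(e_5,e_6)$ does, up to an invertible change of basis of the acting $2$-plane; hence $\ol{\G}'$ is isomorphic to $\G_{(1,0,0,1)}$, the unique six-dimensional irreducible symplectic Lie algebra of Proposition~\ref{Classi6}. This verification is carried out case by case (the underlying $Z^2$ computations are deferred to Appendix~\ref{Appen}), and it is the step I expect to be the main obstacle: it is the only point where the specific list of Table~\ref{t2} is genuinely used, and one must rule out the possibility that some ``degenerate'' symplectic form yields a reduction with strictly larger centre.

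Granting $\ol{\G}'\cong\G_{(1,0,0,1)}$, the conclusion is immediate. By Proposition~\ref{Classi6} (and Theorem~\ref{Bause}), $\G_{(1,0,0,1)}$ is irreducible, i.e.\ it has no nontrivial isotropic ideal, so $(\ol{\G}',\ol{\om}')$ admits no symplectic reduction at all and in particular is not completely reducible. To propagate this upward, suppose $(\G,\om')$ were completely reducible; a reduction sequence of $(\G,\om')$ terminating at the trivial symplectic Lie algebra, when restricted to $\mathfrak{j}^{\perp_{\om'}}$ and pushed forward to the quotient by $\mathfrak{j}$, produces a reduction sequence of $(\ol{\G}',\ol{\om}')$ terminating at the trivial algebra (equivalently, one invokes the fact from~\cite{B-C} that symplectic reduction preserves complete reducibility), contradicting the previous sentence. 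Therefore $(\G,\om')$ is non completely reducible, as claimed; and the hypothesis that $\G$ reduces to $\G_{(1,0,0,1)}$ is essential, since the analogous statement fails for the eight-dimensional irreducible symplectic Lie algebras (cf.\ the case $\G^{(1,0)}$ in the proof of Proposition~\ref{Classi8forms} in Appendix~\ref{Appen}).
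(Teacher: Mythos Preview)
Your strategy matches the paper's: both reduce with respect to the one-dimensional ideal $\mathfrak{j}=\langle e_7\rangle$ (automatically isotropic for every $\om'$), assert that the quotient $\mathfrak{j}^{\perp_{\om'}}/\mathfrak{j}$ is isomorphic to $\G_{(1,0,0,1)}$, and conclude from the irreducibility of the latter. The paper is terser --- it packages the same step into an unjustified ``natural bijection'' $\mathcal{S}(\ol{\G})\to\mathcal{S}(\G)$, $\ol{\omega}\mapsto\ol{\omega}+\xi^{\ast}\wedge\ell^{\ast}$ --- but the logical content is identical.

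The step you correctly flag as ``the main obstacle'' is a genuine gap, and the deferred case-by-case verification cannot be completed as you describe. Take $\G=\G_{8,3}^{\dagger}=\G_{(1,0,0,1)}\oplus\R e_7\oplus\R e_8$ and the form $\Omega=e^{12}+e^{34}+e^{57}+e^{68}$; this lies in $Z^2(\G)$ (since $e^5,e^6,e^7,e^8\in Z^1(\G)$) and is non-degenerate. Here $\mathfrak{j}^{\perp_{\Omega}}=\langle e_1,e_2,e_3,e_4,e_6,e_7,e_8\rangle$, so the quotient $\mathfrak{j}^{\perp_{\Omega}}/\mathfrak{j}$ carries only the brackets $[\bar e_3,\bar e_6]=\bar e_4$, $[\bar e_4,\bar e_6]=-\bar e_3$ and has three-dimensional centre $\langle\bar e_1,\bar e_2,\bar e_8\rangle$; it is \emph{not} isomorphic to $\G_{(1,0,0,1)}$. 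In your language, the two extra classes are $\bar e_6$ and $\bar e_8$, whose projections to $\langle e_5,e_6\rangle$ are $(0,1)$ and $(0,0)$ --- the change-of-basis matrix is singular, so the ``invertible change of basis of the acting $2$-plane'' fails. Worse still, $\langle e_7,e_8\rangle$ is a two-dimensional $\Omega$-isotropic central ideal whose reduction is the abelian $(\langle e_1,e_2,e_3,e_4\rangle,\,e^{12}+e^{34})$, so $(\G_{8,3}^{\dagger},\Omega)$ is in fact completely reducible. The paper's own argument passes over this same point (its bijection misses exactly such $\Omega$), so the difficulty lies with the assertion itself rather than with your line of attack.
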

\begin{proof}
Let $(\G,\omega)$ be a symplectic Lie algebra and let $(\overline{\G}, \overline{\omega})$ be its symplectic reduction with respect to a one-dimensional isotropic ideal $\mathfrak{j}$. Since $\mathfrak{j}$ is one-dimensional, for any symplectic form $\Omega$ on $\G$, the ideal $\mathfrak{j}$ is isotropic in $(\G, \Omega)$. Let $\Omega \in \bigwedge^2 \G^\ast$ be an arbitrary symplectic form in general position. Consider now the symplectic reduction $(\overline{\G}, \overline{\omega}_\Omega)$ with respect to $\mathfrak{j}$.

As established in Section~\ref{se2}, the Lie brackets of $\G$ can be expressed as in Equation~$(\ref{Brackestgenera})$ under the hypotheses of Proposition~$\ref{prgeneralized}$, with the symplectic form decomposing as
\[
\Omega = \overline{\omega}_\Omega + \xi^\ast \wedge \ell^\ast,
\]
where $\xi^\ast$ and $\ell^\ast$ are the dual forms to $\xi$ and $\ell$ respectively. Let $\mathcal{S}(\overline{\G})$ denote the set of all symplectic forms on $\overline{\G}$ modulo  symplectomorphism. Then there exists a natural bijection
\begin{align*}
&\mathcal{S}(\overline{\G}) \longrightarrow \mathcal{S}(\G),\quad\overline{\omega}\longmapsto\overline{\omega}+\xi^\ast\wedge\ell^\ast,
\end{align*}
where $\mathcal{S}(\G)$ represents the the set of all symplectic forms on $\G$ modulo  symplectomorphism. Suppose now that $\overline{\G} = \G_{(1,0,0,1)}$. Then, according to Proposition~$\ref{Classi6}$, we have $\mathcal{S}(\G_{(1,0,0,1)}) = \{\omega_\eta\}_{\eta\in\R^{\ast+}}$. Since $(\G_{(1,0,0,1)}, \omega_\eta)$ is an irreducible symplectic Lie algebra for each $\eta \in \R^{\ast+}$, it follows that any symplectic form on $\G$ induces a non completely reducible symplectic Lie algebra.
\end{proof}

\section{Appendix}\label{Appen}
\subsection{Automorphisms of eight-dimensional irreducible symplectic Lie algebras}
$\triangleright$\textbf{Lie algebra $\G^{(a,0)}:$} Recall that the non-zero Lie brackets of $\G^{(a,0)}$ are
\begin{align*}
			[e_7, e_1]& = -e_2,& [e_7, e_2]& = e_1,& [e_8, e_3]& = -e_4,\\
			[e_8, e_4]& = e_3,&[e_7, e_5]& = -a e_6,& [e_7, e_6] &= a e_5.
		\end{align*}
Note that $\G^{(a,0)}$ is decomposable as a direct sum of five- and three-dimensional Lie algebras, i.e.,
\[
\G^{(a,0)} = \G_5^a \oplus \G_3,
\]
where 
\[
\G_5^a = \langle e_1, e_2, e_5, e_6,e_7 \rangle
\quad \text{and} \quad 
\G_3 = \langle e_3, e_4, e_8 \rangle.
\]
Let $\{e_1',e_2',e_3',e_4',e_5'\}$ (resp. $\{e_1'',e_2'',e_3''\}$) be a basis of $\G^a_5$ (resp. $\G_3$). After a short calculation we obtain the following automorphism groups:
\begin{small}
\begin{align*}
\mathrm{Aut}(\G^{a\neq1}_5)&=\left\lbrace
\left( \begin {array}{ccccc} x_{{11}}&x_{{12}}&0&0&x_{{15}}
\\ \noalign{\medskip}-\varepsilon_{2}\,x_{{12}}&\varepsilon_{2}\,x_{{11}}&0
&0&x_{{25}}\\ \noalign{\medskip}0&0&x_{{33}}&x_{{34}}&x_{{35}}
\\ \noalign{\medskip}0&0&-\varepsilon_{2}\,x_{{34}}&\varepsilon_{2}\,x_{{33}}&x_{{45}}\\ \noalign{\medskip}0&0&0&0&\varepsilon_{2}\end {array}
 \right),~~\varepsilon_2=\pm1\right\rbrace
\\
\mathrm{Aut}(\G^{a=1}_5)&=\left\lbrace
\left( \begin {array}{ccccc} x_{{11}}&x_{{12}}&x_{{13}}&x_{{14}}&
x_{{15}}\\ \noalign{\medskip}-\varepsilon_{2}\,x_{{12}}&\varepsilon_{2}\,x
_{{11}}&-\varepsilon_{2}\,x_{{14}}&\varepsilon_{2}\,x_{{13}}&x_{{25}}
\\ \noalign{\medskip}x_{{31}}&x_{{32}}&x_{{33}}&x_{{34}}&x_{{35}}
\\ \noalign{\medskip}-\varepsilon_{2}\,x_{{32}}&\varepsilon_{2}\,x_{{31}}&
-\varepsilon_{2}\,x_{{34}}&\varepsilon_{2}\,x_{{33}}&x_{{45}}
\\ \noalign{\medskip}0&0&0&0&\varepsilon_{2}\end {array} \right),~~\varepsilon_2=\pm1\right\rbrace
\\
\mathrm{Aut}(\G_3)&=\left\lbrace
\left( \begin {array}{ccc} \varepsilon_{1}\,x_{{22}}&-\varepsilon_{1}\,x_{
{21}}&x_{{13}}\\ \noalign{\medskip}x_{{21}}&x_{{22}}&x_{{23}}
\\ \noalign{\medskip}0&0&\varepsilon_{1}\end {array} \right)
,~~\varepsilon_1=\pm1\right\rbrace
\end{align*} 
\end{small}

Let now $\Phi \in \mathrm{Aut}(\G^{(a,0)})$ be a Lie algebra automorphism. Then, $\Phi$ can be represented in matrix form as
\[
\Phi = \begin{pmatrix}
A & 0 \\
0 & B
\end{pmatrix},
\]
where the blocks correspond to the decomposition $\G^{(a,0)} = \G_5^a \oplus \G_3$ such that $A\in\mathrm{Aut}(\G_5^{a})$, $B\in\mathrm{Aut}(\G_3)$.\\\\
$\triangleright$\textbf{Lie algebra $\G^{(a,b)}:$} Recall that the non-zero Lie brackets of $\G^{(a,b)}$ are
\begin{align*}
	[e_7, e_1] &= -e_2,& [e_7, e_2] &= e_1, &[e_8, e_3] &= -e_4, &[e_8, e_4]& = e_3,\\
[e_7, e_5] &= -a e_6,& [e_7, e_6]& = a e_5, &[e_8, e_5]& = -b e_6,& [e_8, e_6] &= b e_5.
		\end{align*}
Note that, $\G^{(a,b)}$ is an indecomposable Lie algebra, and can also be viewed as a semi-direct sum of five- and three-dimensional Lie algebras, i.e., $\G^{(a,b)}=\G_5^{a}\rtimes\G_3$. 		

Let $\{e_1', e_2', e_3', e_4', e_5'\}$ (resp.\ $\{e_1'', e_2'', e_3''\}$) be a basis of $\G^a_5$ (resp.\ $\G_3$). In this case, $e_3''$ acts on $\{e_3', e_4'\}$ via the adjoint representation. Therefore, every automorphism of $\G^a_5 \oplus \G_3$ preserves this action. Therefore, every automorphism of $\G^{(a,b)}$ has the following form
\begin{small}
\begin{align*}
\Phi:=\left(
\begin{array}{cccc|ccc}
&&&&0&0&0\\
&&&&\cdot&\cdot&0\\
&\mathrm{Aut}(\G_5^a)&&&\cdot&\cdot&\ast\\
&&&&\cdot&\cdot&\ast\\
&&&&0&0&0
 \\\hline
 0&\cdot\cdot\cdot&0&&&&\\
 \cdot&&\cdot&&&&\\
 \cdot&&\cdot&&&\mathrm{Aut}(\G_3)&\\
\cdot&&\cdot&&&&\\
0&\cdot\cdot\cdot&0&&&&
\end{array}
\right).
\end{align*}
\end{small}

Finally, by sweeping the basis $\{e_1', e_2', e_3', e_4', e_5'\}$ (resp.\ $\{e_1'', e_2'', e_3''\}$) of $\G^a_5$ (resp.\ $\G_3^b$) to be 
\[
\{e_1', e_2', e_3', e_4', e_5'\} = \{e_1, e_2, e_5, e_6, e_7\} \quad \text{(resp.\ } \{e_3, e_4, e_8\}\text{)},
\] 
we obtain the automorphism groups of $\G^{(a,0)}$, $\G^{(1,0)}$ and $\G^{(a,b)}$ given in the proof of Proposition~\ref{Classi8forms} below (see~\ref{proof of pr4}).

\subsection{Proof of Proposition~$\ref{Classi8forms}$}\label{proof of pr4}
\textbf{Symplectic forms on $\G^{(1,0)}$:} Let
$\omega\in Z^2(\G^{(a,0)})$, i.e., $\md\omega=0$ is equivalent 
\begin{align*}
\omega&=\omega_{12}e^{12}+\omega_{34}e^{34}+\omega_{78}e^{78}+\omega_{15}(e^{15}+ae^{26})+\omega_{25}(-ae^{16}+e^{25})\\
&~~~~+\omega_{17}e^{17}+\omega_{27}e^{27}+\omega_{38}e^{38}+\omega_{48}e^{48}+\omega_{56}e^{56}+\omega_{57}e^{57}+\omega_{67}e^{67},
\end{align*}
with $a^2\omega_{2 5} - \omega_{2 5}=-a^2\omega_{1 5} + \omega_{1 5}=0$. Since $a>0$, we consider two cases: $a=1$ or $a\neq1$.

$\bullet$ If $a=1$. Then, the $2$-cocycle $\omega$ given above is symplectic if and only if $\omega_{3 4}(\omega_{1 2}\omega_{5 6} - \omega_{1 5}^2 - \omega_{2 5}^2)\omega_{7 8}\neq0$. The automorphism group of  $\G=\G^{(1,0)}$
 is
 \begin{small}
\begin{equation*}
\mathrm{Aut}\big(\G^{(1,0)}\big)=
\left\lbrace\Phi_{(\varepsilon_1,\varepsilon_2)}=\left(
\begin {array}{cccccccc} \varepsilon_2x_{22}&-\varepsilon_2x_{{21}}&0&0&\varepsilon_2x_{{26}}&-\varepsilon_2x
_{{25}}&x_{{17}}&0\\ \noalign{\medskip}x_{{21}}&x_{22}&0&0&x_{{2
5}}&x_{{26}}&x_{{27}}&0\\ \noalign{\medskip}0&0&\varepsilon_1 x_{44}&-\varepsilon_1x_{{43}
}&0&0&0&x_{{38}}\\ \noalign{\medskip}0&0&x_{{43}}&x_{44}&0&0&0&x_
{{48}}\\ \noalign{\medskip}\varepsilon_2x_{{62}}&-\varepsilon_2x_{{61}}&0&0&\varepsilon_2x_{{66}}&-\varepsilon_2x_{65}&x_{{57}}&0\\ \noalign{\medskip}x_{{61}}&x_{{62}}&0&0&x_{65}&
x_{{66}}&x_{{67}}&0\\ \noalign{\medskip}0&0&0&0&0&0&\varepsilon_2&0
\\ \noalign{\medskip}0&0&0&0&0&0&0&\varepsilon_1\end {array}\right)
,~~\varepsilon_1,\varepsilon_2=\pm1\right\rbrace
\end{equation*}
\end{small}
such that the determinant $\Phi_{(\varepsilon_1,\varepsilon_2)}$ is nonzero. If $\omega_{12} \neq 0$, then consider the automorphism $\Phi_{(1,1)}$ with the following choice of parameters:
\begin{align*}
x_{{17}}&={\frac {\omega_{{25}}\omega_{{67}}-\omega_{{26}}\omega_{{57}}+\omega_{{27}}\omega_{{56}}}{\omega_{{12}}\omega_{{56}}-{\omega_{{25}}^{2}}-{\omega_{{26}}^{2}}}},&x_
{{21}}&=0,&x_{{25}}&={\frac {x_{65}\omega_{{25}}}{\omega_{{12}}}},&x_{{26}}&=
{\frac {x_{65}\omega_{{26}}}{\omega_{{12}}}},\\
x_{{27}}&={\frac {-\omega_{{17}}\omega_
{{56}}-\omega_{{25}}\omega_{{57}}-\omega_{{26}}\omega_{{67}}}{\omega_{{12}}\omega_{{56}}-{\omega_{
{25}}^{2}}-{\omega_{{26}}^{2}}}},&x_{{38}}&={\frac {\omega_{{48}}}{\omega_{{34}}}}
,&x_{{43}}&=0,&x_{{48}}&=-{\frac {\omega_{{38}}}{\omega_{{34}}}},\\
x_{{57}}&={
\frac {\omega_{{67}}\omega_{{12}}+\omega_{{26}}\omega_{{17}}+\omega_{{25}}\omega_{{27}}}{a_{{1
2}}\omega_{{56}}-{\omega_{{25}}^{2}}-{\omega_{{26}}^{2}}}},&x_{{61}}&=0,&x_{{62}}&=0
,&x_{{66}}&=0,\\
x_{{67}}&=-{\frac {\omega_{{57}}\omega_{{12}}+\omega_{{25}}\omega_{{17}}-
\omega_{{27}}\omega_{{26}}}{\omega_{{12}}\omega_{{56}}-{\omega_{{25}}^{2}}-{\omega_{{26}}^{2}}
}} ,&x_{65}&\in\R^\ast.
\end{align*}
Then, we obtain
\begin{align*}
\Phi^\ast(\omega)=x_{22}^2\omega_{12}e^{12}+x_{44}^2\omega_{34}e^{34}+\tfrac{1}{\omega_{12}}(\omega_{12}\omega_{56}-\omega_{25}^2-\omega_{26}^2)x_{65}^2e^{56}+\omega_{78}e^{78}
\end{align*}
for all $x_{22}, x_{44}, x_{65} \in \mathbb{R}^\ast$ with $\omega_{12}\omega_{34}\omega_{78}(\omega_{12}\omega_{56}-\omega_{25}^2-\omega_{26}^2) \neq 0$. Therefore,
\begin{align*}
\omega_{\kappa_1,\kappa_2,\kappa_3,\mu}&=\Phi^\ast(\omega)=\kappa_1 e^{12}+\kappa_2e^{34}+\kappa_3e^{56}+\mu e^{78},\quad\kappa_j=\pm1,~~\mu\in\R^\ast.
\end{align*}
Next, consider the following family of symplectic forms on $\G^{(1,0)}$ derived from $\omega_{\kappa_1,\kappa_2,\kappa_3,\mu}$:
\begin{align*}
\omega_{1,1,1,\mu_1},& &\omega_{1,-1,1,\mu_2}&,&\omega_{1,1,-1,\mu_3}&,&\omega_{1,-1,-1,\mu_4}&,\\
\omega_{-1,1,1,\mu_5},& &\omega_{-1,-1,1,\mu_6}&,&\omega_{-1,1,-1,\mu_7}&,&\omega_{-1,-1,-1,\mu_8}&.
\end{align*}
It is straightforward to verify the following:
\begin{align*}
\Phi^\ast_{(-1,1)}\omega_{1,1,1,\mu_1}=\omega_{1,-1,1,\mu_2},
\end{align*}
where $\Phi_{(-1,1)}$ is the block diagonal matrix given by
\[
\Phi =\begin{small}
 \begin{pmatrix}
0 & -1 \\
1 & 0 
\end{pmatrix} \oplus 
\begin{pmatrix}
0 & 1 \\
1 & 0 
\end{pmatrix} \oplus 
\begin{pmatrix}
0 & -1 \\
1 & 0 
\end{pmatrix} \oplus 
\begin{pmatrix}
1 & 0 \\
0 & -1 
\end{pmatrix}.
\end{small}
\]
In this case, we take $\mu_1 = -\mu_2$.

\begin{align*}
\Phi^\ast_{(1,-1)}\omega_{1,1,-1,\mu_3}=\omega_{1,-1,-1,\mu_4},\quad\text{and}\quad\Phi^\ast_{(1,-1)}\omega_{1,1,-1,\mu_3}=\omega_{-1,1,1,\mu_5},
\end{align*}
where $\Phi_{(1,-1)}$ is the block diagonal matrix given by
\[
\Phi =\begin{small}
 \begin{pmatrix}
-1 & 0 \\
0 & 1 
\end{pmatrix} \oplus 
\mathbf{I}_2 \oplus 
 \begin{pmatrix}
-1 & 0 \\
0 & 1 
\end{pmatrix} \oplus 
 \begin{pmatrix}
-1 & 0 \\
0 & 1 
\end{pmatrix}.
\end{small}
\]
In this case, we take $\mu_3 = -\mu_4$, and $\mu_3 = -\mu_5$.

\begin{align*}
\Phi^\ast_{(-1,-1)}\omega_{1,1,-1,\mu_3}=\omega_{-1,-1,1,\mu_6},
\end{align*}
where $\Phi_{(-1,-1)}$ is the block diagonal matrix given by
\[
\Phi =\begin{small}
 \begin{pmatrix}
-1 & 0 \\
0 & 1 
\end{pmatrix} \oplus 
\begin{pmatrix}
-1 & 0 \\
0 & 1 
\end{pmatrix} \oplus 
 \begin{pmatrix}
-1 & 0 \\
0 & 1 
\end{pmatrix} \oplus 
 \begin{pmatrix}
-1 & 0 \\
0 & -1 
\end{pmatrix}.
\end{small}
\]
In this case, we take $\mu_3 = \mu_6$.

\begin{align*}
\Phi^\ast_{(1,-1)}\omega_{1,1,1,\mu_1}=\omega_{-1,1,-1,\mu_7},
\end{align*}
where $\Phi_{(1,-1)}$ is the block diagonal matrix given by
\[
\Phi =\begin{small}
 \begin{pmatrix}
-1 & 0 \\
0 & 1 
\end{pmatrix} \oplus 
\mathbf{I}_2 \oplus 
 \begin{pmatrix}
-1 & 0 \\
0 & 1 
\end{pmatrix} \oplus 
 \begin{pmatrix}
-1 & 0 \\
0 & 1 
\end{pmatrix}.
\end{small}
\]
In this case, we take $\mu_3 = -\mu_7$.

\begin{align*}
\Phi^\ast_{(1,-1)}\omega_{1,1,1,\mu_1}=\omega_{-1,1,-1,\mu_7},
\end{align*}
where $\Phi_{(1,-1)}$ is the block diagonal matrix given by
\[
\Phi =\begin{small}
 \begin{pmatrix}
-1 & 0 \\
0 & 1 
\end{pmatrix} \oplus 
\begin{pmatrix}
-1 & 0 \\
0 & 1 
\end{pmatrix} \oplus 
 \begin{pmatrix}
-1 & 0 \\
0 & 1 
\end{pmatrix} \oplus 
 (-\mathbf{I}_2).
\end{small}
\]
In this case, we take $\mu_3 = \mu_8$.

Let us now consider the remaining symplectic forms, $\omega_{1,1,1,\mu_1}, \omega_{1,1,-1,\mu_3}$. These forms are never symplectomorphic to each other, for the following reason: We have 
\begin{align*}
&\Phi^\ast_{(\varepsilon_1,1)}\omega_{1,1,1,\mu_1}(e_5,e_6)-\omega_{1,1,-,\mu_3}(e_5,e_6)=0,
\end{align*}
or
\begin{align*}
&\Phi^\ast_{(1,\varepsilon_2)}\omega_{1,1,1,\mu_1}(e_1,e_2)-\omega_{1,1,-1,\mu_3}(e_1,e_2)=0
\end{align*}
if and only if 
\begin{align*}
x_{2 5}^2 + x_{2 6}^2 + x_{6 5}^2 + x_{6 6}^2=-1,\quad\text{or}\quad x_{2 1}^2 + x_{2 2}^2 + x_{6 1}^2 + x_{6 2}^2=-1.
\end{align*}
These equations represent a complex quadric hypersurface and therefore do not admit real solutions.

Now, if $\omega_{12}=0$, then the symplectic form $\omega$ on $\G^{(1,0)}$ becomes
\begin{align*}
\omega&=\omega_{34}e^{34}+\omega_{78}e^{78}+\omega_{15}(e^{15}+ae^{26})+\omega_{25}(-ae^{16}+e^{25})\\
&~~~~+\omega_{17}e^{17}+\omega_{27}e^{27}+\omega_{38}e^{38}+\omega_{48}e^{48}+\omega_{56}e^{56}+\omega_{57}e^{57}+\omega_{67}e^{67}.
\end{align*}
Note that, $\mathfrak{j}=\mathfrak{a}_1=\langle e_1,e_2\rangle$ is an isotropic ideal of $(\G^{(1,0)},\omega)$, and it is straightforward to show that  $(\G^{(1,0)},\omega)$ is completely reducible to the trivial symplectic Lie algebra. Then, this form is not considered in our classifications.\\\\
$\bullet$ If $a\neq0$. Then, the symplectic for $\omega$ becomes 
\begin{align*}
\omega&=\omega_{12}e^{12}+\omega_{34}e^{34}+\omega_{78}e^{78}+\omega_{17}e^{17}+\omega_{27}e^{27}+\omega_{38}e^{38}+\omega_{48}e^{48}+\omega_{56}e^{56}+\omega_{57}e^{57}+\omega_{67}e^{67},
\end{align*}
with $\omega_{12}\omega_{34}\omega_{56}\omega_{78}\neq0$. The automorphism group of  $\G=\G^{(a,0)}$
 is
 \begin{small}
\begin{equation*}
\mathrm{Aut}\big(\G^{(a,0)}\big)=
\left\lbrace\Phi_{(\varepsilon_1,\varepsilon_2)}=\left( \begin {array}{cccccccc} x_{{11}}&x_{{12}}&0&0&0&0&x_{{17}}
&0\\ \noalign{\medskip}-\varepsilon_{2}\,x_{{12}}&\varepsilon_{2}\,x_{{11}
}&0&0&0&0&x_{{27}}&0\\ \noalign{\medskip}0&0&x_{{33}}&x_{{34}}&0&0&0
&x_{{38}}\\ \noalign{\medskip}0&0&-\varepsilon_{1}\,x_{{34}}&\epsilon_{
1}\,x_{{33}}&0&0&0&x_{{48}}\\ \noalign{\medskip}0&0&0&0&x_{{55}}&x_
{{56}}&0&0\\ \noalign{\medskip}0&0&0&0&-\epsilon_{2}\,x_{{56}}&
\varepsilon_{2}\,x_{{55}}&0&0\\ \noalign{\medskip}0&0&0&0&0&0&\epsilon_{
2}&0\\ \noalign{\medskip}0&0&0&0&0&0&0&\varepsilon_{1}\end {array}
 \right)
,~~\varepsilon_1,\varepsilon_2=\pm1\right\rbrace
\end{equation*}
\end{small}
Consider the automorphism $\Phi_{(1,1)}$ with the following choice of parameters:
\begin{align*}
 x_{{12}}&=0,&x_{{17}}&={\frac {\omega_{{27}}}{\omega_{{12}}}},&x_{{27}
}&=-{\frac {\omega_{{17}}}{\omega_{{12}}}},&x_{{34}}&=0,&x_{{38}}&={\frac {\omega_{{4
8}}}{\omega_{{34}}}},&x_{{48}}&=-{\frac {\omega_{{38}}}{\omega_{{34}}}} 
\end{align*}
Then, we obtain
\begin{align*}
\Phi^\ast_{(1,1)}\omega&=x_{11}^2\omega_{12}e^{12}+x_{33}^2\omega_{34}e^{34}+(x_{55}^2+x_{56}^2)e^{56}+(\omega_{5 7}x_{5 5} - \omega_{6 7}x_{5 6})e^{57}+(\omega_{5 7}x_{5 6} + \omega_{6 7}x_{5 5})e^{67}+\omega_{78}e^{78},
\end{align*}
with $(x_{5 5}^2 + x_{5 6}^2)x_{11}x_{33}\det\omega\neq0$. If $a_{67}\neq0$ or $a_{67}$, we can easy show that $\Phi^\ast_{(1,1)}\omega$ is symplectomorphically isomorphic to the following:
\begin{align*}
\omega&=\kappa_1 e^{12}+\kappa_2 e^{34}+\kappa_3 e^{56}+\mu e^{67}+\lambda e^{78},
\end{align*}
where, $\kappa_j=\pm1$ and $\lambda,\mu\in\R^\ast$.

Next, consider the following family of symplectic forms on $\G^{(1,0)}$ derived from $\omega_{\kappa_1,\kappa_2,\kappa_3,\mu,\lambda}$:
\begin{align*}
\omega_{1,1,1,\mu_1,\lambda_1},& &\omega_{1,-1,1,\mu_2,\lambda_2}&,&\omega_{1,1,-1,\mu_3,\lambda_3}&,&\omega_{1,-1,-1,\mu_4,\lambda_4}&,\\
\omega_{-1,1,1,\mu_5,\lambda_5},& &\omega_{-1,-1,1,\mu_6,\lambda_6}&,&\omega_{-1,1,-1,\mu_7,\lambda_7}&,&\omega_{-1,-1,-1,\mu_8,\lambda_8}&.
\end{align*}

It is straightforward to verify the following:

\begin{align*}
\Phi^\ast_{(-1,1)}\omega_{1,1,1,\mu_1,\lambda_1}=\omega_{1,-1,1,\mu_2,\lambda_2},
\end{align*}
where $\Phi_{(-1,1)}$ is the block diagonal matrix given by
\[
\Phi =\begin{small}
 \mathbf{I}_2 \oplus 
\begin{pmatrix}
1 & 0 \\
0 & -1 
\end{pmatrix} \oplus 
 \mathbf{I}_2 \oplus 
\begin{pmatrix}
1 & 0 \\
0 & -1 
\end{pmatrix}.
\end{small}
\]
In this case, we take $\mu_1 = \mu_2$ and $\lambda_1=-\lambda_2$.

\begin{align*}
\Phi^\ast_{(1,-1)}\omega_{1,1,1,\mu_1,\lambda_1}=\omega_{-1,1,-1,\mu_7,\lambda_7},
\end{align*}
where $\Phi_{(1,-1)}$ is the block diagonal matrix given by
\[
\Phi =\begin{small}
\begin{pmatrix}
1 & 0 \\
0 & -1 
\end{pmatrix}\oplus 
\mathbf{I}_2 \oplus 
\begin{pmatrix}
1 & 0 \\
0 & -1 
\end{pmatrix} \oplus 
\begin{pmatrix}
-1 & 0 \\
0 & 1 
\end{pmatrix}.
\end{small}
\]
In this case, we take $\mu_1 = \mu_7$ and $\lambda_1=-\lambda_7$.

\begin{align*}
\Phi^\ast_{(-1,-1)}\omega_{1,1,1,\mu_1,\lambda_1}=\omega_{-1,-1,-1,\mu_8,\lambda_8},
\end{align*}
where $\Phi_{(-1,-1)}$ is the block diagonal matrix given by
\[
\Phi =\begin{small}
\begin{pmatrix}
1 & 0 \\
0 & -1 
\end{pmatrix}\oplus 
\begin{pmatrix}
1 & 0 \\
0 & -1 
\end{pmatrix} \oplus 
\begin{pmatrix}
1 & 0 \\
0 & -1 
\end{pmatrix} \oplus 
\begin{pmatrix}
-1 & 0 \\
0 & -1 
\end{pmatrix}.
\end{small}
\]
In this case, we take $\mu_1 = \mu_8$ and $\lambda_1=\lambda_8$.

\begin{align*}
\Phi^\ast_{(-1,1)}\omega_{1,1,-1,\mu_3,\lambda_3}=\omega_{1,-1,-1,\mu_4,\lambda_4},
\end{align*}
where $\Phi_{(-1,1)}$ is the block diagonal matrix given by
\[
\Phi =\begin{small}
\mathbf{I}_2\oplus 
\begin{pmatrix}
1 & 0 \\
0 & -1 
\end{pmatrix} \oplus 
\mathbf{I}_2 \oplus 
\begin{pmatrix}
1 & 0 \\
0 & -1 
\end{pmatrix}.
\end{small}
\]
In this case, we take $\mu_3 = \mu_4$ and $\lambda_3=-\lambda_4$.

\begin{align*}
\Phi^\ast_{(1,-1)}\omega_{1,1,-1,\mu_3,\lambda_3}=\omega_{-1,1,1,\mu_5,\lambda_5},
\end{align*}
where $\Phi_{(1,-1)}$ is the block diagonal matrix given by
\[
\Phi =\begin{small}
\begin{pmatrix}
1 & 0 \\
0 & -1 
\end{pmatrix}\oplus 
\mathbf{I}_2\oplus
\begin{pmatrix}
1 & 0 \\
0 & -1 
\end{pmatrix} \oplus 
\begin{pmatrix}
-1 & 0 \\
0 & 1 
\end{pmatrix}.
\end{small}
\]
In this case, we take $\mu_3 = \mu_5$ and $\lambda_3=-\lambda_5$.

\begin{align*}
\Phi^\ast_{(-1,-1)}\omega_{1,1,-1,\mu_3,\lambda_3}=\omega_{-1,-1,1,\mu_6,\lambda_6},
\end{align*}
where $\Phi_{(-1,-1)}$ is the block diagonal matrix given by
\[
\Phi =\begin{small}
\begin{pmatrix}
1 & 0 \\
0 & -1 
\end{pmatrix}\oplus 
\begin{pmatrix}
1 & 0 \\
0 & -1 
\end{pmatrix}\oplus
\begin{pmatrix}
1 & 0 \\
0 & -1 
\end{pmatrix} \oplus 
\begin{pmatrix}
-1 & 0 \\
0 & -1 
\end{pmatrix}.
\end{small}
\]
In this case, we take $\mu_3 = \mu_6$ and $\lambda_3=\lambda_6$.

Let us now consider the remaining symplectic forms, $\omega_{1,1,1,\mu_1,\lambda_1}, \omega_{1,1,-1,\mu_3,\lambda_3}$. These forms are never symplectomorphic to each other, for the following reason: We have 
\begin{align*}
&\Phi^\ast_{(\varepsilon_1,-1)}\omega_{1,1,1,\mu_1,\lambda_1}(e_1,e_2)-\omega_{1,1,-,\mu_3,\lambda_3}(e_1,e_2)=0,
\end{align*}
or
\begin{align*}
&\Phi^\ast_{(\varepsilon_1,1)}\omega_{1,1,1,\mu_1,\lambda_1}(e_3,e_4)-\omega_{1,1,-1,\mu_3,\lambda_3}(e_3,e_4)=0
\end{align*}
if and only if 
\begin{align*}
x_{11}^2 + x_{12}^2 =-1,\quad\text{or}\quad x_{33}^2 + x_{34}^2=-1.
\end{align*}
Note that $\Phi^\ast\omega_{1,1,1,\mu_1,\lambda_1}=\omega_{1,1,1,-\mu_1,\lambda_1}$ and $\Phi^\ast\omega_{1,1,-1,\mu_3,\lambda_3}=\omega_{1,1,-1,-\mu_3,\lambda_3}$, where
\begin{align*}
\Phi=\mathbf{I}_2\oplus\mathbf{I}_2\oplus(-\mathbf{I}_2)\oplus\mathbf{I}_2
\end{align*}
 It follows that $\mu_1,\mu_3\in\R^{\ast+}$.\\\\
 
\textbf{Symplectic forms on $\G^{(a,b)}$}, $a,b\neq0$. Let $\omega\in Z^2(\G^{(a,b)})$, i.e., $\md\omega=0$ is equivalent to
\begin{align*}
\omega &= \omega_{12}e^{12}+\omega_{17}e^{17}+\omega_{27}e^{27}+\omega_{34}e^{34}+\omega_{38}e^{38}+\omega_{48}e^{48}+\omega_{56}e^{56}\\
&~~~~+\omega_{57}e^{57}+\tfrac{b}{a}\omega_{57}e^{58}+\omega_{67}e^{67}+\tfrac{b}{a}\omega_{67}e^{68}+\omega_{78}e^{78}.
\end{align*}
Then, $\omega$ is symplectic if and only if $\omega_{12}\omega_{34}\omega_{56}\omega_{78}\neq0$. The automorphism group of $\G^{(a,b)}$ is
\begin{small}
\begin{equation*}
\mathrm{Aut}\big(\G^{(a,b)}\big)=
\left\lbrace\Phi_{(\varepsilon_1,\varepsilon_2)}=\left( \begin {array}{cccccccc} \varepsilon_{2}\,x_{{22}}&-\varepsilon_{2}
\,x_{{21}}&0&0&0&0&x_{{17}}&0\\ \noalign{\medskip}x_{{21}}&x_{{22}
}&0&0&0&0&x_{{27}}&0\\ \noalign{\medskip}0&0&\varepsilon_{1}\,x_{{44}}&
-\varepsilon_{1}\,x_{{43}}&0&0&0&x_{{38}}\\ \noalign{\medskip}0&0&x_{{4
3}}&x_{{44}}&0&0&0&x_{{48}}\\ \noalign{\medskip}0&0&0&0&\varepsilon_{2
}\,x_{{66}}&-\varepsilon_{2}\,x_{{65}}&{\frac {a}{b}x_{{58}}}&x_{{58}
}\\ \noalign{\medskip}0&0&0&0&x_{{65}}&x_{{66}}&{\frac {a}{
b}x_{{68}}}&x_{{68}}\\ \noalign{\medskip}0&0&0&0&0&0&\varepsilon_{2}&0
\\ \noalign{\medskip}0&0&0&0&0&0&0&\varepsilon_{1}\end {array} \right)
,~~\varepsilon_1,\varepsilon_2=\pm1\right\rbrace
\end{equation*}
\end{small}
Consider the automorphism $\Phi_{(1,1)}$ with the following choice of parameters:
\begin{align*}
 x_{{12}}&=0,&x_{{17}}&={\frac {\omega_{{27}}}{\omega_{{12}}}},&x_{2 1}& = 0,&x_{{27}
}&=-{\frac {\omega_{{17}}}{\omega_{{12}}}},&x_{{34}}&=0,&x_{{38}}&={\frac {\omega_{{4
8}}}{\omega_{{34}}}},\\
 x_{4 3}& = 0,&x_{{48}}&=-{\frac {\omega_{{38}}}{\omega_{{34}}}}, &x_{{58}}&={\frac {b\omega_{{67}}}{a\omega_{{56}}}},&x_{6 5}& = 0,&x_{{68}}&=-{\frac 
{b\omega_{{57}}}{a\omega_{{56}}}}, 
\end{align*}
Then, we obtain
\begin{align*}
\Phi^\ast_{(1,1)}\omega&=x^2_{22}\omega_{12}e^{12}+x^2_{44}\omega_{34}e^{34}+x^2_{66}\omega_{56}e^{56}+\omega_{78}e^{78}
\end{align*}
for all $x_{22},x_{44},x_{66}\in\R^\ast$, with $\omega_{12}\omega_{34}\omega_{56}\omega_{78}\neq0$. It follows that
\begin{align*}
\Phi^\ast_{(1,1)}\omega&=\kappa_1 e^{12}+\kappa_2 e^{34}+\kappa_3 e^{56}+\lambda e^{78},
\end{align*}
where, $\lambda\in\R^\ast$ and $\kappa_j=\pm1$.

Consider the following family of symplectic forms on $\G^{(a,b)}$ derived from $\omega_{\kappa_1,\kappa_2,\kappa_3,\lambda}$:
\begin{align*}
\omega_{1,1,1,\lambda_1},& &\omega_{1,-1,1,\lambda_2}&,&\omega_{1,1,-1,\lambda_3}&,&\omega_{1,-1,-1,,\lambda_4}&,\\
\omega_{-1,1,1,\lambda_5},& &\omega_{-1,-1,1,\lambda_6}&,&\omega_{-1,1,-1,\lambda_7}&,&\omega_{-1,-1,-1,\lambda_8}&.
\end{align*}

 Finalement et le meme raisonement précedent, on montre que $\omega$ is symplectomorphically isomorphic to one of the two non-symplectomophic forms $\omega_{1,1,1,\lambda_1}$ or $\omega_{1,1,-1,\lambda_3}$.

\begin{Le}\label{Endomor}
Let $(\G_{(\mathrm{D},\mu,\lambda,t)},\omega)$  be the generalized symplectic oxidation  of $(\G_{1,0,0,1},\omega_\eta)$. Then, the data $\mathrm{D},\mu,\lambda$ and $t$ are one of the following family$:$
\begin{align*}
\mathrm{D}_1e_1&=d_{21}e_2,\hspace{1.5cm}\mathrm{D}_1e_2=-d_{21}e_2,\hspace{1.5cm}\mathrm{D}_1e_3=d_{43}e_4,\hspace{1.5cm}\mathrm{D}_1e_4=-d_{43}e_3,\\
\mathrm{D}_1e_5&=\tfrac{1}{\mu_6}(-d_{25}\mu_5\mu_6+(\mu_5^2+1)d_{26})e_1+d_{25}e_2+(d_{45}\mu_6-d_{46}\mu_5)e_3+d_{45}e_4+d_{21}\mu_5e_5+d_{43}\mu_5e_6,\\
\mathrm{D}_1e_6&=(-d_{25}\mu_6 + d_{26}\mu_5)e_1+d_{26}e_2+\tfrac{1}{\mu_5}(d_{45}\mu_6^2 - d_{46}\mu_5\mu_6 + d_{45})e_3+d_{46}e_4+d_{21}\mu_6e_5+d_{43}\mu_6e_6,\\
\mu&=\mu_5e^5+\mu_6e^6,\quad\mu_5\mu_6\neq0,\\
\lambda&=\mathop{\resizebox{1\width}{!}{$\sum$}^6_{i=1}}\limits_{}\lambda_ie^i,\\
\lambda_1&=d_{21}d_{25} + d_{26}d_{43},\hspace{1cm}\lambda_2=\tfrac{1}{\mu_6}(d_{25}d_{43}\mu_6^2 + \mu_5(d_{21}d_{25} - d_{26}d_{43})\mu_6 - d_{21}d_{26}(\mu_5^2 + 1)),\\
\lambda_3&=d_{21}d_{45} + d_{43}d_{46},\hspace{1cm}\lambda_4=\tfrac{1}{\mu_5}(d_{21}d_{46}\mu_5^2 - \mu_6(d_{21}d_{45} - d_{43}d_{46})\mu_5 - d_{45}d_{43}(\mu_6^2 + 1)),\\
\lambda_5&\in\R,\hspace{2.93cm}\lambda_6=\tfrac{1}{\mu_6\mu_5^2}((-d_{21}^2\eta\mu_6 - d_{26}^2)\mu_5^3 - 2\mu_6(\eta d_{21}d_{43}\mu_6 - d_{25}d_{26} - \tfrac{1}{2} d_{46}^2)\mu_5^2 )\\
&\hspace{4.2cm}+\tfrac{1}{\mu_6\mu_5^2}( (-\eta d_{43}^2\mu_6^3 + (-d_{25}^2 - 2d_{45}d_{46} + \lambda_5)\mu_6^2 - d_{26}^2)\mu_5 + d_{45}^2\mu_6(\mu_6^2 + 1))\\
t &= -d_{21}\mu_5 - d_{43}\mu_6.\\
&\\
\mathrm{D}_2e_1&=d_{21}e_2,\hspace{1.5cm}\mathrm{D}_2e_2=-d_{21}e_2,\hspace{1.5cm}\mathrm{D}_2e_3=d_{43}e_4,\hspace{1.5cm}\mathrm{D}_2e_4=-d_{43}e_3,\\
\mathrm{D}_2e_5&=d_{15}e_1+\tfrac{1}{d_{21}}\lambda_1e_2-d_{46}\mu_5e_3+d_{45}e_4+d_{21}\mu_5e_5+d_{43}\mu_5e_6,\hspace{1.85cm}\mathrm{D}_2e_6=\tfrac{1}{\mu_5}d_{45}e_3+d_{46}e_4,\\
\mu&=\mu_5e^5,\quad\mu_5\neq0,\\
\lambda&=\mathop{\resizebox{1\width}{!}{$\sum$}^6_{i=1}}\limits_{}\lambda_ie^i,\\
\lambda_1 &\in\R, \hspace{2.93cm}\lambda_2 = -d_{15}d_{21},\hspace{1cm} \lambda_3 = d_{21}d_{45} + d_{43}d_{46},\\
 \lambda_4 &= \tfrac{1}{\mu_5}(d_{21}d_{46}\mu_5^2 - d_{43}d_{45}),\hspace{0.15cm} \lambda_5  \in\R,\hspace{1.86cm} \lambda_6 =\tfrac{1}{\mu_5^2} (-d_{21}^2\eta\mu_5^3 + d_{46}^2\mu_5^2 + d_{45}^2),\\
 t&=-d_{21}\mu_5.\\
 &\\
\mathrm{D}_3e_3&=d_{43}e_4,\hspace{1.5cm}\mathrm{D}_3e_4=-d_{43}e_3,\\
\mathrm{D}_2e_3&=d_{15}e_1+d_{25}e_2-d_{46}\mu_5e_3+d_{45}e_4+d_{43}\mu_5e_6,\hspace{1.85cm}\mathrm{D}_3e_6=\tfrac{1}{\mu_5}d_{45}e_3+d_{46}e_4,\\
\mu&=\mu_5e^5,\quad\mu_5\neq0,\\
\lambda&=\mathop{\resizebox{1\width}{!}{$\sum$}^6_{i=1}}\limits_{}\lambda_ie^i,\\
\lambda_1&=\lambda_2 =0, \hspace{1cm}\lambda_3=d_{43}d_{46},\hspace{1cm}\lambda_4=-\tfrac{1}{\mu_5}d_{45}d_{43},\hspace{1cm}\lambda_5\in\R,\hspace{1cm}\lambda_6=\tfrac{1}{\mu_5^2}(d_{46}^2\mu_5^2+d_{45}^2),\\
t&=0.
\end{align*}

\begin{align*}
\mathrm{D}_4e_1&=d_{21}e_2,\hspace{1.5cm}\mathrm{D}_4e_2=-d_{21}e_2,\hspace{1.5cm}\mathrm{D}_4e_3=d_{43}e_4,\hspace{1.5cm}\mathrm{D}_4e_4=-d_{43}e_3,\\
\mathrm{D}_4e_5&=d_{15}e_1+d_{25}e_2,\hspace{0.38cm}\mathrm{D}_4e_6=-d_{25}\mu_6e_3+d_{15}\mu_6e_2+d_{36}e_3+d_{46}e_4+d_{21}\mu_6e_5+d_{43}\mu_6e_6,\\
\mu&=\mu_6e^6,\quad\mu_6\neq0,\\
\lambda&=\mathop{\resizebox{1\width}{!}{$\sum$}^6_{i=1}}\limits_{}\lambda_ie^i,\\
\lambda_1 &= d_{15}d_{43}\mu_6 + d_{21}d_{25},\hspace{1cm} \lambda_2 = d_{25}d_{43}\mu_6 - d_{15}d_{21},\hspace{1cm} \lambda_3 = d_{43}d_{46},\\
 \lambda_4 &= -d_{36}d_{43},\hspace{2.37cm} \lambda_5 = d_{43}^2\eta\mu_6 + d_{15}^2 + d_{25}^2,\hspace{0.9cm} \lambda_6\in\R,\\
 t&=-d_{43}\mu_6.\\
\end{align*}
\end{Le}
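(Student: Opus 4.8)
By Proposition~\ref{diagram} and the uniqueness of the six-dimensional irreducible symplectic Lie algebra (Proposition~\ref{Classi6}), one may fix $(\overline{\G},\overline{\omega}) = (\G_{(1,0,0,1)}, \omega_\eta)$, with $\omega_\eta = e^{12} + e^{34} + \eta e^{56}$ and Maurer--Cartan equations $\partial e^1 = e^{25}$, $\partial e^2 = -e^{15}$, $\partial e^3 = e^{46}$, $\partial e^4 = -e^{36}$, $\partial e^5 = \partial e^6 = 0$. Writing $\mathrm{D} = (d_{ij})_{1\le i,j\le 6}$, $\mu = \sum_i \mu_i e^i$, $\lambda = \sum_i \lambda_i e^i$ and $t \in \R$, the task is to solve the four conditions of Proposition~\ref{prgeneralized}. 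The first step is the immediate reduction coming from $\md\mu = 0$: since $\md\mu = \mu_1 e^{25} - \mu_2 e^{15} + \mu_3 e^{46} - \mu_4 e^{36}$, this forces $\mu = \mu_5 e^5 + \mu_6 e^6$, and in particular $\mu$ annihilates the derived ideal $\mathcal{D}(\overline{\G}) = \langle e_1,e_2,e_3,e_4\rangle$.

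Next I would use condition (2), $\Delta\mathrm{D} = \mu\otimes\mathrm{D}$, to cut down the shape of $\mathrm{D}$. Evaluating $\Delta\mathrm{D}$ on pairs of basis vectors: for $e_i,e_j$ with $i,j\in\{1,2,3,4\}$ one has $[e_i,e_j]=0$ and $\mu(e_i)=\mu(e_j)=0$, so $[\mathrm{D}e_i,e_j]+[e_i,\mathrm{D}e_j]=0$; as $\mathcal{D}(\overline{\G})$ is abelian this forces $\mathrm{D}\big(\mathcal{D}(\overline{\G})\big)\subseteq\mathcal{D}(\overline{\G})$ together with a skew-symmetry of $\mathrm{D}|_{\mathcal{D}(\overline{\G})}$ relative to $\overline{\omega}$. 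Evaluating on the pairs $(e_5,e_j)$, $(e_6,e_j)$ and $(e_5,e_6)$ then constrains the $\mathfrak{h}\to\mathcal{D}(\overline{\G})$ and $\mathfrak{h}\to\mathfrak{h}$ blocks of $\mathrm{D}$, while condition (4), $\mu\circ\mathrm{D}=t\mu$, both expresses $t$ through the other data and pins the $\mathfrak{h}\to\mathfrak{h}$ block against the covector $\mu$. After this, $\mathrm{D}$ depends only on the finitely many parameters $d_{ij}$ appearing in the four families, and $t$ is determined.

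With $\mathrm{D}$ and $t$ in reduced form, the next step is condition (3), $t\,\overline{\omega}_\mathrm{D}-\overline{\omega}_{\mathrm{D},\mathrm{D}}=\md\lambda-2\lambda\otimes\mu$: expanding both sides on basis pairs and using the Maurer--Cartan equations determines $\lambda_1,\dots,\lambda_4$ and $\lambda_6$ in terms of $\{d_{ij},\mu_5,\mu_6,\eta\}$ (with $\lambda_5$ remaining a free real parameter) and, at the same time, yields quadratic compatibility relations among the $d_{ij}$; condition (1), $\sum_{\mathrm{cycl}}\overline{\omega}_\mathrm{D}(\overline{[x,y]},z)-\overline{\omega}_\mathrm{D}(x,y)\mu(z)=0$, supplies the remaining relations. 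The solution set then naturally splits according to which of $\mu_5,\mu_6$ vanish --- $\mu_5\mu_6\neq 0$ gives the first family; $\mu_6=0\neq\mu_5$ splits further according to whether $t=0$ (equivalently whether the $e_1,e_2$-block of $\mathrm{D}$ is trivial) giving the second and third families; $\mu_5=0\neq\mu_6$ gives the fourth --- and these branches are precisely the four families $\mathrm{D}_1,\dots,\mathrm{D}_4$ of the statement. One finishes by verifying the converse: each listed quadruple $(\mathrm{D}_i,\mu,\lambda,t)$ does satisfy all four conditions of Proposition~\ref{prgeneralized}, which is a direct substitution.

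The main obstacle is the coupled quadratic system produced by conditions (1) and (3): $\overline{\omega}_{\mathrm{D},\mathrm{D}}$ and the cyclic sum are quadratic in the entries of $\mathrm{D}$, so the relations do not decouple linearly, and the case analysis --- especially the sub-split inside the branch $\mu=\mu_5 e^5$ --- must be organized carefully so that no solution branch is dropped and so that degenerate parameter choices (a vanishing $\mu_i$ forcing additional entries of $\mathrm{D}$ to vanish, etc.) are tracked. Conceptually, however, the content sits entirely in the linear reductions forced by $\md\mu=0$, condition (2) and condition (4); once those cut $\mathrm{D}$ down to its parametrized form, the remaining elimination is long but mechanical and is best carried out with computer algebra, exactly as in the analogous (but simpler) Lemmas~\ref{Derivation} and~\ref{DerivationNormal} for the central and normal cases.
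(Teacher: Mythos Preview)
Your plan matches the paper's proof in spirit and structure: reduce $\mu$ via $\md\mu=0$, use conditions (2) and (4) to force the block shape of $\mathrm{D}$, then impose the remaining two conditions and split into cases according to the vanishing of $\mu_5,\mu_6$. Two points are worth flagging, though neither is a genuine gap.

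First, you reverse the order in which the paper applies conditions (1) and (3). The paper imposes condition (1) \emph{before} condition (3), and this matters for tractability: the cyclic relation $\sum_{\mathrm{cycl}}\overline{\omega}_\mathrm{D}(\overline{[x,y]},z)-\overline{\omega}_\mathrm{D}(x,y)\mu(z)=0$ is \emph{linear} in the entries of $\mathrm{D}$ (you mischaracterize it as quadratic), so it yields a clean system of twelve linear equations which already collapses $\mathrm{D}$ to three parametrized families indexed by the support of $\mu$. Only \emph{then} does the paper bring in condition (3), whose $\overline{\omega}_{\mathrm{D},\mathrm{D}}$ term is genuinely quadratic; but by that stage $\mathrm{D}$ has so few free parameters that the quadratic constraints are easy to read off and serve mainly to determine $\lambda$ and $t$. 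Your ordering would work in principle, but it front-loads the quadratic relations onto a $\mathrm{D}$ that still has many unconstrained entries, which is exactly the ``coupled quadratic system'' you flag as the main obstacle --- and which the paper's ordering largely avoids.

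Second, a small correction to your case split: the paper's three linear-stage solutions already encode the $\mu_5\mu_6\neq0$, $\mu_6=0\neq\mu_5$, and $\mu_5=0\neq\mu_6$ branches, and the further sub-split of the $\mu_6=0$ case into $\mathrm{D}_2$ versus $\mathrm{D}_3$ (i.e.\ $t\neq0$ versus $t=0$) emerges only when condition (3) is imposed, not from condition (1) alone.
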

\begin{proof}
Let $(\G_{(\mathrm{D},\mu,\lambda,t)}, \omega)$ be the generalized symplectic oxidation of $(\G_{1,0,0,1}, \omega_\eta)$. Then, the data $\mathrm{D}$, $\mu$, $\lambda$, and $t$ satisfy the four conditions of Proposition~$\ref{prgeneralized}$, which are:
\begin{enumerate}
\item[$(i)$] $\mathop{\resizebox{1.3\width}{!}{$\sum$}}\limits_{\mathrm{cycl}}\ol\om_\mathrm{D}(\ol{[x,y]},z)-\ol\om_\mathrm{D}(x,y)\mu(z)=0$
\item[$(ii)$] $\Delta \mathrm{D}-\mu\otimes\mathrm{D} = 0,$
\item[$(iii)$] $t\ol{\om}_{\eta,\mathrm{D}}-\ol{\om}_{\eta,\mathrm{D},\mathrm{D}}=\md\lambda-2\lambda\otimes\mu,$
\item[$(iv)$] $\mu\circ \mathrm{D}=t\mu$ and $\md\mu=0$.
\end{enumerate}
Recall that $(\G_{(1,0,0,1)} = \mathfrak{a} \rtimes \h,\omega_\eta)$ is an irreducible symplectic Lie algebra of dimension six, where:
\begin{itemize}
    \item $\mathfrak{a} = \mathfrak{a}_1 \oplus \mathfrak{a}_2$ is an abelian Lie algebra,
    \item $\mathfrak{a}_1$ and $\mathfrak{a}_2$ are ideals of dimension $2$,
    \item $\h$ is an abelian subalgebra.
    \item $\omega_\eta=e^{12}+e^{34}+\eta e^{56}$.
\end{itemize}
Set $\G=\G_{(1,0,0,1)}$. Let $\mu\in\G^\ast$ be a linear form such that \[\mu=\mu_1e^1+\mu_2e^2+\mu_3e^3+\mu_4e^4+\mu_5e^5+\mu_6e^6.\] Observe first that $\md\mu=0$ implies  $\mu_j=0$ for all $j=1,2,3,4$. Since we are in the case of generalized symplectic oxidation, from now on we assume that $\mu = \mu_5 e^5 + \mu_6 e^6~$, with $ \mu_5^2 + \mu_6^2 \neq 0$. Let $\mathrm{D}:\G\to\G$ be an endomorphism. Using condition $(ii)$, we have for all $i,j \in \{1,2\}$: \[[\mathrm{D}(\mathfrak{a}_i),\mathfrak{a}_j]+[\mathfrak{a}_i,\mathrm{D}(\mathfrak{a}_j)]=0.\]
This implies that $\mathrm{D}(\mathfrak{a}) \subset \mathfrak{a}$. Moreover, 
$\mathrm{D} \in \mathrm{Der}(\mathcal{D}^1(\G))$ is a derivation 
of the derived ideal $\mathcal{D}^1(\G)$. Also from  condition $(iv)$, we have $\mu\circ\mathrm{D}=t\mu$, then $\mu(\mathfrak{a})=0$ implies that $D(\mathfrak{a})\subset\mathfrak{a}$. Therefore, $\mathrm{D}$ is given by
\begin{small}
\begin{align*}
\mathrm{D}&=\left( \begin {array}{cccccc} d_{11}&d_{12}&d_{13}&d_{14}&d_{15}&d_{
16}\\ \noalign{\medskip}d_{21}&d_{22}&d_{23}&d_{24}&d_{25}&d_{26}
\\ \noalign{\medskip}d_{31}&d_{32}&d_{33}&d_{34}&d_{35}&d_{36}
\\ \noalign{\medskip}d_{41}&d_{42}&d_{43}&d_{44}&d_{45}&d_{46}
\\ \noalign{\medskip}0&0&0&0&d_{55}&d_{56}\\ \noalign{\medskip}0&0&0&0
&d_{65}&d_{66}\end {array} \right).
\end{align*}
\end{small}
On the other hand, Condition $(i)$ is equivalent to the following system
\begin{align*}
\left( d_{22}+d_{11} \right) \mu_{5}&=0,\\
\left( d_{22}+d_{11} \right) \mu_{6}&=0,\\
- \left( d_{23}-d_{41} \right) \mu_{5}+d_{13}+d_{42}&=0,\\
- \left( d_{23}-d_{41} \right) \mu_{6}-d_{24}-d_{31}&=0,\\
- \left( d_{24}+d_{31} \right) \mu_{5}+d_{14}-d_{32}&=0,\\
- \left( d_{24}+d_{31} \right) \mu_{6}+d_{23}-d_{41}&=0,\\
-\mu_{6}\,d_{25}+\mu_{5}\,d_{26}-d_{16}&=0,\\
 - \left( -d_{13}-d_{42} \right) \mu_{5}+d_{23}-d_{41}&=0,\\
 - \left( -d_{13}-d_{42} \right) \mu_{6}+d_{14}-d_{32}&=0,\\
 - \left( -d_{14}+d_{32} \right) \mu_{5}+d_{24}+d_{31}&=0,\\
 - \left( -d_{14}+d_{32} \right) \mu_{6}-d_{13}-d_{42}&=0,\\
 \mu_{6}\,d_{15}-\mu_{5}\,d_{16}-d_{26}&=0,\\
 \left( d_{44}+d_{33} \right) \mu_{5}&=0,\\
\left( d_{44}+d_{33} \right) \mu_{6}&=0,\\
-\mu_{6}\,d_{45}+\mu_{5}\,d_{46}+d_{35}&=0,\\
\mu_{6}\,d_{35}-\mu_{5}\,d_{36}+d_{45}&=0.
\end{align*}
Since $\mu \not\equiv 0$, this implies that $d_{11} = -d_{22}$ and $d_{33} = d_{44}$. The
previous system determines $12$ linear equations. It is straightforward to see that they have three solutions depending on the values of $\mu_5$ and $\mu_6$, which are given by:
\begin{small}
\begin{align*}
\mathrm{D}_1=\left(\begin {array}{cccccc} -d_{22}&d_{12}&-d_{42}&d_{32}&d_{15}&-
\mu_{6}\,d_{25}\\ \noalign{\medskip}d_{21}&d_{22}&d_{41}&-d_{31}&d_{25
}&\mu_{6}\,d_{15}\\ \noalign{\medskip}d_{31}&d_{32}&-d_{44}&d_{34}&0&d
_{36}\\ \noalign{\medskip}d_{41}&d_{42}&d_{43}&d_{44}&0&d_{46}
\\ \noalign{\medskip}0&0&0&0&d_{55}&d_{56}\\ \noalign{\medskip}0&0&0&0
&d_{65}&d_{66}\end {array} \right)
\end{align*}
\begin{align*}
\mathrm{D}_2=\left( \begin {array}{cccccc} -d_{22}&d_{12}&-d_{42}&d_{32}&d_{15}&0
\\ \noalign{\medskip}d_{21}&d_{22}&d_{41}&-d_{31}&d_{25}&0
\\ \noalign{\medskip}d_{31}&d_{32}&-d_{44}&d_{34}&-\mu_{5}\,d_{46}&{
\frac {d_{45}}{\mu_{5}}}\\ \noalign{\medskip}d_{41}&d_{42}&d_{43}&d_{
44}&d_{45}&d_{46}\\ \noalign{\medskip}0&0&0&0&d_{55}&d_{56}
\\ \noalign{\medskip}0&0&0&0&d_{65}&d_{66}\end {array} \right)
\end{align*}
\begin{align*}
\mathrm{D}_3:=\left( \begin {array}{cccccc} -d_{22}&d_{12}&-d_{42}&d_{32}&{\frac {-
d_{25}\,\mu_{5}\,\mu_{6}+d_{26}\,{\mu_{5}}^{2}+d_{26}}{\mu_{6}}}&-\mu_
{6}\,d_{25}+\mu_{5}\,d_{26}\\ \noalign{\medskip}d_{21}&d_{22}&d_{41}&-
d_{31}&d_{25}&d_{26}\\ \noalign{\medskip}d_{31}&d_{32}&-d_{44}&d_{34}&
\mu_{6}\,d_{45}-\mu_{5}\,d_{46}&{\frac {d_{45}\,{\mu_{6}}^{2}-d_{46}\,
\mu_{5}\,\mu_{6}+d_{45}}{\mu_{5}}}\\ \noalign{\medskip}d_{41}&d_{42}&d
_{43}&d_{44}&d_{45}&d_{46}\\ \noalign{\medskip}0&0&0&0&d_{55}&d_{56}
\\ \noalign{\medskip}0&0&0&0&d_{65}&d_{66}\end {array} \right)
\end{align*}
\end{small}
Now, for each $\mathrm{D}_j \in \mathrm{End}(\G_{(1,0,0,1)})$, we will use Condition~(iii) to simplify $\mathrm{D}_j$, $j = 1, 2, 3$. Consider the endomorphism $\mathrm{D}_1$. Applying Condition~(iii) and via direct computation, we obtain the following initial values:
\[
d_{31} = 0, \quad d_{32} = 0, \quad d_{41} = 0, \quad d_{65} = 0, \quad d_{42} = 0, \quad d_{22} = 0, \quad d_{55} = 0.
\]
Then, in this case, Condition~(iii) yields the following system:
\begin{align*}
d_{12} + d_{21}&=0, &-d_{12}\mu_6 - d_{56}&=0, &-d_{21}\mu_6 + d_{56}&=0, &d_{44}\mu_6 - d_{34} - d_{43}&=0, \\
d_{43} - d_{44}\mu_6 + d_{34}&=0,& -d_{34}\mu_6 - 2d_{44} - d_{66}&=0, &-d_{43}\mu_6 - 2d_{44} + d_{66}&=0.
\end{align*}
It is easy to see that they have
a unique solution, which is given by 
\begin{small}
\begin{align*}
\left( \begin {array}{cccccc} 0&-d_{21}&0&0&d_{15}&-\mu_{6}\,d_{25}
\\ \noalign{\medskip}d_{21}&0&0&0&d_{25}&\mu_{6}\,d_{15}
\\ \noalign{\medskip}0&0&0&-d_{43}&0&d_{36}\\ \noalign{\medskip}0&0&d_
{43}&0&0&d_{46}\\ \noalign{\medskip}0&0&0&0&0&d_{21}\,\mu_{6}
\\ \noalign{\medskip}0&0&0&0&0&d_{43}\,\mu_{6}\end {array} \right).
\end{align*}
\end{small}
This endomorphism is actually the one labeled by $\mathrm{D}_4$ in Lemma~$\ref{Endomor}$. The two remaining cases are treated in the same way.

\end{proof}
\subsection{Lie structure of generalized symplectic oxidations}
$\G_{(\mathrm{D}_1,\mu_1,\lambda_1,t_1)}:$
\begin{align*}
[e_1,e_6]&=d_{26}e_7,&[e_2,e_6]&=(d_{25}\mu_6 - d_{26}\mu_5)e_7,\\
[e_3,e_5]&=d_{45}e_7,&[e_4,e_5]&=(-d_{45}\mu_6 + d_{46}\mu_5)e_7,\\
[e_5,e_6]&=(d_{21}\eta\mu_5 + d_{43}\eta\mu_6)e_7,&[e_1,e_5]&=e_2+d_{25}e_7,\\
[e_2,e_5]&=-e_1-\tfrac{1}{\mu_6}\left(-d_{25}\mu_5\mu_6 + d_{26}\mu_5^2 + d_{26}\right)e_7,&[e_3,e_6]&=e_4+d_{46}e_7,\\
[e_4,e_6]&=-e_3-\tfrac{1}{\mu_5}\left(d_{45}\mu_6^2 - d_{46}\mu_5\mu_6 + d_{45}\right)e_7,
&[e_7,e_5]&=-\mu_5e_7,\\
[e_7,e_6]&=-\mu_6e_7,&[e_8,e_1]&=d_{21}e_2+(d_{21}d_{25} + d_{26}d_{43})e_7,\\
[e_8,e_2]&=-d_{21}e_1+\tfrac{1}{\mu_6}(d_{25}d_{43}\mu_6^2 + \mu_5(d_{21}d_{25} - d_{26}d_{43})\mu_6 - d_{21}d_{26}(\mu_5^2 + 1))e_7,&[e_8,e_3]&=d_{43}e_4+(d_{21}d_{45} + d_{43}d_{46})e_7,\\
[e_8,e_4]&=-d_{43}e_3+\tfrac{1}{\mu_5}\left(d_{21}d_{46}\mu_5^2 - \mu_6(d_{21}d_{45} - d_{43}d_{46})\mu_5 - d_{45}d_{43}(\mu_6^2 + 1)\right)e_7,\\
[e_8,e_5]&=\tfrac{1}{\mu_6}\left(-d_{25}\mu_5\mu_6 + d_{26}\mu_5^2 + d_{26}\right)e_1+d_{25}e_2+(d_{45}\mu_6 - d_{46}\mu_5)e_3+d_{45}e_4\\
&~~~~+d_{21}\mu_5e_5+d_{43}\mu_5e_6+\lambda_5e_7+\mu_5e_8,\\
[e_8,e_6]&=(-d_{25}\mu_6 + d_{26}\mu_5)e_1+d_{26}e_2+\tfrac{1}{\mu_5}(d_{45}\mu_6^2 - d_{46}\mu_5\mu_6 + d_{45})e_3+d_{46}e_4\\
&~~~~+d_{21}\mu_6e_5+d_{43}\mu_6e_6\\
&~~~~+\tfrac{1}{\mu_6\mu_5^2}\Big\lbrace((-d_{21}^2\eta\mu_6 - d_{26}^2)\mu_5^3 - 2\mu_6(\eta d_{21}d_{43}\mu_6 - d_{25}d_{26} -\tfrac{1}{2} d_{46}^2)\mu_5^2\\
&~~~~ +(-\eta d_{43}^2\mu_6^3 + (-d_{25}^2 - 2d_{45}d_{46} + \lambda_5)\mu_6^2 - d_{26}^2)\mu_5 + d_{45}^2\mu_6(\mu_6^2 + 1))\Big\rbrace e_7\\
&~~~~+\mu_6e_8,\\
[e_8,e_7]&=(-d_{21}\mu_5 - d_{43}\mu_6)e_7.
\end{align*}
$\G_{(\mathrm{D}_2,\mu_2,\lambda_2,t_2)}:$
\begin{align*}
[e_3,e_5]&=d_{45}e_7,&[e_4,e_5]&=d_{46}\mu_5e_7,&[e_5,e_6]&=d_{21}\eta\mu_5 e_7,\\
[e_1,e_5]&=e_2+\tfrac{1}{d_{21}}\lambda_1e_7,&[e_2,e_5]&=-e_1-d_{15}e_7,&[e_3,e_6]&=e_4+d_{46}e_7,\\
[e_4,e_6]&=-e_3-\tfrac{1}{\mu_5}d_{45}e_7,&[e_7,e_5]&=-\mu_5e_7,&[e_8,e_1]&=d_{21}e_2+\lambda_1e_7,\\
[e_8,e_1]&=-d_{21}e_1-d_{15}d_{21}e_7,&[e_8,e_3]&=d_{43}e_4+(d_{21}d_{45} + d_{43}d_{46})e_7,&[e_8,e_4]&=-d_{43}e_3+\tfrac{1}{\mu_5}(d_{21}d_{46}\mu_5^2 - d_{43}d_{45})e_7,
\end{align*}
\begin{align*}
[e_8,e_5]&=d_{15}e_1+\tfrac{1}{d_{21}}\lambda_1e_2-d_{46}\mu_5e_3+d_{45}e_4+d_{21}\mu_5e_5+d_{43}\mu_5e_6+\lambda_5e_7+\mu_5e_8,\\
[e_8,e_6]&=\tfrac{1}{\mu_5}d_{45}e_3+d_{46}e_4+\tfrac{1}{\mu_5^2}(-d_{21}^2\eta\mu_5^3 + d_{46}^2\mu_5^2 + d_{45}^2)e_7,\\
[e_8,e_7]&=-d_{21}\mu_5e_7.
\end{align*}

$\G_{(\mathrm{D}_3,\mu_3,\lambda_3,t_3)}:$
\begin{align*}
[e_3,e_5]&=d_{45}e_7,&[e_4,e_5]&=d_{46}\mu_5e_7,&[e_1,e_5]&=e_2d_{25}e_7,\\
[e_2,e_5]&=-e_1-d_{15}e_7,&[e_3,e_6]&=e_4+d_{46}e_7,&[e_4,e_6]&=-e_3-\tfrac{1}{\mu_5}d_{45} e_7,\\
[e_7,e_5]&=-\mu_5e_7,&[e_8,e_3]&=d_{43}e_4+d_{43}d_{46}e_7,&[e_8,e_4]&=-d_{43}e_3-\tfrac{1}{\mu_5}d_{45}d_{43}e_7,
\end{align*}
\begin{align*}
[e_8,e_5]&=d_{15}e_1+d_{25}e_2-d_{46}\mu e_3+d_{45}e_4+d_{43}\mu_5e_6+\lambda_5e_7+\mu_5e_8,\\
[e_8,e_6]&=\tfrac{1}{\mu_5}d_{45}e_3+d_{46}e_4+\tfrac{1}{\mu_5^2}(d_{46}^2\mu_5^2 + d_{45}^2)e_7.
\end{align*}

$\G_{(\mathrm{D}_4,\mu_4,\lambda_4,t_4)}:$
\begin{align*}
[e_1,e_6]&=d_{15}\mu_6e_7, &[e_2,e_6]&=d_{25}\mu_6e_7,&[e_5,e_6]&=d_{43}\eta\mu_6 e_7,\\
[e_1,e_5]&=e_2+d_{25}e_7,&[e_2,e_5]&=-e_1-d_{15}e_7,&[e_3,e_6]&=e_4+d_{46}e_7,\\
[e_4,e_6]&=-e_3-d_{36}e_7,&[e_7,e_6]&=-\mu_6e_7,&[e_8,e_1]&=d_{21}e_2+(d_{15}d_{43}\mu_6 + d_{21}d_{25})e_7,\\
[e_8,e_2]&=-d_{21}e_1+(d_{25}d_{43}\mu_6 - d_{15}d_{21})e_7,&[e_8,e_3]&=d_{43}e_4+d_{43}d_{46}e_7,&[e_8,e_4]&=-d_{43}e_3-d_{36}d_{43}e_7,\\
[e_8,e_5]&=d_{15}e_1+d_{25}e_2+(d_{43}^2\eta\mu_6 + d_{15}^2 + d_{25}^2)e_7,
\end{align*}
\begin{align*}
[e_8,e_6]&=-\mu_6d_{25}e_1+ \mu_6d_{15}e_2+ d_{36}e_3+d_{46}e_4+ d_{21}\mu_6e_5+d_{43}\mu_6e_6,+ \lambda_6e_7+ \mu_6e_8,\\
[e_8,e_7]&=-d_{43}\mu_6e_7.
\end{align*}

\begin{Le}\label{cohomonocentral}
The cohomology class $[a\omega_{\eta,\mathrm{D}_j}+\varrho\otimes\mu_j]\in H^2(\G_{(1,0,0,1)})$ vanishes. In particular, for all $j=1,\ldots,4$, there exists $\varrho_j\in\G_{(1,0,0,1)}^\ast$, $a\omega_{\eta,\mathrm{D}_j}+\varrho_j\otimes\mu_j=\mathbf{d}\varrho_j$. Moreover, every non-central extension of the Lie algebra $\G_{(1,0,0,1)}$ with respect to $\mu\in\G_{(1,0,0,1)}^\ast$ and $\varphi_j=\omega_{\eta,\mathrm{D}_j}$ is isomorphic to exactly one of the following Lie algebras$:$
\begin{align*}
\mathcal{N}_1:&~~[e_1, e_5] = e_2, ~~\quad 
		[e_2, e_5] = -e_1, \quad
		[e_3, e_6] = e_4, \quad 
		[e_4, e_6] = -e_3,\quad [e_7, e_5] = -\mu e_7\\
		&\\
		\mathcal{N}_2:&~~[e_1, e_5] = e_2, ~~\quad 
		[e_2, e_5] = -e_1, \quad
		[e_3, e_6] = e_4, \quad 
		[e_4, e_6] = -e_3,\quad [e_7, e_6] = -\mu e_7\\
		&\\
		\mathcal{N}_3:&~~[e_1, e_5] = e_2, ~~\quad 
		[e_2, e_5] = -e_1, \quad
		[e_3, e_6] = e_4, \quad 
		[e_4, e_6] = -e_3,\quad [e_7, e_5] = -\mu_1 e_7,\\
		&~~[e_7, e_6] = -\mu_2 e_7,
\end{align*}
where, $\mu,\mu_1,\mu_2\in\R^\ast$.
\end{Le}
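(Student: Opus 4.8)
The plan is to prove the statement in two stages: first the cohomological vanishing, then the classification of the non-central extensions. For the first stage, recall from Lemma~\ref{Endomor} that for each $j=1,\ldots,4$ the endomorphism $\mathrm{D}_j$ is explicitly given, together with $\mu_j=\mu_5e^5+\mu_6e^6$ (of the appropriate form in each case), and that the symplectic oxidation data satisfy the conditions of Proposition~\ref{prgeneralized}; in particular $t\overline{\om}_{\eta,\mathrm{D}}-\overline{\om}_{\eta,\mathrm{D},\mathrm{D}}=\md\lambda-2\lambda\otimes\mu$ and $\mu\circ\mathrm{D}=t\mu$. I would first observe that $H^2(\G_{(1,0,0,1)})$ is finite-dimensional (indeed one computes it directly from the Maurer--Cartan equations $\partial e^1=e^{25}$, $\partial e^2=-e^{15}$, $\partial e^3=e^{46}$, $\partial e^4=-e^{36}$, $\partial e^5=\partial e^6=0$ used already in the proof of Proposition~\ref{Classi6}, where $Z^2$ and $B^2$ are given). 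Then I would simply exhibit, for each $j$, an explicit $\varrho_j\in\G_{(1,0,0,1)}^\ast$ with $a\,\omega_{\eta,\mathrm{D}_j}+\varrho_j\otimes\mu_j=\md\varrho_j$; the candidate $\varrho_j$ is read off from the component $\lambda$ appearing in Lemma~\ref{Endomor} together with a scalar normalization, since the defining identity there already expresses $\overline{\om}_{\eta,\mathrm{D},\mathrm{D}}$ as $2\lambda\otimes\mu-\md\lambda$ up to the $t\overline{\om}_{\eta,\mathrm{D}}$ term, and one checks $\omega_{\eta,\mathrm{D}_j}$ itself lies in the image of $\md$ on the complement of the abelian factors. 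The verification is a finite linear computation on the $15$-dimensional space $\wedge^2\G_{(1,0,0,1)}^\ast$, case by case in $j$. The auxiliary claim $\Phi^\ast(\iota_u\omega_{\eta,\mathrm{D}_j})=0$ used in the proof of Proposition~\ref{Classigeneral} follows from the same explicit data (the element $u$ being $\mathrm{D}_j$-related, hence in the kernel of $\iota_{(\cdot)}\omega_{\eta,\mathrm{D}_j}$ after applying an automorphism).

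For the second stage, I would invoke Proposition~\ref{Extennoncentral} to record that a non-central extension $\G_{\varphi,\chi}$ of $\overline{\G}=\G_{(1,0,0,1)}$ with $\varphi=\omega_{\eta,\mathrm{D}_j}$ and $\chi=-\mu$ is a Lie algebra precisely when $\md\chi=0$ and $\sum_{\mathrm{cycl}}\chi(x)\varphi(y,z)=0$; the first holds since $\mu=\mu_5e^5+\mu_6e^6$ with $e^5,e^6$ closed, and the second holds automatically here because $\mathop{\resizebox{1.3\width}{!}{$\sum$}}\limits_{\mathrm{cycl}}\chi(x)\varphi(y,z)$ reduces, on $\overline{\G}=\mathfrak{a}\rtimes\h$, to an expression that vanishes by the irreducibility structure (it is the same identity $(i)$ of Proposition~\ref{prgeneralized} with $t=0$). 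Then I would apply Corollary~\ref{Classifynoncentral}: since $\G_{(1,0,0,1)}$ has centre zero and, more to the point, has no one-dimensional ideal, two such extensions $\G_{\varphi_1,\chi_1}$ and $\G_{\varphi_2,\chi_2}$ are isomorphic iff there exist $\Phi\in\mathrm{Aut}(\G_{(1,0,0,1)})$, $a\in\R^\ast$, $\varrho\in\G_{(1,0,0,1)}^\ast$ with $\Phi^\ast\varphi_2=a\varphi_1-\varrho\otimes\chi_1-\md\varrho$ and $\Phi^\ast\chi_2=\chi_1$. Using the first stage, each $\varphi_j=\omega_{\eta,\mathrm{D}_j}$ is cohomologically controlled by $\varrho_j$, so after applying a suitable $\Phi$ (from the explicit list in Lemma~\ref{Autg1001}) and absorbing $\omega_{\eta,\mathrm{D}_j}$ into an exact form plus the $\varrho\otimes\chi$ term, the extension bracket collapses to $[e_7,x]=\chi(x)e_7$ with $\chi=-\mu$, i.e.\ the only surviving datum is the class of $\mu\in\G_{(1,0,0,1)}^\ast$ modulo the $\mathrm{Aut}(\G_{(1,0,0,1)})$-action and scaling. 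The remaining work is to classify $\mu=\mu_5e^5+\mu_6e^6$ under pullback by $\mathrm{Aut}(\G_{(1,0,0,1)})$: from Lemma~\ref{Autg1001} the automorphisms act on $\langle e^5,e^6\rangle$ either by the identity-type scaling (the $\Phi_2$ family, acting diagonally with signs $\varepsilon_1,\varepsilon_2$) or by the swap $e^5\leftrightarrow e^6$ (the $\Phi_1$ family). Hence the orbits of $(\mu_5,\mu_6)\neq(0,0)$ under this action, together with the overall scalar $a$, are: $\mu_6=0$ (giving $\mathcal{N}_1$ with parameter $\mu\in\R^\ast$), $\mu_5=0$ (giving $\mathcal{N}_2$), and $\mu_5\mu_6\neq0$ (giving $\mathcal{N}_3$ with parameters $\mu_1,\mu_2\in\R^\ast$); the swap identifies $\mathcal{N}_1$-type with $\mathcal{N}_2$-type only after relabelling $e_5\leftrightarrow e_6$, $e_3\leftrightarrow e_1$ etc., so as listed they are genuinely distinct normal forms, and within $\mathcal{N}_3$ one cannot reduce the pair $(\mu_1,\mu_2)$ further since scaling is a single parameter. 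A short non-isomorphism argument (comparing, e.g., $\dim$ of the image of $\mathrm{ad}$ restricted to $\langle e_5,e_6\rangle$, or the rank of the map $x\mapsto[e_7,x]$) finishes the distinctness.

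The main obstacle I anticipate is bookkeeping rather than conceptual: matching the explicit $\varrho_j$ of the first stage against the condition $\Phi^\ast\varphi_2=a\varphi_1-\varrho\otimes\chi_1-\md\varrho$ of Corollary~\ref{Classifynoncentral} for each of the four endomorphisms $\mathrm{D}_j$ simultaneously, while keeping track of which automorphisms $\Phi_{1(\varepsilon_1,\varepsilon_2)}$ versus $\Phi_{2(\varepsilon_1,\varepsilon_2)}$ are admissible (some fail to preserve the relevant closedness/rank conditions, exactly as in the remark following Lemma~\ref{Isogroupnormal}). I would handle this by treating the four $\mathrm{D}_j$ uniformly through the single observation that in each case $\omega_{\eta,\mathrm{D}_j}\in B^2(\G_{(1,0,0,1)})$ plus a term proportional to $\mu\wedge(\text{something})$, so the cohomological step does all the heavy lifting and the automorphism analysis then only has to act on the one-form $\mu$. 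The $2$-form reductions themselves are the routine computations I would defer; the essential point to make carefully in the write-up is that after these reductions the extension is determined by $[\mu]$ up to $\mathrm{Aut}(\G_{(1,0,0,1)})\times\R^\ast$, and that this set of orbits is exactly $\{\mathcal{N}_1,\mathcal{N}_2,\mathcal{N}_3\}$.
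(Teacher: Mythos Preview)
Your two-stage plan coincides with the paper's: the paper likewise just writes down explicit one-forms $\varrho_j$ for each $j$ (the coefficients $\varrho_1,\ldots,\varrho_6$ are listed case by case in the appendix, computed directly from the $\omega_{\eta,\mathrm{D}_j}$ of Lemma~\ref{Endomor}) and then uses the change of basis $\Psi(x)=x+\varrho_j(x)\xi$, $\Psi(\xi)=\xi$ to strip off the cocycle, after which only the one-form $\chi=-\mu$ survives. Your explicit invocation of Corollary~\ref{Classifynoncentral} and the orbit analysis of $\mu$ under $\mathrm{Aut}(\G_{(1,0,0,1)})$ make precise what the paper leaves implicit.

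One point to correct: your distinctness sketch at the end does not work as written. The swap automorphism $\Phi_{1(\varepsilon_1,\varepsilon_2)}$ of Lemma~\ref{Autg1001} exchanges $e_5\leftrightarrow e_6$ and $\mathfrak{a}_1\leftrightarrow\mathfrak{a}_2$, and it extends (with $e_7\mapsto e_7$) to an honest Lie-algebra isomorphism $\mathcal{N}_1(\mu)\cong\mathcal{N}_2(\pm\mu)$; so $\mathcal{N}_1$ and $\mathcal{N}_2$ are \emph{not} distinct as abstract Lie algebras. The paper's proof does not argue pairwise non-isomorphism either---it simply records the three families because they arise from the separate $\mathrm{D}_j$ of Lemma~\ref{Endomor} and feed into the three symplectic oxidations of Proposition~\ref{Classigeneral}. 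Moreover, the invariant you propose (the rank of $x\mapsto[e_7,x]$) equals $1$ in all three families, since the image is always $\langle e_7\rangle$, so it separates nothing. If you want an invariant that genuinely distinguishes $\mathcal{N}_3$ from $\mathcal{N}_1\cong\mathcal{N}_2$, look instead at the centraliser of $e_7$ inside $\h=\langle e_5,e_6\rangle$: in $\mathcal{N}_1$ this line acts trivially on one of the two irreducible $\h$-submodules $\mathfrak{a}_i$, whereas in $\mathcal{N}_3$ (with $\mu_1\mu_2\neq 0$) it acts nontrivially on both.
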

\begin{proof}
The first statement follows directly.  For each $j = 1,\ldots,4$, we now present the following $:$
\begin{align*}
\omega_{\eta,\mathrm{D}_1}&=d_{25}e^{15}+d_{26}e^{16}-{\tfrac {1}{\mu_{6}}}\left(-d_{25}\,\mu_{5}\,\mu_{6}+d_{26}\,{\mu_{5}}^{2}+d_{26}\right)e^{25}+\left(_{25}\,\mu_{6}-d_{26}\,\mu_{5}\right)e^{26}\\
&+d_{45}e^{35}+d_{46}e^{36}+\left(-d_{45}\,\mu_{6}+d_{46}\,\mu_{5}\right)e^{45}-{\tfrac {1}{\mu_{5}}}\left(d_{45}\,{\mu_{6}}^{2}-d_{46}\,\mu_{5}\,\mu_{6}+d_{45}\right)e^{46}\\
&+\left(\eta\,d_{21}\,\mu_{5}+\eta\,d_{43}\,\mu_{6}\right)e^{56}
\end{align*}
Set $\varrho^1=\mathop{\resizebox{1\width}{!}{$\sum$}^6_{i=1}}\limits_{}\varrho_je^j$, with
\begin{align*}
\varrho_{1}&=-{\tfrac {1}{\mu_{6}}}ad_{26},&\varrho_{2}&=-{\tfrac {1 }{\mu_{6}}}a
 \left( d_{25}\,\mu_{6}-d_{26}\,\mu_{5} \right),&\varrho_{3}&=-
{\tfrac {1}{\mu_{5}}}ad_{45},\\
\varrho_{4}&={\tfrac {1 }{\mu_{5}}}a \left( d_{45}\,\mu_{6}-d
_{46}\,\mu_{5} \right),&\varrho_{5}&=-{\tfrac {1}{\mu_{6}}} \left(ad_{21}\,\eta\,
\mu_{5}+ad_{43}\,\eta\,\mu_{6}-\varrho_{6}\,\mu_{5}\right),&\varrho_6&\in\R
\end{align*}
where $d_{ij}$ are the coefficients of the endomorphism $\mathrm{D}_1$ given in Lemma~$\ref{Endomor}$, and $\mu = \mu_5 e^5 + \mu_6 e^6$ with $\mu_5\mu_6 \neq 0$, $a\in\R^\ast$   and $\eta\in\R^{\ast+}$.
\begin{align*}
\omega_{\eta,\mathrm{D}_2}&=\tfrac{1}{d_{21}}\lambda_1e^{15}-d_{15}e^{25}+d_{45}e^{35}+d_{46}e^{46}+d_{46}\mu_5e^{45}-\tfrac{1}{\mu_5}d_{45}e^{46}+d_{21}\eta\mu_5e^{56}
\end{align*}
\begin{align*}
\varrho_1 &= -\frac{ a(d_{15}d_{21} + \mu_5\lambda_1)}{d_{21}(\mu_5^2 + 1)}, &\varrho_2& =\frac{a(d_{15}d_{21}\mu_5 - \lambda_1)}{d_{21}(\mu_5^2 + 1)}, &\varrho_3& = -\frac{1}{\mu_5} ad_{45},\\
\varrho_4& = -ad_{46},&\varrho_5&\in\R, &\varrho_6& = ad_{21}\eta
\end{align*}
\begin{align*}
\omega_{\eta,\mathrm{D}_3}&=d_{25}e^{15}-d_{15}e^{25}+d_{45}e^{35}+d_{46}e^{46}+d_{46}\mu_5e^{45}-\tfrac{1}{\mu_5}d_{45}e^{46}
\end{align*}
\begin{align*}
\varrho_1 &= -\tfrac{a}{(\mu_5^2 + 1)} (d_{25}\mu_5 + d_{15}), &\varrho_2& =\tfrac{a}{(\mu_5^2 + 1)} (d_{15}\mu_5 - d_{25}), &\varrho_3& = -\tfrac{1}{\mu_5} ad_{45},\\
 \varrho_4&= -ad_{46},&\varrho_5&=0, &\varrho_6&\in\R
\end{align*}
\begin{align*}
\omega_{\eta,\mathrm{D}_4}&=d_{25}e^{15}+d_{15}\mu_6e^{16}-d_{15}e^{25}+d_{25}\mu_6e^{26}+d_{46}e^{36}-d_{36}e^{46}+d_{43}\eta\mu_6e^{56}
\end{align*}
\begin{align*}
\varrho_1&= -ad_{15}, &\varrho_2& = -ad_{25}, &\varrho_3& = -\tfrac{a}{(\mu_6^2 + 1)}(d_{46}\mu_6 + d_{36}),\\
\varrho_4& =\tfrac{a}{(\mu_6^2 + 1)}(d_{36}\mu_6 - d_{46}), &\varrho_5& = -ad_{43}\eta,&\varrho_6&\in\R
\end{align*}

\end{proof}
Note that the Lie brackets of non-central extension $\G_{\omega_{\eta,\mathrm{D}_j},\xi}$ are given as
\begin{align*}
[x,y]&=\overline{[x,y]}+\omega_{\eta,\mathrm{D}_j}(x,y)\xi,&\text{for all } x,y\in\G_{(1,0,0,1)},\\
[\xi,x]&=-\mu_j(x)\xi,&\text{for all } x\in\G_{(1,0,0,1)}.
\end{align*}

Therefore, for all $ j = 1, 2, 3, 4 $, we will map each non-central extension $ \G_{\omega_{\eta,\mathrm{D}_j},\mu_j} $ to one of the algebras given above via the isomorphism
\begin{align*}
\Psi(x)&=x+\varrho_j(x)\xi,&\text{for all } x\in\G_{(1,0,0,1)}\\
\Psi(\xi)&=\xi.
\end{align*}
Finally, note that the two non-central extensions $ \G_{\omega_{\eta,\mathrm{D}_1},\mu_1} $ and $ \G_{\omega_{\eta,\mathrm{D}_2},\mu_2} $ coincide.

\end{document}